\pgfplotsset{compat=1.15}
\DeclareMathOperator{\Hom}{Hom}
\DeclareMathOperator{\Cont}{Cont}
\DeclareMathOperator{\Ext}{Ext}
\DeclareMathOperator{\Gal}{Gal}
\newcommand{\N}{\ensuremath{\mathbbm{N}}}
\newcommand{\Q}{\ensuremath{\mathbbm{Q}}}
\newcommand{\R}{\ensuremath{\mathbbm{R}}}
\newcommand{\Z}{\ensuremath{\mathbbm{Z}}}
\newcommand{\F}{\ensuremath{\mathbbm{F}}}
\newcommand{\TT}{\ensuremath{\mathscr{T}}}
\newcommand{\CC}{\ensuremath{\mathscr{C}}}
\newcommand{\MM}{\ensuremath{\mathcal{M}}}
\newcommand{\DD}{\ensuremath{\mathcal{D}}}
\newcommand{\DDD}{\ensuremath{\mathbf{D}}}
\renewcommand{\H}{\mathrm{H}}
\renewcommand{\O}{\ensuremath{\mathcal{O}}}
\newcommand{\HH}{\ensuremath{\mathbbm{H}}}
\newcommand{\GGamma}{\ensuremath{\mathbb{\Gamma}}}
\newcommand{\LH}{\ensuremath{\mathcal{LH}}}
\newcommand{\RH}{\ensuremath{\mathcal{RH}}}
\newcommand{\NH}{\ensuremath{\mathcal{NH}}}
\newcommand{\Sch}{\ensuremath{\mathbf{Sch}}}
\newcommand{\p}{\ensuremath{\mathfrak{p}}}
\newcommand{\Set}{\mathbf{Set}}
\newcommand{\Grp}{\mathbf{Grp}}
\newcommand{\Ab}{\mathbf{Ab}}
\newcommand{\Top}{\mathbf{Top}}
\newcommand{\LCA}{\mathbf{LCA}}
\newcommand{\FLCA}{\mathbf{FLCA}}
\newtheorem*{rep@theorem}{\rep@title}
\newcommand{\newreptheorem}[2]{%
\newenvironment{rep#1}[1]{%
 \def\rep@title{#2 \ref{##1}}%
 \begin{rep@theorem}}%
 {\end{rep@theorem}}}
\theoremstyle{plain}
\newtheorem{thm}{Theorem}[section]
\newtheorem{cor}[thm]{Corollary}
\newtheorem{lem}[thm]{Lemma}
\newtheorem{prop}[thm]{Proposition}
\newtheorem*{thm*}{Theorem}
\newtheorem*{lem*}{Lemma}
\newtheorem*{prop*}{Proposition}
\newtheorem{thmx}{Theorem}
\theoremstyle{definition}
\newtheorem{defn}[thm]{Definition}
\newtheorem{ex}[thm]{Example}
\newtheorem{rmk}[thm]{Remark}
\newcommand{\leftin}[2] {\prescript{}{#1}{#2}}
 \crefname{cor}{Corollary}{Corollaries}
\crefname{thm}{Theorem}{Theorems}
\crefname{prop}{Proposition}{Propositions}
\crefname{lem}{Lemma}{Lemmas}
\crefname{rmk}{Remark}{Remarks}
\crefname{cns}{Construction}{Constructions}
\crefname{ntt}{Notation}{Notation}
\crefname{ex}{Example}{Examples}
\crefname{defn}{Definition}{Definitions}
\crefname{section}{Section}{Sections}
\numberwithin{equation}{section}
\title{Duality for the condensed Weil-étale realisation of $1$-motives over $p$-adic fields}
\author{Marco Artusa\text{, with an appendix written by Takashi Suzuki}}
\date{}
\begin{document}

\maketitle

\begin{abstract}
    We extend Tate duality for Galois cohomology of abelian varieties to $1$-motives over a $p$-adic field, improving a result of Harari and Szamuely. To do this, we replace Galois cohomology with the condensed cohomology of the Weil group. This is a topological cohomology theory defined in a previous work, which keeps track of the topology of the $p$-adic field. To see $1$-motives as coefficients of this cohomology theory, we introduce their condensed Weil-étale realisation. Our duality takes the form of a Pontryagin duality between locally compact abelian groups.
\end{abstract}

\setcounter{tocdepth}{2}
\tableofcontents
\section{Introduction}
Duality theorems are among the central statements in arithmetic geometry. For a $p$-adic field $F$, the earliest example is due to Tate \cite{WC}, who proves a duality theorem for Galois cohomology of abelian varieties. His result can be stated as follows \begin{thmx}\label{intro:thmtate}
Let $A$ be an abelian variety over a $p$-adic field $F$ and let $A^*$ be its dual abelian variety. For $q=0,1$, there is a perfect pairing \[
\H^q(G_F,A(\overline{F}))\times \H^{1-q}(G_F,A^*(\overline{F}))\rightarrow \H^2(G_F,\overline{F}^{\times})=\Q/\Z
\] between a profinite abelian group and a torsion abelian group.
\end{thmx}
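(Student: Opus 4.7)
The plan is to follow Tate's classical strategy: define the pairing via the Poincaré biextension and reduce non-degeneracy to local duality for finite Galois modules through the Kummer sequence.

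First I would construct the pairing. Because $A^*$ parametrises extensions of $A$ by $\mathbb{G}_m$ in the fppf topos, there is a canonical Poincaré biextension of $A\times A^*$ by $\mathbb{G}_m$; cup-product with the corresponding class produces, for $q=0,1$, pairings
\[
\H^q(G_F, A(\overline{F})) \times \H^{1-q}(G_F, A^*(\overline{F})) \longrightarrow \H^2(G_F, \overline{F}^{\times}) = \Q/\Z,
\]
where the right-hand identification is the invariant map of local class field theory.

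Next I would reduce to finite coefficients. For every $n\geq 1$ the Kummer sequence $0\to A[n]\to A\xrightarrow{n} A\to 0$ is exact on $\overline{F}$-points, so it yields short exact sequences
\[
0 \to A(F)/n \to \H^1(G_F, A[n]) \to \H^1(G_F, A)[n] \to 0,
\]
and analogously for $A^*$. Since the Weil pairing $A[n]\times A^*[n]\to \mu_n$ agrees with the restriction of the Poincaré biextension to $n$-torsion, the cup-product pairings above are compatible with Tate's local duality for the finite Galois modules $A[n]$ and $A^*[n]$. A short diagram chase with the connecting homomorphism then identifies $A(F)/n$ and $A^*(F)/n$ as exact annihilators of one another inside the perfect pairing
\[
\H^1(G_F, A[n]) \times \H^1(G_F, A^*[n]) \longrightarrow \H^2(G_F, \mu_n) = \Z/n\Z,
\]
yielding a perfect pairing of finite abelian groups $A(F)/n \times \H^1(G_F, A^*)[n] \to \Z/n\Z$.

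Finally I would take limits. By Mattuck's theorem, $A(F)$ is compact and profinite as a topological group, with $A(F) \cong \varprojlim_n A(F)/n$. On the other side, the surjection $\H^1(G_F, A^*[n])\twoheadrightarrow \H^1(G_F, A^*)[n]$ has finite source, forcing $\H^1(G_F, A^*)$ to be torsion and equal to $\varinjlim_n \H^1(G_F, A^*)[n]$. Passing to the limit in the family of perfect pairings and invoking Pontryagin duality between profinite and discrete torsion abelian groups yields the $q=0$ case; the $q=1$ case then follows by exchanging the roles of $A$ and $A^*$ and using the biduality $A^{**}=A$.

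The hard part will be the compatibility, at the level of cocycles, between the Weil pairing on $n$-torsion and the restriction of the Poincaré biextension pairing, together with the careful identification of $A(F)/n$ and $A^*(F)/n$ as exact annihilators of each other inside Tate duality for $A[n]$. Both points are classical but delicate; once they are in place, the remaining steps, including the topological identifications at the limit, are essentially formal.
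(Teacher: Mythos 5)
The paper does not give its own proof of this statement: it is Tate's classical local duality theorem for abelian varieties, which the paper cites directly (Tate's original article \cite{WC} and \cite[Corollary 3.4]{ADT}) and then uses as an ingredient — in particular in the proof of \cref{thm:condenseddualityav}. So there is no ``paper's proof'' to compare against. Your sketch is a correct outline of the standard argument, essentially the one in Milne's \emph{Arithmetic Duality Theorems}: construct the pairing from the Poincar\'e biextension, reduce to finite coefficients via the Kummer sequence, use the compatibility between the Weil pairing and the restricted Poincar\'e pairing to invoke Tate local duality for $A[n]$ and $A^*[n]$, show that $A(F)/n$ and $A^*(F)/n$ are exact annihilators, and pass to limits using Mattuck's theorem and Pontryagin duality.

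One point you underestimate: the step ``a short diagram chase then identifies $A(F)/n$ and $A^*(F)/n$ as exact annihilators'' is not a diagram chase. The diagram chase only shows that each is contained in the annihilator of the other; to get equality one must count, and the standard way is to compare orders via the local Euler characteristic formula (the ranks of $\H^0$, $\H^1$, $\H^2$ with coefficients in $A[n]$ and in $A^*[n]$), or to use the exactness of the six-term duality sequence. This is the genuinely nontrivial input, alongside the Weil-pairing compatibility you already flag. With that supplied, your passage to the limit is sound: $A(F)$ is profinite with $nA(F)$ cofinal among open subgroups (so $A(F)\cong\varprojlim A(F)/n$), and $\H^1(G_F,A^*)$ is torsion and equals $\varinjlim\H^1(G_F,A^*)[n]$, so the limit of the finite perfect pairings is a perfect pairing of a profinite group against a discrete torsion group.
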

The goal of this paper is to extend this result to $1$-motives replacing Galois cohomology with the condensed cohomology of the Weil group, developed in \cite{Artusa}. Indeed, such a generalisation in terms of Galois cohomology is not satisfying enough. The reason is that completion procedures are needed on cohomology groups to make the paring perfect (see for example \cite[Corollary 2.3]{ADT} and \cite[Theorem 0.1]{HarariSzamuely}), which lose information on the original groups. In our new framework, the duality for $1$-motives is expressed as a Pontryagin duality between locally compact abelian groups and no completion procedure is needed.

Let $F$ be a $p$-adic field. Certain shortcomings of Galois cohomology for duality theorems have been observed in \cite{Artusa}, where a new cohomology theory is provided by replacing the Galois group $G_F$ with the Weil group $W_F$ and the category of sets with the category $\CC$ of condensed sets. The resulting cohomology groups $\HH^q(B_{\hat{W}_F},-)$ are condensed abelian groups, and the category of coefficients, denoted by $\Ab(B_{\hat{W}_F})$, is the category of condensed abelian groups with an action of the pro-condensed group $\hat{W}_F$. Since $\CC$ contains compactly generated topological spaces as a full subcategory stable by all limits, both the coefficients and the cohomology groups are naturally topologised. 

In this paper, we associate to a $F$-commutative group scheme locally of finite type,  say $G$, a condensed abelian group with an action of $\hat{W}_F$. We informally present this construction. Let $L$ be the completion of the maximal unramified extension $F^{\mathrm{un}}$. Let us fix separable closures $\overline{F}\subset \overline{L}$ of $F$ and $L$ respectively. The Weil group $W_F$ acts continuously on the abelian group $G(\overline{L})$. We observe that $\overline{L}$ has a natural topology induced by the one of $L$, which comes from the non-archimedean topology on $F$. Then $G(\overline{L})$ can be seen as a topological abelian group with the topology induced by the one of $\overline{L}$. This operation gives $G(\overline{L})$ the structure of a condensed abelian group with an action of $\hat{W}_F$. We denote this object $G(\overline{L})_{\mathrm{cond}}$ and we call it \emph{condensed Weil-étale realisation} of $G$. We consider the cohomology groups $\HH^q(B_{\hat{W}_F},G(\overline{L})_{\mathrm{cond}})$, condensed abelian groups which keep track of the topology of $F$. The underlying abelian groups coincide with the cohomology groups $\H^q(W_F,G(\overline{L}))$ considered by Karpuk in \cite{Karpuk2}. 

By functoriality, we extend this construction to $1$-motives. A $1$-motive over $F$ is a complex of $F$-commutative group schemes concentrated in degrees $-1$ and $0$, say $\MM=[Y\to E]$, where $Y$ is étale-locally isomorphic to the constant group $\Z^r$ and $E$ is a semiabelian variety, i.e.\ an extension of an abelian variety by a torus. Following \cite{Deligne}, we associate to $\MM$ a Cartier dual $\MM^*=[Y^*\to E^*]$. We consider the condensed Weil-étale realisations $\MM(\overline{L})_{\mathrm{cond}},\MM^*(\overline{L})_{\mathrm{cond}}$ and we study the total derived cohomologies $R\GGamma(B_{\hat{W}_F},\MM(\overline{L})_{\mathrm{cond}}),R\GGamma(B_{\hat{W}_F},\MM^*(\overline{L})_{\mathrm{cond}})$, showing that they are represented by bounded complexes of locally compact abelian groups. We define the $\R/\Z$-twist of the condensed cohomology of $\MM^*$, and we denote it $R\GGamma(B_{\hat{W}_F},\MM^*(\overline{L})_{\mathrm{cond}};\R/\Z)$. Using a condensed version of biextensions, we build a cup-product pairing in $\DDD^{\mathrm{b}}(\CC)$ \begin{equation}\label{intro:cppairingmotives}
    R\GGamma(B_{\hat{W}_F},\MM(\overline{L})_{\mathrm{cond}})\otimes^L R\GGamma(B_{\hat{W}_F},\MM^*(\overline{L})_{\mathrm{cond}};\R/\Z)\rightarrow \HH^1(B_{\hat{W}_F},\mathbbm{G}_m(\overline{L})_{\mathrm{cond}};\R/\Z)=\R/\Z.
\end{equation} Our main result can be stated as follows \begin{repthm}{duality1motives2}
The cup-product pairing \eqref{intro:cppairingmotives} is perfect. Moreover, it induces perfect cup-product pairings of locally compact abelian groups \[\begin{split}
\HH^{-1}(B_{\hat{W}_F},\MM(\overline{L})_{\mathrm{cond}})\otimes \HH^{1}(B_{\hat{W}_F},\MM^*(\overline{L})_{\mathrm{cond}};\R/\Z)^{\mathrm{lca}}&\rightarrow \R/\Z, \\
\HH^{0}(B_{\hat{W}_F},\MM(\overline{L})_{\mathrm{cond}})^{\mathrm{lca}}\otimes \HH^{0}(B_{\hat{W}_F},\MM^*(\overline{L})_{\mathrm{cond}};\R/\Z)&\rightarrow \R/\Z,\\
\HH^{1}(B_{\hat{W}_F},\MM(\overline{L})_{\mathrm{cond}})\otimes \HH^{-1}(B_{\hat{W}_F},\MM^*(\overline{L})_{\mathrm{cond}};\R/\Z)&\rightarrow \R/\Z\\
\end{split}
\] and a perfect pairing \[
\HH^0(B_{\hat{W}_F},\MM(\overline{L})_{\mathrm{cond}})^{\mathrm{nh}}\otimes^L \HH^1(B_{\hat{W}_F},\MM^*(\overline{L})_{\mathrm{cond}};\R/\Z)^{\mathrm{nh}}[-1]\rightarrow \R/\Z.
\]
\end{repthm}
We observe the presence of 4 pieces contributing to the cohomology of $R\GGamma(B_{\hat{W}_F},\MM(\overline{L})_{\mathrm{cond}})$. Indeed, the condensed abelian group $\HH^0(B_{\hat{W}_F},\MM(\overline{L})_{\mathrm{cond}})$ is not locally compact and can be decomposed in a maximal locally compact quotient $\HH^0(B_{\hat{W}_F},\MM(\overline{L})_{\mathrm{cond}})^{\mathrm{lca}}$ and a non-Hausdorff subgroup $\HH^0(B_{\hat{W}_F},\MM(\overline{L})_{\mathrm{cond}})^{\mathrm{nh}}$. The same holds for $\HH^1(B_{\hat{W}_F},\MM(\overline{L})_{\mathrm{cond}};\R/\Z)$ and, more generally, for every object of the left heart of $\DDD^{\mathrm{b}}(\LCA)$, the bounded derived category of the quasi-abelian category $\LCA$ of locally compact abelian groups. 

If $\MM=[0\rightarrow A]$ is an abelian variety, the underlying abelian groups of $\HH^q(B_{\hat{W}_F},A(\overline{L})_{\mathrm{cond}})$ and $\HH^q(B_{\hat{W}_F},A^*(\overline{L})_{\mathrm{cond}};\R/\Z)$ coincide with $\H^q(G_F,A(\overline{F}))$ and $\H^{q-1}(G_F,A^*(\overline{F}))$ respectively. Thus our result is a topological improvement of Theorem \ref{intro:thmtate}. Indeed, Tate's cohomology groups are abelian groups, hence they are not naturally topologised, contrarily to our condensed cohomology groups. Nonetheless, Tate's duality for abelian varieties is a key ingredient to prove our result. If $\MM=[0\rightarrow T]$ is a torus, our result is a condensed-Weil version of Tate-Nakayama duality which does not need profinite completion, and is a consequence of \cite[Theorem 4.27]{Artusa}. In general, our result is an improved version of \cite[Theorem 0.1]{HarariSzamuely} in terms of the Weil group and of Condensed Mathematics.
\subsection{Relation with previous work}
Let us recall existing generalisations of Theorem \ref{intro:thmtate} and their shortcomings, starting from the setting of Galois cohomology. Let $T$ be a torus over $F$ and let $Y^*$ be its Cartier dual. Cartier duality induces a cup-product pairing \begin{equation}\label{intro:eqn:noncondensedpairingtori}
    \H^q(G_F,Y^*(\overline{F}))\times \H^{2-q}(G_F,T(\overline{F}))\rightarrow \H^2(G_F,\overline{F}^{\times})=\Q/\Z.
\end{equation}
To generalise Theorem \ref{intro:thmtate} to tori, one would expect this pairing to be perfect. However, this does not happen. Indeed, if $T=\mathbbm{G}_m$ and $q=0$, then \eqref{intro:eqn:noncondensedpairingtori} becomes the pairing \[
\Z\times \Q/\Z\rightarrow \Q/\Z,
\] which cannot be perfect since the induced map $\Z\rightarrow \Hom(\Q/\Z,\Q/\Z)=\hat{\Z}$ is not an isomorphism. However, we recover perfectness of the pairing if we replace the $0$th cohomology group with its profinite completion. This result is called Tate-Nakayama duality and can be stated as follows (see for example \cite[Corollary 2.3]{ADT})
\begin{thmx}\label{intro:thmtatenakayama}
The pairing \eqref{intro:eqn:noncondensedpairingtori} becomes perfect if in the case $q\neq 1$ we replace $\H^0$ with its profinite completion.
\end{thmx}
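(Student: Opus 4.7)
The plan is to reduce the statement for an arbitrary torus $T$ to the case $T = \mathbbm{G}_m$ by a resolution through quasi-trivial tori, and then treat the three cases $q = 0, 1, 2$ for $\mathbbm{G}_m$ directly using local class field theory.

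\textbf{Step 1 (the case $T = \mathbbm{G}_m$).} Here $Y^* = \Z$ with trivial action. For $q = 0$ the pairing is
\[
\Z \times \H^2(G_F, \overline{F}^{\times}) = \Z \times \Q/\Z \rightarrow \Q/\Z
\]
via the invariant map of local class field theory, so after profinite completion of $\Z$ it becomes the perfect evaluation pairing $\hat{\Z} \times \Q/\Z \rightarrow \Q/\Z$. For $q = 1$ both sides vanish: $\H^1(G_F, \Z) = \Hom_{\mathrm{cont}}(G_F, \Z) = 0$ since $G_F$ is compact and $\Z$ is discrete torsion-free, while $\H^1(G_F, \overline{F}^{\times}) = 0$ by Hilbert 90. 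For $q = 2$, the boundary of $0 \to \Z \to \Q \to \Q/\Z \to 0$ identifies $\H^2(G_F, \Z) \cong \Hom_{\mathrm{cont}}(G_F, \Q/\Z)$, and the pairing with $F^{\times}$ is $(\chi, a) \mapsto \chi(\mathrm{rec}_F(a))$; its perfectness after profinite completion of $F^{\times}$ is precisely the statement that the local reciprocity map induces $\widehat{F^{\times}} \cong G_F^{\mathrm{ab}}$.

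\textbf{Step 2 (reduction to quasi-trivial tori).} Let $K/F$ be a finite Galois extension splitting $T$. The character lattice $X^{*}(T) = Y^*(\overline{F})$ is a continuous, $\Z$-free $G_F$-module of finite rank, hence admits a short exact sequence
\[
0 \rightarrow M_2 \rightarrow M_1 \rightarrow X^{*}(T) \rightarrow 0
\]
with $M_1, M_2$ finite direct sums of permutation lattices $\Z[G_F/G_{K_i}]$. Cartier dualising produces an exact sequence of tori
\[
0 \rightarrow T \rightarrow T_1 \rightarrow T_2 \rightarrow 0
\]
with $T_1, T_2$ quasi-trivial, i.e.\ products of Weil restrictions $\mathrm{Res}_{K_i/F}\mathbbm{G}_m$. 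For such quasi-trivial tori Shapiro's lemma reduces each occurrence of $\H^q(G_F, -)$ to $\H^q(G_{K_i}, -)$ of either $\mathbbm{G}_m$ or $\Z$ over $K_i$, so Step 1 already yields perfect pairings along the two rows involving $T_1$ and $T_2$.

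\textbf{Step 3 (assembly).} The two long exact sequences in Galois cohomology arising from the short exact sequence of tori and from the dual sequence on character groups fit into a commutative ladder whose vertical arrows are induced by the cup-product pairing. At every degree outside the $\H^0$ slots, a five-lemma argument combined with Step 2 gives perfectness for $T$. At the $\H^0$ slots one first replaces both $T(F)$ and $X^{*}(T)^{G_F}$ by their profinite completions; the essential point is that this completion stays exact in the relevant positions, which holds because $X^{*}(T)^{G_F}$ is a finitely generated abelian group and $T(F)$ is an extension of a finitely generated abelian group by a compact group. The main obstacle is precisely this last compatibility: profinite completion is not exact in general, and one must exploit the concrete compact-by-finitely-generated structure of $T(F)$, preserved through the sequence of tori because $T_1, T_2$ are quasi-trivial, to ensure exactness of the completed $\H^0$-sequence. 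Once this is in place, the five-lemma closes the argument.
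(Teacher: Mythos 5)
The paper does not prove this statement; it is invoked as background and cited directly to \cite[Corollary 2.3]{ADT}, where Milne proves it by dévissage to the finite case (Theorem \ref{intro:tatelocalduality}) via the Kummer sequence $0 \to T[n] \to T \xrightarrow{\cdot n} T \to 0$ and its character-group analogue, followed by a passage to limits. So there is no proof in the paper against which to compare; I can only assess your argument on its own.

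Your Step 1 is correct, and a dévissage to $\mathbbm{G}_m$ through quasi-trivial tori is a natural idea, but Step 2 contains a genuine error. You claim that any $G_F$-lattice $X^*(T)$ sits in an exact sequence
\[
0 \rightarrow M_2 \rightarrow M_1 \rightarrow X^*(T) \rightarrow 0
\]
with \emph{both} $M_1$ and $M_2$ finite direct sums of permutation lattices. One can always take $M_1$ permutation (indeed free over $\Z[\Gal(K/F)]$), but the kernel $M_2$ is then merely a lattice and in general \emph{not} permutation, nor stably so. Lattices admitting such a two-term permutation presentation are exactly the duals of the so-called quasi-permutation lattices in the Endo--Miyata / Colliot-Thélène--Sansuc theory of flasque resolutions, and it is a classical fact that this class is strictly smaller than the class of all $G$-lattices once the Galois group of the splitting field is non-cyclic (this is the same phenomenon that produces non-rational and non-stably-rational tori). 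Consequently the short exact sequence of tori $0 \to T \to T_1 \to T_2 \to 0$ with $T_1, T_2$ quasi-trivial does not exist in general, and the five-lemma assembly in Step 3 has nothing to be applied to. Even setting this aside, Step 3 also leaves the crucial exactness of the profinite-completed degree-zero row asserted but not established; this is precisely the kind of subtlety Milne's proof via the Kummer sequence and the finite case is designed to control. To repair the argument you should replace the quasi-trivial dévissage by the dévissage to finite modules: apply Theorem \ref{intro:tatelocalduality} to $T[n]$ and its Cartier dual $X^*(T)/n$, compare the resulting long exact sequences for $T$ and for $X^*(T)$, and take the limit over $n$.
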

If $T=\mathbbm{G}_m$ and $q=2$ we recover the reciprocity isomorphism of local class field theory \[
(F^{\times})^{\wedge}\overset{\sim}{\rightarrow} G_F^{ab}.
\] This completion procedure loses information on the original groups. Moreover, since it is not an exact operation, it causes further issues when we want to generalise Theorems \ref{intro:thmtate} and \ref{intro:thmtatenakayama} to complexes of group schemes, e.g.\ $1$-motives. 

Let $\MM=[Y\rightarrow E]$ be a $1$-motive over $F$ and let $\MM^*$ be its dual $1$-motive. The Poincaré biextension gives a cup-product pairing on Galois cohomology \begin{equation}\label{intro:eqn:noncondensedpairingmotives}
R\Gamma(G_F,\MM(\overline{F}))\otimes^L R\Gamma(G_F,\MM^*(\overline{F}))\rightarrow \H^2(G_F,\overline{F}^{\times})[-1]=\Q/\Z[-1].
\end{equation}
Harari and Szamuely prove the following duality result (see \cite[Theorem 0.1]{HarariSzamuely})
\begin{thmx}\label{intro:thmhs}
The cup-product pairing \eqref{intro:eqn:noncondensedpairingmotives} induces perfect pairings \[
\H^{-1}(G_F,\MM(\overline{F}))_{\wedge}\otimes \H^2(G_F,\MM^*(\overline{F}))\rightarrow \Q/\Z
\] and \[
\H^0(G_F,\MM(\overline{F}))^{\wedge}\otimes \H^1(G_F,\MM^*(\overline{F}))\rightarrow \Q/\Z.
\] Here $\H^0(G_F,\MM(\overline{F}))^{\wedge}$ is the profinite completion of $\H^0(G_F,\MM(\overline{F}))$, while $\H^{-1}(G_F,\MM(\overline{F}))_{\wedge}$ is the kernel of $Y(F)^{\wedge}\rightarrow E(F)^{\wedge}$.
\end{thmx}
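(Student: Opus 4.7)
The plan is to adopt the classical devissage strategy, reducing the duality for $\MM=[Y\to E]$ to the known dualities for its building blocks. First, I would decompose $\MM$ via the short exact sequence of complexes $0\to E[0]\to \MM\to Y[1]\to 0$, which yields a distinguished triangle and, upon applying $R\Gamma(G_F,-)$, the long exact sequence
\[
\cdots\to \H^q(G_F,E(\overline{F}))\to \H^q(G_F,\MM(\overline{F}))\to \H^{q+1}(G_F,Y(\overline{F}))\to \H^{q+1}(G_F,E(\overline{F}))\to\cdots
\]
Simultaneously, I would invoke the extension $0\to T\to E\to A\to 0$ presenting $E$ as an extension of an abelian variety $A$ by a torus $T$; Cartier dually, $\MM^*=[Y^*\to E^*]$ has $E^*$ an extension of the dual abelian variety $A^*$ by the torus $Y^{\vee}=\Hom(Y,\mathbbm{G}_m)$. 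In particular, $\H^{-1}(G_F,\MM(\overline{F}))=\ker(Y(F)\to E(F))$.

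Second, I would invoke the dualities proved for the pieces: Tate (Theorem \ref{intro:thmtate}) for the pair $(A,A^*)$, and Tate-Nakayama (Theorem \ref{intro:thmtatenakayama}) for the pairs $(T,Y^*)$ and $(Y,Y^\vee)$. The Poincaré biextension on $\MM\times\MM^*$ restricts to the Weil pairing on $A\times A^*$ and to the tautological biextensions on the torus-lattice pairs, so that \eqref{intro:eqn:noncondensedpairingmotives} defines a morphism from the long exact sequence of $R\Gamma(G_F,\MM(\overline{F}))$ into a shifted Pontryagin dual of that for $R\Gamma(G_F,\MM^*(\overline{F}))$. At the positions where the component dualities already hold without any completion, the Five Lemma directly yields perfectness of the induced maps on $\H^q(G_F,\MM(\overline{F}))$.

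The main obstacle is that profinite completion is not exact, so one cannot complete the long exact sequence of $\MM$ term-by-term to match that for $\MM^*$, whose relevant entries are already compact or torsion. This is precisely why Harari and Szamuely prescribe
\[
\H^{-1}(G_F,\MM(\overline{F}))_\wedge := \ker\bigl(Y(F)^\wedge\to E(F)^\wedge\bigr),
\]
i.e., complete first and only then take kernels. Concretely, Tate-Nakayama identifies $Y(F)^\wedge$ with the Pontryagin dual of $\H^2(G_F,Y^\vee(\overline{F}))$, while combining Tate for $A$ with Tate-Nakayama for $T$ identifies $E(F)^\wedge$ with the Pontryagin dual of $\H^1(G_F,E^*(\overline{F}))$. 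A Snake Lemma applied to the morphism of four-term exact sequences produced by the cup-product then yields the perfect pairing $\H^{-1}_\wedge\times \H^2(G_F,\MM^*(\overline{F}))\to\Q/\Z$; the parallel argument on the $\H^0$ side, where $\H^0(G_F,\MM(\overline{F}))^\wedge$ arises as an extension built from $\mathrm{coker}(Y(F)\to E(F))^\wedge$ and a subgroup of $\H^1(G_F,Y(\overline{F}))$, gives the second pairing. The most delicate point is verifying the compatibility of the connecting maps in these long exact sequences with the biextension pairing; this follows from the explicit construction of the Poincaré biextension on $\MM\otimes^L\MM^*$ but requires a careful diagram chase.
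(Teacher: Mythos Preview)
Your proposal outlines the classical d\'evissage strategy of Harari--Szamuely themselves, and in broad strokes it is correct: reduce via the weight filtration to Tate duality for $(A,A^*)$ and Tate--Nakayama for the torus/lattice pairs, then handle the non-exactness of profinite completion by completing before taking kernels. The one place where your sketch is thin is the $\H^0$ pairing: you assert that $\H^0(G_F,\MM(\overline{F}))^{\wedge}$ is an extension of a piece of $\H^1(G_F,Y)$ by $\mathrm{coker}(Y(F)\to E(F))^{\wedge}$, but since completion is not exact this requires justification (in practice one uses that the relevant groups are already profinite-by-finitely-generated, so completion behaves well enough).

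The paper, however, does not prove this statement directly at all: Theorem~\ref{intro:thmhs} is quoted as a known result of \cite{HarariSzamuely}. What the paper does is prove the much stronger condensed duality (\cref{duality1motives2}) for $R\GGamma(B_{\hat{W}_F},\MM(\overline{L})_{\mathrm{cond}})$ and then recover the \emph{first} of the two pairings as \cref{cor:harariszamuely1}. That deduction goes by an entirely different route: one sets $C=\mathrm{cofib}(\underline{Y(F)}\to\underline{E(F)})$, uses the condensed perfect pairing to identify $C^{\vee}$ with an explicit fibre $D$, then passes to $C\otimes^L\hat{\Z}\cong(D\otimes^L\Q/\Z)^{\vee}[1]$ and reads off the pairing from Kummer-type exact sequences \eqref{kummerqz}, \eqref{kummerzhat} together with the comparison \eqref{eqn:comparison021} between Weil and Galois cohomology. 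The second pairing of Theorem~\ref{intro:thmhs} is not re-derived in the paper.

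So your approach is the original one and is essentially sound; the paper's approach trades the delicate completion bookkeeping for a single clean condensed duality in $\DDD^{\mathrm{b}}(\FLCA)$, from which the classical statement drops out by formal manipulations with $\otimes^L\hat{\Z}$ and $\otimes^L\Q/\Z$. The cost of the paper's route is the heavy machinery of condensed Weil-\'etale realisations; the payoff is that no ad hoc completion tricks are needed and one obtains a strictly finer duality along the way.
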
 The need of two different completion procedures $(-)^{\wedge}$ and $(-)_{\wedge}$ comes from the lack of exactness of profinite completion that we mentioned before. Generalisations of Theorem \ref{intro:thmtate} should not involve such completion procedures.

\vspace{0.5em}
In \cite{Karpuk2}, Karpuk tries to solve this issue by using the Weil group $W_F$ instead of the Galois group, following Lichtenbaum's intuition \cite{Licht}. If $G$ is a commutative group scheme over $F$, and $L$ is the completion of the maximal unramified extension of $F$, we have a natural action of $W_F$ on $G(\overline{L})$ and we can consider the cohomology $R\Gamma(W_F,G(\overline{L}))$. If $T$ is a torus over $F$ with Cartier dual $Y^*$, we obtain a pairing \begin{equation}\label{intro:eqn:noncondensedweilpairingtori}
    R\Gamma(W_F,Y^*(\overline{L}))\otimes^L R\Gamma(W_F,T(\overline{L}))\rightarrow \H^1(W_F,\overline{L}^{\times})[-1]=\Z[-1].
\end{equation}Karpuk's version of Theorem \ref{intro:thmtatenakayama} using the Weil group can be stated as follows (see \cite[Theorem 3.3.1]{Karpuk}) 
\begin{thmx}\label{intro:thmkarpuktori}
The cup-product pairing \eqref{intro:eqn:noncondensedweilpairingtori} induces an equivalence in $\DDD^{\mathrm{b}}(\Ab)$
\[
R\Gamma(W_F,T(\overline{L}))\overset{\sim}{\rightarrow} R\Hom_{\Ab}(R\Gamma(W_F,Y^*(\overline{L})),\Z[-1]).
\]
\end{thmx}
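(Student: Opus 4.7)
My plan has two parts: first a Shapiro-style dévissage reducing to $T=\mathbbm{G}_m$, and second an explicit computation for $\mathbbm{G}_m$ via the Hochschild--Serre spectral sequence for $1 \to I_F \to W_F \to \Z \to 1$, with the quotient generated by a Frobenius lift $\Phi$.

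For the reduction, I would resolve the character lattice $Y^*$ as a $G_F$-module by a finite complex of permutation lattices $\Z[G_F/G_{F'}]$, for varying finite extensions $F'/F$. Dually, $T$ gets resolved by tori of the form $\mathrm{Res}_{F'/F}\mathbbm{G}_m$. Using that Cartier duality is compatible with Weil restriction, together with Shapiro's lemma for Weil cohomology,
\[
R\Gamma(W_F, \mathrm{Ind}_{W_{F'}}^{W_F} M) \cong R\Gamma(W_{F'}, M),
\]
the statement reduces to the case $T = \mathbbm{G}_m$, which I will assume from now on (after relabeling the extension back to $F$).

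Set $L = \widehat{F^{\mathrm{un}}}$. Since $L$ is strictly Henselian, $\mathrm{Br}(L) = 0$, and combined with Hilbert 90 this gives $R\Gamma(I_F, \overline{L}^{\times}) \cong L^\times$ concentrated in degree $0$. Therefore
\[
R\Gamma(W_F, \overline{L}^{\times}) \cong R\Gamma(\Z, L^\times) \cong [L^\times \xrightarrow{1-\Phi} L^\times],
\]
with $H^0 = F^\times$ and $H^1 = \Z$ (using surjectivity of $1-\Phi$ on $\O_L^\times$ and the valuation on the uniformiser factor). On the dual side, $R\Gamma(W_F, \Z)$ has $H^0 = \Z$ and $H^1 = \Z$ (the valuation character), plus higher terms fed by $H^{\ge 2}(I_F, \Z)$; via the dimension shift $0 \to \Z \to \Q \to \Q/\Z \to 0$, these encode $\Hom_{\mathrm{cts}}(I_F^{\mathrm{ab}}, \Q/\Z)$ and further iterates.

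The cup-product pairing \eqref{intro:eqn:noncondensedweilpairingtori} is induced by the tautological $\overline{L}^{\times} \otimes \Z \to \overline{L}^{\times}$; passing through $I_F$- and then $\Z$-cohomology yields a candidate duality map
\[
R\Gamma(W_F, \overline{L}^{\times}) \to R\Hom_{\Ab}(R\Gamma(W_F, \Z), \Z[-1]).
\]
The match in $H^1$ is essentially tautological. The main obstacle is the $H^0$ match: the universal-coefficients spectral sequence for $R\Hom(-, \Z[-1])$ expresses $H^0$ of the right-hand side as an extension of $\Hom(H^1(W_F, \Z), \Z) = \Z$ by $\Ext^1(H^2(W_F, \Z), \Z)$, and identifying this with $F^\times$ relies on Artin reciprocity $W_F^{\mathrm{ab}} \cong F^\times$ together with a Pontryagin--Ext identity $\Ext^1(\Hom(F^\times, \Q/\Z), \Z) \cong F^\times$. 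Crucially, this invokes local class field theory in its Weil-group form \emph{without profinite completion} --- precisely what makes the Weil formulation preferable to the Galois one --- and a careful verification of the resulting spectral-sequence degenerations in $\DDD^{\mathrm{b}}(\Ab)$ is the delicate step.
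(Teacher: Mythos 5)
The paper does not actually prove this theorem; it is recalled as a background result (Theorem~D in the introduction) and attributed to Karpuk as \cite[Theorem 3.3.1]{Karpuk}, so there is no in-text proof to compare against. Your proposal therefore has to stand on its own.

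The main gap is in your reduction step. You propose to resolve the character lattice $Y^*$ by ``a finite complex of permutation lattices $\Z[G_F/G_{F'}]$,'' but such a finite resolution does not exist in general. A $G$-lattice, with $G=\Gal(F'/F)$ finite, always admits a short exact sequence $0 \to Y^* \to P \to Q \to 0$ with $P$ permutation, but the cokernel $Q$ is in general only flasque, not permutation; iterating produces an infinite resolution, and the obstruction to a finite permutation resolution is precisely the flasque class in the sense of Colliot-Th\'el\`ene and Sansuc, which is nonzero for many $G$. If instead you truncate a free (hence permutation) resolution, the syzygy is again an arbitrary lattice, so the transfer-of-duality argument through the triangle relating $T$, a quasi-trivial torus, and the syzygy torus has no base case and is circular. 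Karpuk's argument does not pass through $\mathbbm{G}_m$ by d\'evissage; it works directly with the Weil Hochschild--Serre spectral sequence for $1 \to I_F \to W_F \to W_k \to 1$, the unramified computation and the Weil-group form of local class field theory.

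Your explicit computation for $T=\mathbbm{G}_m$ is sound in outline: the vanishing $R\Gamma(I_F,\overline{L}^{\times}) \simeq L^{\times}[0]$, the identifications $\H^0 = F^{\times}$ and $\H^1 = \Z$ via $1-\Phi$, and the universal-coefficients reading of $\H^0$ of $R\Hom(R\Gamma(W_F,\Z),\Z[-1])$ as an extension of $\Z$ by $\Ext^1(\H^2(W_F,\Z),\Z)$ are all correct. You rightly flag the identification $\Ext^1\bigl(\Hom_{\mathrm{cts}}(I_F^{\mathrm{ab}},\Q/\Z)^{W_k},\Z\bigr) \cong \O_F^{\times}$ as the delicate point; this does go through since $\H^2(W_F,\Z)$ is a discrete torsion group, so the $\Ext^1$ reduces via $0\to\Z\to\Q\to\Q/\Z\to 0$ to an honest Pontryagin double dual and Artin reciprocity finishes it, but a full argument is needed. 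The fatal issue is the d\'evissage, not the $\mathbbm{G}_m$ step.
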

This is a duality theorem for tori which does not need profinite completion to hold. It is a significant improvement, but it still encounter certain limitations. First of all, it is a duality theorem which is expressed only at the level of derived categories. Since cohomology groups of $R\Gamma(W_F,Y^*(\overline{L}))$ may have non-trivial torsion, this result does not directly induce a duality between cohomology groups. Moreover, the other map induced by \eqref{intro:eqn:noncondensedweilpairingtori} \[
R\Gamma(W_F,Y^*(\overline{L}))\rightarrow R\Hom(R\Gamma(W_F,T(\overline{L})),\Z[-1])
\] is not an equivalence in general (see \cite[Proposition 3.3.8]{Karpuk}). Thus the pairing \eqref{intro:eqn:noncondensedweilpairingtori} is not perfect. In the same spirit, Karpuk's version of Theorem \ref{intro:thmtate} does not recover a perfect pairing on the cohomology of the Weil group. Indeed, if $A$ is an abelian variety over $F$ and $A^*$ is its dual abelian variety, we have a cup-product pairing \[
R\Gamma(W_F,A(\overline{L}))\otimes^L R\Gamma(W_F,A^*(\overline{L}))\rightarrow \H^1(W_F,\overline{L}^{\times})=\Z
\] and an induced morphism \[
\tau(A):R\Gamma(W_F,A^*(\overline{L}))\rightarrow R\Hom(R\Gamma(W_F,A(\overline{L})),\Z).
\] Karpuk's result is stated as follows (see \cite[Theorem 5.4.4]{Karpuk2})
\begin{thmx}\label{intro:thmkarpuk2}
The map $\tau(A)$ has the following properties:
\begin{enumerate}[(i)]
    \item $\tau^0(A):A^*(F)\rightarrow \Ext(\H^1(W_F,A(\overline{L})),\Z)=\Ext(\H^1(W_F,A(\overline{L})),\Q/\Z)$ is an isomorphism of profinite groups.
    \item $\tau^1(A):\H^1(W_F,A^*(\overline{L}))\rightarrow \Ext(A(F),\Z)$ induces an isomorphism of $\H^1(W_F,A^*(\overline{L}))$ with the torsion subgroup $\Hom(A(F),\Q/\Z)$ of  $\Ext(A(F),\Z)$.
\end{enumerate}
\end{thmx}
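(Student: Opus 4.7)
The plan is to deduce the theorem from classical Tate duality (\cref{intro:thmtate}) by comparing Weil-group cohomology $\H^q(W_F,A(\overline{L}))$ to Galois cohomology $\H^q(G_F,A(\overline{F}))$ and reinterpreting Pontryagin duality in terms of $\Ext^1(-,\Z)$. The key algebraic dictionary is $\Ext^1(M,\Z)\cong \Hom(M,\Q/\Z)$ for torsion abelian groups $M$, which arises from the long exact sequence of $\Hom(M,-)$ applied to $0\to\Z\to\Q\to\Q/\Z\to 0$.

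First I would analyse $R\Gamma(W_F,A(\overline{L}))$ in low degrees. The extension $1\to I_F\to W_F\to \Z\to 1$, together with the fact that $\Z$ has cohomological dimension one, makes the Hochschild--Serre spectral sequence degenerate into short exact sequences
\[
0\to \H^{q-1}(I_F,A(\overline{L}))_{F}\to \H^q(W_F,A(\overline{L}))\to \H^q(I_F,A(\overline{L}))^{F}\to 0,
\]
with $\H^0(W_F,A(\overline{L}))=A(F)$, and the parallel analysis for $G_F=I_F\rtimes\hat{\Z}$ yields matching sequences for Galois cohomology. Since $A(\overline{L})=A(\overline{F})$ as $I_F$-modules, the inertia cohomology agrees in both situations; only the outer (co)invariants differ, being formed with respect to the abstract Frobenius $F\in\Z$ instead of its topological completion in $\hat{\Z}$. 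I would then verify that $\H^1(W_F,A(\overline{L}))$ is torsion: the quotient $\H^1(I_F,A(\overline{L}))^{F}$ is torsion because $\H^1(I_F,A(\overline{F}))$ is a classical torsion Galois-cohomology group, while the coinvariant piece $A(L)_{F}$ is torsion via Lang's theorem applied to the connected Néron model over $\overline{\mathbb{F}_p}$, combined with a filtration argument controlling the formal part of $A(L)$. The same reasoning applies to $A^*$.

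Next I would establish the compatibility of the Weil cup-product with the Tate cup-product. The target invariants differ: $\H^1(W_F,\overline{L}^{\times})=\Z$ on the Weil side versus $\H^2(G_F,\overline{F}^{\times})=\Q/\Z$ on the Tate side; this discrepancy is governed by the Bockstein of $0\to\Z\to\Q\to\Q/\Z\to 0$, which is precisely the operation translating a $\Q/\Z$-valued Hom-duality into a $\Z$-valued $\Ext$-duality. With this compatibility, for (i) the torsionness of $\H^1(W_F,A(\overline{L}))$ gives $\Ext^1(\H^1(W_F,A(\overline{L})),\Z)\cong \Hom(\H^1(W_F,A(\overline{L})),\Q/\Z)$, and after identifying $\H^1(W_F,A(\overline{L}))$ with $\H^1(G_F,A(\overline{F}))$ up to the controlled coinvariant discrepancy, the Pontryagin duality $A^*(F)\cong \Hom(\H^1(G_F,A(\overline{F})),\Q/\Z)$ from \cref{intro:thmtate} produces the required isomorphism of profinite groups. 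For (ii), $\H^1(W_F,A^*(\overline{L}))$ is torsion by the same reasoning, Tate duality yields $\H^1(G_F,A^*(\overline{F}))\cong \Hom(A(F),\Q/\Z)$, and the natural embedding $\Hom(A(F),\Q/\Z)\hookrightarrow \Ext^1(A(F),\Z)$ realises its image as the torsion subgroup, giving $\tau^1(A)$ as the claimed isomorphism.

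The main obstacle will be controlling the discrepancy between the abstract coinvariants $A(L)_{F}$ (appearing in the Weil analysis) and the topological coinvariants $A(L)_{\hat F}$ (appearing in the Galois analysis), since this is precisely what distinguishes the Weil and Galois cohomologies; its precise study rests on Lang's theorem for the special fibre of the Néron model of $A$ together with the structure of the formal group of $A$. A secondary difficulty is verifying that the Bockstein-mediated compatibility between the Weil $\Z$-valued pairing and the Tate $\Q/\Z$-valued pairing is precise enough to transfer perfectness directly, and in particular that no further $\Ext^1$-defect appears between $\Hom(A(F),\Q/\Z)$ and $\Ext^1(A(F),\Z)$ beyond the expected torsion subgroup.
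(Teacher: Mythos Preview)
The paper does not prove this statement; it is quoted from Karpuk \cite[Theorem 5.4.4]{Karpuk2} as part of the survey of prior work in the introduction, so there is no proof in the paper to compare against. Your strategy---reduce to classical Tate duality via the comparison $\H^q(W_F,A(\overline{L}))\cong \H^q(G_F,A(\overline{F}))$ and then reinterpret Pontryagin duality through $\Ext^1(-,\Z)$---is exactly Karpuk's, and it is also the route the present paper takes when proving its condensed refinement (\cref{thm:condenseddualityav}): there the comparison is invoked via \cite[Lemma 5.4.3]{Karpuk2}, and perfectness is inherited from \cref{intro:thmtate}.

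There is, however, a genuine error in your execution. You assert that $A(\overline{L})=A(\overline{F})$ as $I_F$-modules, and deduce that the inertia cohomology is literally the same in the Weil and Galois spectral sequences. This is false: $\overline{F}\subsetneq\overline{L}$, and already $\H^0(I_F,A(\overline{L}))=A(L)$ strictly contains $\H^0(I_F,A(\overline{F}))=A(F^{\mathrm{un}})$ whenever $A$ has positive dimension. What is true, and what Karpuk actually proves, is that the full cohomology groups $\H^q(W_F,A(\overline{L}))$ and $\H^q(G_F,A(\overline{F}))$ agree; the argument passes through the N\'eron model and the vanishing of $\H^1(W_k,\mathcal{A}^0(\O_L))$ (Lang's theorem plus Greenberg approximation), which you correctly invoke for the coinvariant piece, but which must also absorb the discrepancy between $A(L)$ and $A(F^{\mathrm{un}})$ at the $\H^0$-level. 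The paper carries out precisely this analysis in the proof of \cref{lem:torsion}. Once you replace the incorrect $I_F$-module identification with this N\'eron-model argument, your outline goes through.
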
 Even though we are still able to recover Tate's duality for abelian varieties, the cup-product pairing inducing $\tau(A)$ is not perfect. 

Let us point out a deeper explanation of why Karpuk's improvements are not optimal. The reason why the profinite completion appears in duality theorems for Galois cohomology is the following. Such results are expressed in terms of \emph{discrete} Pontryagin duality \[
\Hom_{\Ab}(-,\Q/\Z):\Ab^{\mathrm{op}}\rightarrow \Ab,
\] which realises an antiequivalence of categories between profinite abelian groups and torsion abelian groups, but it is \emph{not} an antiequivalence of categories on the whole category $\Ab$. To make the cup-product pairings perfect one needs to work with profinite and torsion abelian groups and this is why profinite completions appear in Theorems \ref{intro:thmtatenakayama} and \ref{intro:thmhs}. In Karpuk's setting of the cohomology of $W_F$, the results are expressed in terms of $\Z$-linear duality \[
R\Hom_{\Ab}(-,\Z):\DDD^{\mathrm{b}}(\Ab)^{\mathrm{op}}\rightarrow \DDD^{\mathrm{b}}(\Ab),
\] which is not an antiequivalence of categories either. The pairings involved in Theorems \ref{intro:thmkarpuktori} and \ref{intro:thmkarpuk2} are not perfect since the complexes $R\Gamma(W_F,A(\overline{L}))$ and $R\Gamma(W_F,Y^*(\overline{L}))$ are not dualisable. To fix this, we consider the \emph{topological} Pontryagin duality \[
R\underline{\Hom}_{\LCA}(-,\R/\Z):\DDD^{\mathrm{b}}(\LCA)^{\mathrm{op}}\overset{\sim}{\rightarrow} \DDD^{\mathrm{b}}(\LCA)
\] which is an antiequivalence of categories. This suggests that to have satisfying duality theorems, our cohomologies should be naturally topologised, as remarked by Geisser and Morin in \cite{GeisserMor2}. Condensed Mathematics \cite{LCM,BH} provide a topos $\CC$ of condensed sets which contains compactly generated abelian groups as a full subcategory stable by all limits. Moreover, the bounded $\infty$-derived category of condensed abelian groups $\DDD^b(\CC)$ contains $\DDD^b(\LCA)$ as a full stable $\infty$-subcategory (see \cite[Lemma 4.2]{Artusa}). To express our duality theorems as a topological Pontryagin duality, we want a cohomology theory which takes values in $\DDD^b(\CC)$.

\vspace{0.5em}
After the first appearence of this article on the web, it was drawn to the attention of the author that Suzuki proved a duality theorem for $1$-motives in this spirit in \cite{Suz}. Indeed, using the so-called ind-rational pro-étale site of the residue field $k$, denoted by $\mathrm{Spec}(k^{\mathrm{indrat}}_{\mathrm{pro\acute{e}t}})$, he builds a cohomology theory $R\Gamma(F_W,-)$ with values in $\DDD^b(\overline{k}_{\mathrm{pro\acute{e}t}})=\DDD^b(\CC)$, having complexes of fppf sheaves as coefficients. For a $1$-motive $\MM$, the underlying complex of abelian groups $R\Gamma(F_W,\MM)(*)$ coincides with the cohomology of the Weil group. He then proves the following (see \cite[Section 10]{Suz}) 
\begin{thmx}\label{intro:thm:suz}
Let $\MM$ be a $1$-motive over $F$ with Cartier dual $\MM^*$. We have a perfect cup-product pairing in $\DDD^b(\CC)$ \[R\Gamma(F_W,\MM)\otimes^L R\Gamma(F_W,\MM^*)\longrightarrow \mathbf{\H}^1(F_W,\mathbbm{G}_m)=\Z.
\] 
\end{thmx}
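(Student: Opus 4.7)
The plan is to proceed by dévissage on $\MM$, reducing the general perfectness statement to the three basic building blocks (lattices, tori and abelian varieties), proving duality separately in each case in Suzuki's ind-rational pro-étale framework, and then checking compatibility with the cup-product pairing induced by the Poincaré biextension. From $\MM = [Y \to E]$ together with the short exact sequence $0 \to T \to E \to A \to 0$ expressing the semiabelian variety $E$ as an extension of an abelian variety $A$ by a torus $T$, one obtains a filtration of $\MM$ whose graded pieces are $Y[1]$, $T$ and $A$; the Cartier dual $\MM^*$ admits a matching filtration whose graded pieces are, in the opposite order, the torus dual to $Y$, the character lattice $X^*(T)$ dual to $T$, and the dual abelian variety $A^*$. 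It therefore suffices to establish the duality in each of the three basic cases (lattice against torus, torus against lattice, abelian variety against abelian variety), together with the compatibility of the biextension pairing with these reductions.

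The torus / lattice case reduces to $\MM = \mathbbm{G}_m$, where $\mathbf{\H}^1(F_W, \mathbbm{G}_m) = \Z$ is a Weil-group formulation of local reciprocity: this is the condensed pro-étale analogue of Tate--Nakayama duality, and should follow from analysing $\mathbbm{G}_m$ on the ind-rational pro-étale site of the residue field combined with classical local class field theory. For abelian varieties, one upgrades Tate's classical duality between $A(F)$ and $\H^1(G_F, A(\overline{F}))$ to an equivalence in $\DDD^b(\CC)$: concretely, one identifies the condensed structure on $R\Gamma(F_W, A)$ in degree $0$ with the natural $p$-adic Lie group topology on $A(F)$ and in degree $1$ with the discrete torsion structure on $\H^1$, and then checks that the pairing induced by the Poincaré biextension on $A \times A^*$ realises the classical Tate pairing under the identification $\Ext^1(-, \Z) = \Hom(-, \Q/\Z)$ on torsion groups.

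The main obstacle I expect is the abelian variety case: lifting the classical duality to an equivalence in $\DDD^b(\CC)$ demands both that the cohomology complexes be represented by locally compact abelian groups with the expected topology, and that the cup-product pairing on underlying abelian groups match the classical one. Once this identification is secured, the argument concludes by a formal octahedral / five-lemma argument in $\DDD^b(\CC)$ combining the three sub-dualities along the triangles arising from the dévissage of $\MM$, exactly as in the Galois-cohomological analogue of Harari--Szamuely, but now without any need to pass to profinite completions: this is possible because $\DDD^b(\CC)$ contains $\DDD^b(\LCA)$ as a full stable subcategory on which topological Pontryagin duality is a genuine antiequivalence, so the cohomology complexes in question are honestly dualisable rather than dualisable only after completion.
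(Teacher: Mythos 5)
Your d\'evissage strategy mirrors the one the paper uses to prove the analogous \cref{thm:duality1motives1} (which Appendix~\ref{appendixB}, written by Suzuki, shows recovers the statement quoted here up to a canonical identification of the two cohomology formalisms). Two caveats. First, the paper does not re-prove the statement as quoted but cites it from \cite{Suz}; the d\'evissage you outline is what the paper carries out in its own setting, using \cref{lem:lastlemma} for the compatibility of the Poincar\'e biextension with the weight filtration, \cref{tatenakayama} for the torus/lattice pieces, and \cref{thm:condenseddualityav} for the abelian variety piece, and concluding by two-out-of-three in the stable $\infty$-category exactly as you describe. Second, your closing remark attributes dualisability to the Pontryagin antiequivalence $\DDD^{\mathrm{b}}(\LCA)^{\mathrm{op}}\simeq\DDD^{\mathrm{b}}(\LCA)$, but Suzuki's pairing, like \cref{thm:duality1motives1}, targets $\Z$ rather than $\R/\Z$; the $\Z$-linear dualisability of the cohomology complexes depends on the specific structure results (discrete finitely generated, profinite, or torsion groups in the relevant degrees, cf.\ \cref{lem:doubledualcoeffz}), not on the general Pontryagin antiequivalence. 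The Pontryagin-style reformulation of the duality is what the paper develops separately, via the $\R/\Z$-twist, in \cref{duality1motives2}.
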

This result is a condensed (and $\Z$-linear) version of Theorem \ref{intro:thmhs} and does not need profinite completion to hold. In this sense, it is certainly an improvement of both Theorem \ref{intro:thmhs} and of Theorems \ref{intro:thmkarpuktori} and \ref{intro:thmkarpuk2}. Moreover, it has the great merit of being stated as a condensed duality before the formalisation of the theory by Clausen-Scholze and Barwick-Haine (see the original version \cite{Suz0}). However, contrarily to Harari and Szamuely's result, it is a duality only on the derived category level. Similarly to Theorem \ref{intro:thmkarpuktori}, a concrete expression of the duality on each cohomology object is missing.

\vspace{0.5em}

In this paper, we use a different and independent approach which builds on the results of \cite{Artusa}, where the need for a topological cohomology theory is addressed by using the topos $B_{\hat{W}_F}$, the classifying topos of $W_F$ seen as a pro-object of $\Grp(\CC)$. Our cohomology theory is denoted by $R\GGamma(B_{\hat{W}_F},-)$ and takes values in $\DDD^{\mathrm{b}}(\CC)$. In this way we solve the issues of the classical Tate approach (i.e.\ the profinite completion) and of Karpuk's approach (i.e.\ the lack of topology). The analogue of Theorem \ref{intro:thm:suz} can still be recovered in our setting (see \cref{thm:duality1motives1}). Moreover, we carry out a deeper study of the structure of each cohomology group (\cref{section:structurecohomologymotives}) and of the Pontryagin duality in $\DDD^b(\LCA)$ (\cref{decompositionsection}). Thanks to this, we prove our main result (\cref{duality1motives2}), which is a condensed duality for the cohomology of $1$-motives expressed as a Pontryagin duality on each cohomology group. Thus, it contains Theorem \ref{intro:thmtate} and improves simultaneously Theorems \ref{intro:thmtatenakayama}, \ref{intro:thmhs}, \ref{intro:thmkarpuktori}, \ref{intro:thmkarpuk2} and \ref{intro:thm:suz}. 

\vspace{0.5em}

We conclude this section by saying a few words on the relation between the results of this work and the ones of \cite{Artusa}. Both \cref{duality1motives2} and \cite[Theorem 4.27]{Artusa} extend Tate's local duality to more general coefficients.
\begin{thmx}[Tate local duality]\label{intro:tatelocalduality}
Let $N$ be a finite abelian group with a continuous action of $G_F$. Then we have a perfect cup-product pairing \[
\H^q(G_F,N)\otimes \H^{2-q}(G_F,\Hom(N,\overline{F}^{\times}))\rightarrow \H^2(G_F,\overline{F}^{\times})=\Q/\Z.
\]
\end{thmx}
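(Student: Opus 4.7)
The plan is to follow Tate's classical approach. The starting input is the trace isomorphism $\H^2(G_F,\overline{F}^\times)\cong\Q/\Z$ coming from the fundamental class of local class field theory; once this is fixed, the cup product produces for every finite $G_F$-module $N$ a canonical morphism
\[
\alpha^q_N\colon \H^q(G_F,N)\longrightarrow \H^{2-q}(G_F,N^\vee)^\vee,
\]
where $N^\vee:=\Hom(N,\overline{F}^\times)$ and $(-)^\vee:=\Hom(-,\Q/\Z)$, and the theorem is the assertion that every $\alpha^q_N$ is an isomorphism.

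The first step is a dévissage: a short exact sequence of finite $G_F$-modules yields long exact sequences on both sides compatible with $\alpha$, so by the five-lemma one reduces to the case where $N$ is a simple $G_F$-module, in particular an $\F_\ell$-vector space for some prime $\ell$. A restriction--corestriction argument along a finite Galois extension $F'/F$ that splits the action then reduces further to the case where the action on $N$ is trivial and $F$ has been replaced by $F'$. At this point the argument bifurcates according to whether $\ell$ differs from the residue characteristic $p$ or not. In the tame case $\ell\neq p$, Kummer theory identifies $N^\vee$ with $\mu_\ell$ and all three cohomology groups become explicit: $\H^0=\mu_\ell(F)$, $\H^1=F^\times/(F^\times)^\ell$, and $\H^2=\mathrm{Br}(F)[\ell]=\Z/\ell\Z$. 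The pairing is the Hilbert symbol, whose non-degeneracy follows from local class field theory together with elementary finite abelian group duality. In the wild case $\ell=p$, one first matches orders using the Tate Euler--Poincaré formula $\chi(G_F,N)=[\O_F:|N|\O_F]^{-1}$, and then deduces bijectivity of each $\alpha^q_N$ from injectivity of $\alpha^0_{\mu_p}$, which is equivalent to the non-degeneracy of the $p$-adic Hilbert symbol.

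The main obstacle is precisely this wild case. The Euler--Poincaré formula in residue characteristic is an arithmetic input whose proof requires a delicate limit argument over towers of local extensions, and the non-degeneracy of the $p$-adic Hilbert symbol is itself a non-trivial theorem going back to Iwasawa and ultimately to the reciprocity map of local class field theory. Everything else --- the dévissage, the reduction to simple modules with trivial action, and the treatment of the $\ell\neq p$ case --- is essentially formal once local class field theory and Kummer theory are in hand.
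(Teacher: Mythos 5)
The paper does not prove this statement; it is recalled in the introduction purely as classical background (Tate's local duality, due to Tate \cite{WC}, with a modern reference \cite[Chapter I]{ADT}), to motivate the condensed duality theorems that are the paper's actual subject. There is therefore no in-paper argument to compare against, so I evaluate your sketch on its own terms.

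Your outline follows the standard Tate--Serre--Milne proof, and the landmarks you identify --- the invariant isomorphism from local class field theory, the comparison maps $\alpha^q_N$, d\'evissage, the Hilbert symbol for $\mu_\ell$, and the Euler--Poincar\'e formula in the wild case --- are the right ones. One step, however, is too casual: the claim that a restriction--corestriction argument along a splitting extension $F'/F$ ``reduces to the trivial-action case over $F'$'' does not hold in general. The composite $\mathrm{cor}\circ\mathrm{res}$ is multiplication by $[F':F]$, which gives control only when $\ell\nmid[F':F]$; but if $N$ is a nontrivial simple module, the image of $G_F$ in $\mathrm{GL}(N)$ can perfectly well have order divisible by $\ell$ (e.g.\ $\mathrm{SL}_2(\F_\ell)$ acting on $\F_\ell^2$), in which case the full splitting field $F'$ has degree divisible by $\ell$. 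The standard proofs avoid this either by working with the fixed field of a Sylow-$\ell$ subgroup of the image (so that $\ell\nmid[F':F]$, at the cost of the residual action being by an $\ell$-group, which is then handled by a further filtration), or by passing to induced modules via Shapiro's lemma, with the Euler--Poincar\'e formula supplying the missing exactness in the comparison --- that is, Euler--Poincar\'e is load-bearing in the reduction itself, not only an order check afterwards as your sketch suggests. Apart from this gap, the structure is sound and your treatment of the explicit cases ($\ell\neq p$ via Kummer theory and the Hilbert symbol; $\ell=p$ via non-degeneracy of the $p$-adic Hilbert symbol) is correct.
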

The difference is that \cite[Theorem 4.27]{Artusa} extends such coefficients in a \emph{topological} direction: from finite discrete abelian groups to some locally compact abelian groups including $\Z^n$ with the discrete topology, countable abelian groups with the discrete topology, $\R^n$ with the Euclidean topology, $(\R/\Z)^n$ with its compact Hausdorff topology. \cref{duality1motives2} extends the coefficients in a \emph{algebro-geometric} direction: from finite group schemes to abelian varieties, tori, and more generally $1$-motives. To do this, the new tool of condensed Weil-étale realisation is fundamental, allowing to see algebro-geometric objects as coefficients of the condensed cohomology of $\hat{W}_F$.
\subsection{Outline}
In \cref{section:realisation} we introduce the condensed Weil-étale realisation. This is a functor which associates to a scheme $X$ which is locally of finite type over a $p$-adic field $F$ an object of $B_{\hat{W}_F}$, i.e.\ a condensed set with an action of the pro-condensed group $\hat{W}_F$. This functor respects fiber products. In particular, if $G$ is a commutative $F$-group scheme locally of finite type, then its condensed Weil-étale realisation $G(\overline{L})_{\mathrm{cond}}$ is an object of $\Ab(B_{\hat{W}_F})$. We also show that this functor respects the exactness of some exact sequences 
\begin{repcor}{exactweilrealisation}
Let \[
0\rightarrow A'\rightarrow A\rightarrow A''\rightarrow 0
\] be an exact sequence of commutative algebraic groups over $F$. Suppose that for every finite field extension $K$ of $L$ the topological abelian groups $A(K)^{\mathrm{top}}$ and $A''(K)^{\mathrm{top}}$ are prodiscrete. Then the corresponding sequence in $\Ab(B_{\hat{W}_F})$ \[
0\rightarrow A'(\overline{L})_{\mathrm{cond}}\rightarrow A(\overline{L})_{\mathrm{cond}}\rightarrow A''(\overline{L})_{\mathrm{cond}}\rightarrow 0.
\] is exact.
\end{repcor}
By functoriality, we extend this construction to $1$-motives. If $\MM=[Y\rightarrow E]$ is a $1$-motive over $F$, we define its condensed Weil-étale realisation $\MM(\overline{L})_{\mathrm{cond}}\coloneqq[Y(\overline{L})_{\mathrm{cond}}\rightarrow E(\overline{L})_{\mathrm{cond}}]$, which represents an object of the derived category $\DDD^{\mathrm{b}}(B_{\hat{W}_F})$. As a consequence of \cref{exactweilrealisation} we prove the existence of a condensed weight filtration 
\begin{repprop}{condensedfiltration}
Let $\MM=[Y\overset{u}{\rightarrow} E]$ be a 1-motive over $F$. We have a filtration \[
0\hookrightarrow W_{-2}\MM(\overline{L})_{\mathrm{cond}}\hookrightarrow W_{-1}\MM(\overline{L})_{\mathrm{cond}}\hookrightarrow W_{0}\MM(\overline{L})_{\mathrm{cond}}=\MM(\overline{L})_{\mathrm{cond}}
\] with $W_{-2}\MM(\overline{L})_{\mathrm{cond}}\coloneqq [0\rightarrow T(\overline{L})_{\mathrm{cond}}]$, $W_{-1}\MM(\overline{L})_{\mathrm{cond}}\coloneqq [0\rightarrow E(\overline{L})_{\mathrm{cond}}]$. Moreover, we have \[\begin{split}
 gr^0\coloneqq W_0\MM(\overline{L})_{\mathrm{cond}}/W_{-1}\MM(\overline{L})_{\mathrm{cond}}&=[Y(\overline{L})_{\mathrm{cond}}\rightarrow 0], \\ gr^{-1}\coloneqq W_{-1}\MM(\overline{L})_{\mathrm{cond}}/W_{-2}\MM(\overline{L})_{\mathrm{cond}}&=[0\rightarrow A(\overline{L})_{\mathrm{cond}}], \\ 
 \end{split}
\]
\end{repprop}

\vspace{1em}
In \cref{section:structurealgebraicgroups1motives} we study the structure of $\HH^q(B_{\hat{W}_F},\MM(\overline{L})_{\mathrm{cond}})$ when $\MM$ is a $1$-motive over $F$. We start with the case where $\MM$ is either a torus (see \cref{prop:structuretorus}), an abelian variety (see \cref{thm:structurecohomologyabvar}) or a semiabelian variety (see \cref{prop:structurecohomsab}). In all these cases the cohomology groups $\HH^q(B_{\hat{W}_F},\MM(\overline{L})_{\mathrm{cond}})$ are represented by locally compact abelian groups (of finite ranks) and vanish if $q\neq 0,1$. In general we cannot expect $\HH^0(B_{\hat{W}_F},\MM(\overline{L})_{\mathrm{cond}})$ to be represented by a locally compact abelian group. For exemple, we set $\MM=[\Z\overset{u}{\rightarrow}\mathbbm{G}_m]$, where we have \[u(\Q_p):\Z\hookrightarrow \Q_p^{\times}, 1\mapsto 1+p.\] Under the identification $\Q_p^{\times}\cong \Z_p\oplus \mathbbm{F}_p^{\times}\oplus \Z$, the image of $u(\Q_p)$ is a dense subset of $\Z_p$. Then $\HH^0(B_{\hat{W}_F},\MM(\overline{L})_{\mathrm{cond}})$ contains a non-Hausdorff subgroup $\HH^0(B_{\hat{W}_F},\MM(\overline{L})_{\mathrm{cond}})^{\mathrm{nh}}\coloneqq \Z_p/\Z$. More generally, we prove the following 
\begin{repthm}{structuremotives}
Let $\MM=[Y\overset{u}{\rightarrow} E]$ be a 1-motive over $F$. Then $R\GGamma(B_{\hat{W}_F},\MM(\overline{L})_{\mathrm{cond}})$ is represented by a bounded complex of locally compact abelian groups of finite ranks. Moreover, we have \begin{itemize}
\item $\HH^{-1}(B_{\hat{W}_F},\MM(\overline{L})_{\mathrm{cond}})$ is a discrete free finitely generated abelian group.
\item $\HH^0(B_{\hat{W}_F},\MM(\overline{L})_{\mathrm{cond}})$ is an extension of $\HH^0(B_{\hat{W}_F},\MM(\overline{L})_{\mathrm{cond}})^{\mathrm{lca}}$ by $\HH^0(B_{\hat{W}_F},\MM(\overline{L})_{\mathrm{cond}})^{\mathrm{nh}}$.
\item $\HH^0(B_{\hat{W}_F},\MM(\overline{L})_{\mathrm{cond}})^{\mathrm{nh}}$ is the cokernel of a dense injective morphism of locally compact abelian groups of finite ranks (non-Hausdorff subgroup).
\item  $\HH^{0}(B_{\hat{W}_F},\MM(\overline{L})_{\mathrm{cond}})^{\mathrm{lca}}$ is a locally compact abelian group of finite ranks.
\item $\HH^1(B_{\hat{W}_F},\MM(\overline{L})_{\mathrm{cond}})$ is discrete of finite ranks.
\item $\HH^q(B_{\hat{W}_F},\MM(\overline{L})_{\mathrm{cond}})=0$ for all $q\neq 0,\pm 1$.
\end{itemize}
\end{repthm}
Moreover, we define the $\R/\Z$-twisted cohomology $R\GGamma(B_{\hat{W}_F},\MM(\overline{L})_{\mathrm{cond}};\R/\Z)$. In the case where $\MM$ is either an abelian variety, a torus or a $F$-group scheme étale-locally isomorphic to $\Z^r$, we also establish the relation between $\HH^q(B_{\hat{W}_F},\MM(\overline{L})_{\mathrm{cond}};\R/\Z)$ and $\HH^q(B_{\hat{W}_F},\MM(\overline{L})_{\mathrm{cond}})$ for all $q$. This is done in \cref{structurerztwist}. Informally speaking, copies of $\Z$ in $\HH^q(B_{\hat{W}_F},\MM(\overline{L})_{\mathrm{cond}})$ are replaced by copies of $\R/\Z$ in degree $q$, while the rest is moved to degree $q-1$ cohomology. 

\vspace{1em}
In \cref{section:duality1motives} we state and prove duality theorems for the condensed cohomology of the Weil group $\HH^q(B_{\hat{W}_F},-)$ with coefficients in condensed Weil-étale realisations of tori, abelian varieties and, more generally, $1$-motives. Our dualities are expressed in terms of Pontryagin duality between complexes in $\DDD^{\mathrm{b}}(\FLCA)$, the bounded derived $\infty$-category of the quasi-abelian category of locally compact abelian groups of finite ranks (see \cite[2.2]{GeisserMor2}). As explained in \cite[4.1]{Artusa}, this is a full stable $\infty$-subcategory of $\DDD^{\mathrm{b}}(\CC)$.

First, we introduce a canonical decomposition in the left heart $\LH(\LCA)$. We define a full subcategory $\NH\subset \LH(\LCA)$ of \emph{non-Hausdorff} objects. Moreover, for all  $M\in\LH(\LCA)$, we show the existence of a unique (up to isomorphism) exact sequence \[
0\rightarrow M^{\mathrm{nh}}\rightarrow M\rightarrow M^{\mathrm{lca}}\rightarrow 0,
\] with $M^{\mathrm{nh}}\in\NH$ and $M^{\mathrm{lca}}\in\LCA$. This is the analogue in $\LH(\LCA)$ of the decomposition of a finitely generated abelian group $A$ \[
0\rightarrow A_{\mathrm{tors}}\rightarrow A\rightarrow A_{\mathrm{free}}\rightarrow 0.
\] We use this decomposition to better understand Pontryagin duality in $\DDD^{\mathrm{b}}(\LCA)$. Indeed, we prove the following 
\begin{replem}{dualitynhlca}
Let $C,D\in \DDD^{\mathrm{b}}(\CC)$ be two objects in the essential image of $\DDD^{\mathrm{b}}(\LCA)\to \DDD^{\mathrm{b}}(\CC)$. Suppose that we have a perfect pairing \[
C\otimes^L D\rightarrow \R/\Z.
\] Then for all $n\in \N$ we have induced perfect pairings in $\Ab(\CC)$ \[
\H^n(C)^{\mathrm{lca}}\otimes \H^{-n}(D)^{\mathrm{lca}}\rightarrow \R/\Z\] and perfect pairings in $\DDD^{\mathrm{b}}(\CC)$\[ \H^n(C)^{\mathrm{nh}}\otimes^L \H^{-n+1}(D)^{\mathrm{nh}}[-1]\rightarrow \R/\Z.
\]\end{replem}
The first pairings identify $\H^n(C)^{\mathrm{lca}}$ with $\underline{\Hom}(\H^{-n}(D)^{\mathrm{lca}},\R/\Z)$ and viceversa, while the second ones identify $\H^n(C)^{\mathrm{nh}}$ with $\underline{\Ext}(\H^{-n+1}(D)^{\mathrm{nh}},\R/\Z)$ and viceversa. All these results hold when $\LCA$ is replaced with $\FLCA$.

Duality for the condensed Weil-étale realisation of tori is treated in \cref{subsection:dualitytori}. If $T$ is a torus over $F$ and $Y^*$ is its Cartier dual, we show the existence of a cup-product pairing \begin{equation}\label{intro:eqn:cupproductpairingtori}
   R\GGamma(B_{\hat{W}_F},T(\overline{L})_{\mathrm{cond}})\otimes^L R\GGamma(B_{\hat{W}_F},Y^*(\overline{L})_{\mathrm{cond}})\rightarrow \HH^1(B_{\hat{W}_F},\overline{L}^{\times})[-1]=\Z[-1].
\end{equation} We prove the following 
\begin{repprop}{tatenakayama}
The cup-product pairing \eqref{intro:eqn:cupproductpairingtori} is a perfect pairing in $\DDD^{\mathrm{b}}(\FLCA)$.
\end{repprop}

In \cref{subsection:dualitycoefficientsabelianvarieties}, we deal with duality for condensed Weil-étale realisations of abelian varieties. Let $A$ be an abelian variety over $F$ and $A^*$ its dual abelian variety. Using a condensed version of the theory of biextensions, which is developed in the Appendix \ref{appendix}, we define a condensed Poincaré pairing \begin{equation}\label{condensedpairing:av:intro}
    \psi_{\mathcal{P}_0(\overline{L})_{\mathrm{cond}}}:A(\overline{L})_{\mathrm{cond}}\otimes^L A^*(\overline{L})_{\mathrm{cond}}\rightarrow \overline{L}^{\times}[1]
\end{equation} and the induced cup-product pairing \begin{equation}\label{intro:condensedcppairing:av}
    R\GGamma(B_{\hat{W}_F},A(\overline{L})_{\mathrm{cond}})\otimes^L R\GGamma(B_{\hat{W}_F},A^*(\overline{L})_{\mathrm{cond}})\rightarrow \HH^1(B_{\hat{W_F}},\overline{L}^{\times})[0]=\Z.
\end{equation} We prove the following
\begin{repthm}{thm:condenseddualityav}(Tate duality for abelian varieties)
The cup-product pairing \eqref{condensedcppairing:av} is a perfect pairing in $\DDD^{\mathrm{b}}(\FLCA)$ and factors through $\R/\Z[-1]$. In particular, for $q=0,1$, we get a perfect pairing \[
\HH^q(B_{\hat{W}_F},A(\overline{L})_{\mathrm{cond}})\otimes \HH^{1-q}(B_{\hat{W}_F},A^*(\overline{L})_{\mathrm{cond}})\rightarrow \R/\Z.
\] between a profinite abelian group (of finite ranks) and a torsion abelian group (of finite ranks). \end{repthm}

We generalise both results to $1$-motives. Let $\MM$ be a $1$-motive over $F$ and $\MM^*$ its dual $1$-motive. As in the case of abelian varieties, we use the condensed theory of biextensions to define a condensed Poincaré pairing 
\begin{equation}\label{intro:eqn:duality1motives2}
    \psi_{\mathcal{P}(\overline{L})_{\mathrm{cond}}}:\MM(\overline{L})_{\mathrm{cond}}\otimes^L \MM^*(\overline{L})_{\mathrm{cond}}\rightarrow \overline{L}^{\times}[1]
\end{equation} and a cup-product pairing 
\begin{equation}\label{intro:eqn:rztwistedcppairing}
    R\GGamma(B_{\hat{W}_F},\MM(\overline{L})_{\mathrm{cond}})\otimes^L R\GGamma(B_{\hat{W}_F},\MM^*(\overline{L})_{\mathrm{cond}};\R/\Z)\rightarrow \HH^1(B_{\hat{W}_F},\overline{L}^{\times};\R/\Z)=\R/\Z.
\end{equation} We prove our main result, which we already stated (\cref{duality1motives2}). Finally, we deduce from this one of the two dualities of \cite[Theorem 0.1]{HarariSzamuely}
\begin{repcor}{cor:harariszamuely1}
The perfect pairing \eqref{intro:eqn:rztwistedcppairing} induces a perfect cup-product pairing in $\FLCA$ \[
\HH^{-1}(B_{\hat{G}_F},\MM(\overline{F})_{\mathrm{cond,\acute{e}t}})_{\wedge}\otimes \HH^2(B_{\hat{G}_F},\MM^*(\overline{F})_{\mathrm{cond,\acute{e}t}})\rightarrow \R/\Z,
\] where we set \[\HH^{-1}(B_{\hat{G}_F},\MM(\overline{F})_{\mathrm{cond,\acute{e}t}})_{\wedge}\coloneqq \mathrm{Ker}((\underline{Y(F)^{\mathrm{top}}})^{\wedge}\rightarrow (\underline{E(F)^{\mathrm{top}}})^{\wedge}).\]
\end{repcor}

\vspace{1em}

In the Appendix \ref{appendix} we show how to define condensed pairings for abelian varieties and $1$-motives, and we prove their compatibility with the classical Poincaré pairings. These pairings are needed for the statement of the duality theorems of \cref{section:duality1motives}. 

After presenting some generalities on cup-product pairings in a topos and their compatibility with respect to morphisms of topoi, we recall the notion of biextension in a topos, which is introduced in \cite[ VII]{SGA7I} and is related to pairings. This relationship can be summarised as follows. Let $\TT$ be a topos and let $A,B,G$ be abelian objects of $\TT$. The set $\mathrm{Biex}^1(A,B;G)$ of biextensions of $(A,B)$ by $G$ modulo isomorphism is endowed with an abelian group structure. Moreover, \cite[VII, Corollary 3.6.5]{SGA7I} gives a canonical isomorphism of abelian groups \begin{equation}\label{eqn:intro:biextensionpairingsga7}
\mathrm{Biext}^1(A,B;G)\overset{\sim}{\rightarrow}\Hom_{\DDD(\TT)}(A\otimes^L B,G[1]).
\end{equation}

In Section \ref{appendixsection:commutativegroupschemes} we analyse biextensions of commutative group schemes over a $p$-adic field $F$. In particular, we suppose that $A,B$ are commutative $F$-group schemes locally of finite type, and that $G$ is an affine algebraic group over $F$. First, we show that if $W$ is a biextension of $(A,B)$ by $G$, then the discrete $W_F$-set $W(\overline{L})$ is a biextension of $(A(\overline{L}),B(\overline{L}))$ by $\mathbbm{G}_m(\overline{L})$ in the topos of discrete $W_F$-sets. Afterwards, we use the condensed Weil-étale realisation to produce condensed biextensions in $B_{\hat{W}_F}$ under the assumption that $G=\mathbbm{G}_m$. Indeed, we prove the following
\begin{repthm}{condensedbiext}
Let $A,B$ be commutative group schemes locally of finite type over $F$, and let $W$ be a biextension of $(A,B)$ by $\mathbbm{G}_m$. Then its condensed Weil-étale realisation $W(\overline{L})_{\mathrm{cond}}$ is a biextension of $(A(\overline{L})_{\mathrm{cond}},B(\overline{L})_{\mathrm{cond}})$ by $\mathbbm{G}_m(\overline{L})_{\mathrm{cond}}$.
\end{repthm}
We also show the compatibility with the classical cup-product pairing in the following 
\begin{repprop}{compatibility3biextensions}
Let $A,B$ be commutative group schemes locally of finite type over $F$, and let $W$ be a biextension of $(A,B)$ by $\mathbbm{G}_m$. Then $W$, its Weil-étale realisation $W(\overline{L})$ and its condensed Weil-étale realisation $W(\overline{L})_{\mathrm{cond}}$ induce compatible cup-product pairings. In other words, the following diagram has commutative squares \[
\begin{tikzcd}[column sep=4.5 em, row sep=1.5em]
R\Gamma(G_F,A(\overline{F}))\otimes^L R\Gamma(G_F,B(\overline{F}))\ar[d]\ar[r,"CP_{W}"]&R\Gamma(B_{G_F}(\Set),\mathbbm{G}_m(\overline{F}))[1]\ar[d]\\
R\Gamma(W_F,A(\overline{L}))\otimes^L R\Gamma(W_F,B(\overline{L}))\ar[r,"CP_{W(\overline{L})}"] & R\Gamma(W_F,\mathbbm{G}_m(\overline{L}))[1]\\
R\Gamma(B_{\hat{W}_F},A(\overline{L})_{\mathrm{cond}})\otimes^L R\Gamma(B_{\hat{W}_F},B(\overline{L})_{\mathrm{cond}})\ar[r,"CP_{W(\overline{L})_{\mathrm{cond}}}"]\ar[u,"\sim"] & R\Gamma(B_{\hat{W}_F},\mathbbm{G}_m(\overline{L})_{\mathrm{cond}})[1]\ar[u,"\sim"]
\end{tikzcd}
\]
\end{repprop}
We apply this discussion to the Poincaré biextension $\mathcal{P}_0$ of an abelian variety $A$ and its dual $A^*$ by $\mathbbm{G}_m$, obtaining a biextension $\mathcal{P}_0(\overline{L})_{\mathrm{cond}}$ of $(A(\overline{L})_{\mathrm{cond}},A^*(\overline{L})_{\mathrm{cond}})$ by $\mathbbm{G}_m(\overline{L})_{\mathrm{cond}}$. This condensed biextension induces the condensed Poincaré pairing \eqref{condensedpairing:av:intro} \[
\psi_{\mathcal{P}_0(\overline{L})_{\mathrm{cond}}}:A(\overline{L})_{\mathrm{cond}}\otimes^L A^*(\overline{L})_{\mathrm{cond}}\rightarrow \mathbbm{G}_m(\overline{L})_{\mathrm{cond}}[1],
\] which is needed for \cref{thm:condenseddualityav}.

Finally, in Section \ref{appendixsection:biextension1motives}, we recall the generalisation of the concept of biextension to two-term complexes, as explained in \cite{Deligne}. This allows to define biextensions of $1$-motives. As a Corollary to \cref{condensedbiext}, we prove its analogue for $1$-motives 
\begin{repcor}{condensedbiextensions1motives}
Let $\MM_1=[Y_1\to E_1]$ and $\MM_2=[Y_2\to E_2]$ be two $1$-motives over $F$, and let $W$ be a biextension of $(\MM_1,\MM_2)$ by $\mathbbm{G}_m$. Then $W(\overline{L})_{\mathrm{cond}}$ is a biextension of $(\MM_1(\overline{L})_{\mathrm{cond}},\MM_2(\overline{L})_{\mathrm{cond}})$ by $\overline{L}^{\times}$.
\end{repcor}
We conclude by applying this result to the Poincaré biextension $\mathcal{P}$ of a $1$-motive $\MM$ and its Cartier dual $\MM^*$ by $\mathbbm{G}_m$, defined in \cite[10.2.11]{Deligne}. We obtain the condensed Poincaré biextension $\mathcal{P}(\overline{L})_{\mathrm{cond}}$ by $(\MM(\overline{L})_{\mathrm{cond}},\MM^*(\overline{L})_{\mathrm{cond}})$ by $\mathbbm{G}_m(\overline{L})_{\mathrm{cond}}$ and the condensed Poincaré pairing \eqref{intro:eqn:duality1motives2} \[
\psi_{\mathcal{P}(\overline{L})_{\mathrm{cond}}}:\MM(\overline{L})_{\mathrm{cond}}\otimes^L \MM^*(\overline{L})_{\mathrm{cond}}\rightarrow\mathbbm{G}_m(\overline{L})_{\mathrm{cond}}[1],
\] which is needed for \cref{duality1motives2}. Finally, we show the compatibility of this condensed pairing with the condensed weight filtration in \cref{lem:lastlemma}.

\vspace{0.5em}

In the Appendix \ref{appendixB}, written by Takashi Suzuki, the relation between Theorem \ref{intro:thm:suz} and \cref{thm:duality1motives1} is treated. 
\subsection{Preliminaries and notation}
Most of these coincide with the conventions of \cite{Artusa}. 
\subsubsection{Set-theoretical conventions}
A category $C$ is small if both $\mathrm{Ob}(C)$ and $\mathrm{Mor}(C)$ are sets. A category is essentially small if it is equivalent to a small category. We fix an uncountable strong limit cardinal $\kappa> \aleph_1$. We say that a set $S$ is $\kappa$-small if we have $|S|<\kappa$.

\subsubsection{Topos theory}
We make use of topos theory, which main reference is \cite{SGA4}. If $T$ is a topos and $X$ is an object of $T$, we denote by $T/X$ the induced topos \cite[IV.5.1]{SGA4}. We call the canonical morphism of topoi $j_X:T/X\rightarrow T$ localisation morphism. If $G$ is a group object in $T$, we denote by $B_G(T)$ the classifyig topos of $G$ (see \cite[IV.2.4]{SGA4}), which is the category of objects of $T$ with an action of $G$. By functoriality of this construction, we have a canonical morphism of topoi \[
f_G:B_G(T)\rightarrow B_*(T)=T.
\] For example, if $G$ is a group, $B_G(\Set)$ is the category of $G$-sets. Then the morphism $f_G$ is such that the functor $f_{G,*}$ is the fixed points functors \[f_{G,*}:B_G(\Set)\rightarrow \Set, \quad X\mapsto X^G.\] 
Let $\hat{G}=(G_i)_{i\in I}$ be a pro-object of the category $\Grp(T)$. As in \cite[2.3]{Artusa}, we define the classifying topos of $\hat{G}$ as \[
B_{\hat{G}}(T)\coloneqq \underset{\substack{\leftarrow\\ i\in I}}{\lim}\,B_{G_i}(T).
\] We have a canonical morphism of topoi \[
f_{\hat{G}}:B_{\hat{G}}(T)\rightarrow T.
\]
For example if $\hat{G}$ is the Galois group of a field $F$ seen as a pro-object of the topos $T=\Set$, then $B_{\hat{G}}(\Set)$ is the category of discrete Galois modules. In this case, the morphism $f_{\hat{G}}$ is such that $f_{\hat{G},*}$ is the fixed points functor.

Whenever the topos $T$ is the topos of ($\kappa$)-condensed sets, we denote $B_G(T)$ and $B_{\hat{G}}(T)$ simply by $B_G$ and $B_{\hat{G}}$.

\subsubsection{\texorpdfstring{$\infty$-}{infty}derived categories} We use the theory of stable $\infty$-categories, developed in \cite{HA}. If $A$ is an abelian category, we denote by $\DDD^{\mathrm{b}}(A)$ its bounded derived $\infty$-category. It is a stable $\infty$-category whose homotopy category is the classical bounded derived category $D^{\mathrm{b}}(A)$. If $T$ is a topos, we denote by $\DDD^{\mathrm{b}}(T)$ the bounded derived $\infty$-category $\DDD^{\mathrm{b}}(\Ab(T))$. 

If $QA$ is a quasi-abelian category in the sense of \cite{Schn}, we denote by $\DDD^{\mathrm{b}}(QA)$ its bounded derived $\infty$-category in the sense of \cite[2.1]{GeisserMor2}. It is a stable $\infty$-category whose homotopy category is the bounded derived category $D^{\mathrm{b}}(QA)$ in the sense of \cite{Schn}.

\subsubsection{Locally compact abelian groups and Pontryagin duality}
We denote by $\LCA$ the quasi-abelian category of $\kappa$-small locally compact abelian groups. Unless stated otherwise, by locally compact we mean $\kappa$-small locally compact. The category $\FLCA$ of locally compact abelian groups of finite ranks (see \cite{Hoff}) is a full subcategory of $\LCA$. Indeed, all locally compact abelian groups of finite ranks are $\kappa$-small. For a locally compact abelian group $A$, we denote by $A^{\vee}$ its Pontryagin dual, i.e.\ the locally compact abelian group $\Hom_{\LCA}(A,\R/\Z)$ with the compact-open topology. The Pontryagin duality induces equivalences of categories $\LCA^{\mathrm{op}}\cong \LCA$ and $\FLCA^{\mathrm{op}}\cong \FLCA$.

Both $\LCA$ and $\FLCA$ are quasi-abelian categories, and we can consider their bounded derived $\infty$-categories $\DDD^{\mathrm{b}}(\LCA)$ and $\DDD^{\mathrm{b}}(\FLCA)$. As showed in \cite[2.1]{GeisserMor2}, the functor $\DDD^{\mathrm{b}}(\FLCA)\to \DDD^{\mathrm{b}}(\LCA)$ is an exact and fully faithful functor of stable $\infty$-categories. It is also $t$-exact. Moreover, Pontryagin duality gives an equivalence \[
\DDD^{\mathrm{b}}(\LCA)^{\mathrm{op}}\overset{\sim}{\rightarrow} \DDD^{\mathrm{b}}(\LCA), \quad X\mapsto X^{\vee}\coloneqq R\underline{\Hom}_{\LCA}(X,\R/\Z)
\] and similarly for $\DDD^{\mathrm{b}}(\FLCA)$.

\subsubsection{Condensed Mathematics}
We denote by $\CC$ the topos of $\kappa$-condensed sets as defined in \cite{LCM}. This is the category of sheaves of sets on the site of $\kappa$-small profinite sets, with the topology where coverings are given by finitely jointly surjective maps. We can also restrict the defining site to $\kappa$-small extremally disconnected topological spaces. If $\Top^{cg}$ denotes the category of $\kappa$-compactly generated topological spaces (i.e.\ such that the continuity of functions can be checked on $\kappa$-small compact Hausdorff subspaces), we have a fully faithful functor \[
\underline{(-)}:\Top^{cg}\rightarrow \CC, \quad X\mapsto \underline{X}\coloneqq\Cont(-,X).
\] Thus $\CC$ is a topos which is able to see topological phenomena.
Unless stated otherwise, condensed (resp.\ compactly generated, resp.\ profinite, resp.\ extremally disconnected) means $\kappa$-condensed (resp.\ $\kappa$-compactly generated, resp.\ $\kappa$-small profinite, resp.\ $\kappa$-small extremally disconnected).

The functor $\underline{(-)}$ respects all limits. Consequently, the category $\Ab(\CC)$ contains $\LCA$ as full subcategory. Moreover, the internal Hom can be equivalently computed in $\Ab(\CC)$ or in $\LCA$ (see \cite[Lecture IV]{LCM}). As it is showed in \cite[4.1]{Artusa}, we have a functor $\DDD^{\mathrm{b}}(\LCA)\rightarrow \DDD^{\mathrm{b}}(\CC)$ which is an exact and fully faithful functor of stable $\infty$-categories. Moreover, the computation of $R\underline{\Hom}(-,-)$ can equivalently be done in $\DDD^{\mathrm{b}}(\CC)$ or $\DDD^{\mathrm{b}}(\LCA)$. In this sense, the Pontryagin duality can be seen as an antiequivalence of a stable $\infty$-subcategory of $\DDD^{\mathrm{b}}(\CC)$.

If $A$ is a condensed abelian group and $m\in\N$, we denote by $\leftin{m}{A}$ and $A/m$ the kernel and the cokernel of the multiplication by $m$ respectively. Moreover, we denote by $TA$ the Tate module of $A$, i.e.\ the cofiltered limit of $\leftin{m}{A}$ along the morphisms $\cdot d:\leftin{d\cdot m}{A}\rightarrow \leftin{m}{A}$. The non-derived profinite completion of $A$ is denoted by $A^{\wedge}$, while the derived completion of any complex of abelian groups $C$ is denoted by $C\otimes^L \hat{\Z}$.

\subsubsection{The Weil group of a \texorpdfstring{$p$}{p}-adic field} Unless stated otherwise, $F$ denotes a fixed $p$-adic field, with ring of integers $\O_F$ and residue field $k$. The units of the ring of integers are denoted by $\O_F^{\times}$. We fix separable closures of $F$ and $k$ and we call them $\overline{F}$ and $\overline{k}$ respectively. We denote by $F^{\mathrm{un}}$ the maximal unramified extension of $F$, and by $L$ the completion of $F^{\mathrm{un}}$. We fix a separable closure $\overline{L}$ of $L$ containing $\overline{F}$. We denote by $G_k$, $G_F$ and by $I_F$ the absolute Galois groups of $k$, $F$ and $F^{\mathrm{un}}$ respectively. By Krasner's Lemma we also have $I_F=\Gal(\overline{L}/L)$. We have a natural isomorphism of topological groups $G_k\overset{\sim}{\rightarrow}\hat{\Z}$ sending the Frobenius automorphism to $1$.
\begin{defn} The Weil groups of $k$ and $F$ are defined as follows \begin{enumerate}[(i)]
    \item The Weil group of $k$, denoted $W_k$, is defined by the following diagram \[\begin{tikzcd}
    W_k\ar[d,"\sim"]\ar[r,"\subset"]&G_k\ar[d,"\sim"]\\
    \Z\ar[r,"\subset"]& \hat{\Z}.
    \end{tikzcd}\]
    \item The Weil group of $F$ is defined as $W_F\coloneqq G_F\times_{G_k} W_k$.
\end{enumerate}
We have an exact sequence of topological groups \[
1\rightarrow I_F\rightarrow W_F\rightarrow W_k\rightarrow 1.
\] We see that $W_F$ is a prodiscrete topological group, inverse limit of discrete groups $W_F/U$, with $U$ varying among the open normal subgroups of $I_F$. If $K$ is a finite extension of $L$ with absolute Galois group $U$, the discrete groups $W_F/U$ acts on $K$ and the fixed points are $F$. We get a continuous action of $W_F$ on $\overline{L}$ with fixed points $F$.
\end{defn}
 
\subsubsection{The condensed cohomology of the Weil group}
We use the results and the notation of \cite[\S 3]{Artusa}. Thanks to the functor $\underline{(-)}:\Top^{cg}\hookrightarrow \CC$, we see the prodiscrete group $W_F$ as a pro-object of the category $\Grp(\CC)$ of condensed groups, and we denote it $\hat{W}_F$. We consider its classifying topos $B_{\hat{W}_F}$, i.e.\ the category of condensed sets with an action of the pro-condensed group $\hat{W}_F$. We get a functor \[
f_{\hat{W}_F,*}:\Ab(B_{\hat{W}_F})\rightarrow \Ab(\CC),
\] whose derived functors give the cohomology theory $\HH^q(B_{\hat{W}_F},-)$. We do the same construction at the derived level, obtaining a functor \[
R\GGamma(B_{\hat{W}_F},-):\DDD^{\mathrm{b}}(B_{\hat{W}_F})\rightarrow \DDD^{\mathrm{b}}(\CC).
\] This is the cohomology we use in this paper.

\subsubsection{Group schemes and Cartier duality}
The coefficients of $R\GGamma(B_{\hat{W}_F},-)$ come from algebraic geometry. We denote by $\Sch^{\mathrm{lft}}_F$ the category of schemes which are locally of finite type over $F$, with morphisms of schemes as morphisms. We endow $\Sch^{lft}_F$ with the fppf-topology, resp.\ the étale topology, and we call $\TT_{\mathrm{fppf}}$, resp.\ $\TT_{\mathrm{\acute{E}t}}$, the corresponding topos. We denote the internal Hom's $\underline{\Hom}_{\Ab(\TT_{\mathrm{fppf}})}(-,-)$ and $\underline{\Hom}_{\Ab(\TT_{\mathrm{\acute{E}t}})}(-,-)$ simply by $\underline{\Hom}_{\mathrm{fppf}}(-,-)$ and $\underline{\Hom}_{\mathrm{\acute{E}t}}(-,-)$, and similarly for $
\underline{\Ext}(-,-)$.

The category $\Ab(\Sch^{\mathrm{lft}}_F)$ of commutative group schemes which are locally of finite type over $F$ is a full subcategory of both $\Ab(\TT_{\mathrm{fppf}})$ and $\Ab(\TT_{\mathrm{\acute{E}t}})$. If $G$ is either finite or a torus, the object $\underline{\Hom}_{\mathrm{fppf}}(G,\mathbbm{G}_m)$ is representable by a certain $G'\in\Ab(\Sch^{\mathrm{lft}}_F)$. Such $G'$ is called \emph{Cartier dual} of $G$, and we also have $G=\underline{\Hom}_{\mathrm{fppf}}(G',\mathbbm{G}_m)$. If $G$ is finite, then so is $G'$. If $G$ is a torus (étale-locally isomorphic to $\mathbbm{G}_m^r)$, then $G'$ is étale-locally isomorphic to $\Z^r$. We obtain a natural biadditive map \[ G\times G'\rightarrow \mathbbm{G}_m\] called \emph{Cartier pairing}. If $A$ is an abelian variety, the sheaf $\underline{\Ext}_{\mathrm{fppf}}(A,\mathbbm{G}_m)$ is represented by the dual abelian variety $A^*$. Moreover, the Poincaré biextension gives a pairing \[
A\otimes^L A^*\rightarrow \mathbbm{G}_m[1]
\] in $\DDD^{\mathrm{b}}(\TT_{\mathrm{fppf}})$. We discuss this pairing more in detail in Appendix \ref{appendix}.

\subsection{Acknowledgements}
This work was carried out during my PhD at the Université de Bordeaux under the supervision of Baptiste Morin, whose support was crucial and to whom I am deeply grateful. I would like to thank Qing Liu for answering to my questions on the topology of rational points of abelian varieties. I am thankful to Adrien Morin for the exchange we had on biextensions. I am grateful to Clark Barwick and Tamás Szamuely for their valuable comments.
\section{Condensed Weil-\'etale realisation}\label{section:realisation}
 In \cite{Conrad}, Conrad shows how the topology of $F$ functorially defines a topology on the $F$-points of those schemes which are locally of finite type over $F$. We use this technique to associate to every commutative group scheme locally of finite type over $F$, resp.\ every $1$-motive $\MM$ over $F$, a \emph{condensed Weil-\'etale realisation}. This object is a condensed abelian group, resp.\ a complex of abelian groups, with an action of $\hat{W}_F$.
\subsection{Realisation of algebraic groups}
Let $K$ be a topological field. By \cite[Proposition 3.1]{Conrad}, there is a functor \begin{equation}\label{conradfunctor}
\mathbf{Sch}^{\mathrm{lft}}_K\rightarrow \Top, \qquad X\mapsto X(K)^{\mathrm{top}}
\end{equation} such that: \begin{enumerate}[i)]
\item if $X\hookrightarrow Y$ is a closed (resp.\ open) immersion of schemes, then $X(K)^{\mathrm{top}}\hookrightarrow Y(K)^{\mathrm{top}}$ is a topological embedding (resp.\ open embedding).
\item we have $(X\times_Z Y)(K)^{\mathrm{top}}=X(K)^{\mathrm{top}}\times_{Z(K)^{\mathrm{top}}} Y(K)^{\mathrm{top}}$.
\item we have $X(K)^{\mathrm{top}}=K$ with its topology when $X=\mathbbm{A}^1_{K}$.
\item (\cite[Example 2.2]{Conrad}) Let $K'/K$ be a finite extension. Let $X$ be a locally finite type $K$-scheme and let $X'\coloneqq X\times_K K'$. Then $X'$ is locally of finite type over $K'$, and the natural map $X(K)^{\mathrm{top}}\to X(K')^{\mathrm{top}}=X'(K')^{\mathrm{top}}$ is a topological closed embedding. Moreover, if $K_1\rightarrow K_2$ is a continuous map of topological rings, and $X$ and $X'$ are as above, the induced map $X(K_1)^{\mathrm{top}}\to X(K_2)^{\mathrm{top}}=X'(K_2)^{\mathrm{top}}$ is continuous.
\end{enumerate}
We apply this construction to define a condensed \'etale realisation of schemes which are locally of finite type over a topological field $K$ which is complete with respect to a non-trivial norm $\lVert - \rVert_K$. Let $\overline{K}$ be a separable closure of $K$. If $K'/K$ is a finite field extension contained in $\overline{K}$, then $\lVert - \rVert_K$ uniquely extends on $K'$, inducing a topology on $K'$. We have $X_{K'}\in \mathbf{Sch}_{K'}^{\mathrm{lft}}$ and we can define $X(K')^{\mathrm{top}}$ as in \eqref{conradfunctor}. By iv), we get a continuous action of $\Gal(K'/K)$ on $X(K')^{\mathrm{top}}$. This leads to the following
\begin{defn}
Let $K$ be a topological field which is complete with respect to a non-trivial norm. Let $X$ be a scheme which is locally of finite type over $K$. We define the \emph{condensed \'etale realisation} of $X$ as \[
X(\overline{K})_{\mathrm{cond,\acute{e}t}}\coloneqq \underset{\substack{\longrightarrow\\ K'/K\\ \text{ finite ext.}}}{\lim}\, \underline{X(K')^{\mathrm{top}}},
\] which can be naturally seen as an object of $B_{\hat{G}_K}$.
\end{defn}
The condensed \'etale realisation defines a functor \[-(\overline{K})_{\mathrm{cond,\acute{e}t}}:\mathbf{Sch}^{\mathrm{lft}}_K\rightarrow B_{\hat{G}_K}.\] which commutes with fiber products. This follows from the property ii) of Conrad's functor, and from the fact that filtered colimits commute with fiber products. Thus we get a functor \[
-(\overline{K})_{\mathrm{cond,\acute{e}t}}:\Ab(\Sch^{\mathrm{lft}}_K)\rightarrow \Ab(B_{\hat{G}_K}), \qquad A\mapsto A(\overline{K})_{\mathrm{cond,\acute{e}t}}.
\]
\begin{ex}
If $A=\mathbbm{G}_m$, then we have $A(\overline{K})_{\mathrm{cond,\acute{e}t}}=\overline{K}^{\times}\coloneqq\underset{\substack{\rightarrow\\ K'/K \\ \text{ finite ext}}}{\lim}\, \underline{(K')^{\times}}$.
\end{ex}
\begin{rmk}
If the norm is archimedean, then $(K,\lVert -\rVert_K)$ is isomorphic to either $\mathbbm{R}$ or $\mathbbm{C}$ with their usual norms. In both cases, the condensed étale realisation of any $X\in\mathbf{Sch}^{\mathrm{lft}}_K$ is $\underline{X(\mathbbm{C})^{\mathrm{top}}}\in B_{\mathrm{Gal}(\mathbbm{C}/K)}$, which is represented by a locally compact $\mathrm{Gal}(\mathbbm{C}/K)$-module (see \cite[Remark 3.2]{Conrad}).
\end{rmk}
Let $F$ be a $p$-adic field. We follow the same approach to define the condensed Weil-\'etale realisation of $F$-schemes locally of finite type. Let $L$ be the completion of the maximal unramified extension $F^{\mathrm{un}}$. We fix an algebraic closure $\overline{L}$ of $L$. For every field extension $K/L$ contained in $\overline{L}$, the discrete valuation on $L$ extends uniquely to $K$, defining a topology on it. Moreover, if we set $U_K\coloneqq \Gal(\overline{L}/K)$, the discrete group $W_F/U_K$ acts continuously on $K$. We have the following 
 \begin{defn}\label{defn:cwer}
 Let $X$ be a scheme which is locally of finite type over $F$. We define the \emph{condensed Weil-\'etale realisation} of $X$ as \[
X(\overline{L})_{\mathrm{cond}}\coloneqq \underset{\substack{\longrightarrow\\ K/L\\ \text{ finite ext.}}}{\lim}\, \underline{X(K)^{\mathrm{top}}},
\] which can be naturally seen as an object of $B_{\hat{W}_F}$.
\end{defn}
We get the \emph{condensed Weil-\'etale realisation} functor \[
 -(\overline{L})_{\mathrm{cond}}:\Sch^{\mathrm{lft}}_F\rightarrow B_{\hat{W}_F}.
 \] which commutes with fiber products and induces a functor \[
 -(\overline{L})_{\mathrm{cond}}:\Ab(\Sch^{\mathrm{lft}}_F)\rightarrow\Ab(B_{\hat{W}_F}).
 \]
 For every $X\in\Sch^{\mathrm{lft}}_F$, the pullback of $X(\overline{L})_{\mathrm{cond}}$ via the morphism of topoi $B_{\hat{I}}\rightarrow B_{\hat{W}_F}$ is the condensed étale realisation of $X_L$, i.e.\ $X_L(\overline{L})_{\mathrm{cond,\acute{e}t}}$.
\begin{ex}
We set $\mu_n\coloneqq \mathrm{Spec}(F[T]/(T^n-1))$. Then $\mu_n(\overline{L})_{\mathrm{cond}}\in\Ab(B_{\hat{W}_F})$ is represented by the finite abelian group $\mu_n(\overline{L})$ of $n$th roots of unity in $\overline{L}$ with a continuous action of $W_F$, which factors through a finite quotient. 
\end{ex}
\begin{ex}\label{example:constantgroupschemerealisation}
Let $M$ be an abelian group, and let $M_F=\coprod_{m\in M} \mathrm{Spec}(F)$ be the constant group scheme over $F$ associated to $M$. By property i) of Conrad's functor, $M(K)^{\mathrm{top}}$ is discrete for all $K$ finite extension of $L$. Thus we have \[
M_F(\overline{L})_{\mathrm{cond}}= M
\] with the discrete topology and the trivial action of $\hat{W}_F$.
\end{ex}
\begin{ex}\label{example:locallyconstantrealisation}
Let $G$ be a commutative group scheme over $F$ which is locally constant for the étale topology, isomorphic to $M_{F'}$ over a finite extension $F'$ of $F$. We have \[
G(\overline{L})_{\mathrm{cond}}=\underline{G(F')^{\mathrm{top}}}=M
\] with the discrete topology and an action of $\hat{W}_F$ which factors through $\Gal(F'/F)$.
\end{ex}
\begin{ex}\label{example:charactertorus}Let $T$ be a torus over $F$ and let $Y^*$ be its Cartier dual. Let $X^*(T)\coloneqq Y^*(\overline{F})$ denote the group of characters of $T_{\overline{F}}$. It is a free finitely generated abelian group isomorphic to $\Z^r$ with a continuous action of $G_F$ which factors through $\Gal(F'/F)$. By \cref{example:locallyconstantrealisation}, we have \[
Y^*(\overline{L})_{\mathrm{cond}}=X^*(T)\in \Ab(B_{\hat{W}_F}),
\] with the discrete topology and an action of $\hat{W}_F$ which factors through the finite quotient $\Gal(F'/F)$. The Cartier pairing \[
\gamma_T:T\times \mathcal{X}(T)\rightarrow \mathbbm{G}_m.
\] induces a bilinear map in $\Ab(B_{\hat{W}_F})$ \[
T(\overline{L})_{\mathrm{cond}}\times Y^*\rightarrow \overline{L}^{\times},
\] and consequently a morphism \begin{equation}\label{equation:bilinearmaptoruscharacter}
T(\overline{L})_{\mathrm{cond}}\rightarrow \underline{\Hom}_{B_{\hat{W}_F}}(X^*(T),\overline{L}^{\times}).
\end{equation} We have the following
\begin{prop}\label{prop:dualtorus}
Let $T$ be a torus over $F$ and let $X^*(T)$ be the group of characters of $T_{\overline{F}}$. Then \eqref{equation:bilinearmaptoruscharacter} is an isomorphism in $\Ab(B_{\hat{W}_F})$.
\end{prop}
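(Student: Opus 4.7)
The plan is to reduce to the split case by choosing a finite Galois extension $F'/F$ over which $T$ becomes isomorphic to $\mathbbm{G}_{m,F'}^r$, and then to compute both sides of \eqref{equation:bilinearmaptoruscharacter} as $r$-fold products. Since the map is constructed in $\Ab(B_{\hat{W}_F})$, it is automatically $\hat{W}_F$-equivariant; as the forgetful functor $\Ab(B_{\hat{W}_F}) \to \Ab(\CC)$ is conservative, it suffices to check that the underlying morphism of condensed abelian groups is an isomorphism.

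Fix a splitting extension $F'/F$ for $T$, so that $X^*(T) \cong \Z^r$ as an abelian group (the $\hat{W}_F$-action being irrelevant at the level of underlying objects). Finite extensions $K/L$ inside $\overline{L}$ containing the compositum $F'L$ form a cofinal subsystem of the filtered colimit defining $T(\overline{L})_{\mathrm{cond}}$. For each such $K$, the torus $T_K$ is split, and properties (ii) and (iii) of Conrad's functor yield a topological isomorphism $T(K)^{\mathrm{top}} \cong (K^\times)^r$, where $K^\times \subset K$ carries its natural non-archimedean topology. Using that finite products commute with filtered colimits in $\CC$, I compute
\[
T(\overline{L})_{\mathrm{cond}} = \varinjlim_{K \supset F'L} \underline{T(K)^{\mathrm{top}}} = \varinjlim_{K \supset F'L} \underline{K^{\times}}^{\,r} = \Bigl(\varinjlim_{K \supset F'L} \underline{K^{\times}}\Bigr)^r = \bigl(\overline{L}^{\times}\bigr)^{r}.
\]
On the other side, $\underline{\Hom}_{B_{\hat{W}_F}}(X^*(T), \overline{L}^{\times})$ has the same underlying condensed abelian group as $\underline{\Hom}_{\CC}(\Z^r, \overline{L}^{\times}) = (\overline{L}^{\times})^r$ (the only difference being the conjugation $\hat{W}_F$-action, which is irrelevant here). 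Under these identifications, the map induced by the Cartier pairing sends, at each finite level $K$, a tuple $(x_1,\dots,x_r)$ to the homomorphism $(n_1,\dots,n_r) \mapsto x_1^{n_1}\cdots x_r^{n_r}$, which corresponds to the identity on $(K^{\times})^r$. Passing to the filtered colimit gives the identity on $(\overline{L}^{\times})^r$, proving the claim.

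No serious obstacle is expected: the argument is essentially the cofinality/product-interchange step, relying only on Conrad's functor recalled earlier and on the commutation of finite products with filtered colimits in $\CC$. The only potential pitfall is bookkeeping the $\hat{W}_F$-equivariance on the two sides, and this is circumvented entirely by the reduction to checking an isomorphism of underlying condensed abelian groups.
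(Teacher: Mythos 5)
Your overall strategy — reduce to a splitting field, compute both sides, and check the isomorphism after forgetting the action — is the same as the paper's, and the computation of $T(\overline{L})_{\mathrm{cond}} = (\overline{L}^{\times})^r$ via cofinality and commutation of finite products with filtered colimits in $\CC$ is fine. However, there is a gap in how you handle the $\underline{\Hom}$ side. You assert that $\underline{\Hom}_{B_{\hat{W}_F}}(X^*(T),\overline{L}^{\times})$ has the same underlying condensed abelian group as $\underline{\Hom}_{\CC}(\Z^r,\overline{L}^{\times})$, ``the only difference being the conjugation action.'' This is not automatic: $B_{\hat{W}_F}$ is a pro-classifying topos, not the classifying topos of a single condensed group, and $\overline{L}^{\times}$ is itself a filtered colimit whose terms live at different finite levels $B_{W_F/U_K}$. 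Whether the forgetful pullback to $\CC$ commutes with $\underline{\Hom}_{B_{\hat{W}_F}}$, and whether the internal Hom commutes with the filtered colimit in the target, are exactly the technical points the paper must address. The paper does this by invoking \cite[Corollary 2.58, (i)]{Artusa} to rewrite $\underline{\Hom}_{B_{\hat{W}_F}}(X^*(T),\overline{L}^{\times})$ as the filtered colimit of $\underline{\Hom}_{B_{W_F/U_K}}(X^*(T),\underline{K^{\times}})$ over finite extensions $K/K'$, reducing to the honest classifying topos $B_{W_F/U_K}$ of a condensed group, where the localization $j^*_{EW_F/U_K}$ to $\CC$ is conservative and commutes with internal Hom.

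To repair your argument, you should make this reduction explicit: either cite the colimit decomposition of the internal Hom, or prove directly that $\underline{\Hom}_{B_{\hat{W}_F}}(X^*(T),-)$ commutes with the relevant filtered colimit (using that $X^*(T)$ is a finitely generated discrete object whose action factors through a finite quotient) and that the forgetful pullback commutes with $\underline{\Hom}$ at each finite level. Without one of these justifications, the step ``the only difference being the conjugation $\hat{W}_F$-action'' is not established, and it is the crux of the claim. The identification of the Cartier-pairing map with the identity on $(\overline{L}^{\times})^r$ after these identifications is correct, but it is downstream of the unjustified step.
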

\begin{proof}
We have \[ T_{F'}=(\mathbbm{G}_m^r)_{F'}, \qquad Y^*_{F'}=\Z^r_{F'}\] for some $r\in \N$ and for some finite extension $F'$ of $F$. Let $K'$ be a finite extension of $L$ containing $F'$. By definition of $-(\overline{L})_{\mathrm{cond}}$ we have \[
T(\overline{L})_{\mathrm{cond}}=\underset{\substack{\rightarrow\\ K/K'\\ \text{fin.\, ext.}}}{\lim}\, \underline{T(K)^{\mathrm{top}}}.
\] Moreover, by \cite[Corollary 2.58, (i)]{Artusa} we have \[
\underline{\Hom}_{B_{\hat{W}_F}}(X^*(T),\overline{L}^{\times})=\underset{\substack{\rightarrow\\ K/K'\\ \text{fin.\, ext.}}}{\lim}\, \underline{\Hom}_{B_{W_F/U_K}}(X^*(T),\underline{K}^{\times}).
\] The morphism \eqref{equation:bilinearmaptoruscharacter} is induced by morphisms \[
\varphi_K:\underline{T(K)^{\mathrm{top}}}\rightarrow \underline{\Hom}_{B_{W_F/U_K}}(X^*(T),\underline{K}^{\times}).
\] It is enough to show that $\varphi_K$ is an isomorphism in $\Ab(B_{W_F/U_K})$ for all $K$ finite extension of $K'$. Since the morphism of topoi $j:\CC=B_{W_F/U_K}/EW_F/U_K\rightarrow B_{W_F/U_K}$ is a localisation morphism, its pullback is conservative. Hence it is enough to show that the induced morphism \[
j^*\varphi_K:j^*\underline{T(K)^{\mathrm{top}}}\rightarrow \underline{\Hom}_{\CC}(j^*X^*(T),j^*\underline{K}^{\times})
\] is an isomorphism. In other words, we forget the action of the Weil group. Since we have $T_{F'}=(\mathbbm{G}^r_m)_{F'}$ and $Y^*_{F'}=\Z^r_{F'}$, the morphism $j^*\varphi_K$ is \[
j^*\varphi_K: \underline{\mathbbm{G}_m^r(K)^{\mathrm{top}}}\rightarrow \underline{\Hom}_{\CC}(\Z^r,\underline{\mathbbm{G}_m(K)^{\mathrm{top}}}),
\] which is an isomorphism in $\Ab(\CC)$. This concludes the proof.
\end{proof}
\end{ex}
\subsection{Exactness of realisations}
The goal of this section is to study the exactness of condensed \'etale realisation and condensed Weil-\'etale realisation functors.
\begin{thm}\label{thm:exactrealisation}
Let $K$ be a complete field with respect to a non-trivial norm. Let \[
0\rightarrow A'\rightarrow A\rightarrow A''\rightarrow 0
\] be an exact sequence of smooth commutative algebraic groups over $K$.
\begin{itemize}
    \item If the norm is archimidean, the corresponding sequence in $\Ab(B_{\hat{G}_K})$
   \[
0\rightarrow A'(\overline{K})_{\mathrm{cond,\acute{e}t}}\rightarrow A(\overline{K})_{\mathrm{cond,\acute{e}t}}\rightarrow A''(\overline{K})_{\mathrm{cond,\acute{e}t}}\rightarrow 0
\] is exact.
\item If the norm is non-archimedean, the corresponding sequence in $\Ab(B_{\hat{G}_K})$ \[
0\rightarrow A'(\overline{K})_{\mathrm{cond,\acute{e}t}}\rightarrow A(\overline{K})_{\mathrm{cond,\acute{e}t}}\rightarrow A''(\overline{K})_{\mathrm{cond,\acute{e}t}}\rightarrow 0
\] is exact if the topological abelian groups $A(K')^{\mathrm{top}},A''(K')^{\mathrm{top}}$ are prodiscrete for every finite extension $K'$ of $K$.
\end{itemize}
\end{thm}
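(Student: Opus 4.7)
The plan is to check exactness termwise in the filtered colimit $A(\overline{K})_{\mathrm{cond,\acute{e}t}}=\varinjlim_{K'/K}\, \underline{A(K')^{\mathrm{top}}}$ defining the realisation. Since filtered colimits are exact in $\Ab(\CC)$, and the pullback $B_{\hat{G}_K}\to\CC$ along the localisation (which forgets the Galois action) is exact and conservative, as exploited in the proof of \cref{prop:dualtorus}, the task reduces to two statements for every finite extension $K'/K$: (a) the sequence $0\to \underline{A'(K')^{\mathrm{top}}}\to \underline{A(K')^{\mathrm{top}}}$ is exact in $\Ab(\CC)$ with middle kernel the image of $A'$; and (b) every continuous $f\colon S\to A''(K')^{\mathrm{top}}$ from an extremally disconnected profinite $S$ admits a continuous lift $g\colon S\to A(K'')^{\mathrm{top}}$ for some finite extension $K''/K'$.

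Part (a) is direct from Conrad's functor: $A'\hookrightarrow A$ is the scheme-theoretic kernel of $\pi\colon A\to A''$, hence a closed immersion, so property i) of \eqref{conradfunctor} gives a topological closed embedding $A'(K')^{\mathrm{top}}\hookrightarrow A(K')^{\mathrm{top}}$; property ii) applied to the fibre square $A'=A\times_{A''}\{0\}$ identifies $A'(K')^{\mathrm{top}}$ with $\ker(\pi(K')^{\mathrm{top}})$ as a topological group. Since $\underline{(-)}\colon \Top^{cg}\to\CC$ preserves limits, this transfers to exactness in $\Ab(\CC)$.

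The content of the theorem lies in part (b). Smoothness and surjectivity of $\pi$ provide an étale cover $U\to A''$ with a section $s\colon U\to A$. In the archimedean case, after enlarging $K'$ to $\mathbbm{C}$ if needed, $\pi(K')^{\mathrm{top}}$ is a surjective submersion of Lie groups, so composing $s$ with local analytic sections of $U(K')^{\mathrm{top}}\to A''(K')^{\mathrm{top}}$ produces continuous sections on an open cover of $A''(K')^{\mathrm{top}}$. The compact image $f(S)$ is covered by finitely many such opens, and a clopen refinement of the pullback cover of $S$ (possible because $S$ is profinite and extremally disconnected) patches the local sections into a single continuous lift $g$. In the non-archimedean case, the prodiscreteness hypothesis is the essential ingredient: writing $A''(K')^{\mathrm{top}}=\varprojlim_\alpha A''(K')/U_\alpha$ with each quotient discrete, $f$ corresponds to a compatible family of locally constant maps $f_\alpha\colon S\to A''(K')/U_\alpha$ with finite image (by compactness of $S$); each finite image admits a Hensel-lemma lift through $A(K'')/V_\alpha$ after a finite extension $K''/K'$ (using the étale section $s$), and prodiscreteness of $A(K'')^{\mathrm{top}}$ assembles these discrete-quotient lifts into a continuous $g$. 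The main obstacle is producing a \emph{single} finite extension $K''/K'$ that works uniformly in $\alpha$; this is where prodiscreteness is crucial, as it turns the lifting problem into a coherent inverse system of finite problems whose solutions glue continuously, whereas without the hypothesis (e.g.\ for $A=\mathbbm{G}_a$) the topology on $A''(K')^{\mathrm{top}}$ is too fine to permit such a uniform lift.
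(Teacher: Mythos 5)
The reduction in part (a) (exactness at the left and middle terms from the functor respecting fiber products) is correct and matches what the paper does. The real content is (b), and here your route diverges from the paper's in a way that introduces genuine gaps.

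Your strategy for (b) is to directly construct, for every $f\colon S\to A''(K')^{\mathrm{top}}$ with $S$ extremally disconnected, a lift $g\colon S\to A(K'')^{\mathrm{top}}$ for some finite $K''/K'$. The paper does something quite different: it never attempts a lift. Instead, it observes that for each finite $K'$ the image $I_{K'}$ of $f_{K'}\colon A(K')^{\mathrm{top}}\to A''(K')^{\mathrm{top}}$ is open (by smoothness), hence closed, hence $A''(K')^{\mathrm{top}}/I_{K'}$ is \emph{discrete}; that $\underline{A(K')^{\mathrm{top}}}\to\underline{I_{K'}}$ is already an epimorphism of condensed groups by Andreychev's lemma (using prodiscreteness of $I_{K'}$); and so $\mathrm{Coker}(\underline{f_{K'}})=\underline{A''(K')^{\mathrm{top}}/I_{K'}}$. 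After passing to the colimit, the cokernel is a filtered colimit of discrete groups, hence discrete, and a discrete cokernel must agree with the naive cokernel of the underlying abelian groups $A(\overline{K})\to A''(\overline{K})$ — which is zero. The whole point is to sidestep the lifting problem by using discreteness.

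Your non-archimedean lifting argument has concrete gaps. First, there is no given correspondence between the open subgroups $U_\alpha\subset A''(K')^{\mathrm{top}}$ defining prodiscreteness and open subgroups $V_\alpha\subset A(K'')^{\mathrm{top}}$; you introduce $V_\alpha$ without defining them, and they are not canonically produced by the data. Second, ``Hensel-lemma lift'' is misapplied here: Hensel's lemma lifts along a complete local ring or a smooth morphism over a henselian local base; it does not directly lift arbitrary elements of $A''(K')$ to $A(K'')$ after a field extension — for that one uses surjectivity on $\overline{K}$-points, which only controls one element at a time, not an entire open subgroup. Third, and most seriously, you flag the uniformity of $K''$ across the system $\{U_\alpha\}$ as ``the main obstacle,'' but then assert without argument that ``prodiscreteness... turns the lifting problem into a coherent inverse system.'' Prodiscreteness of $A(K'')^{\mathrm{top}}$ by itself does not give you a compatible family of lifts $g_\alpha\colon S\to A(K'')/V_\alpha$; compatibility across $\alpha$ is precisely what needs to be constructed, and nothing in the argument produces it. This is exactly the difficulty that the paper's cokernel computation avoids.

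Finally, a smaller point: in the archimedean case you patch local sections over a clopen refinement of $S$. That works, but it re-derives by hand the standard fact that a strict epimorphism of $\kappa$-compactly generated locally compact abelian groups (here the open surjection $A(\mathbb{C})^{\mathrm{top}}\to A''(\mathbb{C})^{\mathrm{top}}$) induces an epimorphism of condensed abelian groups; the paper simply invokes that fact.
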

\begin{proof}
Since the functor \[
-(\overline{K})_{\mathrm{cond,\acute{e}t}}:\Ab(\Sch^{\mathrm{lft}}_K)\rightarrow \Ab(B_{\hat{G}_K})
\] respects fiber products, the only thing to show is that the morphism $A(\overline{K})_{\mathrm{cond,\acute{e}t}}\rightarrow A''(\overline{K})_{\mathrm{cond,\acute{e}t}}$ is an epimorphism in $\Ab(B_{\hat{G}_K})$. 

Let $K'/K$ be a finite field extension. Since the morphism $A_{K'}\rightarrow A''_{K'}$ is smooth, the induced map $f_{K'}:A(K')^{\mathrm{top}}\to A''(K')^{\mathrm{top}}$ is open (see the discussion before \cite[Theorem 4.5]{Conrad}).

If the norm is archimedean, we apply this observation to $K'=\overline{K}=\mathbbm{C}$. The map $f_{K'}$ is a strict epimorphism between locally compact abelian groups. Consequently, it induces an epimorphism on the associated condensed abelian groups (see \cite[Lecture IV]{LCM}).

We now suppose that the norm is non-archimedean and that $A(K')^{\mathrm{top}},A''(K')^{\mathrm{top}}$ are prodiscrete for all finite extension $K'/K$. Let $I_{K'}\coloneqq \mathrm{Im}(f_{K'})$. Since $f_{K'}$ is open, the subgroup $I_{K'}$ is closed and open in $A''(K')^{\mathrm{top}}$. Consequently, the topological abelian group $A''(K')^{\mathrm{top}}/I_{K'}$ is discrete, and we have an exact sequence of condensed abelian groups \[
0\rightarrow \underline{I_{K'}}\rightarrow \underline{A(K')^{\mathrm{top}}}\rightarrow \underline{A''(K')/I_{K'}}\rightarrow 0.
\]
Since $I_{K'}$ is a closed subgroup of $A''(K')^{\mathrm{top}}$ and since the latter is prodiscrete, then so is $I_{K'}$. By \cite[Lemma 3.1]{Andrey}, the map $A(K')^{\mathrm{top}}\rightarrow I_{K'}$ induces an epimorphism on the associated condensed abelian groups. We obtain an exact sequence in $\Ab(\CC)$ \[
0\rightarrow \underline{A'(K')^{\mathrm{top}}}\rightarrow \underline{A(K')^{\mathrm{top}}}\rightarrow \underline{I_{K'}}\rightarrow 0,
\] which identifies $\underline{I_{K'}}$ with $\mathrm{Coim}(\underline{f_{K'}})=\mathrm{Im}(\underline{f_{K'}})$. Thus we have \[
\mathrm{Coker}(\underline{f_{K'}})=\underline{A''(K')^{\mathrm{top}}}/\underline{I_{K'}},
\] which is discrete by the previous discussion. This holds in $\Ab(\CC)$ but also in $\Ab(B_{\Gal(K'/K)})$, being the left adjoint of the localisation morphism $j_{E\Gal(K'/K)}: \CC\rightarrow B_{\Gal(K'/K)}$ conservative. As a consequence, we have \[\begin{split}
\mathrm{Coker}(A(\overline{K})_{\mathrm{cond,\acute{e}t}}\rightarrow A''(\overline{K})_{\mathrm{cond,\acute{e}t}})&=\underset{\substack{\rightarrow\\ K'/K\\ \text{fin.\ ext.}}}{\lim}\, \mathrm{Coker}(\underline{A(K')^{\mathrm{top}}}\rightarrow \underline{A''(K')^{\mathrm{top}}})=\\&=\underset{\substack{\rightarrow\\ K'/K\\ \text{fin.\ ext.}}}{\lim}\, \underline{A''(K')^{\mathrm{top}}/I_{K'}}, \end{split}
\] which is discrete and coincides with the cokernel of $A(\overline{K})\rightarrow A''(\overline{K})$, which is $0$. This concludes the proof.
\end{proof}
We deduce an analogous result for the condensed Weil-\'etale realisation.
\begin{cor}\label{exactweilrealisation}
Let $F$ be a $p$-adic field and let $L$ be the completion of its maximal unramified extension $F^{\mathrm{un}}$. Let \[
0\rightarrow A'\rightarrow A\rightarrow A''\rightarrow 0
\] be an exact sequence of commutative algebraic groups over $F$. Suppose that for every finite field extension $K$ of $L$ the topological abelian groups $A(K)^{\mathrm{top}}$ and $A''(K)^{\mathrm{top}}$ are prodiscrete. Then the corresponding sequence in $\Ab(B_{\hat{W}_F})$ \[
0\rightarrow A'(\overline{L})_{\mathrm{cond}}\rightarrow A(\overline{L})_{\mathrm{cond}}\rightarrow A''(\overline{L})_{\mathrm{cond}}\rightarrow 0.
\] is exact.
\end{cor}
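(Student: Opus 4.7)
The plan is to reduce directly to \cref{thm:exactrealisation} applied to the base-changed sequence over $L$. Since the morphism $\mathrm{Spec}(L) \to \mathrm{Spec}(F)$ is flat, base change preserves exactness of the sequence as well as the smoothness of the constituent algebraic groups, so we obtain an exact sequence
\[
0 \to A'_L \to A_L \to A''_L \to 0
\]
of smooth commutative algebraic groups over $L$. The field $L$ is complete with respect to a non-archimedean norm, and the prodiscreteness hypothesis on $A(K)^{\mathrm{top}}$ and $A''(K)^{\mathrm{top}}$ for every finite extension $K/L$ is precisely what is required for the non-archimedean case of \cref{thm:exactrealisation}. Applying that theorem yields an exact sequence in $\Ab(B_{\hat{G}_L}) = \Ab(B_{\hat{I}_F})$
\[
0 \to A'_L(\overline{L})_{\mathrm{cond,\acute{e}t}} \to A_L(\overline{L})_{\mathrm{cond,\acute{e}t}} \to A''_L(\overline{L})_{\mathrm{cond,\acute{e}t}} \to 0.
\]

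Next, I would invoke the compatibility recorded immediately after \cref{defn:cwer}: for every $X \in \Sch^{\mathrm{lft}}_F$, the pullback of $X(\overline{L})_{\mathrm{cond}}$ along the morphism of topoi $B_{\hat{I}_F} \to B_{\hat{W}_F}$ (induced by the inclusion $I_F \hookrightarrow W_F$) is the condensed étale realisation $X_L(\overline{L})_{\mathrm{cond,\acute{e}t}}$. Indeed, both objects are computed by the same filtered colimit $\varinjlim_{K/L} \underline{X(K)^{\mathrm{top}}}$ of condensed abelian groups; only the group acting differs. Consequently the sequence whose exactness we must prove in $\Ab(B_{\hat{W}_F})$ pulls back, along $B_{\hat{I}_F} \to B_{\hat{W}_F}$, to the exact sequence just produced in $\Ab(B_{\hat{I}_F})$.

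To finish, I would observe that the pullback functor $\Ab(B_{\hat{W}_F}) \to \Ab(B_{\hat{I}_F})$ is exact (being the pullback of a morphism of topoi) and conservative (it only restricts the action of $\hat{W}_F$ to its subgroup $\hat{I}_F$ without modifying the underlying condensed abelian group, so a map becomes an isomorphism after restriction iff it was already one). Exactness in $\Ab(B_{\hat{W}_F})$ is therefore detected after this pullback, and the corollary follows. The only delicate point is the last conservativity assertion for the pro-condensed inclusion $\hat{I}_F \hookrightarrow \hat{W}_F$; this can be made rigorous by writing $B_{\hat{W}_F}$ and $B_{\hat{I}_F}$ as the cofiltered limits defining them and noting that at every finite level the restriction along an inclusion of (condensed) groups is exact and conservative, so the same holds in the limit.
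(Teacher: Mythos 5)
Your proof is correct and follows essentially the same route as the paper: apply \cref{thm:exactrealisation} over $L$, then reduce along the morphism $B_{\hat{I}}\to B_{\hat{W}_F}$ using exactness and conservativity of its pullback. The paper obtains conservativity more directly by identifying $B_{\hat{I}}$ as the localization $B_{\hat{W}_F}/EW_k$ (a slice over an object covering the terminal one, for which conservativity of $j^*$ is standard), which bypasses the cofiltered-limit argument you sketch at the end; but the morphism is the same and the strategy is identical.
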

\begin{rmk}
We don't need the smoothness hypothesis anymore, since every algebraic group over a field of characteristic $0$ is smooth (see \cite[Corollary 8.38]{MilneAG}).
\end{rmk}
\begin{proof}
By \cref{thm:exactrealisation} we have an exact sequence in $\Ab(B_{\hat{I}})$ \[
0\rightarrow A'(\overline{L})_{\mathrm{cond,\acute{e}t}}\rightarrow A(\overline{L})_{\mathrm{cond,\acute{e}t}}\rightarrow A''(\overline{L})_{\mathrm{cond,\acute{e}t}}\rightarrow 0.
\] Let us call $j_{EW_k}$ the localisation morphism \[
B_{\hat{W}_F}/EW_k=B_{\hat{I}}\rightarrow B_{\hat{W}_F}.
\] We have $j_{EW_K}^*\circ -(\overline{L})_{\mathrm{cond}}=-(\overline{L})_{\mathrm{cond,\acute{e}t}}$. Conservativity of $j_{EW_K}^*$ implies that the sequence \[
0\rightarrow A'(\overline{L})_{\mathrm{cond}}\rightarrow A(\overline{L})_{\mathrm{cond}}\rightarrow A''(\overline{L})_{\mathrm{cond}}\rightarrow 0
\] is exact in $\Ab(B_{\hat{W}_F})$.
\end{proof}
The hypothesis of prodiscreteness of $A(K)^{\mathrm{top}}$ and $A''(K)^{\mathrm{top}}$ seems hard to check apriori, but in fact it is often satisfied for algebraic groups over $p$-adic fields. I conjecture that if $F$ is a $p$-adic field, this hypothesis holds for all algebraic groups. We will see that it is satisfied in all the cases we are interested in, i.e.\ when $A$ is an abelian variety, a semiabelian variety or a torus over the $p$-adic field $F$. 
\subsection{Realisation of \texorpdfstring{$1$}{1}-
motives}\label{section:realisation1motives}
By \cite{Ray}, a 1-motive $\mathcal{M}$ over $F$ is the following data:\begin{itemize}
\item A $F$-group scheme $Y$ which is \'etale locally isomorphic to the constant group $\Z^r$ for some $r\in \N$.
\item A semiabelian variety $E$ over $F$, i.e.\ an extension of an $F$-abelian variety $A$ by an $F$-torus $T$.
\item A morphism of $F$-group schemes $u:Y\rightarrow E$.
\end{itemize}
We denote by $\MM$ the complex of $F$-group schemes $[Y\overset{u}{\longrightarrow} E]$, where $Y$ is in degree $-1$ and $E$ is in degree $0$. This can be seen as a complex of fppf-sheaves over $F$ and thus it represents an object of the derived category $\DDD^{\mathrm{b}}(\TT_{\mathrm{fppf}})$.  The $1$-motive $\MM$ is equipped with a 3-term weight filtration \[
0\hookrightarrow W_{-2}\MM\hookrightarrow W_{-1}\MM\hookrightarrow W_0\MM=\MM, \qquad W_{-2}\MM=[0\rightarrow T], \, \, W_{-1}\MM=[0\rightarrow E]
\] where we have \[
\MM/W_{-1}\MM=[Y\rightarrow 0], \qquad W_{-1}\MM/W_{-2}\MM=[0\rightarrow A].
\]

\begin{defn}\label{defn:wereal}
Let $\MM=[Y\overset{u}{\rightarrow} E]$ be a 1-motive over $F$.
We define the \emph{condensed Weil-\'etale realisation} of $\MM$ as the complex \[ \MM(\overline{L})_{\mathrm{cond}}\coloneqq [Y(\overline{L})_{\mathrm{cond}}\overset{u(\overline{L})_{\mathrm{cond}}}{\longrightarrow} E(\overline{L})_{\mathrm{cond}}],\] where $Y(\overline{L})_{\mathrm{cond}}$ is in degree $-1$ and $E(\overline{L})_{\mathrm{cond}}$ is in degree $0$. 
\end{defn}
 The complex $\MM(\overline{L})_{\mathrm{cond}}$ represents an object of the derived category $\DDD^{\mathrm{b}}(B_{\hat{W}_F})$. The existence of a condensed weight filtration for $\MM(\overline{L})_{\mathrm{cond}}$ is a consequence of \cref{exactweilrealisation}. \begin{prop}\label{condensedfiltration}
Let $\MM=[Y\overset{u}{\rightarrow} E]$ be a 1-motive over $F$. We have a filtration \[
0\hookrightarrow W_{-2}\MM(\overline{L})_{\mathrm{cond}}\hookrightarrow W_{-1}\MM(\overline{L})_{\mathrm{cond}}\hookrightarrow W_{0}\MM(\overline{L})_{\mathrm{cond}}=\MM(\overline{L})_{\mathrm{cond}}
\] with $W_{-2}\MM(\overline{L})_{\mathrm{cond}}\coloneqq [0\rightarrow T(\overline{L})_{\mathrm{cond}}]$, $W_{-1}\MM(\overline{L})_{\mathrm{cond}}\coloneqq [0\rightarrow E(\overline{L})_{\mathrm{cond}}]$. Moreover, we have \[\begin{split}
 gr^0\coloneqq W_0\MM(\overline{L})_{\mathrm{cond}}/W_{-1}\MM(\overline{L})_{\mathrm{cond}}&=[Y(\overline{L})_{\mathrm{cond}}\rightarrow 0], \\ gr^{-1}\coloneqq W_{-1}\MM(\overline{L})_{\mathrm{cond}}/W_{-2}\MM(\overline{L})_{\mathrm{cond}}&=[0\rightarrow A(\overline{L})_{\mathrm{cond}}], \\ 
 \end{split}
\]
\end{prop}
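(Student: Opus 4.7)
The plan is to apply the condensed Weil-étale realisation functor componentwise (as in \cref{defn:wereal}) to the classical weight filtration of $\MM$, and to reduce both the existence of the filtration and the identification of its graded pieces to a single application of \cref{exactweilrealisation} to the short exact sequence of commutative algebraic groups $0 \to T \to E \to A \to 0$ defining the semiabelian variety $E$.

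Before invoking \cref{exactweilrealisation}, I would verify its hypothesis, namely that $E(K)^{\mathrm{top}}$ and $A(K)^{\mathrm{top}}$ are prodiscrete for every finite extension $K$ of $L$. This is expected to be the main obstacle and is the only non-formal ingredient; it is precisely what the text alludes to at the end of the preceding subsection (``we will see that it is satisfied in all the cases we are interested in''). For an abelian variety over a $p$-adic field $K$, $A(K)^{\mathrm{top}}$ contains an open subgroup of finite index topologically isomorphic to $\mathcal{O}_K^{\dim A}$, which is prodiscrete since $K$ is complete discretely valued; the torus $T(K)^{\mathrm{top}}$ becomes isomorphic, after a finite extension splitting $T$, to a product of copies of $K^{\times} \cong \Z \times \mathcal{O}_K^{\times}$, hence is prodiscrete; and prodiscreteness of $E(K)^{\mathrm{top}}$ then follows from the stability of the class of prodiscrete topological abelian groups under extensions.

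Granting this, \cref{exactweilrealisation} yields a short exact sequence $0 \to T(\overline{L})_{\mathrm{cond}} \to E(\overline{L})_{\mathrm{cond}} \to A(\overline{L})_{\mathrm{cond}} \to 0$ in $\Ab(B_{\hat{W}_F})$. The filtration in the statement then arises immediately upon setting $W_{-2}\MM(\overline{L})_{\mathrm{cond}} \coloneqq [0 \to T(\overline{L})_{\mathrm{cond}}]$ and $W_{-1}\MM(\overline{L})_{\mathrm{cond}} \coloneqq [0 \to E(\overline{L})_{\mathrm{cond}}]$: the monomorphism $T(\overline{L})_{\mathrm{cond}} \hookrightarrow E(\overline{L})_{\mathrm{cond}}$ coming from the exact sequence provides the first inclusion of two-term complexes, while the inclusion $W_{-1}\MM(\overline{L})_{\mathrm{cond}} \hookrightarrow \MM(\overline{L})_{\mathrm{cond}}$ is tautological (it is the inclusion of $0$ into $Y(\overline{L})_{\mathrm{cond}}$ in degree $-1$, together with the identity in degree $0$). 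Computing the successive quotient complexes termwise then gives $gr^{0} = [Y(\overline{L})_{\mathrm{cond}} \to 0]$ by inspection, and $gr^{-1} = [0 \to E(\overline{L})_{\mathrm{cond}}/T(\overline{L})_{\mathrm{cond}}] = [0 \to A(\overline{L})_{\mathrm{cond}}]$ by the short exact sequence above, as required.
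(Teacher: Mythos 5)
Your core reduction---apply \cref{exactweilrealisation} to $0\to T\to E\to A\to 0$ and compute the graded pieces of the two-term complexes termwise---is exactly the paper's proof of \cref{condensedfiltration}, and the paper likewise treats the prodiscreteness hypothesis as deferred: a remark immediately after the proof says it is only complete once \cref{prodiscreteabelianvarieties} and \cref{prodiscretesemiabelianvarieties} are established. Those are proved via the N\'eron model of $A$ and the N\'eron lft-model of $E$ respectively, writing $A(K)^{\mathrm{top}}$ and $E(K)^{\mathrm{top}}$ directly as inverse limits of discrete groups of points over the finite quotients $\mathcal{O}_K/\mathfrak{p}_K^n$, with no reduction from $E$ to its pieces $T$ and $A$.

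Your inline sketch of the prodiscreteness verification takes a different route, and it has two problems. A minor one: you claim $A(K)^{\mathrm{top}}$ has an open subgroup of \emph{finite index} isomorphic to $\mathcal{O}_K^{\dim A}$. Finite index fails in general, since $K$ is a finite extension of $L$ and hence has residue field $\overline{k}$, which is infinite; the quotient by the Mattuck open subgroup surjects onto the $\overline{k}$-points of the special fibre of the N\'eron model, which is infinite. This is harmless, because the existence of an open prodiscrete subgroup already forces prodiscreteness of the ambient group. The genuine gap is the appeal to ``stability of prodiscrete topological abelian groups under extensions'' to conclude for $E(K)^{\mathrm{top}}$. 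In the relevant extension $0\to T(K)^{\mathrm{top}}\to E(K)^{\mathrm{top}}\to \mathrm{Image}\to 0$ the kernel $T(K)^{\mathrm{top}}$ is closed but \emph{not open}, so the easy argument that worked for $A$ (inherit a neighbourhood basis of open subgroups from an open subgroup) does not apply, and open subgroups of $T(K)^{\mathrm{top}}$ do not automatically give open subgroups of $E(K)^{\mathrm{top}}$. This three-space property for prodiscrete groups is not obvious---particularly in the present non-locally-compact setting, since $\mathcal{O}_K$ is not compact here---and would need to be proved or cited; the paper avoids the issue entirely by applying the N\'eron lft-model argument directly to $E$ in \cref{prodiscretesemiabelianvarieties}.
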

\begin{proof}
By definition, we have a fibre sequence in $\DDD^{\mathrm{b}}(B_{\hat{W}_F})$ \[
E(\overline{L})_{\mathrm{cond}}[0]\rightarrow \MM(\overline{L})_{\mathrm{cond}}\rightarrow Y(\overline{L})_{\mathrm{cond}}[1].
\] This gives \[
gr^0=[Y(\overline{L})_{\mathrm{cond}}\to 0].
\] Moreover, by \cref{exactweilrealisation} we have a fibre sequence in $\DDD^{\mathrm{b}}(B_{\hat{W}_F})$ \[
T(\overline{L})_{\mathrm{cond}}[0]\rightarrow E(\overline{L})_{\mathrm{cond}}[0]\rightarrow A(\overline{L})_{\mathrm{cond}}[0].
\] This gives \[
gr^{-1}=[0\to A(\overline{L})_{\mathrm{cond}}].
\]
\end{proof}
\begin{rmk}
To use \cref{exactweilrealisation}, one should prove that $E(K)^{\mathrm{top}}$ and $A(K)^{\mathrm{top}}$ are prodiscrete for every finite field extension $K$ of $L$. This is done in \cref{prodiscreteabelianvarieties} and \cref{prodiscretesemiabelianvarieties}. We can say that the proof of \cref{condensedfiltration} will be complete only after the proof of such results. This does not cause any problem since we will not use this result before \cref{section:structurecohomologymotives}.
\end{rmk}
\section{Condensed structures on cohomology groups}\label{section:structurealgebraicgroups1motives}
If $G$ is an algebraic group over $F$, then $G(\overline{L})$ is a discrete $W_F$-module, and its cohomology groups are given by \[
\H^q(B_{W_F}(\Set),G(\overline{L}))=\underset{\substack{\rightarrow\\ K/L}}{\lim}\,\H^q(B_{W_F/\Gal(\overline{L}/K)}(\Set),G(K)).
\]
The condensed abelian group $\HH^q(B_{\hat{W}_F},G(\overline{L})_{\mathrm{cond}})$ is a topologised version of $\H^q(B_{W_F}(\Set),G(\overline{L}))$. Indeed we have the following 
\begin{prop}\label{prop:globalsectionscohomag}
Let $G$ be an algebraic group over $F$. Then for all $q$ the canonical map \[
\H^q(B_{W_F}(\Set),G(\overline{L}))\rightarrow\H^q(B_{\hat{W}_F},G(\overline{L})_{\mathrm{cond}}) 
\] induced by the morphism of topoi $B_{\hat{W}_F}\to B_{W_F}(\Set)$ is an isomorphism of abelian groups.
\end{prop}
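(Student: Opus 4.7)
The strategy is to decompose both sides as filtered colimits indexed by the finite extensions $K$ of $L$, reduce to a comparison of cohomologies at each finite level, and apply a bar resolution argument using that evaluation at the terminal object of $\CC$ is exact.

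First, I would write both cohomologies as filtered colimits. The right-hand side is already written in the excerpt as $\mathrm{colim}_K\,\H^q(B_{W_F/U_K}(\Set), G(K))$. For the left-hand side, using that $G(\overline{L})_{\mathrm{cond}} = \mathrm{colim}_K\,\underline{G(K)^{\mathrm{top}}}$ in $\Ab(B_{\hat{W}_F})$ and that the $\hat{W}_F$-action on each $\underline{G(K)^{\mathrm{top}}}$ factors through the discrete quotient $H_K \coloneqq W_F/U_K$, I would adapt the argument of \cite[Corollary 2.58]{Artusa} from $\underline{\Hom}$ to the full $R\GGamma$, obtaining
\[
\HH^q(B_{\hat{W}_F}, G(\overline{L})_{\mathrm{cond}}) \;=\; \mathrm{colim}_{K}\,\HH^q(B_{H_K}, \underline{G(K)^{\mathrm{top}}}).
\]

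At each finite level, set $H \coloneqq H_K$, a discrete group. The condensed bar complex $\bigoplus_{H^{\bullet+1}} \mathbb{Z} \to \mathbb{Z}$ is a resolution of $\mathbb{Z}$ by projective $\mathbb{Z}[\underline{H}]$-modules in $\Ab(B_H)$ (the condensed set $\underline{H}$ is a discrete disjoint union of projective points, so this coincides with the classical free $\mathbb{Z}[H]$-module resolution). Applying $\underline{\Hom}_{B_H}(-, \underline{G(K)^{\mathrm{top}}})$ produces the condensed cobar complex $\underline{(G(K)^{\mathrm{top}})^{H^{\bullet}}}$ in $\Ab(\CC)$. The terminal object $*$ of $\CC$ is projective (any sheaf epimorphism of condensed abelian groups is, by definition, locally surjective, and every cover $T\to *$ admits a section since the one-point profinite set is a retract of any non-empty profinite set), so the functor $(-)(*):\Ab(\CC)\to \Ab$ is exact and commutes with cohomology. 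Evaluating the condensed cobar complex at $*$ yields the classical cobar complex $(G(K)^{\mathrm{top}})^{H^{\bullet}}$ computing $\H^\bullet(H, G(K)) = \H^\bullet(B_H(\Set), G(K))$, whence
\[
\HH^q(B_H, \underline{G(K)^{\mathrm{top}}})(*) \;=\; \H^q(B_H(\Set), G(K)).
\]
Taking the colimit over $K$ and using that $(-)(*)$ commutes with filtered colimits in $\Ab(\CC)$ (since $*$ is compact projective) concludes.

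The main obstacle is the commutation step in the first paragraph: the fact that $R\GGamma(B_{\hat{W}_F}, -)$ commutes with the particular filtered colimit $\mathrm{colim}_K \underline{G(K)^{\mathrm{top}}}$ is not formal, since cohomology on a topos does not commute with filtered colimits in general. To justify it, I would either (i) compute $R\GGamma(B_{\hat{W}_F}, -)$ via a continuous cochain complex on the prodiscrete group $\hat{W}_F$, in which one can interchange the colimit with the cochain functor because each continuous cocycle into $G(\overline{L})_{\mathrm{cond}}$ factors through some stage $\underline{G(K)^{\mathrm{top}}}$ thanks to the finite-level structure of the action, or (ii) apply a Cartan--Leray-style spectral sequence for the morphism $B_{\hat{W}_F} \to B_{\hat{I}}$ and reduce to the inertia-fixed parts, where the analogous statement for the internal Hom from \cite[Corollary 2.58]{Artusa} can be bootstrapped to higher cohomology.
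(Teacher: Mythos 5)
Your overall strategy — write $G(\overline{L})_{\mathrm{cond}}$ as a filtered colimit over finite extensions $K/L$, compare at each finite level $W_F/U_K$, and take colimits — is exactly the paper's. The difference is in how the finite-level comparison is established. The paper simply cites \cite[Proposition 2.41]{Artusa}: the morphism of topoi $\alpha_K\colon B_{W_F/U_K}\to B_{W_F/U_K}(\Set)$ has \emph{exact} pushforward $\alpha_{K*}$ (the underlying-abelian-group functor), hence $R\alpha_{K*}=\alpha_{K*}$ and the Leray spectral sequence degenerates, giving the isomorphism. You instead re-derive this by hand via the condensed bar resolution and the projectivity of $*$, which is a valid and self-contained replacement for the black-boxed lemma. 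Your reasoning is sound: for discrete $H=W_F/U_K$, the bar terms $\Z[\underline{H}^{n+1}]$ are induced from $\bigoplus_{H^n}\Z$, which is projective in $\Ab(\CC)$ since $\Z$ is, so the condensed cobar complex $\prod_{H^{\bullet}}\underline{G(K)^{\mathrm{top}}}$ computes $R\GGamma(B_{W_F/U_K},\underline{G(K)^{\mathrm{top}}})$, and evaluation at the extremally disconnected point $*$ is exact and commutes with products, recovering the classical cobar complex for $\H^{\bullet}(W_F/U_K,G(K))$. This is the same fact that underlies the paper's cited exactness of $\alpha_{K*}$, just made explicit. As for the colimit commutation you flag as non-formal: it genuinely is the delicate point, but it is also taken as given in the paper (it is \cite[Corollary 2.26, (1)]{Artusa}, cited explicitly in the proof of \cref{thm:structurecohomologyabvar}), and your option (ii) is in the right direction; so relative to what the paper itself argues there is no gap in your proposal.
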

\begin{proof}
For all $K/L$ finite extension, we have a morphism of topoi \[\alpha_K:B_{W_F/\Gal(\overline{L}/K)}\to B_{W_F/\Gal(\overline{L}/K)}(\Set)\] such that $\alpha_{K*}$ is exact and induces an isomorphism of abelian groups \[ \H^q(B_{W_F/\Gal(\overline{L}/K)}(\Set),G(K))\overset{\sim}{\longrightarrow}\H^q(B_{W_F/\Gal(\overline{L}/K)},\underline{G(K)^{\mathrm{top}}}) \] for all $q$ (see \cite[Proposition 2.41]{Artusa}). The result follows. 
\end{proof}
\subsection{Coefficients in tori}
A direct consequence of \cref{prop:dualtorus} is the following \begin{prop}\label{prop:structuretorus}
Let $T$ be a torus over $F$. We have \[
\HH^q(B_{\hat{W}_F},T(\overline{L})_{\mathrm{cond}})=\begin{array}{ll}
\Z^l\oplus\underline{\Z_p}^m\oplus H_0 & q=0,\\
\Z^n\oplus H_1 & q=1,\\
0 & q\ge 2.
\end{array}
\] for some $l,m,n\in\N$ and some finite abelian groups $H_0,H_1$.
\end{prop}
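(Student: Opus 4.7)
The plan is to use \cref{prop:dualtorus} to reduce the computation to the cohomology of $\overline{L}^\times$ over finite extensions of $F$, whose structure has already been determined in \cite{Artusa}, and then assemble the pieces via a reduction to quasi-trivial tori.

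By \cref{prop:dualtorus}, $T(\overline{L})_{\mathrm{cond}} \simeq \underline{\Hom}_{B_{\hat{W}_F}}(X^*(T),\overline{L}^\times)$, where $X^*(T)\cong \Z^r$ carries a continuous $\hat{W}_F$-action factoring through a finite quotient $\Gal(F'/F)$. Since $X^*(T)$ is $\Z$-free, one obtains $R\GGamma(B_{\hat{W}_F},T(\overline{L})_{\mathrm{cond}}) \simeq R\Hom_{B_{\hat{W}_F}}(X^*(T),\overline{L}^\times)$ via the tensor--Hom adjunction. I would then choose a short exact sequence of $\Gal(F'/F)$-lattices $0\to X'\to P\to X^*(T)\to 0$ with $P$ a permutation lattice; dually this provides a short exact sequence of $F$-tori $1\to T\to T_1\to T_2\to 1$ with $T_1 = \prod_j \mathrm{Res}_{F_j/F}\mathbbm{G}_m$ quasi-trivial. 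The topological groups $T_i(K)^{\mathrm{top}}$ are prodiscrete for $K/L$ finite (they are locally compact of the form $\Z^a\oplus \mathcal{O}_K^b\oplus \text{finite}$), so \cref{exactweilrealisation} makes the sequence remain exact after applying $-(\overline{L})_{\mathrm{cond}}$, yielding a long exact sequence in cohomology.

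For each Weil-restriction factor, Shapiro's lemma identifies $\HH^q(B_{\hat{W}_F},\mathrm{Res}_{F_j/F}\mathbbm{G}_m(\overline{L})_{\mathrm{cond}})$ with $\HH^q(B_{\hat{W}_{F_j}},\overline{L}^\times)$; by the results of \cite{Artusa}, this equals $\underline{F_j^\times}\cong \Z\oplus \underline{\Z_p}^{[F_j:\Q_p]}\oplus \mu_{F_j}$ in degree $0$, is $\Z$ in degree $1$, and vanishes in degree $\ge 2$. Iterating the construction on $T_2$ (which is again a torus) and combining the long exact sequences then delivers the stated form for $T$: the degree-$0$ groups assemble into something of shape $\Z^l\oplus \underline{\Z_p}^m\oplus H_0$, the degree-$1$ groups into $\Z^n\oplus H_1$, and the higher cohomology vanishes.

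The main obstacle will be making the iteration precise: one must show that the shape $\Z^l\oplus \underline{\Z_p}^m\oplus H_0$ in degree $0$ and $\Z^n\oplus H_1$ in degree $1$ is preserved through all the kernels, cokernels, and extensions arising in the successive long exact sequences, and in particular that higher cohomology cancels so that $\HH^{\ge 2}=0$. A cleaner alternative is to invoke \cite[Theorem 4.27]{Artusa}, the Weil-group Tate--Nakayama duality for tori established there, which identifies $R\GGamma(B_{\hat{W}_F},T(\overline{L})_{\mathrm{cond}})$ with a dual of $R\Hom_{B_{\hat{W}_F}}(X^*(T),\Z)$ up to a degree shift; the concentration in degrees $0,1$ and the precise form of the two non-vanishing groups then follow by a direct Ext-computation on the lattice $X^*(T)$, using that $\Hom_\CC(\underline{\Z_p},\Z)=0$ to keep the $\underline{\Z_p}$-summand confined to degree $0$.
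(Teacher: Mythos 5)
Your proposal takes a genuinely different route from the paper, which instead runs a descent spectral sequence for the finite group $G=\Gal(F'/F)$ splitting $T$,
\[
E_2^{i,j}=\HH^i(B_G,\HH^j(B_{\hat{W}_{F'}},\underline{\Hom}(X^*(T),\overline{L}^\times)))\Longrightarrow \HH^{i+j}(B_{\hat{W}_F},T(\overline{L})_{\mathrm{cond}}),
\]
shows directly that $E_2^{i,j}$ vanishes for $j\ge 2$ and is finite for $i\ge 1$, then reads off the degree-$0$ term as $\underline{T(F)^{\mathrm{top}}}$. No dévissage through quasi-trivial tori is needed, and in particular the degree-$0$ structure is immediate — a point your proposal does not exploit.

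There are two concrete gaps in your argument. First, the iteration step you flag as the ``main obstacle'' is indeed unresolved and cannot be waved away: a single permutation resolution $0\to X'\to P\to X^*(T)\to 0$ leaves you with a general torus $T_2$ with lattice $X'$, so either you need a coflasque resolution that terminates, and in any case you must verify that the shapes $\Z^l\oplus\underline{\Z_p}^m\oplus H_0$ and $\Z^n\oplus H_1$ are stable under the kernels, cokernels and extensions appearing in the long exact sequences in $\Ab(\CC)$ — this is precisely what the paper's spectral-sequence argument circumvents. Second, the parenthetical justification of prodiscreteness is wrong: $K/L$ finite means $K$ is a finite extension of the completion of $F^{\mathrm{un}}$, whose residue field is $\overline{k}$, so $\mathcal{O}_K$ is not compact and $K$ is not locally compact; the groups $T_i(K)^{\mathrm{top}}$ are prodiscrete because $\mathcal{O}_K^\times$ is the inverse limit of its quotients by the higher unit groups, not because of any ``$\Z^a\oplus\mathcal{O}_K^b\oplus\text{finite}$'' structure. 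Finally, your ``cleaner alternative'' misattributes the result: what you describe as the Weil Tate--Nakayama duality for tori is \cref{tatenakayama} of this paper, proved \emph{later} and derived non-trivially from \cite[Theorem 4.27]{Artusa}. That theorem applies to locally compact coefficients such as $X^*(T)$ or $X^*(T)^D$, not to $T(\overline{L})_{\mathrm{cond}}$ itself; going from it to the $T$-duality requires the fiber sequence $X^*(T)^D\to T(\overline{L})_{\mathrm{cond}}\to\underline{\Hom}(X^*(T),\R)$ plus a double-duality argument. So the alternative route would effectively reproduce the proof of \cref{tatenakayama} before one could extract the structure result, which is considerably more work than you indicate.
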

\begin{proof}
The proof of the first part of the statement is similar to the one of \cite[Theorem 3.31]{Artusa}. The action of $\hat{W}_F$ on $X^*(T)$ is induced by a finite quotient of $G_F$, say $G\coloneqq \Gal(F'/F)$. By \cref{prop:dualtorus}, we have a spectral sequence \[
E_2^{i,j}=\HH^i(B_G,\HH^j(B_{\hat{W}_{F'}},\underline{\Hom}(X^*(T),\overline{L}^{\times}))\implies \HH^{i+j}(B_{\hat{W}_{F}},T(\overline{L})_{\mathrm{cond}}).
\] 
As a $\hat{W}_{F'}$-module, we have $\underline{\Hom}(X^*(T),\overline{L}^{\times})\cong (\overline{L}^{\times})^r$, where $r$ is the rank of $X^*(T)$.
 By \cite[Proposition 3.12]{Artusa}, we have $E_2^{i,j}=0$ for all $j \ge 2$. In addition to that, we have \[
E_2^{i,0}=\HH^i(B_G,\underline{\Hom}(X^*(T),\underline{(F')^{\times}})), \qquad E_2^{i,1}=\HH^i(B_G,M).
\] where $M$ is a free finitely generated abelian group with an action of the finite group $G$. Consequently, $E_2^{i,1}$ is finite for all $i\ge 1$.
Let us consider the exact sequence of $G$-modules \[
0\rightarrow \underline{\Hom}(X^*(T),\underline{\O_{F'}})\rightarrow \underline{\Hom}(X^*(T),\underline{(F')^{\times}})\rightarrow \underline{\Hom}(X^*(T),\Z)\rightarrow 0
\] induced by the valuation $(F')^{\times}\to \Z$. In the proof of \cite[Theorem 3.31]{Artusa}, it is shown that the condensed abelian group $\HH^i(B_G,\underline{\Hom}(X^*(T),\underline{\O_{F'}}))$ is finite for all $i\ge 1$. Consequently, the long exact cohomology sequence implies that $E_2^{i,0}$ is finite for all $i\ge 1$.

To summarise, we showed that $E_2^{i,j}$ is finite for all $i\ge 1$ for all $j$, and vanishes for all $j\ge 2$ for all $i$. Consequently, we have exact sequences for all $q\ge 1$ \[
0\rightarrow E_{\infty}^{q-1,1}\rightarrow \HH^q(B_{\hat{W}_F},T(\overline{L})_{\mathrm{cond}})\rightarrow E_{\infty}^{q,0}\rightarrow 0
\] For $q\ge 2$, both $E_{\infty}^{q-1,1}$ and $E_{\infty}^{q,0}$ are finite, hence so is $\HH^q(B_{\hat{W}_{F}},T(\overline{L})_{\mathrm{cond}})$. This implies that the condensed abelian group $\HH^q(B_{\hat{W}_F},T(\overline{L})_{\mathrm{cond}})$ is represented by its underlying abelian group seen as a discrete topological group. Consequently we have, for all $q\ge 2$ \[\HH^q(B_{\hat{W}_F},T(\overline{L})_{\mathrm{cond}})(*)=\H^q(B_{W_F}(\Set),T(\overline{L}))=0.\]
 
For $q=1$, the condensed abelian group $E_{\infty}^{q-1,1}$ is a subgroup of $\HH^0(B_G,M)$. Consequently, it is a free finitely generated abelian group, while $E_{\infty}^{q,0}$ is finite. The fact that $\HH^1(B_{\hat{W}_F},T(\overline{L})_{\mathrm{cond}})$ is a finitely generated abelian group follows.

We just need to determine the structure of $\HH^0(B_{\hat{W}_F},T(\overline{L})_{\mathrm{cond}})=\underline{T(F)^{\mathrm{top}}}$. Since $T(F)^{\mathrm{top}}$ is isomorphic to a finite power of $F^{\times}$ as a topological abelian group, the result follows.
\end{proof}
\subsection{Coefficients in abelian varieties}
Let $A/F$ be an abelian variety. We determine the structure of $A(\overline{L})_{\mathrm{cond}}$ and of $\HH^q(B_{\hat{W}_F},A(\overline{L})_{\mathrm{cond}})$ for all $q$. We have the following 
\begin{prop}\label{prodiscreteabelianvarieties}
Let $K/L$ be a finite extension. Then the topological abelian group $A(K)^{\mathrm{top}}$ is prodiscrete.
\end{prop}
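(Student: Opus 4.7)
The plan is to show that $A(K)^{\mathrm{top}}$ is a Hausdorff complete topological abelian group admitting a fundamental system of neighborhoods of the identity consisting of open subgroups; such a group is canonically the inverse limit of its discrete quotients by these open subgroups, hence prodiscrete.

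First, I would record that $A$ is proper (being an abelian variety), so by the properties of Conrad's functor the topological group $A(K)^{\mathrm{top}}$ is compact Hausdorff, and in particular complete. It remains to exhibit a basis of open subgroups at the identity.

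For this I would use the smooth N\'eron model $\mathcal{A}$ of $A$ over $\mathcal{O}_K$: by the N\'eron mapping property one has $A(K) = \mathcal{A}(\mathcal{O}_K)$, and the reduction maps yield open subgroups $U_n \coloneqq \ker\!\left(\mathcal{A}(\mathcal{O}_K) \to \mathcal{A}(\mathcal{O}_K/\mathfrak{m}_K^n)\right)$ for $n \ge 1$. Equivalently, one may work with the formal group $\widehat{A}$ of $A$ at the identity, obtaining open subgroups $\widehat{A}(\mathfrak{m}_K^n) \subset A(K)^{\mathrm{top}}$, which agree with the $U_n$ up to passing to an open subgroup of finite index. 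To show that the $U_n$ form a \emph{fundamental} system of neighborhoods of the identity, I would argue locally near the identity section: an \'etale trivialisation of $\mathcal{A}$ identifies a neighborhood of $e$ with $\mathbb{A}^g_{\mathcal{O}_K}$, and on $\mathcal{O}_K$-points this is $\mathcal{O}_K^g$ endowed (via properties (ii)--(iii) of Conrad's functor) with the $\mathfrak{m}_K$-adic topology. Under this identification the $U_n$ correspond to the subgroups $\mathfrak{m}_K^n \cdot \mathcal{O}_K^g$, which plainly form a fundamental system of neighborhoods of $0$; continuity of translation on $A(K)^{\mathrm{top}}$ then transports this basis to the whole group.

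The hardest step will be cleanly matching the topology produced by Conrad's functor — glued from affine charts using the valuation topology of $K$ — with the $\mathfrak{m}_K$-adic topology on $\mathcal{O}_K^g$ coming from the local model near the identity. Once this identification is in place, prodiscreteness is immediate: $A(K)^{\mathrm{top}}$ is a Hausdorff complete topological group whose topology is defined by the open-subgroup filtration $\{U_n\}_{n\ge 1}$, so $A(K)^{\mathrm{top}} \cong \varprojlim_n A(K)/U_n$ as topological groups, where each quotient is discrete because $U_n$ is open. Note that these quotients are generally not finite (since the residue field of $K$ is the infinite field $\overline{k}$), so the result is genuinely about prodiscreteness and not profiniteness.
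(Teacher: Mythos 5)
There is a genuine gap in the first step: the claim that $A(K)^{\mathrm{top}}$ is compact Hausdorff is false in this setting, and the completeness you need cannot be derived that way. Recall that $K$ is a finite extension of $L$, where $L$ is the completion of $F^{\mathrm{un}}$, so the residue field of $K$ is $\overline{k}$, which is infinite; consequently $\mathcal{O}_K$ is not compact. Since by Mattuck's theorem $A(K)^{\mathrm{top}}$ contains an open subgroup isomorphic to $\mathcal{O}_K^d$, the group $A(K)^{\mathrm{top}}$ cannot be compact. You even observe at the end of your proposal that the quotients $A(K)/U_n$ are infinite precisely because the residue field is $\overline{k}$, but you do not notice that this same fact contradicts the compactness claim you used earlier to get completeness. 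As written, the argument therefore does not establish that $A(K)^{\mathrm{top}} \to \varprojlim_n A(K)/U_n$ is surjective, which is exactly the content of completeness.

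The paper's proof sidesteps this by never invoking any soft completeness argument: it establishes, as a separate lemma, that for a locally finite type $R$-scheme $X$ over a complete DVR $R$ with maximal ideal $\mathfrak{m}$, the canonical map $X(R)^{\mathrm{top}} \to \varprojlim_n X(R/\mathfrak{m}^n)^{\mathrm{top}}$ is a \emph{homeomorphism} (a Greenberg-style statement, checked on affine charts), and similarly that $A(K)^{\mathrm{top}} \cong \mathcal{A}(\mathcal{O}_K)^{\mathrm{top}}$. Once you have these two homeomorphisms, prodiscreteness is immediate since each $\mathcal{A}(\mathcal{O}_K/\mathfrak{m}_K^n)^{\mathrm{top}}$ is discrete, and completeness comes for free from writing the group as a limit of discrete groups. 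Your proposal effectively needs this same lemma --- you acknowledge that matching the Conrad topology on charts with the $\mathfrak{m}_K$-adic topology is ``the hardest step'' --- so if you fix the completeness argument by proving that Greenberg-style homeomorphism directly, you land on essentially the same proof as the paper.
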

Before proving this result, we make a few comments. We observe that $A_K$ is an abelian variety over $K$ and we have $A_K(K)=A(K)$. Let $\mathcal{O}_K$ be the ring of integers of $K$, and let us call $\mathcal{A}$ the N\'eron model of $A_K$ over $\mathcal{O}_K$. We have an isomorphism of abelian groups \begin{equation}\label{neron1}
\mathcal{A}(\mathcal{O}_K)\cong A(K).
\end{equation} Moreover, the ring $\mathcal{O}_K\cong \underset{\leftarrow}{\lim}\, \mathcal{O}_K/\p_K^n$ is a complete Noetherian local ring. Consequently, the morphism of abelian groups \begin{equation}\label{neron2}
\mathcal{A}(\mathcal{O}_K)\rightarrow \underset{\substack{\leftarrow\\n}}{\lim}\, \mathcal{A}(\mathcal{O}_K/\p_K^n)
\end{equation} induced by $\mathcal{A}(\mathcal{O}_K)\rightarrow \mathcal{A}(\mathcal{O}_K/\p_K^n)$ is an isomorphism of abelian groups by \cite[\S 6]{Schemata}.  

By composing \eqref{neron1} and \eqref{neron2}, we get an isomorphism \[A(K)\cong \underset{\substack{\leftarrow\\n}}{\lim}\, \mathcal{A}(\mathcal{O}_K/\p_K^n).\] To deduce the prodiscreteness of $A(K)^{\mathrm{top}}$, we have to show that \eqref{neron1} and \eqref{neron2} are actually homeomorphism between topological abelian groups. This is expressed by the following
\begin{lem}\label{lem:homeos} The following hold true:
\begin{enumerate}[(1)]
    \item Let $R\to R'$ be a continuous map of local topological rings, and let $X$ be an $R$-scheme locally of finite type. We set $X'\coloneqq X\times_R R'$. Then the map $X(R)^{\mathrm{top}}\to X'(R')^{\mathrm{top}}=X(R')^{\mathrm{top}}$ is continuous. Moreover, if $R\to R'$ is an open embedding, then so is $X(R)^{\mathrm{top}}\to X(R')^{\mathrm{top}}$.
    \item Let $R$ be a complete discrete valuation ring with maximal ideal $\mathfrak{m}$. Let $X$ be an $R$-scheme which is locally of finite type. The morphism \[\beta:X(R)^{\mathrm{top}}\rightarrow \underset{\substack{\leftarrow\\n}}{\lim}\, X(R/\mathfrak{m}^n)^{\mathrm{top}}\] is a homeomorphism.
\end{enumerate}
 \end{lem}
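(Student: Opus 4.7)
The plan is to establish both parts affine-locally and then globalize via an affine open cover.

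For part (1), if $X = \mathrm{Spec}(R[x_1,\ldots,x_n]/(f_1,\ldots,f_m))$ is affine of finite type over $R$, then $X(R)$ sits inside $R^n$ as the closed subset cut out by the $f_i$ and carries the induced subspace topology; this is the natural extension to local topological rings of Conrad's construction recalled before the lemma. A continuous ring map $\varphi\colon R\to R'$ induces the continuous coordinatewise map $R^n\to (R')^n$, which restricts to $X(R)\to X(R')$, giving continuity. If $\varphi$ is moreover a topological open embedding, then so is $R^n\hookrightarrow (R')^n$; since $X(R) = X(R')\cap R^n$ as subsets of $(R')^n$ and the subspace topology on $X(R)$ agrees whether viewed inside $R^n$ or $(R')^n$, one obtains that $X(R)\to X(R')$ is an open embedding. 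The general case follows by gluing along any affine open cover, using property (i) of Conrad's functor (open immersions give open embeddings on point sets).

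For part (2), the set-theoretic bijectivity of $\beta$ is the standard fact recalled in the excerpt via the reference \cite[\S 6]{Schemata}, so what remains is purely topological. Applying part (1) to the continuous quotient maps $R\to R/\mathfrak{m}^n$ (continuous because $\mathfrak{m}^n$ is open in $R$, which forces $R/\mathfrak{m}^n$ to carry the discrete topology) shows that each projection $X(R)^{\mathrm{top}}\to X(R/\mathfrak{m}^n)^{\mathrm{top}}$ is continuous, hence $\beta$ is continuous as a map into the topological inverse limit. In the affine case $X = \mathrm{Spec}(R[x_1,\ldots,x_n]/(f_1,\ldots,f_m))$, both $X(R)\subseteq R^n$ and $X(R/\mathfrak{m}^k)\subseteq (R/\mathfrak{m}^k)^n$ are the vanishing loci of the same polynomials with their respective subspace topologies. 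Completeness of $R$ gives $R\overset{\sim}{\to}\lim_k R/\mathfrak{m}^k$ as topological rings, hence $R^n\overset{\sim}{\to}\lim_k (R/\mathfrak{m}^k)^n$ as topological spaces; intersecting with the common closed vanishing locus commutes with the inverse limit, so $\beta$ is a homeomorphism in the affine case.

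To globalize, fix an affine open cover $\{U_\alpha\}$ of $X$. The key observation is that for any compatible system $(x_n)\in \lim_n X(R/\mathfrak{m}^n)$, the image of the closed point of $\mathrm{Spec}(R)$ under the corresponding $R$-point coincides with its image under $x_0$, so the system factors through $U_\alpha$ if and only if $x_0$ does, if and only if every $x_n$ does. Hence both $X(R)^{\mathrm{top}}$ and $\lim_n X(R/\mathfrak{m}^n)^{\mathrm{top}}$ are glued from the respective affine pieces $U_\alpha(R)^{\mathrm{top}}$ and $\lim_n U_\alpha(R/\mathfrak{m}^n)^{\mathrm{top}}$ in a compatible manner, and the homeomorphism follows from the affine case combined with part (1) applied to the open immersions $U_\alpha\hookrightarrow X$. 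The main conceptual obstacle is precisely this gluing step: one must verify that the topology assembled from affine charts on $X(R)^{\mathrm{top}}$ matches that assembled on the inverse limit across all finite levels, which crucially relies on the fact that an $R$-point lies in a given affine open if and only if its reduction modulo $\mathfrak{m}$ does.
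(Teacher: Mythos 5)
Your proof is correct and follows essentially the same route as the paper's: reduce to the affine case via the observation that $R$-points of $X$ over a local ring factor through a single affine open, determined by the image of the closed point (the paper's Remark~\ref{opencovering}), and in the affine case identify both sides with the closed vanishing locus inside $R^k \cong \lim_n (R/\mathfrak{m}^n)^k$. The only difference is that you spell out the gluing step for part (2) somewhat more explicitly than the paper, which merely notes that it suffices to verify $\beta$ is a local homeomorphism.
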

\begin{rmk}\label{opencovering}
Let $R$ be a local ring and let $\{U_i\}_{i\in I}$ be an open cover of an $R$-scheme $X$. Then the family $\{U_i(R)^{\mathrm{top}}\}_{i\in I}$ is an open cover of $X(R)^{\mathrm{top}}$. Indeed, for all $i$ the morphism $U_i(R)^{\mathrm{top}}\subset X(R)^{\mathrm{top}}$ is open by the properties of Conrad's topology. Moreover, we have $\bigcup_{i\in I} U_i(R)^{\mathrm{top}} \subset X(R)$. To see that this is an equality, we take $f:\mathrm{Spec}(R)\to X$ and we show that it factors through $U_i$ for some $i$.  Let $s$ be the closed point of $\mathrm{Spec}(R)$. We have $f(s)\in U_i$ for some $i$. The subset $f^{-1}(U_i)$ is an open subset of $\mathrm{Spec}(R)$ containing $s$. Let $t$ be prime ideal of $\mathrm{Spec}(R)\setminus f^{-1}(U_i)$. The closure of $t$ is contained in $\mathrm{Spec}(R)\setminus f^{-1}(U_i)$ but contains $s$, which is a contradiction. Thus we have $f^{-1}(U_i)=\mathrm{Spec}(R)$.
\end{rmk}
\begin{proof} 
For (1), we use \cref{opencovering} to reduce to the case where $X$ is affine. Now the result follows from \cite[Example 2.2]{Conrad}.

In the hypotheses of (2), the map $\beta$ is a bijection by \cite[\S 6, Proposition 2]{Schemata}. We just need to show that $\beta$ is a local homeomorphism. Consequently, by \cref{opencovering} we can suppose that $X$ is affine. We have $X=\mathrm{Spec}(R[t_1,\dots,t_k]/I)$ for some $k\in\N$ and some ideal $I$. Then $X(R)^{\mathrm{top}}$ is the closed subset of $R^k$ defined by  $\{(x_1,\dots,x_k)\in R^k\, | f(x_1,\dots,x_k)=0 \, \forall f\in I\}$. Similarly, $X(R/\mathfrak{m}^n)^{\mathrm{top}}$ is the closed subset of $(R/\mathfrak{m}^n)^k$ given by $\{(\overline{x_1},\dots, \overline{x_k})\in (R/\mathfrak{m}^n)^k\,| \overline{f}(\overline{x_1},\dots,\overline{x_k})=0\, \forall f\in I\}$. Thus the map \[
\beta:X(R)^{\mathrm{top}}\rightarrow \underset{\substack{\leftarrow\\n}}{\lim}\, X(R/\mathfrak{m}^n)^{\mathrm{top}}
\] is an homeomorphism, being the restriction of the homeomorphism $R^k\cong \underset{\substack{\leftarrow\\n}}{\lim}\, (R/\mathfrak{m}^n)^k$ to the closed subset $X(R)^{\mathrm{top}}$.
\end{proof}
We are now ready to prove the structure result for $A(K)^{\mathrm{top}}$.
\begin{proof}[Proof of \cref{prodiscreteabelianvarieties}]
Let $\mathcal{A}$ be the N\'eron model of $A_K$ over $\O_K$. By \cref{lem:homeos} we have isomorphisms of topological abelian groups \[
A(K)^{\mathrm{top}}\cong \mathcal{A}(\O_K)^{\mathrm{top}}\cong\underset{\substack{\leftarrow\\n}}{\lim}\, \mathcal{A}(\O_K/\p_K^n)^{\mathrm{top}}.
\] Since $\O_K/\p_K^n$ is a discrete topological ring for all $n$, then $\mathcal{A}(\O_K/\p_K^n)^{\mathrm{top}}$ is a discrete topological abelian group for all $n$. The result follows.
\end{proof}

\begin{rmk}\label{exseq:opensubgr}
There is another way of seeing that $A(K)^{\mathrm{top}}$ is prodiscrete for every finite extension $K$ of $L$. Indeed, by \cite[Theorem 7]{Mattuck}, if $k$ is a complete ultrametric field and $A$ is an abelian variety of dimension $d$ defined over $k$, then the topological abelian group $A(k)^{\mathrm{top}}$ contains an \emph{open} subgroup $A^o$ which is isomorphic to $\mathcal{O}_k^d$ as a topological abelian group.
\end{rmk}

\begin{lem}\label{cor:condenseddiscrete}
The condensed abelian groups $\leftin{m}{\HH^q(B_{\hat{W}_F},A(\overline{L})_{\mathrm{cond}})}$ and $\HH^q(B_{\hat{W}_F},A(\overline{L})_{\mathrm{cond}})/m$ are finite for all $m,q\in\N$. In particular, if we denote by $A(\overline{L})^{\delta}$ the abelian group $A(\overline{L})$ with the discrete topology, then the map of condensed abelian groups \[
A(\overline{L})^{\delta}\rightarrow A(\overline{L})_{\mathrm{cond}}
\] induces equivalences \[
\HH^q(B_{\hat{W}_F},A(\overline{L})^{\delta})\otimes^L \Z/m \rightarrow \HH^q(B_{\hat{W}_F},A(\overline{L})_{\mathrm{cond}})\otimes^L \Z/m
\] for all $m,q\in \N$.
\end{lem}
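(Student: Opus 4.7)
The plan is to reduce both statements to the Kummer-style multiplication-by-$m$ short exact sequence on $A$. Apply \cref{exactweilrealisation} to the exact sequence of commutative $F$-group schemes $0 \to A[m] \to A \xrightarrow{m} A \to 0$; the prodiscreteness hypothesis is supplied by \cref{prodiscreteabelianvarieties}. This yields a short exact sequence in $\Ab(B_{\hat{W}_F})$
\[ 0 \to A[m](\overline{L})_{\mathrm{cond}} \to A(\overline{L})_{\mathrm{cond}} \xrightarrow{m} A(\overline{L})_{\mathrm{cond}} \to 0, \]
whose associated long exact cohomology sequence breaks into short exact sequences
\[ 0 \to \HH^q(B_{\hat{W}_F},A(\overline{L})_{\mathrm{cond}})/m \to \HH^q(B_{\hat{W}_F},A[m](\overline{L})_{\mathrm{cond}}) \to \leftin{m}{\HH^{q+1}(B_{\hat{W}_F},A(\overline{L})_{\mathrm{cond}})} \to 0. \]
The first claim is thus reduced to showing that each middle term is a finite discrete condensed abelian group.

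To see this, observe that $A[m]$ is finite étale over $F$, so by \cref{example:locallyconstantrealisation} its condensed Weil-étale realisation is the finite abelian group $A[m](\overline{L})$ with the discrete topology and a continuous action of $\hat{W}_F$ factoring through a finite quotient $G \coloneqq \Gal(F'/F)$. A Hochschild-Serre spectral sequence for the extension $1 \to W_{F'} \to W_F \to G \to 1$, combined with the vanishing and finiteness results of \cite[\S 3]{Artusa} for the condensed cohomology of $\hat{W}_{F'}$ with finite discrete coefficients (alternatively, via \cref{prop:globalsectionscohomag} and the standard finiteness of local Galois cohomology with finite coefficients, plus the discreteness of the resulting condensed structure), shows that $\HH^q(B_{\hat{W}_F},A[m](\overline{L})_{\mathrm{cond}})$ is finite and discrete for all $q$, giving the first assertion.

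For the second assertion, let $Q \coloneqq \mathrm{cofib}(A(\overline{L})^{\delta} \to A(\overline{L})_{\mathrm{cond}})$ in $\DDD^{\mathrm{b}}(B_{\hat{W}_F})$. It suffices to show that multiplication by $m$ acts invertibly on $Q$: for then $R\GGamma(B_{\hat{W}_F},Q) \otimes^L \Z/m \simeq 0$, so $m$ is invertible on each $\HH^q(B_{\hat{W}_F},Q)$, and the long exact cohomology sequence shows that the kernel and cokernel of $\HH^q(B_{\hat{W}_F},A(\overline{L})^{\delta}) \to \HH^q(B_{\hat{W}_F},A(\overline{L})_{\mathrm{cond}})$ are objects on which $m$ acts invertibly, yielding the equivalence after $-\otimes^L \Z/m$. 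Since $\overline{L}$ is algebraically closed of characteristic zero, multiplication by $m$ on $A(\overline{L})$ is surjective with kernel $A[m](\overline{L})$, which is finite and hence has discrete Conrad topology; thus $m$ is surjective on both $A(\overline{L})^{\delta}$ and (by the short exact sequence above) $A(\overline{L})_{\mathrm{cond}}$, with the \emph{same} finite discrete kernel $A[m](\overline{L})^{\delta}$. The octahedral axiom applied to the commutative square of multiplication-by-$m$ maps from $A(\overline{L})^{\delta}$ to $A(\overline{L})_{\mathrm{cond}}$ identifies $\mathrm{cone}(m \colon Q \to Q)$ with $\mathrm{cone}(\mathrm{id} \colon A[m](\overline{L})^{\delta}[1] \to A[m](\overline{L})^{\delta}[1]) = 0$, as required.

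The main obstacle is the finiteness of $\HH^q(B_{\hat{W}_F},A[m](\overline{L})_{\mathrm{cond}})$ as a \emph{condensed} abelian group, not merely as an underlying abelian group: one has to check that the condensed structure stays discrete, which requires carefully invoking the vanishing results for condensed cohomology of $\hat{W}_{F'}$ (or of $\hat{I}_F$) with finite discrete coefficients from \cite{Artusa}. Once this is in place, the remainder of the argument is a formal manipulation with the Kummer fiber sequence and the octahedral axiom.
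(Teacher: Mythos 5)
Your argument for the \emph{first} assertion follows the paper's own route: the Kummer sequence $0 \to A[m] \to A \xrightarrow{m} A \to 0$, the exactness of the condensed realisation via \cref{exactweilrealisation} and \cref{prodiscreteabelianvarieties}, and the finiteness of $\HH^q(B_{\hat{W}_F},A[m](\overline{L})_{\mathrm{cond}})$ (which the paper gets by a single citation to \cite[Proposition 3.26]{Artusa}). Note a small indexing slip: the short exact sequence extracted from the long exact sequence should read $0 \to \HH^{q-1}(A)/m \to \HH^q(A_m) \to \leftin{m}{\HH^q(A)} \to 0$, not with $\HH^q(A)/m$ on the left and $\leftin{m}{\HH^{q+1}(A)}$ on the right; the conclusion is unaffected.

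For the \emph{second} assertion your approach diverges from the paper's and, as written, has a gap. The cofiber argument correctly shows that $m$ acts invertibly on $Q := \mathrm{cofib}(A(\overline{L})^{\delta}\to A(\overline{L})_{\mathrm{cond}})$, hence $R\GGamma(B_{\hat{W}_F},Q)\otimes^L\Z/m \simeq 0$ and therefore $R\GGamma(B_{\hat{W}_F},A(\overline{L})^{\delta})\otimes^L\Z/m \to R\GGamma(B_{\hat{W}_F},A(\overline{L})_{\mathrm{cond}})\otimes^L\Z/m$ is an equivalence. But the lemma asserts the stronger, per-degree statement that $\HH^q(\cdot)\otimes^L\Z/m$ are equivalent for each $q$, and these two formulations are not interchangeable. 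You justify the passage by claiming that the kernel and cokernel of $\HH^q(B_{\hat{W}_F},A(\overline{L})^{\delta}) \to \HH^q(B_{\hat{W}_F},A(\overline{L})_{\mathrm{cond}})$ inherit invertibility of $m$ from $\HH^{q-1}(Q)$ and $\HH^q(Q)$, but this is false in general: the kernel is a \emph{quotient} of $\HH^{q-1}(B_{\hat{W}_F},Q)$, and quotients of uniquely $m$-divisible groups can acquire $m$-torsion (e.g.\ $\Z[1/m]/\Z$), while the cokernel is a \emph{subobject} of $\HH^q(B_{\hat{W}_F},Q)$, and subobjects need not remain $m$-divisible (e.g.\ $\Z\subset\Q$). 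Without unique $m$-divisibility of both, the per-$q$ statement does not follow from the complex-level one.

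The paper closes this gap differently: it invokes the comparison of underlying abelian groups (via \cite[Remark 3.22]{Artusa} and \cref{prop:globalsectionscohomag}) to see that the map on underlying abelian groups is already an equivalence, and then uses the finiteness from the first part to upgrade this to an equivalence of condensed abelian groups. Your route could in fact be repaired with the same input: since $Q(*)=0$ (the underlying abelian group of $Q$ vanishes, because $A(\overline{L})^{\delta}$ and $A(\overline{L})_{\mathrm{cond}}$ have the same underlying $W_F$-module), the groups $\HH^q(B_{\hat{W}_F},Q)$ all have trivial underlying abelian group, and the relevant finite obstruction groups are then discrete with no $*$-points, hence zero. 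But this underlying-group comparison is exactly the extra input your proposal omits and the paper supplies.
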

\begin{proof}
By \cref{prodiscreteabelianvarieties,exactweilrealisation} we have an exact sequence in $\Ab(B_{\hat{W}_F})$ \[
0\rightarrow A_m(\overline{L})_{\mathrm{cond}}\longrightarrow A(\overline{L})_{\mathrm{cond}}\overset{\cdot m}{\longrightarrow} A(\overline{L})_{\mathrm{cond}}\rightarrow 0.
\] The associated Kummer sequence gives an exact sequence \[
0\rightarrow\HH^{q-1}(B_{\hat{W}_F},A(\overline{L})_{\mathrm{cond}})/m\rightarrow \HH^{q}(B_{\hat{W}_F},A_m(\overline{L})_{\mathrm{cond}})\rightarrow \leftin{m}{\HH^{q}(B_{\hat{W}_F},A(\overline{L})_{\mathrm{cond}})}\rightarrow 0.
\] for all $q,m\in\N$. Since $A_m(\overline{L})_{\mathrm{cond}}$ is finite, then so is $\HH^q(B_{\hat{W}_F},A_m(\overline{L})_{\mathrm{cond}})$ by \cite[Proposition 3.26]{Artusa}. The first part of the statement follows. The map \[
\HH^q(B_{\hat{W}_F},A(\overline{L})^{\delta})\otimes^L \Z/m \rightarrow \HH^q(B_{\hat{W}_F},A(\overline{L})_{\mathrm{cond}})\otimes^L \Z/m
\] induces an equivalence in $\DDD^{\mathrm{b}}(\Ab)$ between the complexes of underlying abelian groups (see \cite[Remark 3.22]{Artusa} and \cref{prop:globalsectionscohomag}).  Since the cohomology groups of both terms are finite, then the map is an equivalence in $\DDD^{\mathrm{b}}(\Ab(\CC))$. 
\end{proof}
\begin{lem}\label{lem:torsion}
For all $q\ge 1$ the condensed abelian group $\HH^q(B_{\hat{W}_F},A(\overline{L})_{\mathrm{cond}})$ is torsion, i.e.\ we have \[
\HH^q(B_{\hat{W}_F},A(\overline{L})_{\mathrm{cond}})=\underset{\substack{\rightarrow\\ m}}{\lim}\leftin{m}{\HH^q(B_{\hat{W}_F},A(\overline{L})_{\mathrm{cond}})}
\]
\end{lem}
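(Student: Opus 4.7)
The plan is to use the Hochschild–Serre spectral sequence associated to the short exact sequence of (pro-)condensed groups $1 \to \hat{I}_F \to \hat{W}_F \to W_k = \mathbb{Z} \to 1$ in order to reduce the computation of $\HH^q(B_{\hat{W}_F}, A(\overline{L})_{\mathrm{cond}})$ to the Galois cohomology of $L$ (encoded by $\hat{I}_F$) and the cohomology of $W_k = \mathbb{Z}$, and then to exploit the structure of the Néron model together with Lang's theorem over the algebraically closed residue field $\overline{k}$.

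First I would establish a condensed form of Lang's theorem: for the Galois group $\hat{I}_F = \Gal(\overline{L}/L)$ of a complete discretely valued field with algebraically closed residue field, and for the abelian variety $A$,
\[
\HH^{q'}(B_{\hat{I}_F}, A(\overline{L})_{\mathrm{cond}}) = 0 \quad \text{for all } q' \geq 1.
\]
Classically, $\H^1(I_F, A(\overline{L})) = 0$ by Lang's theorem (via the Néron model and the surjectivity of $\phi - 1$ on connected commutative algebraic groups over $\overline{k}$), while higher cohomology vanishes because $\mathrm{cd}(I_F) = 1$. I would promote this to the condensed setting using the filtered colimit expression $A(\overline{L})_{\mathrm{cond}} = \underset{K/L}{\mathrm{colim}}\,\underline{A(K)^{\mathrm{top}}}$ over finite extensions (with each $A(K)^{\mathrm{top}}$ prodiscrete by \cref{prodiscreteabelianvarieties}) together with the commutation of $\HH^{q'}$ with filtered colimits.

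Combined with the fact that $W_k = \mathbb{Z}$ has condensed cohomological dimension $1$, the Hochschild–Serre spectral sequence then immediately gives $\HH^q(B_{\hat{W}_F}, A(\overline{L})_{\mathrm{cond}}) = 0$ for $q \geq 2$, which is trivially torsion.

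For the remaining case $q = 1$, the spectral sequence collapses to yield
\[
\HH^1(B_{\hat{W}_F}, A(\overline{L})_{\mathrm{cond}}) \;\cong\; A(L)^{\mathrm{cond}}/(\phi - 1).
\]
Using the Néron-model exact sequence $0 \to A^1 \to A(L)^{\mathrm{cond}} \to \mathcal{A}(\overline{k})^{\mathrm{cond}} \to 0$ (implicit in the proof of \cref{prodiscreteabelianvarieties}) and taking $(\phi - 1)$-coinvariants, I would apply Lang's theorem to the identity component $\mathcal{A}^0(\overline{k})$ to obtain surjectivity of $\phi - 1$ there, use finiteness of the component group $\Phi = \mathcal{A}/\mathcal{A}^0$ to reduce $\mathcal{A}(\overline{k})^{\mathrm{cond}}/(\phi - 1)$ to the finite group $\Phi/(\phi - 1)$, and prove surjectivity of $\phi - 1$ on the formal/additive part $A^1$ by induction along the filtration $A^1 \supset A^2 \supset \dots$, using Artin–Schreier surjectivity on $\overline{k}$ at each graded piece. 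This forces $A(L)^{\mathrm{cond}}/(\phi - 1)$ to be a finite (hence discrete) condensed abelian group, in particular torsion.

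The main obstacle is Part 1: setting up the condensed Hochschild–Serre spectral sequence and proving the condensed Lang vanishing rigorously, since this requires a careful interplay between the pro-condensed structure of $\hat{I}_F$, the filtered colimit presentation of $A(\overline{L})_{\mathrm{cond}}$ over finite extensions $K/L$, and the classical vanishing theorems of Lang for connected commutative algebraic groups over algebraically closed fields.
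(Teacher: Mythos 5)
Your overall strategy---a Hochschild--Serre argument splitting $\hat W_F$ into the inertia part and $W_k$, followed by a N\'eron-model analysis of the $W_k$-cohomology---is in the same spirit as the paper's, and your sketch for the degree-one term (N\'eron model, surjectivity of $1-\varphi$ on the connected part of the special fibre, finite component group) essentially matches what the paper does. However, the input you label a ``condensed Lang vanishing'' for the inertia cohomology is wrong. You claim $\HH^{q'}(B_{\hat I_F}, A(\overline L)_{\mathrm{cond}})=0$ for all $q'\ge 1$, but Lang's theorem is a statement about $1-\varphi$ being surjective on $G(\overline k)$ for a connected commutative algebraic group over $\overline k$; it controls $\HH^1(B_{W_k},-)$, not $\HH^1(B_{\hat I_F},-)$. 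In degree one the inertia cohomology is genuinely nonzero in general: take $E$ an elliptic curve with split multiplicative reduction, so that $E(\overline L)\cong \overline L^\times/q^{\Z}$ as a $\Gal(\overline L/L)$-module; since $\H^1(L,\overline L^\times)=0$ and $\mathrm{Br}(L)=0$, the long exact sequence gives
\[
\H^1(L,E)\;\cong\;\H^2(L,\Z)\;=\;\Hom_{\mathrm{cont}}\bigl(\Gal(\overline L/L)^{\mathrm{ab}},\Q/\Z\bigr)\;\neq\;0,
\]
the right side being nonzero already from the tame quotient $\prod_{\ell\ne p}\Z_\ell$. So $\HH^1(B_{\hat I_F},E(\overline L)_{\mathrm{cond}})\neq 0$, and your deduction that $\HH^q(B_{\hat W_F},A(\overline L)_{\mathrm{cond}})=0$ for $q\ge 2$ does not follow by this route: you would still need the vanishing of $\HH^1(B_{W_k},\HH^1(B_{\hat I_F},-))$, which is not a formal consequence of cohomological dimension.

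The paper sidesteps this entirely because the lemma asserts only that the cohomology is \emph{torsion}, not that it vanishes. For $q\ge 2$ this is cheap: after reducing to sections at extremally disconnected $S$ and to a finite level $U=\Gal(\overline L/K)$, the contributions come from $\HH^{q'}(B_{I/U},-)$ with $q'\ge 1$, which are killed by $|I/U|$ since $I/U$ is a finite group. All the real work is then concentrated in $q=1$, namely the finiteness of $\HH^1(B_{W_k},\underline{A(L)^{\mathrm{top}}})$, which is proved via the N\'eron model and Greenberg's surjectivity of $1-\varphi$ on $\mathcal A^0(\O_L/\p_L^n)$---the argument you correctly sketch. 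The actual vanishing for $q\ge2$ is established only afterwards, in \cref{thm:structurecohomologyabvar}, by combining this torsion statement with \cref{cor:condenseddiscrete} to compare against discrete coefficients and then invoking Karpuk; it is not something one should try to get directly out of the inertia cohomology.
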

\begin{proof}
Since the functor which takes the sections at extremally disconnected topological spaces commutes with all limits and colimits, it is enough to show that $\HH^q(B_{\hat{W}_F},A(\overline{L})_{\mathrm{cond}})(S)$ is torsion for all $S$ extremally disconnected and for all $q\ge 1$. Moreover, we can reduce to finite extensions of $L$. We call $K$ such an extension and we set $U\coloneqq \mathrm{Gal}(\overline{L}/K)$. For all $q$, we have an exact sequence \begin{equation}\label{hsabvar}\begin{split}
0\rightarrow \HH^1(B_{W_k},\HH^{q-1}(B_{I/U},\underline{A(K)^{\mathrm{top}}}))(S)\rightarrow &\HH^q(B_{W_F/U},\underline{A(K)^{\mathrm{top}}})(S)\rightarrow\\ &\rightarrow \HH^0(B_{W_k},\HH^q(B_{I/U},\underline{A(K)^{\mathrm{top}}}))(S)\rightarrow 0. \end{split}
\end{equation} By \cite[Proposition 2.41]{Artusa} we have for all $q$ and for $i=0,1$ \[\HH^i(B_{W_k},\HH^q(B_{I/U},\underline{A(K)^{\mathrm{top}}}))(S)=\H^i(B_{W_k}(\Set),\H^q(B_{I/U}(\Set),\Cont(S,A(K)^{\mathrm{top}}))),\] which is torsion for all $q\ge 1$, since $I/U$ is finite. Thus $\HH^q(B_{\hat{W}_F},\underline{A(K)^{\mathrm{top}}})(S)$ is torsion for all $q\ge 2$.

For $q=1$, it is enough to show that $\HH^1(B_{W_k},\underline{A(L)^{\mathrm{top}}})$ is represented by a discrete torsion abelian group. The following argument is inspired by the proofs of \cite[Proposition 3.8]{ADT} and \cite[Lemma 5.4.3]{Karpuk2}. Let $\mathcal{A}$ be the N\'eron model for $A$ over $\O_F$ and let $\mathcal{A}_k$ be its special fibre. Let us denote by $\mathcal{A}^0$ the open subgroup scheme of $\mathcal{A}$ whose generic fibre is $A$ and whose special fibre is $\mathcal{A}_k^{id}$, the identity component of $\mathcal{A}_k$. By \cite[Theorem 7.2.1, Corollary 7.2.2]{blrneron} the base change $\mathcal{A}\times_{\O_F} \O_L$ is the N\'eron model of $A_L=A\times_F L$ over $\O_L$, whose special fibre is $\mathcal{A}_{\overline{k}}$. Moreover, the base change $\mathcal{A}^0\times_{\O_F}\O_L$ is the open subgroup scheme of $\mathcal{A}\times_{\O_F} \O_L$ whose generic fibre is $A_L$ and whose special fibre is $\mathcal{A}_{\overline{k}}^{id}$. For this last point, we are using the fact that the identity component of an algebraic group commutes with any extension of the base field (see \cite[Proposition 1.34]{MilneAG}). Then the topological abelian group $\mathcal{A}^0(\O_L)^{\mathrm{top}}=(\mathcal{A}^0\times_{\O_F}\O_L)(\O_L)^{\mathrm{top}}$ is an open subgroup of $\mathcal{A}(\O_L)^{\mathrm{top}}=(\mathcal{A}\times_{\O_F}\O_L)(\O_L)^{\mathrm{top}}$. Since $\mathcal{A}\times_{\O_F}\O_L$ is smooth over $\O_L$, which is a complete Hausdorff local ring, Hensel's lemma implies that the reduction map $\mathcal{A}(\O_L)^{\mathrm{top}}\rightarrow \mathcal{A}_k(\overline{k})$ is surjective. It follows that the open subgroup $\mathcal{A}^0(\O_L)^{\mathrm{top}}$ is of finite index in $\mathcal{A}(\O_L)^{\mathrm{top}}$ and the quotient is isomorphic to $\pi_0(\mathcal{A}_{\overline{k}})$, the component group of $\mathcal{A}_{\overline{k}}$. Consequently, we have an exact sequence of condensed $W_k$-modules \[
0\rightarrow \underline{\mathcal{A}^0(\O_L)^{\mathrm{top}}}\rightarrow \underline{\mathcal{A}(\O_L)^{\mathrm{top}}}\rightarrow \underline{\pi_0(\mathcal{A}_{\overline{k}}})\rightarrow 0, 
\] which induces an exact sequence of condensed abelian groups \begin{equation}\label{exseqcondcohom}
\HH^1(B_{W_k},\underline{\mathcal{A}^0(\O_L)^{\mathrm{top}}})\rightarrow \HH^1(B_{W_k},\underline{\mathcal{A}(\O_L)^{\mathrm{top}}})\rightarrow \HH^1(B_{W_k},\underline{\pi_0(\mathcal{A}_{\overline{k}}}))\rightarrow 0.
\end{equation}
We show the vanishing of $\HH^1(B_{W_k},\underline{\mathcal{A}^0(\O_L)^{\mathrm{top}}})$ following \cite[Proof of Proposition 3]{Schemata2}. By \cref{lem:homeos} we have an isomorphism of topological abelian groups \[
\mathcal{A}^0(\O_L)^{\mathrm{top}}\cong \underset{\substack{\leftarrow\\n}}{\lim}\, \mathcal{A}^0(\O_L/\mathfrak{p}_L)^{\mathrm{top}},
\] compatible with the action of $W_k$. In particular, the continuous homomorphism $1-\varphi:\mathcal{A}^0(\O_L)^{\mathrm{top}}\rightarrow \mathcal{A}^0(\O_L)^{\mathrm{top}}$ is the projective limit of the continuous morphisms \[
(1-\varphi)_n:\mathcal{A}^0(\O_L/\p_L)^{\mathrm{top}}\rightarrow \mathcal{A}^0(\O_L/\p_L)^{\mathrm{top}}
\] which are surjective for all $n$ (see \cite[Proof of Proposition 3]{Schemata2}). By discreteness, we conclude that $\underline{(1-\varphi)_n}$ is an epimorphism for all $n$, and since $B_{W_k}$ is replete, $\underline{1-\varphi}$ is an epimorphism of condensed $W_k$-modules (see \cite[Proposition 3.1.8]{proetale}). Consequently, we have \[ \HH^1(B_{W_k},\underline{\mathcal{A}^0(\O_L)^{\mathrm{top}}})=\mathrm{coker}(\underline{\mathcal{A}^0(\O_L)^{\mathrm{top}}}\overset{\underline{1-\varphi}}{\longrightarrow} \underline{\mathcal{A}^0(\O_L)^{\mathrm{top}}})=0.\] Therefore, the exact sequence \eqref{exseqcondcohom} gives an isomorphism of finite abelian groups \[
\HH^1(B_{W_k},\underline{\mathcal{A}(\O_L)^{\mathrm{top}}})=\HH^1(B_{W_k},\underline{\pi_0(\mathcal{A}_{\overline{k}})}).
\] Since $\underline{\mathcal{A}(\O_L)^{\mathrm{top}}}\cong \underline{A(L)^{\mathrm{top}}}$ as condensed $W_k$-modules, the result follows.
\end{proof}
We are ready to prove the structure result for the cohomology groups of $B_{\hat{W}_F}$ with coefficients in $A(\overline{L})_{\mathrm{cond}}$.

\begin{prop}\label{thm:structurecohomologyabvar}
Let $A$ be an abelian variety over $F$. We have \[
\HH^q(B_{\hat{W}_F},A(\overline{L})_{\mathrm{cond}})=\begin{array}{ll}
    \underline{\Z_p}^n\oplus H_0 &  q=0,\\
    \Q_p/\Z_p^m\oplus H_1 & q=1,\\
     0& q\ge 2.
\end{array}
\] for some $n,m\in \N$ and some finite abelian groups $H_0,H_1$. 
\end{prop}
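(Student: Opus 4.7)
The plan is to determine each $\HH^q$ separately, using that condensed cohomology agrees with classical cohomology on underlying abelian groups (\cref{prop:globalsectionscohomag}), combined with the torsion/finiteness inputs from \cref{lem:torsion,cor:condenseddiscrete}. Non-trivial condensed topology will appear only in $\HH^0$, inherited from the $p$-adic analytic structure of $A(F)$.

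For $\HH^0(B_{\hat{W}_F},A(\overline{L})_{\mathrm{cond}})=\underline{A(F)^{\mathrm{top}}}$, Mattuck's theorem (\cref{exseq:opensubgr}) provides an open subgroup of $A(F)^{\mathrm{top}}$ isomorphic as topological abelian group to $\O_F^d\cong\Z_p^{d[F:\Q_p]}$ with $d=\dim A$, of finite index (the component group of the N\'eron model of an abelian variety is finite). Hence $A(F)^{\mathrm{top}}$ is a compact commutative $p$-adic Lie group. Its torsion subgroup is discrete (Lie group torsion is discrete) and compact (closed in a compact group), hence finite; call it $H_0$. The torsion-free quotient $A(F)^{\mathrm{top}}/H_0$ is compact of Lie dimension $n\coloneqq d[F:\Q_p]$ over $\Q_p$, hence topologically isomorphic to $\Z_p^n$. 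Since $\Z_p^n$ is projective among profinite $\Z_p$-modules, the extension $0\to H_0\to A(F)^{\mathrm{top}}\to \Z_p^n\to 0$ splits topologically, yielding $\HH^0\cong\underline{\Z_p}^n\oplus H_0$.

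For $q\geq 1$, combining \cref{lem:torsion} (torsion) with \cref{cor:condenseddiscrete} (finiteness of $n$-torsion) lets me write $\HH^q=\mathrm{colim}_n\leftin{n}{\HH^q}$ as a filtered colimit of finite discrete condensed abelian groups, hence itself discrete; by \cref{prop:globalsectionscohomag} this is canonically $\underline{\H^q(W_F,A(\overline{L}))}$. It therefore remains to compute the abstract abelian group $\H^q(W_F,A(\overline{L}))$. For $q=1$, Karpuk's Theorem (\cref{intro:thmkarpuk2}, applied to $A$ in place of his $A^*$) identifies it with the torsion subgroup $\Hom(A^*(F),\Q/\Z)$ of $\Ext(A^*(F),\Z)$. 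Applying the first paragraph to $A^*$ gives $A^*(F)\cong\Z_p^m\oplus K$ for some $m\in\N$ and finite abelian group $K$; continuous duality into $\Q/\Z$ yields $(\Q_p/\Z_p)^m\oplus K^{\vee}$, so setting $H_1\coloneqq K^{\vee}$ gives the desired shape. For $q\geq 2$, $\H^q(W_F,A(\overline{L}))=0$ is implicit in Karpuk's framework; alternatively it can be extracted from the Hochschild-Serre spectral sequence for $1\to I_F\to W_F\to W_k\to 1$ (with $W_k\cong\Z$ of cohomological dimension $1$) together with Tate's vanishing $\H^q(G_F,A(\overline{F}))=0$ for $q\geq 2$.

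The main anticipated obstacle is the passage from classical Galois-cohomological vanishing (in $A(\overline{F})$) to Weil-group cohomological vanishing (in $A(\overline{L})$): a priori, enlarging from $\overline{F}$ to $\overline{L}$ could introduce extra cohomology in positive degree, and ruling this out requires the Weil-Galois comparison established in Karpuk's framework. A secondary but delicate point is the topological splitting of $A(F)^{\mathrm{top}}$, which is handled by the projectivity of $\Z_p^n$ among profinite $\Z_p$-modules.
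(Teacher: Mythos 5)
Your proposal reaches the correct statement, and the backbone of the argument — identify $\HH^0$ with $\underline{A(F)^{\mathrm{top}}}$ via Mattuck's theorem, and for $q\ge 1$ use \cref{lem:torsion} and \cref{cor:condenseddiscrete} to reduce to the discrete-coefficient cohomology and hence to $\H^q(W_F,A(\overline{L}))$ — matches the paper's proof. But the routes you take to compute those abstract Weil-group cohomology groups in degrees $q\ge 1$ genuinely diverge from the paper's, and this is where some care is needed.

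For $q=1$, the paper simply cites Karpuk's Lemmas 5.4.1 and 5.4.3, which directly establish that $\H^1(W_F,A(\overline{L}))$ is of finite $\Q_p/\Z_p$-cotype. You instead invoke the \emph{duality} Theorem~\ref{intro:thmkarpuk2}(ii) (with $A$ and $A^*$ swapped) together with the $\HH^0$-structure of $A^*(F)$. This is a valid, if heavier, alternative. Be aware, though, that you read the statement as ``continuous duality into $\Q/\Z$''; as phrased in Theorem~\ref{intro:thmkarpuk2}(ii), the target is the \emph{torsion subgroup of} $\Ext(A^*(F),\Z)$, and one needs a small supplementary argument (using the $\Hom/\Ext$ sequence attached to $0\to\Z\to\Q\to\Q/\Z\to 0$ and the fact that every finite-index subgroup of $A^*(F)$ is open) to see that this torsion subgroup is indeed the Pontryagin dual $(\Q_p/\Z_p)^m\oplus K^\vee$. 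You gloss over this.

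For $q\ge 2$, your fallback spectral-sequence argument is correctly structured but the key vanishing input is misattributed. What the Hochschild–Serre spectral sequence for $1\to I_F\to W_F\to W_k\to 1$ needs, beyond $\cd(W_k)=1$, is $\H^q(I_F,A(\overline{L}))=0$ for $q\ge 2$; this follows from $\cd(G_L)\le 1$ (residue field algebraically closed) plus the Kummer sequence showing that $\H^q(I_F,A(\overline{L}))$ is both torsion and has trivial $m$-torsion. It does \emph{not} follow from Tate's vanishing of $\H^q(G_F,A(\overline{F}))$ for $q\ge 2$, which concerns the full Galois group over $F$, not the inertia. The paper sidesteps this by citing Karpuk directly. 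Finally, in your $\HH^0$ argument the splitting of $0\to H_0\to A(F)^{\mathrm{top}}\to\Z_p^n\to 0$ by ``projectivity of $\Z_p^n$ among profinite $\Z_p$-modules'' implicitly uses that $A(F)^{\mathrm{top}}$ is a $\Z_p$-module, which it need not be because $H_0$ can have prime-to-$p$ torsion; one should first split off the (finite) prime-to-$p$ Sylow part.
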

\begin{proof}
By \cref{lem:torsion} we have \[
\HH^q(B_{\hat{W}_F},A(\overline{L})_{\mathrm{cond}})=\underset{\substack{\rightarrow\\ m}}{\lim}\leftin{m}{\HH^q(B_{\hat{W}_F},A(\overline{L})_{\mathrm{cond}})}
\] for all $q\ge 1$. By \cref{cor:condenseddiscrete}, this coincides with \[
\underset{\substack{\rightarrow\\ m}}{\lim}\, \leftin{m}{\HH^q(B_{\hat{W}_F},A(\overline{L})^{\delta})}=\HH^q(B_{\hat{W}_F},A(\overline{L})^{\delta}).
\] Consequently, for all $q\ge 1$ we have \[
\HH^q(B_{\hat{W}_F},A(\overline{L})_{\mathrm{cond}})=\HH^q(B_{\hat{W}_F},A(\overline{L})^{\delta}).
\] Consequently $\HH^q(B_{\hat{W}_F},A(\overline{L})_{\mathrm{cond}})$ vanishes for all $q\ge 2$ and it is of finite $\Q_p/\Z_p$-type for $q=1$ (see \cite[Lemma 5.4.1, Lemma 5.4.3]{Karpuk2}).

To conclude, by \cite[Corollary 2.26, (1)]{Artusa} we have \[
\HH^0(B_{\hat{W}_F},A(\overline{L})_{\mathrm{cond}})=\underset{\substack{\rightarrow\\ K/L\\ \text{fin.\, ext.}}}{\lim}\, \HH^q(B_{W_F/U_K},\underline{A(K)^{\mathrm{top}}})=\underline{A(F)^{\mathrm{top}}},
\] which is a profinite abelian group isomorphic to the direct sum of a finite power of $\O_F$ and a finite abelian group (see \cref{exseq:opensubgr}).

\end{proof}
\subsection{Coefficients in semiabelian varieties}
Let $E$ be a semiabelian variety over $F$. We have an exact sequence of commutative algebraic groups over $F$ \begin{equation}\label{eqn:semiabelianvariety}
0\rightarrow T\rightarrow E\rightarrow A\rightarrow 0.
\end{equation} for some abelian variety $A$ and some torus $T$. We use \cref{exactweilrealisation} to determine the structure of $R\GGamma(B_{\hat{W}_F},E(\overline{L})_{\mathrm{cond}})$ in terms of the one of $R\GGamma(B_{\hat{W}_F},T(\overline{L})_{\mathrm{cond}})$ and $R\GGamma(B_{\hat{W}_F},A(\overline{L})_{\mathrm{cond}})$. The hypothesis of \cref{exactweilrealisation} is satisfied by $A$ thanks to \cref{prodiscreteabelianvarieties}. The same holds for $E$ by the following
\begin{prop}\label{prodiscretesemiabelianvarieties}
Let $K/L$ be a finite extension. Then the topological abelian group $E(K)^{\mathrm{top}}$ is prodiscrete.
\end{prop}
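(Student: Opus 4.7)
The plan is to adapt the proof of \cref{prodiscreteabelianvarieties}, replacing the N\'eron model of an abelian variety with a N\'eron lft-model of the semiabelian variety $E_K$. By \cite[Ch.~10]{blrneron}, $E_K$ admits such a N\'eron lft-model $\mathcal{E}$: a smooth, separated, locally of finite type $\O_K$-group scheme with generic fiber $E_K$ satisfying the N\'eron mapping property; in particular we obtain a canonical bijection $\mathcal{E}(\O_K) = E_K(K) = E(K)$.

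First I would verify that this bijection upgrades to a homeomorphism $\mathcal{E}(\O_K)^{\mathrm{top}} \cong E(K)^{\mathrm{top}}$ of topological abelian groups. Applying \cref{lem:homeos}(1) to the open embedding of local topological rings $\O_K \hookrightarrow K$ shows that the natural map $\mathcal{E}(\O_K)^{\mathrm{top}} \to E(K)^{\mathrm{top}}$ is an open topological embedding, and it is a bijection by the N\'eron mapping property, hence a homeomorphism. Then I would apply \cref{lem:homeos}(2) to $\mathcal{E}$, which remains valid in the locally-of-finite-type setting since the proof of that lemma reduces to affine opens via \cref{opencovering}. This yields a homeomorphism $\mathcal{E}(\O_K)^{\mathrm{top}} \cong \underset{\substack{\leftarrow\\ n}}{\lim}\, \mathcal{E}(\O_K/\p_K^n)^{\mathrm{top}}$. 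Since each $\O_K/\p_K^n$ is a finite discrete ring, each $\mathcal{E}(\O_K/\p_K^n)^{\mathrm{top}}$ is discrete, so $E(K)^{\mathrm{top}}$ is a projective limit of discrete topological abelian groups, hence prodiscrete.

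The most delicate point is invoking the correct existence statement for the N\'eron lft-model of the semiabelian variety $E_K$ over $\O_K$ and carefully tracking that the identification with $E(K)$ is topological rather than merely set-theoretic; both ingredients are, however, standard. An alternative and softer route bypasses integral models entirely: $E(K)^{\mathrm{top}}$ is locally compact (being locally cut out by polynomial equations over the locally compact field $K$) and totally disconnected (as $K$ is), so by van Dantzig's theorem it admits a compact open subgroup, necessarily profinite and hence prodiscrete; the presence of an open prodiscrete subgroup then forces the ambient Hausdorff topological group to be prodiscrete.
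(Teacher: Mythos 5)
Your main argument is essentially the paper's proof: you invoke the N\'eron lft-model $\mathcal{E}$ of $E_K$ over $\O_K$, then use \cref{lem:homeos} to upgrade the bijections $\mathcal{E}(\O_K)\cong E(K)$ and $\mathcal{E}(\O_K)\cong\varprojlim_n\mathcal{E}(\O_K/\p_K^n)$ to homeomorphisms and conclude from discreteness of the special fibers. The paper additionally records that $E_K$ is smooth, connected and contains no subgroup of type $\mathbbm{G}_a$ before citing \cite[Ch.\ X.2, Theorem 2]{blrneron}; you wave at this with ``standard'' but do not check the no-$\mathbbm{G}_a$ hypothesis, which is the actual existence criterion for the N\'eron lft-model. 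This is a small gap, easily filled since $E_K$ is semiabelian.

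Your ``alternative and softer route'' is, however, incorrect. You assert that $E(K)^{\mathrm{top}}$ is locally compact because $K$ is a locally compact field. But in this proposition $K$ is a finite extension of $L=\widehat{F^{\mathrm{un}}}$, and $L$ is \emph{not} locally compact: its residue field is $\overline{k}$, which is infinite, so $\O_L\cong\varprojlim_n\O_L/\p_L^n$ is an inverse limit of infinite discrete groups and is not compact. The same holds for $\O_K$. Hence $K$ is not locally compact, $E(K)^{\mathrm{top}}$ has no reason to be locally compact, and van Dantzig's theorem does not apply. This is precisely why the paper (and your main argument) must work through integral models and exhibit $E(K)^{\mathrm{top}}$ explicitly as a projective limit of discrete groups rather than appealing to local compactness.
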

\begin{proof}
Since $A_K$ and $T_K$ are smooth and connected, then so is $E_K$, by \cite[Proposition 8.1]{MilneAG}. We observe that $E_K$ contains no subgroup of type $\mathbbm{G}_a$. By \cite[Ch X.2, Theorem 2]{blrneron}, $E_K$ admits a N\'eron lft-model over $\O_K$, we call it $\mathcal{E}$. 

Now we proceed as in \cref{prodiscreteabelianvarieties}. By N\'eron mapping property and by \cite[\S 6]{Schemata}, we have isomorphisms of abelian groups \begin{equation}\label{lftneron1}
\mathcal{E}(\O_K)\cong E(K).
\end{equation} \begin{equation}\label{lftneron2}
\mathcal{E}(\O_K)\cong \underset{\substack{\leftarrow\\n}}{\lim}\, \mathcal{E}(\O_K/\p_K^n).
\end{equation} By \cref{lem:homeos} we have an isomorphism of topological abelian groups \[E(K)^{\mathrm{top}}\cong \underset{\substack{\leftarrow\\n}}{\lim}\, \mathcal{E}(\O_K/\p_K^n)^{\mathrm{top}}. \] The result follows from discreteness of $\O_K/\p_K^n$.
\end{proof}

\begin{prop}\label{prop:structurecohomsab}
Let $E$ be a semiabelian variety over $F$. We have: \begin{itemize}
    \item $\HH^0(B_{\hat{W}_F},E(\overline{L})_{\mathrm{cond}})=\underline{E(F)^{\mathrm{top}}}$ is locally compact of finite ranks, extension of an open subgroup of finite index of $\underline{A(F)^{\mathrm{top}}}$ by $\underline{T(F)^{\mathrm{top}}}$.
    \item $\HH^1(B_{\hat{W}_F},E(\overline{L})_{\mathrm{cond}})$ is discrete of finite ranks, extension of $\HH^1(B_{\hat{W}_F},A(\overline{L})_{\mathrm{cond}})$ by a quotient of $\HH^1(B_{\hat{W}_F},T(\overline{L})_{\mathrm{cond}})$.
    \item $\HH^q(B_{\hat{W}_F},E(\overline{L})_{\mathrm{cond}})=0$ for all $q\neq 0,1$.
\end{itemize}
\end{prop}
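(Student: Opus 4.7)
The approach is to derive the statement from the corresponding results for tori (\cref{prop:structuretorus}) and abelian varieties (\cref{thm:structurecohomologyabvar}) by applying the long exact sequence induced by \eqref{eqn:semiabelianvariety}. The prodiscreteness hypothesis of \cref{exactweilrealisation} is verified by \cref{prodiscreteabelianvarieties} and \cref{prodiscretesemiabelianvarieties}, so the condensed Weil-\'etale realisation produces a short exact sequence
\[
0\rightarrow T(\overline{L})_{\mathrm{cond}}\rightarrow E(\overline{L})_{\mathrm{cond}}\rightarrow A(\overline{L})_{\mathrm{cond}}\rightarrow 0
\]
in $\Ab(B_{\hat{W}_F})$. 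Applying $R\GGamma(B_{\hat{W}_F},-)$ yields a long exact sequence of condensed abelian groups, which I will analyse degree by degree.

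For $q\ge 2$, both flanking terms vanish by the two cited propositions, so $\HH^q(B_{\hat{W}_F},E(\overline{L})_{\mathrm{cond}})=0$; the negative-degree vanishing is automatic. For $q=1$, the long exact sequence collapses to
\[
0\to \HH^1(B_{\hat{W}_F},T(\overline{L})_{\mathrm{cond}})/\mathrm{Im}(\HH^0(B_{\hat{W}_F},A(\overline{L})_{\mathrm{cond}}))\to \HH^1(B_{\hat{W}_F},E(\overline{L})_{\mathrm{cond}})\to \HH^1(B_{\hat{W}_F},A(\overline{L})_{\mathrm{cond}})\to 0.
\]
Since $\HH^1(B_{\hat{W}_F},T(\overline{L})_{\mathrm{cond}})$ is a finitely generated discrete abelian group, its quotient by the image is again discrete and finitely generated, and $\HH^1(B_{\hat{W}_F},A(\overline{L})_{\mathrm{cond}})$ is discrete of finite ranks, so the extension $\HH^1(B_{\hat{W}_F},E(\overline{L})_{\mathrm{cond}})$ is discrete of finite ranks as claimed.

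For $q=0$, the same reduction to finite extensions of $L$ together with an appeal to \cite[Corollary 2.26(1)]{Artusa} (used in the proof of \cref{thm:structurecohomologyabvar}) identifies $\HH^0(B_{\hat{W}_F},E(\overline{L})_{\mathrm{cond}})$ with $\underline{E(F)^{\mathrm{top}}}$. The initial segment of the long exact sequence then reads
\[
0\to \underline{T(F)^{\mathrm{top}}}\to \underline{E(F)^{\mathrm{top}}}\to K\to 0,
\]
where $K$ is the kernel of the connecting map $\underline{A(F)^{\mathrm{top}}}\to \HH^1(B_{\hat{W}_F},T(\overline{L})_{\mathrm{cond}})$. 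The target is discrete and the source is profinite (hence compact) by \cref{thm:structurecohomologyabvar}, so this kernel is open of finite index in $\underline{A(F)^{\mathrm{top}}}$. Combined with the known structure of $\underline{T(F)^{\mathrm{top}}}$ coming from \cref{prop:structuretorus}, this extension gives the asserted description of $\HH^0$ together with its local compactness of finite ranks, finishing the proof.

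I do not anticipate a serious obstacle: the two structural inputs for $T$ and $A$ carry all the required information, and the argument reduces to a bookkeeping exercise in the long exact sequence. The only point that deserves explicit justification is the claim that $K$ is open of finite index in $\underline{A(F)^{\mathrm{top}}}$, which reflects the general fact that any morphism from a profinite condensed abelian group to a discrete one factors through a finite quotient.
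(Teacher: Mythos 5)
Your proof is correct and follows essentially the same route as the paper: apply $R\GGamma(B_{\hat{W}_F},-)$ to the short exact sequence from \cref{exactweilrealisation}, read off the long exact cohomology sequence, and use the structure results for $T$ and $A$ plus finiteness of the image of $\underline{A(F)^{\mathrm{top}}}\to\HH^1(B_{\hat{W}_F},T(\overline{L})_{\mathrm{cond}})$ to get the description of $\HH^0$ and $\HH^1$. The only (cosmetic) difference is that the paper justifies finiteness of that image via $\underline{\Hom}_{\Ab(\CC)}(\underline{A(F)^{\mathrm{top}}},\Z)=0$ while you invoke the general compact-to-discrete argument, both of which are valid.
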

\begin{proof}
By \cref{prodiscretesemiabelianvarieties} and \cref{exactweilrealisation},  we have a fibre sequence in $\DDD^{\mathrm{b}}(\CC)$ \[
R\GGamma(B_{\hat{W}_F},T(\overline{L})_{\mathrm{cond}})\rightarrow R\GGamma(B_{\hat{W}_F},E(\overline{L})_{\mathrm{cond}})\rightarrow R\GGamma(B_{\hat{W}_F},A(\overline{L})_{\mathrm{cond}}).
\] By combining \cref{thm:structurecohomologyabvar,prop:structuretorus}, we have $\HH^q(B_{\hat{W}_F},E(\overline{L})_{\mathrm{cond}})=0$ for all $q\ge 2$. Moreover, we have an exact sequence in $\Ab(\CC)$ \[\begin{split}
0&\rightarrow \underline{T(F)^{\mathrm{top}}}\rightarrow \underline{E(F)^{\mathrm{top}}}\rightarrow \underline{A(F)^{\mathrm{top}}}\rightarrow\\&\rightarrow \HH^1(B_{\hat{W}_F},T(\overline{L})_{\mathrm{cond}})\rightarrow \HH^1(B_{\hat{W}_F},E(\overline{L})_{\mathrm{cond}})\rightarrow \HH^1(B_{\hat{W}_F},A(\overline{L})_{\mathrm{cond}})\rightarrow 0. \end{split}
\] Since we have $\underline{\Hom}_{\Ab(\CC)}(\underline{A(F)^{\mathrm{top}}},\Z)=0$, the map \[
\underline{A(F)^{\mathrm{top}}}\rightarrow \HH^1(B_{\hat{W}_F},T(\overline{L})_{\mathrm{cond}})
\] has finite image. Consequently, the image of $\underline{E(F)^{\mathrm{top}}}\to \underline{A(F)^{\mathrm{top}}}$ is represented by an open subgroup of $A(F)^{\mathrm{top}}$ of finite index. The result follows.
\end{proof}

\subsection{Coefficients in 1-motives}\label{section:structurecohomologymotives}
Let $\MM=[Y\overset{u}{\rightarrow} E]$ be a 1-motive over $F$ and let $\MM(\overline{L})_{\mathrm{cond}}$ be its condensed Weil-\'etale realisation, which represents an object of $\DDD^{\mathrm{b}}(B_{\hat{W}_F})$. We determine the structure of $R\GGamma(B_{\hat{W}_F},\MM(\overline{L})_{\mathrm{cond}})$. We start with the following remark. We can't expect $\HH^0(B_{\hat{W}_F},\MM(\overline{L})_{\mathrm{cond}})$ to be in the essential image of $\LCA\subset \Ab(\CC)$ in general. Indeed, $\HH^0(B_{\hat{W}_F},\MM(\overline{L})_{\mathrm{cond}})$ contains the cokernel of the morphism $\underline{u^0}:\underline{Y(F)^{\mathrm{top}}}\rightarrow \underline{E(F)^{\mathrm{top}}}$, which is not always in $\LCA$ as the following example shows.
\begin{ex}\label{ex:nh}
Let us consider $F=\Q_p$ ($p\ge 3$), and let $\MM$ be the 1-motive $[\Z\overset{u}{\rightarrow}\mathbbm{G}_m]$ such that $u^0:\Z\to \Q_p^{\times}$ is the injection sending $1$ to $1+p$. Then, under the isomorphism $\Q_p^{\times}\cong\Z_p\oplus\F_p^{\times}\oplus \Z$, the image of $\Z$ is a dense subset of $\Z_p$. Consequently, the cokernel of the induced map \[
\underline{\Z}\rightarrow \underline{\Q_p^{\times}}
\] contains the condensed abelian group $\underline{\Z_p}/\underline{\Z}$, which is not locally compact. 
\end{ex}
 Condensed Mathematics is able to deal with non-Hausdorff situations. Indeed, we can isolate the non-Hausdorff term via the following
\begin{defn}(from \cref{decompositionsection}, \eqref{eqn:decompositiondefinition})
Let $u^0:Y(F)^{\mathrm{top}}\to E(F)^{\mathrm{top}}$ be the continuous morphism of locally compact abelian groups of finite ranks induced by $u$. Let $\overline{u^0(Y(F)^{\mathrm{top}})}$ be the closure of $u^0(Y(F)^{\mathrm{top}})$ in $E(F)^{\mathrm{top}}$. We define \[
\HH^0(B_{\hat{W}_F},\MM(\overline{L})_{\mathrm{cond}})^{\mathrm{nh}}\coloneqq \mathrm{coker}(\, \underline{u^0(Y(F)^{\mathrm{top}})}\rightarrow \underline{(\overline{u^0(Y(F)^{\mathrm{top}})})}\,)
\]  and \[
\HH^0(B_{\hat{W}_F},\MM(\overline{L})_{\mathrm{cond}})^{\mathrm{lca}}\coloneqq\HH^0(B_{\hat{W}_F},\MM(\overline{L})_{\mathrm{cond}})/\HH^0(B_{\hat{W}_F},\MM(\overline{L})_{\mathrm{cond}})^{\mathrm{nh}}.
\]\end{defn} 
In \cref{ex:nh}, we have $\HH^0(B_{\hat{W}_F},\MM(\overline{L})_{\mathrm{cond}})^{\mathrm{nh}}=\underline{\Z_p}/\underline{\Z}$. The structure of the cohomology groups of $R\GGamma(B_{\hat{W}_F},\MM(\overline{L})_{\mathrm{cond}})$ is given by the following 
\begin{thm}\label{structuremotives}
Let $\MM=[Y\overset{u}{\rightarrow} E]$ be a 1-motive over $F$. Then we have $R\GGamma(B_{\hat{W}_F},\MM(\overline{L})_{\mathrm{cond}})\in \DDD^{\mathrm{b}}(\FLCA)$. Moreover, we have \begin{itemize}
\item $\HH^{-1}(B_{\hat{W}_F},\MM(\overline{L})_{\mathrm{cond}})$ is a discrete free finitely generated abelian group.
\item $\HH^0(B_{\hat{W}_F},\MM(\overline{L})_{\mathrm{cond}})$ is an extension of $\HH^0(B_{\hat{W}_F},\MM(\overline{L})_{\mathrm{cond}})^{\mathrm{lca}}$ by $\HH^0(B_{\hat{W}_F},\MM(\overline{L})_{\mathrm{cond}})^{\mathrm{nh}}$.
\item $\HH^0(B_{\hat{W}_F},\MM(\overline{L})_{\mathrm{cond}})^{\mathrm{nh}}$ is the cokernel of a dense injective morphism of locally compact abelian groups of finite ranks.
\item  $\HH^{0}(B_{\hat{W}_F},\MM(\overline{L})_{\mathrm{cond}})^{\mathrm{lca}}$ is a locally compact abelian group of finite ranks.
\item $\HH^1(B_{\hat{W}_F},\MM(\overline{L})_{\mathrm{cond}})$ is discrete of finite ranks.
\item $\HH^q(B_{\hat{W}_F},\MM(\overline{L})_{\mathrm{cond}})=0$ for all $q\neq 0,\pm 1$.
\end{itemize}
\end{thm}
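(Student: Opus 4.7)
The plan is to use the defining fibre sequence $E(\overline{L})_{\mathrm{cond}}[0]\to \MM(\overline{L})_{\mathrm{cond}}\to Y(\overline{L})_{\mathrm{cond}}[1]$ in $\DDD^{\mathrm{b}}(B_{\hat{W}_F})$ (the one used in the proof of \cref{condensedfiltration}) and to apply $R\GGamma(B_{\hat{W}_F},-)$ to it. This gives a long exact sequence in $\Ab(\CC)$
\[
\cdots\to \HH^q(B_{\hat{W}_F},E(\overline{L})_{\mathrm{cond}})\to \HH^q(B_{\hat{W}_F},\MM(\overline{L})_{\mathrm{cond}})\to \HH^{q+1}(B_{\hat{W}_F},Y(\overline{L})_{\mathrm{cond}})\to \cdots
\]
from which all structural results will be extracted by combining \cref{prop:structurecohomsab} for the semiabelian variety $E$ with \cref{example:locallyconstantrealisation} for $Y$. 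The latter identifies $Y(\overline{L})_{\mathrm{cond}}$ with a free finitely generated abelian group $Y(\overline{F})$ carrying a continuous $\hat{W}_F$-action factoring through a finite quotient $\Gal(F'/F)$; by the computation of cohomology of finite groups on free finitely generated modules, $\HH^q(B_{\hat{W}_F},Y(\overline{L})_{\mathrm{cond}})$ is represented by a discrete finitely generated abelian group for $q=0,1$ and vanishes for $q\ge 2$ (and for $q<0$).

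With these inputs, the vanishing $\HH^q(B_{\hat{W}_F},\MM(\overline{L})_{\mathrm{cond}})=0$ for $q\notin\{-1,0,1\}$ is immediate. For $q=-1$ the sequence gives
\[
\HH^{-1}(B_{\hat{W}_F},\MM(\overline{L})_{\mathrm{cond}})=\ker\bigl(\underline{Y(F)}\to \underline{E(F)^{\mathrm{top}}}\bigr),
\]
a subgroup of the discrete free finitely generated group $Y(F)$, hence of the same type. For $q=1$ we get a surjection from a quotient of $\HH^1(B_{\hat{W}_F},E(\overline{L})_{\mathrm{cond}})$, which is discrete of finite ranks by \cref{prop:structurecohomsab}, so $\HH^1(B_{\hat{W}_F},\MM(\overline{L})_{\mathrm{cond}})$ inherits this property. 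The heart of the argument concerns $q=0$, where the long exact sequence extracts a short exact sequence
\[
0\to \mathrm{coker}\bigl(\underline{Y(F)}\to \underline{E(F)^{\mathrm{top}}}\bigr)\to \HH^0(B_{\hat{W}_F},\MM(\overline{L})_{\mathrm{cond}})\to \ker\bigl(\HH^1(B_{\hat{W}_F},Y)\to \HH^1(B_{\hat{W}_F},E)\bigr)\to 0,
\]
whose third term is discrete finitely generated.

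The main subtlety is analysing the cokernel $\mathrm{coker}(\underline{u^0}\colon\underline{Y(F)}\to \underline{E(F)^{\mathrm{top}}})$ in $\Ab(\CC)$. Denote by $I\coloneqq u^0(Y(F))$ the (not necessarily closed) image and by $\overline{I}$ its closure in $E(F)^{\mathrm{top}}$. I will insert the closure to obtain a factorisation
\[
0\to \underline{\overline{I}}/\underline{I}\to \underline{E(F)^{\mathrm{top}}}/\underline{I}\to \underline{E(F)^{\mathrm{top}}}/\underline{\overline{I}}\to 0.
\]
The quotient $\underline{E(F)^{\mathrm{top}}}/\underline{\overline{I}}$ is represented by the topological quotient $E(F)^{\mathrm{top}}/\overline{I}$ because $\overline{I}\hookrightarrow E(F)^{\mathrm{top}}$ is a closed embedding of locally compact abelian groups of finite ranks (here one uses \cref{prop:structurecohomsab} together with the fact that closed subgroups and Hausdorff quotients of LCA groups of finite ranks are of the same type), so this quotient lies in $\FLCA$. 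By construction $\underline{\overline{I}}/\underline{I}$ is the cokernel of a dense injective morphism in $\FLCA$, exactly the defining shape of the non-Hausdorff piece $\HH^0(B_{\hat{W}_F},\MM(\overline{L})_{\mathrm{cond}})^{\mathrm{nh}}$. The lca/nh decomposition of the full $\HH^0$ is then obtained by combining this short exact sequence with the one coming from the fibre sequence, using that an extension of a LCA group of finite ranks by a LCA group of finite ranks is again of finite ranks, and that the class $\NH$ is stable under such extensions. The expected main obstacle is precisely this bookkeeping step: identifying the condensed cokernel with a topological quotient requires invoking that $I$, $\overline{I}$ and $E(F)^{\mathrm{top}}$ are all compactly generated and applying the results of \cite{Andrey} (used already in the proof of \cref{thm:exactrealisation}) to ensure that the sequence of condensed groups is exact and that each term is represented by the expected topological object. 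Once this is in place, $R\GGamma(B_{\hat{W}_F},\MM(\overline{L})_{\mathrm{cond}})\in\DDD^{\mathrm{b}}(\FLCA)$ follows because each cohomology object is in the left heart of $\FLCA$ (a fact that will be systematised in \cref{decompositionsection}).
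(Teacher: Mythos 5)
Your proposal takes the same fibre sequence route as the paper, but it contains a genuine error in the structural inputs on the cohomology of $Y$. You assert that ``by the computation of cohomology of finite groups on free finitely generated modules, $\HH^q(B_{\hat{W}_F},Y(\overline{L})_{\mathrm{cond}})$ is represented by a discrete finitely generated abelian group for $q=0,1$ and vanishes for $q\ge 2$''. This is false: the cohomology here is that of the full pro-condensed Weil group $\hat{W}_F$ (an extension of $W_k\cong\Z$ by the inertia), not of a finite Galois quotient, and indeed $\HH^2(B_{\hat{W}_F},Y(\overline{L})_{\mathrm{cond}})$ is a nonzero torsion group of the form $(\Q_p/\Z_p)^t\oplus H$ with $H$ finite. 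The correct reference is \cite[Theorem 3.30]{Artusa}, which the paper cites and which gives the vanishing only for $q\ge 3$ (and $q<0$), together with $\HH^0\cong\Z^r$, $\HH^1\cong\Z^s\oplus H_1''$, $\HH^2\cong(\Q_p/\Z_p)^t\oplus H_2''$.

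This error propagates to the $q=1$ part of your argument. The long exact sequence reads $\HH^1(E)\to\HH^1(\MM)\to\HH^2(Y)\to\HH^2(E)=0$, so $\HH^1(\MM)$ is an extension of the (nonzero, in general) torsion group $\HH^2(Y)$ by a quotient of $\HH^1(E)$ --- not merely a quotient of $\HH^1(E)$ as you claim. The conclusion that $\HH^1(\MM)$ is discrete of finite ranks survives, since extensions of discrete groups of finite ranks by discrete groups of finite ranks are again discrete of finite ranks, but as written your argument does not account for the $\HH^2(Y)$ contribution and so has a hole. Your treatment of $q=-1$ and of the nh/lca decomposition at $q=0$ is otherwise in good shape, and it is more explicit than the paper's own terse proof (which simply defers to \cref{prop:structurecohomsab} and \cite[Theorem 3.30]{Artusa}); to repair the argument, replace the finite-group-cohomology heuristic with the actual citation of \cite[Theorem 3.30]{Artusa} and redo the $q=1$ step as an extension rather than a quotient.
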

\begin{proof}
We have a fibre sequence in $\DDD^{\mathrm{b}}(\CC)$ \[
R\GGamma(B_{\hat{W}_F},E(\overline{L})_{\mathrm{cond}})\rightarrow R\GGamma(B_{\hat{W}_F},\MM(\overline{L})_{\mathrm{cond}})\rightarrow R\GGamma(B_{\hat{W}_F},Y(\overline{L})_{\mathrm{cond}})[1].
\] The result follows by taking the long exact cohomology sequence and by \cref{prop:structurecohomsab} and \cite[Theorem 3.30]{Artusa}.
\end{proof}
\begin{rmk} $\HH^0(B_{\hat{W}_F},\MM(\overline{L})_{\mathrm{cond}})^{\mathrm{lca}}$ is the maximal locally compact quotient of the condensed abelian group $\HH^0(B_{\hat{W}_F},\MM(\overline{L})_{\mathrm{cond}})$. Indeed, for all $A\in \LCA$ we have \[\Hom_{\Ab(\CC)}(\HH^0(B_{\hat{W}_F},\MM(\overline{L})_{\mathrm{cond}})^{\mathrm{nh}},A)=0.\] Thus any map $\HH^0(B_{\hat{W}_F},\MM(\overline{L})_{\mathrm{cond}})\to A$ factors through $\HH^0(B_{\hat{W}_F},\MM(\overline{L}))^{\mathrm{lca}}$.

Moreover, whenever the continuous map $u^0:Y(F)^{\mathrm{top}}\to E(F)^{\mathrm{top}}$ is closed, the condensed abelian group $\HH^0(B_{\hat{W}_F},\MM(\overline{L})_{\mathrm{cond}})^{\mathrm{nh}}$ vanishes and  $\HH^0(B_{\hat{W}_F},\MM(\overline{L})_{\mathrm{cond}})$ is locally compact of finite ranks.\end{rmk}
\begin{ex}
Let $\MM=[\Z\overset{u}{\rightarrow} \mathbbm{G}_m]$ be the 1-motive of \cref{ex:nh}, defined over $F=\Q_p$. The morphism $u(\overline{L}):\Z\to \overline{L}^{\times}$ sends $1$ to $1+p$. We determine the structure of the cohomology groups $\HH^q(B_{\hat{W}_F},\MM(\overline{L})_{\mathrm{cond}})$. We have a long exact cohomology sequence \[\begin{split}
0&\rightarrow \HH^{-1}(B_{\hat{W}_F},\MM(\overline{L})_{\mathrm{cond}})\rightarrow \Z\overset{\alpha}{\rightarrow} \Q_p^{\times}\rightarrow \HH^0(B_{\hat{W}_F},\MM(\overline{L})_{\mathrm{cond}})\rightarrow \\
& \rightarrow \Z\overset{\beta}{\rightarrow} \Z \rightarrow \HH^1(B_{\hat{W}_F},\MM(\overline{L})_{\mathrm{cond}})\rightarrow (\O_F^{\times})^{\vee} \rightarrow 0.\end{split}
\] Here the morphism $\alpha:\Z\to \Q_p^{\times}$ is the injection $1\mapsto 1+p$ and the morphism \[\beta:\HH^1(B_{\hat{W}_F},\Z)=\HH^1(B_{W_k},\Z) \rightarrow \HH^1(B_{\hat{W}_F},\overline{L}^{\times})=\HH^1(B_{W_k},L^{\times})\] is the zero morphism, since the valuation of $1+p$ is $0$ and the cohomology group $\HH^1(B_{W_k},L^{\times})$ identifies with $\HH^1(B_{W_k},\Z)\cong \Z$ via the valuation $L^{\times}\to\Z$. Consequently we have \[\begin{split}
\HH^{-1}(B_{\hat{W}_F},\MM(\overline{L})_{\mathrm{cond}})=0,\qquad \qquad & \HH^0(B_{\hat{W}_F},\MM(\overline{L})_{\mathrm{cond}})^{\mathrm{nh}}=\underline{\Z_p}/\underline{\Z},\\
 \HH^0(B_{\hat{W}_F},\MM(\overline{L})_{\mathrm{cond}})^{\mathrm{lca}}=(\Z\oplus \mathbbm{F}_p^{\times})\oplus \Z \qquad & \HH^1(B_{\hat{W}_F},\MM(\overline{L})_{\mathrm{cond}})\in \Ext_{\Ab(\CC)}((J)^{\vee},\Z),
 \end{split}
\]where $J$ is the kernel of $G_F^{ab}\to G_k$. 
\end{ex}

\subsection{The \texorpdfstring{$\R/\Z$}{RZ}-twist}\label{section:rztwist}
In the following we define a $\R/\Z$-twist for the complex $R\GGamma(B_{\hat{W}_F},\MM(\overline{L})_{\mathrm{cond}})$, and more generally for any $M\in \DDD^{\mathrm{b}}(B_{\hat{W}_F})$ such that we have $R\GGamma(B_{\hat{W}_F},M)\in\DDD^{\mathrm{b}}(\LCA)$. This operation allows us to express the duality theorem for $1$-motives as a Pontryagin duality.
\begin{defn}
Let $M\in\DDD^{\mathrm{b}}(B_{\hat{W}_F})$ such that we have $R\GGamma(B_{\hat{W}_F},M)\in \DDD^{\mathrm{b}}(\LCA)$. We define the $\R/\Z$-twist of $R\GGamma(B_{\hat{W}_F},M)$ as \[
R\GGamma(B_{\hat{W}_F},M;\R/\Z)\coloneqq R\GGamma(B_{\hat{W}_F},M)\otimes^L_{\LCA} \R/\Z,
\] where $\otimes^L_{\LCA}$ is the derived tensor product of the category $\DDD^{\mathrm{b}}(\LCA)$ (see \cite[Remark 4.3, i)]{Hoff}).
\end{defn}
\begin{rmk}
Let $A^{\bullet},B^{\bullet}$ be two objects of $\DDD^{\mathrm{b}}(\Ab(\CC))$ in the essential image of $\DDD^{\mathrm{b}}(\LCA)$. Then we have \[
(A^{\bullet}\otimes^L_{\Ab(\CC)}B^{\bullet})^{\vee}=R\underline{\Hom}_{\Ab(\CC)}(A^{\bullet},(B^{\bullet})^{\vee})=R\underline{\Hom}_{\LCA}(A^{\bullet},(B^{\bullet})^{\vee})=(A^{\bullet}\otimes^L_{\LCA}B^{\bullet})^{\vee}.
\] This shows that the Pontryagin dual of $T_{\Ab(\CC)}\coloneqq A^{\bullet}\otimes^L_{\Ab(\CC)} B^{\bullet}$ coincides with the Pontryagin dual of $T_{\LCA}\coloneqq A^{\bullet}\otimes^L_{\LCA} B^{\bullet}$. Since we have $T_{\LCA}\in \DDD^{\mathrm{b}}(\LCA)$, this object is dualisable, i.e.\ we have $((T_{\LCA})^{\vee})^{\vee}=T_{\LCA}$. Whenever $T_{\Ab(\CC)}$ is dualisable as well, we can conclude that $T_{\LCA}=T_{\Ab(\CC)}$. However, this fails in many cases, even not elaborated ones. \begin{ex}Consider, for example, $A^{\bullet}=\Z_p$ and $B^{\bullet}=\R$. Then we have \[
(\Z_p\otimes^L_{\Ab(\CC)}\R)^{\vee}=(\Z_p\otimes^L_{\LCA}\R)^{\vee}=0.
\] Consequently, we have $\Z_p\otimes^L_{\LCA}\R=0$. However, the complex $\Z_p\otimes^L_{\Ab(\CC)}\R$ is far from being zero. For example, the underlying abelian group of $\H^0(\Z_p\otimes^L_{\Ab(\CC)}\R)=\Z_p\otimes_{\Z}\R$ is an uncountably dimensional flat $\R$-module. 
\end{ex}
\end{rmk}

With coefficients in tori, abelian varieties or their Cartier duals, the effect of the $\R/\Z$ twist on cohomology groups can be described as follows: 
\begin{prop}\label{structurerztwist}
Let $G$ be either an abelian variety, a torus or a $F$-group scheme étale locally isomorphic to $\Z^r$ for some $r\in\N$. Then we have \begin{enumerate}[(a)]
    \item $\HH^{-1}(B_{\hat{W}_F},G(\overline{L})_{\mathrm{cond}};\R/\Z)=\HH^0(B_{\hat{W}_F},G(\overline{L})_{\mathrm{cond}})_{\mathrm{toptors}}$;
    \item For $q=0,1$, we have \[
    \HH^q(B_{\hat{W}_F},G(\overline{L})_{\mathrm{cond}};\R/\Z)=(\HH^q(B_{\hat{W}_F},G(\overline{L})_{\mathrm{cond}})_{\Z}\otimes \R/\Z) \oplus \HH^{q+1}(B_{\hat{W}_F},G(\overline{L})_{\mathrm{cond}})_{\mathrm{toptors}}.
    \]
    \item $\HH^q(B_{\hat{W}_F},G(\overline{L})_{\mathrm{cond}};\R/\Z)=0$ for $q\neq 0,\pm 1$.
\end{enumerate}
\end{prop}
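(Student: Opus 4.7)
The plan is to apply the fundamental fibre sequence $\Z \to \R \to \R/\Z$ in $\DDD^{\mathrm{b}}(\LCA)$ to the complex $C \coloneqq R\GGamma(B_{\hat{W}_F}, G(\overline{L})_{\mathrm{cond}})$. Tensoring on the left by $C$ yields a fibre sequence
\[
C \longrightarrow C \otimes^L_{\LCA} \R \longrightarrow C \otimes^L_{\LCA} \R/\Z = R\GGamma(B_{\hat{W}_F}, G(\overline{L})_{\mathrm{cond}}; \R/\Z),
\]
and the three assertions will then drop out of the associated long exact cohomology sequence, provided one can first identify the middle term $C \otimes^L_{\LCA} \R$.

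The structure results \cref{prop:structuretorus}, \cref{thm:structurecohomologyabvar} and the analogue for $Y$ given in \cite[Theorem 3.30]{Artusa} tell us that $C$ has cohomology concentrated in degrees $0$ and $1$, and that each cohomology group decomposes canonically as $\HH^q(C) = \HH^q(C)_{\Z} \oplus \HH^q(C)_{\mathrm{toptors}}$, where $\HH^q(C)_{\Z}$ is finitely generated free abelian and $\HH^q(C)_{\mathrm{toptors}}$ is built from $\Z_p$'s, $\Q_p/\Z_p$'s and finite abelian groups. The key computation is then that $\Z \otimes^L_{\LCA} \R = \R$ while $P \otimes^L_{\LCA} \R = 0$ for each of the topologically torsion pieces $P$ listed above. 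The vanishing can be checked by Pontryagin duality: using that $R\underline{\Hom}_{\LCA}(\R, \R/\Z) = \R$, one has $(P \otimes^L_{\LCA} \R)^{\vee} = R\underline{\Hom}_{\LCA}(P, \R)$, and no non-zero continuous morphism from a profinite or discrete torsion group lands in $\R$ (the higher Ext groups are handled similarly using resolutions such as $0 \to \Z_p \to \Q_p \to \Q_p/\Z_p \to 0$). Consequently $C \otimes^L_{\LCA} \R$ has cohomology $\HH^q(C)_{\Z} \otimes_{\Z} \R$ in each degree.

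Substituting into the long exact sequence, the map $\HH^q(C) \to \HH^q(C)_{\Z} \otimes \R$ has kernel $\HH^q(C)_{\mathrm{toptors}}$ and cokernel $\HH^q(C)_{\Z} \otimes \R/\Z$, since it factors as the projection onto the free summand followed by the canonical injection. This produces short exact sequences
\[
0 \to \HH^{q-1}(C)_{\Z} \otimes \R/\Z \to \HH^{q-1}(C; \R/\Z) \to \HH^q(C)_{\mathrm{toptors}} \to 0
\]
for every integer $q$. Setting $q = 0$ recovers (a), the values $q = 1, 2$ give the two parts of (b), and the values outside $\{0, 1\}$ give (c) thanks to the vanishing of $\HH^q(C)$ there.

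The main obstacle is to show that these short exact sequences split, so that the answers in (b) are genuine direct sums rather than mere extensions. This reduces to the vanishing of $\Ext^1_{\LCA}(\HH^q(C)_{\mathrm{toptors}}, (\R/\Z)^l)$ for the relevant $q$, i.e.\ of Ext groups from a topologically torsion group into a torus. Divisibility of $\R/\Z$, combined with the fact that a set-theoretic section of a compact extension by a finite or profinite quotient is automatically continuous, implies the vanishing, and the claimed direct-sum formulas follow.
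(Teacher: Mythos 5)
Your argument follows essentially the same route as the paper: tensor the complex $C = R\GGamma(B_{\hat{W}_F}, G(\overline{L})_{\mathrm{cond}})$ against the fibre sequence $\Z \to \R \to \R/\Z$, compute the $\R$-twist degreewise, and extract the twisted cohomology from the resulting long exact sequence. The identifications $\mathrm{coker}(f^q)=\HH^q(C)_{\Z}\otimes\R/\Z$ and $\ker(f^{q+1})=\HH^{q+1}(C)_{\mathrm{toptors}}$ via Pontryagin duality are the same as in the paper's proof.

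One factual slip: you assert that $C$ has cohomology concentrated in degrees $0$ and $1$, but for $G=Y$ (\'etale-locally $\Z^r$) the structure theorem \cite[Theorem 3.30]{Artusa} gives a nonzero $\HH^2(C)=(\Q_p/\Z_p)^t\oplus H''_2$. This does not break the proof because $\HH^2(C)$ is discrete torsion so $\HH^2(C)_{\Z}=0$, and $\HH^q(C)=0$ for $q\geq 3$; but you should derive (c) from the vanishing of $\HH^q(C)_{\Z}$ and $\HH^{q+1}(C)_{\mathrm{toptors}}$ outside the relevant range rather than from an incorrect concentration claim.

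Where you genuinely add something: you explicitly address why the short exact sequences
\[
0 \to \HH^{q}(C)_{\Z}\otimes\R/\Z \to \HH^{q}(C;\R/\Z) \to \HH^{q+1}(C)_{\mathrm{toptors}} \to 0
\]
split, a point the paper's proof passes over silently. Your phrasing (``compact extension by a finite or profinite quotient'') is imprecise --- in the cases at hand the quotient $\HH^{q+1}(C)_{\mathrm{toptors}}$ is a \emph{discrete} torsion group, not profinite --- but the underlying idea is sound: for a discrete quotient, any set-theoretic section is automatically continuous, so the LCA $\Ext^1$ reduces to the abelian-group $\Ext^1$, which vanishes since $\R/\Z$ is divisible. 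Equivalently and more cleanly one can apply Pontryagin duality to reduce to $\Ext^1_{\LCA}(\Z^l,-)=0$ via projectivity of $\Z$ in $\LCA$. Either way the direct-sum conclusion holds, and your version of the proof is in this respect more complete than what is printed.
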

\begin{rmk}
If $A\in\LCA$, $A_{\mathrm{toptors}}$ denotes the topological torsion of $A$ and $A_{\Z}$ denotes the discrete torsionfree quotient of $A$ defined in \cite[Proposition 2.2]{Hoff}. If $A=\HH^q(B_{\hat{W}_F},G(\overline{L})_{\mathrm{cond}})$ with $G$ as in the hypotheses of the Proposition, $A_{\Z}$ is a free finitely generated abelian group and the tensor product $A_{\Z}\otimes \R/\Z$, present in (b), is equivalently $\otimes_{\LCA}$ or $\otimes_{\Ab(\CC)}$ (they are canonically isomorphic in this case).
\end{rmk}
\begin{proof}
We have a fiber sequence in $\DDD^{\mathrm{b}}(\FLCA)$ \[
R\GGamma(B_{\hat{W}_F},G(\overline{L})_{\mathrm{cond}})\rightarrow R\GGamma(B_{\hat{W}_F},G(\overline{L})_{\mathrm{cond}};\R)\rightarrow R\GGamma(B_{\hat{W}_F},G(\overline{L})_{\mathrm{cond}};\R/\Z)
\] where the middle term is $R\GGamma(B_{\hat{W}_F},G(\overline{L})_{\mathrm{cond}};\R)\coloneqq R\GGamma(B_{\hat{W}_F},G(\overline{L})_{\mathrm{cond}})\otimes^L_{\LCA} \R$. We denote by $f^q$ the induced morphism $\HH^q(B_{\hat{W}_F},G(\overline{L})_{\mathrm{cond}})\rightarrow \HH^q(B_{\hat{W}_F},G(\overline{L})_{\mathrm{cond}};\R)$. The long exact cohomology sequence gives an exact sequence \[
0\rightarrow \mathrm{Coker}(f^q)\rightarrow \HH^q(B_{\hat{W}_F},G(\overline{L})_{\mathrm{cond}};\R/\Z)\rightarrow \mathrm{Ker}(f^{q+1})\rightarrow 0.
\]  By definition, we have\[
\HH^q(B_{\hat{W}_F},G(\overline{L})_{\mathrm{cond}};\R)=\underline{\Hom}(\underline{\Hom}(\HH^q(B_{\hat{W}_F},G(\overline{L})_{\mathrm{cond}}),\R),\R/\Z).
\] Consequently, the Pontryagin dual of $f^q$ is given by \[
(f^q)^{\vee}: \underline{\Hom}(\HH^q(B_{\hat{W}_F},G(\overline{L})_{\mathrm{cond}}),\R)\rightarrow \underline{\Hom}(\HH^q(B_{\hat{W}_F},G(\overline{L})_{\mathrm{cond}}),\R/\Z),
\] whose kernel and cokernel are $\underline{\Hom}(\HH^q(B_{\hat{W}_F},G(\overline{L})_{\mathrm{cond}}),\Z)$ and $\underline{\Ext}(\HH^q(B_{\hat{W}_F},G(\overline{L})_{\mathrm{cond}}),\Z)$ respectively. By the structure results on $\HH^q(B_{\hat{W}_F},G(\overline{L})_{\mathrm{cond}})$ (\cref{prop:structuretorus,thm:structurecohomologyabvar} and \cite[Theorem 3.30]{Artusa}) we have \[
\underline{\Ext}(\HH^q(B_{\hat{W}_F},G(\overline{L})_{\mathrm{cond}}),\Z)=\underline{\Hom}(\HH^q(B_{\hat{W}_F},G(\overline{L})_{\mathrm{cond}})_{\mathrm{toptors}},\R/\Z)\] and \[ \underline{\Hom}(\HH^q(B_{\hat{W}_F},G(\overline{L})_{\mathrm{cond}}),\Z)=\underline{\Hom}(\HH^q(B_{\hat{W}_F},G(\overline{L})_{\mathrm{cond}})_{\Z},\Z)
\] and both are locally compact abelian groups, hence dualisable for the Pontryagin duality. Consequently,  we have \[
\mathrm{Coker}(f^q)=\mathrm{Ker}((f^q)^{\vee})^{\vee}=\underline{\Hom}(\HH^q(B_{\hat{W}_F},G(\overline{L})_{\mathrm{cond}})_{\Z},\Z)^{\vee}=\HH^q(B_{\hat{W}_F},G(\overline{L})_{\mathrm{cond}})_{\Z}\otimes \R/\Z
\] and \[
\mathrm{Ker}(f^{q+1})=\mathrm{Coker}((f^{q+1})^{\vee})=\HH^q(B_{\hat{W}_F},G(\overline{L})_{\mathrm{cond}})_{\mathrm{toptors}}^{\vee\vee}=\HH^q(B_{\hat{W}_F},G(\overline{L})_{\mathrm{cond}})_{\mathrm{toptors}},
\] which concludes the proof.
\end{proof}
\begin{rmk}
The fact that $\underline{\Ext}(\HH^q(B_{\hat{W}_F},G(\overline{L})_{\mathrm{cond}}),\Z)$ is locally compact is a key point, and it follows from structure results on cohomology of abelian varieties, tori and their Cartier duals. This does not hold in general if $G$ is a semiabelian variety.
\end{rmk}
\begin{ex} We look more precisely at the $F$-group schemes considered in \cref{structurerztwist}.
\begin{enumerate}
    \item Let $T$ be a torus. \cref{prop:structuretorus,structurerztwist} give \begin{align*}
\HH^q(B_{\hat{W}_F},T(\overline{L})_{\mathrm{cond}})&=\begin{array}{ll}
    \Z^l\oplus \underline{\Z_p}^m\oplus H_0 & q= 0\\
    \Z^n\oplus H_1 & q=1\\
    0 & q\ge 2
\end{array}\\  \implies \HH^q(B_{\hat{W}_F},T(\overline{L})_{\mathrm{cond}};\R/\Z)&=\begin{array}{ll}
\underline{\Z_p}^m\oplus H_0 & q=-1\\
    (\R/\Z)^l\oplus H_1 & q= 0\\
    (\R/\Z)^n & q=1\\
    0 & q\neq 0,\pm 1.
\end{array}
\end{align*}
\item For $T=\mathbbm{G}_m$ we have (following \cite[Proposition 3.12]{Artusa}) \[
\HH^q(B_{\hat{W}_F},\overline{L}^{\times};\R/\Z)=\begin{array}{ll}
\O_F^{\times} & q=-1\\
    \R/\Z & q=0 \\
    \R/\Z & q=1\\
    0 & q\neq 0,\pm 1.
\end{array}
\]
\item Let $A$ be an abelian variety. \cref{thm:structurecohomologyabvar,structurerztwist} give  
\[ \HH^q(B_{\hat{W}_F},A(\overline{L})_{\mathrm{cond}};\R/\Z)=\HH^{q-1}(B_{\hat{W}_F},A(\overline{L})_{\mathrm{cond}})=\begin{array}{ll}
\underline{\Z_p}^{a}\oplus H'_0 & q=-1\\
    (\Q_p/\Z_p)^{b}\oplus H'_1 & q= 0\\
    0 & q\neq -1, 0.
\end{array}
\]
\item Let $Y$ be an $F$-scheme étale locally isomorphic to a free finitely generated abelian group. \cref{structurerztwist} and \cite[Theorem 3.30]{Artusa} give
\begin{align*}
\HH^q(B_{\hat{W}_F},Y(\overline{L})_{\mathrm{cond}})&=\begin{array}{ll}
    \Z^r & q= 0\\
    \Z^s\oplus H''_1 & q=1\\
    (\Q_p/\Z_p)^t\oplus H''_2 & q= 2\\
    0 & q\ge 3
\end{array}\\  \implies \HH^q(B_{\hat{W}_F},Y(\overline{L})_{\mathrm{cond}};\R/\Z)&=\begin{array}{ll}
    (\R/\Z)^r\oplus H''_1 & q= 0\\
    (\R/\Z)^s\oplus (\Q_p/\Z_p)^t\oplus H''_2 & q=1\\
    0 & q\neq 0,1.
\end{array}
\end{align*}
\item For $Y=\Z$ we have (following \cite[Lemma 3.28]{Artusa}) \[
\HH^q(B_{\hat{W}_F},\Z;\R/\Z)=\begin{array}{ll}
     \R/\Z & q=0 \\
      (W_F^{ab})^{\vee} & q=1\\
      0 & q\neq 0,1.
\end{array}
\]
\end{enumerate}
\end{ex}
\section{Duality}\label{section:duality1motives}
In this section we prove duality result for the cohomology of $B_{\hat{W}_F}$ with coefficients in some objects coming from algebraic geometry. Our dualities are expressed as Pontryagin duality between complexes in $\DDD^{\mathrm{b}}(\FLCA)$. 

\subsection{Pontryagin duality in \texorpdfstring{$\DDD^{\mathrm{b}}(\FLCA)$}{DbFLCA}}\label{decompositionsection}
Every statement in this section holds if we replace the category $\FLCA$ with $\LCA$.

 The Pontryagin duality in $\DDD^{\mathrm{b}}(\FLCA)$ is given by the equivalence of categories \[
R\underline{\Hom}(-,\R/\Z):\DDD^{\mathrm{b}}(\FLCA)\rightarrow \DDD^{\mathrm{b}}(\FLCA).
\] Our duality results are expressed as perfect pairings in $\DDD^{\mathrm{b}}(\FLCA)$ \begin{equation}\label{eqn:cdperfectpairingpontryagin}
C\otimes^L D\rightarrow \R/\Z. \end{equation} Such a perfect pairing identifies $C$ with the Pontryagin dual of $D$ and vice-versa. The goal of this section is to deduce from such a perfect pairing a similar duality on the cohomology groups of $C$ and $D$.

\begin{rmk}As explained in \cite[4.1]{Artusa}, the functor $\underline{(-)}:\DDD^{\mathrm{b}}(\FLCA)\to\DDD^{\mathrm{b}}(\Ab(\CC))$ allows us to see the first category as a full stable $\infty$-category of the second. Moreover, this functor is t-exact and induces an exact and fully faithful functor \[
\LH(\FLCA)\rightarrow \Ab(\CC), \qquad [X^{-1}\rightarrow X^0] \,\mapsto \underline{X^0}/\underline{X^{-1}}.
\] These statements are precisely \cite[Lemma 4.2]{Artusa} and \cite[Lemma 4.3]{Artusa}.\end{rmk}

Let us consider the perfect pairing \eqref{eqn:cdperfectpairingpontryagin}. If $\H^n(D)$ (and consequently $\H^n(C)$) is locally compact for all $n$, then the spectral sequence \[
E_2^{i,j}=\underline{\Ext}^i(\H^{-j}(D),\R/\Z)\implies \underline{\Ext}^{i+j}(D,\R/\Z)
\] degenerates. For all $n$, we get $\H^n(R\underline{\Hom}(D,\R/\Z))=(\H^{-n}(D))^{\vee}$, whence a perfect pairing in $\FLCA$ \[
\H^n(C)\otimes \H^{-n}(D)\rightarrow \R/\Z.
\] This identifies $\H^n(C)$ with the Pontryagin dual of $\H^{-n}(D)$ and vice-versa. 

However, in general some cohomology groups of $C$ and $D$ may not be in $\FLCA$. In this case, we can't deduce a perfect pairing between the cohomology groups so directly.

\begin{ex} \label{example:introdecomposition1}
We set \[C\coloneqq[\Z\overset{f}{\rightarrow} \Z_p], \quad D\coloneqq[\Q_p/\Z_p\overset{f^{\vee}}{\rightarrow} \R/\Z]\] where $f$ is a continuous map, $\Z$ is in degree $-1$ and $\Q_p/\Z_p$ is in degree $0$. Tautologically, we have a perfect pairing in $\DDD^{\mathrm{b}}(\Ab(\CC))$\[
C\otimes^L D\rightarrow \R/\Z
\] 
\begin{enumerate}[i)]
    \item  If $f$ is the zero map, the cohomology groups of $C$ and $D$ are locally compact, and we obtain perfect pairings \[
\Z_p\otimes \Q_p/\Z_p\rightarrow \R/\Z, \qquad \Z\otimes \R/\Z \rightarrow\R/\Z, \ 
\] which realise $\H^0(C)$ and $\H^{-1}(C)$ as the Pontryagin duals of $\H^0(D)$ and $\H^1(D)$ respectively.
\item If $f$ is the $\cdot p$-multiplication $\cdot p:\Z\to\Z_p$, then it is injective and the image is a dense subset of $p\Z_p$. Moreover, $f^{\vee}$ has a dense image, and its kernel is given by the $p$-torsion of $\Q_p/\Z_p$.  In this case, $\H^0(C)$ and $\H^1(D)$ are not locally compact. Moreover, $\H^0(C)$ is not the Pontryagin dual of $\H^0(D)$ and $\H^{-1}(C)$ is not the Pontryagin dual of $\H^1(D)$. Indeed, we have \[
\H^{-1}(C)=0, \quad \H^0(C)=\underline{\Z_p}/\underline{p\Z}, \quad \H^0(D)=\Z/p, \quad \H^1(D)=\underline{\R/\Z}/\underline{\Q_p/\Z_p}.
\] 
\end{enumerate}
\end{ex}
\vspace{0.5em}
Something similar happens in the category $\DDD^{perf}(\Z)$, where instead of the Pontryagin duality we consider the $\Z$-linear duality. 
\begin{ex}
We set \[
C\coloneqq [\Z\oplus \Z\overset{f}{\rightarrow}\Z], \quad D\coloneqq[\Z\overset{g}{\rightarrow}\Z\oplus \Z],
\] where $f$ is a morphism of abelian groups and $g$ is the $\Z$-linear dual of $f$. Here $\Z\oplus\Z$ is in degree $-1$ for $C$ and in degree $1$ for $D$. Tautologically, we have a perfect pairing in $\DDD^{perf}(\Z)$ \[
C\otimes^L D\rightarrow \Z.
\]\begin{enumerate}[i)]
    \item Let $f$ be the zero map. Both cohomology groups of $C$ and $D$ are free, and we obtain $\H^0(D)$ as the $\Z$-linear dual of $\H^0(C)$ and $\H^1(D)$ as the $\Z$-linear dual of $\H^{-1}(C)$.
    \item Let $f$ be the morphism $f(n,m)\coloneqq p\cdot m$ (and thus let $g$ be given by $g(n)\coloneqq(0,p\cdot n)$). Then we have \[
    \H^{-1}(C)=\Z, \quad \H^0(C)=\Z/p, \quad \H^0(D)=0, \quad \H^1(D)=\Z\oplus \Z/p.
    \] The behaviour is different from the case i) but we still obtain a duality. Indeed, the maximal free quotient $\H^1(D)_{\Z}$ is the $\Z$-linear dual of $\H^{-1}(C)$, while the torsion subgroup $\H^1(D)_{\mathrm{tors}}$ is determined by $\H^0(C)$ via $\H^1(D)_{\mathrm{tors}}=\Ext(\H^0(C),\Z)$. 
\end{enumerate}
\end{ex} In this example, we see how the duality can be recovered by separating, for any finitely generated abelian group, its torsion subgroup from its maximal free quotient. The goal of this section is to do something similar in the category $\LH(\FLCA)$. The role of the torsion subgroup is taken by the \emph{non-Hausdorff} subgroup, and the role of the maximal free quotient is taken by the maximal \emph{locally compact} quotient. 

We define, for any $M\in \Ab(\CC)$ which is in the essential image of $\LH(\FLCA)$, a unique (up to isomorphism) exact sequence \begin{equation}\label{eqn:preamble:nhlca}
0\rightarrow M^{\mathrm{nh}}\rightarrow M\rightarrow M^{\mathrm{lca}}\rightarrow 0,
\end{equation} and we prove the following \begin{lem}\label{dualitynhlca}
Let $C,D\in \DDD^{\mathrm{b}}(\Ab(\CC))$ be two objects in the essential image of $\DDD^{\mathrm{b}}(\FLCA)\to \DDD^{\mathrm{b}}(\Ab(\CC))$. Suppose that we have a perfect pairing \[
C\otimes^L D\rightarrow \R/\Z.
\] Then for all $n\in \N$ we have induced perfect pairings in $\Ab(\CC)$ \[
\H^n(C)^{\mathrm{lca}}\otimes \H^{-n}(D)^{\mathrm{lca}}\rightarrow \R/\Z
\] and perfect pairings in $\DDD^{\mathrm{b}}(\CC)$\[ \H^n(C)^{\mathrm{nh}}\otimes^L \H^{-n+1}(D)^{\mathrm{nh}}[-1]\rightarrow \R/\Z.
\]\end{lem}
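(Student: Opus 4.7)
The perfect pairing is equivalent to an equivalence $D\overset{\sim}{\to} R\underline{\Hom}(C,\R/\Z)$ in $\DDD^{\mathrm{b}}(\FLCA)$, so my plan is to analyse the right hand side via the hypercohomology spectral sequence
\[ E_2^{p,q}=\underline{\Ext}^p(\H^{-q}(C),\R/\Z)\Longrightarrow \H^{p+q}(D), \]
and then identify the resulting abutment filtration on each $\H^m(D)$ with the canonical $(\mathrm{nh},\mathrm{lca})$-decomposition recalled in \eqref{eqn:preamble:nhlca}. All the substantive work takes place in the left heart $\LH(\FLCA)$.

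The first step is to compute $R\underline{\Hom}(M,\R/\Z)$ for $M\in \LH(\FLCA)$. For $M\in \FLCA\subset \LH(\FLCA)$, Pontryagin duality immediately gives $R\underline{\Hom}(M,\R/\Z)=M^{\vee}$ concentrated in degree $0$ (using that $\underline{\Ext}$ in $\Ab(\CC)$ agrees with that in $\FLCA$ on LCA inputs, as recalled in the preliminaries). For $N\in \NH$, I would choose a presentation $N=\mathrm{coker}(\underline{A}\hookrightarrow \underline{\bar A})$ by a dense inclusion of LCA groups and apply $R\underline{\Hom}(-,\R/\Z)$ to the resulting short exact sequence in $\Ab(\CC)$. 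Since the Pontryagin dual of a dense injection of locally compact abelian groups is again a dense injection (a double-duality check using that density is dual to injectivity), the induced map $(\bar A)^{\vee}\to A^{\vee}$ is injective with dense image; this forces $R\underline{\Hom}(N,\R/\Z)$ to be concentrated in degree $1$ and equal to the $\NH$ object $\underline{\Ext}^1(N,\R/\Z)=A^{\vee}/(\bar A)^{\vee}$. Applying $R\underline{\Hom}(-,\R/\Z)$ to the canonical exact sequence $0\to M^{\mathrm{nh}}\to M\to M^{\mathrm{lca}}\to 0$ and taking cohomology then shows that $R\underline{\Hom}(M,\R/\Z)$ lives in degrees $0$ and $1$ only, with $\H^0=(M^{\mathrm{lca}})^{\vee}\in \FLCA$ and $\H^1=\underline{\Ext}^1(M^{\mathrm{nh}},\R/\Z)\in \NH$.

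Feeding this into the spectral sequence makes it degenerate at $E_2$ (only columns $p=0,1$ are non-zero, so there is no room for differentials), and its abutment produces a short exact sequence
\[ 0\to \underline{\Ext}^1(\H^{-m+1}(C)^{\mathrm{nh}},\R/\Z)\to \H^m(D)\to (\H^{-m}(C)^{\mathrm{lca}})^{\vee}\to 0 \]
in $\Ab(\CC)$ with $\NH$ kernel and $\FLCA$ cokernel. By the uniqueness of the $(\mathrm{nh},\mathrm{lca})$-decomposition this must coincide with the canonical decomposition of $\H^m(D)$, whence $\H^m(D)^{\mathrm{lca}}=(\H^{-m}(C)^{\mathrm{lca}})^{\vee}$ and $\H^m(D)^{\mathrm{nh}}=\underline{\Ext}^1(\H^{-m+1}(C)^{\mathrm{nh}},\R/\Z)$. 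Setting $m=-n$ gives the first family of perfect pairings, while setting $m=-n+1$ and unwinding the identification $\underline{\Ext}^1(-,\R/\Z)=R\underline{\Hom}(-,\R/\Z)[1]$ on $\NH$ yields the second; in both cases it should be straightforward to check that the constructed pairing is the one induced by the original cup product via adjunction.

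The main obstacle I anticipate is the structural input that $R\underline{\Hom}(N,\R/\Z)$ is concentrated in degree $1$ and lands in $\NH$ for every $N\in \NH$, which relies on the non-trivial fact that Pontryagin duality sends dense injections of LCA groups to dense injections; once this stability (and the corresponding vanishing statements) is in place, the rest of the argument is formal manipulation of the spectral sequence together with the uniqueness of the decomposition proved in \cref{decompositionsection}.
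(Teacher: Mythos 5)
Your proposal is correct and takes essentially the same route as the paper. Both arguments turn the perfect pairing into an equivalence between one side and the $\R/\Z$-dual of the other, run the $\underline{\Ext}^{p}(\H^{-q},\R/\Z)$ hyper-Ext spectral sequence (which collapses to two columns because every cohomology object lies in $\LH(\FLCA)$ and hence has a two-term LCA resolution), extract the resulting short exact sequence with $\NH$ kernel and $\FLCA$ cokernel, and invoke uniqueness of the nh--lca decomposition; the only cosmetic difference is that you first compute $R\underline{\Hom}(M,\R/\Z)$ termwise by splitting $M$ into its $\mathrm{nh}$ and $\mathrm{lca}$ parts and verify the stability of $\NH$ under $\underline{\Ext}(-,\R/\Z)$ directly via the ``dual of a dense injection is a dense injection'' observation, whereas the paper packages exactly that content as Proposition~\ref{lem:nhlca}~v) (proved by a double-duality computation) and cites it.
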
 The first pairing identifies $\H^n(C)^{\mathrm{lca}}$ with $\underline{\Hom}(\H^{-n}(D)^{\mathrm{lca}},\R/\Z)$, while the second one identifies $\H^n(C)^{\mathrm{nh}}$ with $\underline{\Ext}(\H^{-n+1}(D),\R/\Z)$ and viceversa.
\vspace{0.5em}

We go back to the second item of \cref{example:introdecomposition1}. The decomposition \eqref{eqn:preamble:nhlca} for $\H^0(C)$ becomes \[
0\rightarrow \underline{p\Z_p}/\underline{p\Z}\rightarrow \H^0(C)\rightarrow \Z/p\rightarrow 0.
\] 

We obtain the perfect pairings 
\[\underline{p\Z_p}/\underline{p\Z}\otimes^L (\underline{\R/\Z})/(\underline{\Q_p}/\underline{\Z_p})[-1]\rightarrow \R/\Z,
\] and \[
\Z/p \otimes \Z/p\rightarrow \R/\Z.
\] The first one identifies $\H^1(D)=\H^1(D)^{\mathrm{nh}}$ with $\underline{\Ext}(\H^0(C)^{\mathrm{nh}},\R/\Z)$ and vice-versa, while the second one is the Pontryagin duality between the locally compact abelian groups $\H^0(C)^{\mathrm{lca}}$ and $\H^0(D)^{\mathrm{lca}}=\H^0(D)$.

\vspace{1em}
Let us develop the tools necessary for the proof of \cref{dualitynhlca}.
\begin{defn}
An object $X\in\LH(\FLCA)$ is \emph{non-Hausdorff} if it satisfies $\underline{\Hom}(X,\R/\Z)=0$. We define $\NH$ as the full subcategory of $\LH(\FLCA)$ of non-Hausdorff objects. \end{defn}
\begin{prop}\label{lem:nhlca}
The following facts hold: \begin{enumerate}[i)]
\item $\FLCA$ is the full subcategory of $\LH(\FLCA)$ of the objects $X\in\LH(\FLCA)$ such that $\underline{\Ext}(X,\R/\Z)=0$.
\item Both $\FLCA$ and $\NH$ are stable by extensions in $\LH(\FLCA)$. Moreover, $\FLCA$ is stable by kernels and $\NH$ is stable by cokernels.
\item We have $\NH\cap \FLCA=0$.
\item If we have $X\in \NH$ and $Y\in\FLCA$, then we have $\Hom(X,Y)=0$.
\item If $X\in \NH$, then so is $X^{\vee}[1]=\underline{\Ext}(X,\R/\Z)$.
\item The Pontryagin duality $R\underline{\Hom}(-,\R/\Z):\DDD^{\mathrm{b}}(\FLCA)^{\mathrm{op}}\rightarrow \DDD^{\mathrm{b}}(\FLCA)$ induces equivalence of categories $\FLCA^{\mathrm{op}}\overset{\sim}{\rightarrow} \FLCA$ and $\NH^{\mathrm{op}}\overset{\sim}{\rightarrow} \NH[-1]$.
\end{enumerate}
\end{prop}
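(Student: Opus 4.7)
The crucial preliminary observation is that any $X\in\LH(\FLCA)$ is represented by a two-term complex $[X^{-1}\hookrightarrow X^0]$ with $X^{-1},X^0\in\FLCA$, so its Pontryagin dual $X^{\vee}=R\underline{\Hom}(X,\R/\Z)$ is represented by $[X^{0,\vee}\to X^{-1,\vee}]$ in degrees $0,1$, giving $\underline{\Ext}^i(X,\R/\Z)=0$ for $i\notin\{0,1\}$. This fact together with the contravariant equivalence $R\underline{\Hom}(-,\R/\Z)\colon \DDD^{\mathrm{b}}(\FLCA)^{\mathrm{op}}\overset{\sim}{\rightarrow}\DDD^{\mathrm{b}}(\FLCA)$ is the engine of the whole proof.

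For (i), if $X\in\FLCA$ then $X^{\vee}\in\FLCA$ is in degree $0$, so $\underline{\Ext}^1(X,\R/\Z)=0$; conversely, if $\underline{\Ext}^1(X,\R/\Z)=0$ then $X^{\vee}$ is concentrated in degree $0$, so $X^{\vee}\in\FLCA$, and applying the equivalence once more yields $X\in\FLCA$. Part (ii) will then follow from (i) via the long exact sequence in $\underline{\Ext}^{\bullet}(-,\R/\Z)$ associated to any exact sequence in the abelian category $\LH(\FLCA)$: closure of $\FLCA$ under extensions amounts to the vanishing of $\underline{\Ext}^1$ on the middle term when it vanishes on both outer terms, and closure of $\NH$ under extensions is the left exactness of $\underline{\Hom}$. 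For closure of $\FLCA$ under kernels, writing $0\to K\to X\to I\to 0$ with $I=\mathrm{Im}(f)$ and exploiting the vanishing of $\underline{\Ext}^2$ on $\LH(\FLCA)$ shows $\underline{\Ext}^1(K,\R/\Z)=0$; the cokernel case for $\NH$ is symmetric.

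Part (iii) is immediate: $X\in\NH\cap\FLCA$ forces $R\underline{\Hom}(X,\R/\Z)=0$, and faithfulness of Pontryagin duality yields $X=0$. For (iv), the functorial identity $\Hom(X,Y)\cong \Hom(Y^{\vee},X^{\vee})$ reduces the statement to $\Hom$ from $Y^{\vee}\in\FLCA$ (in degree $0$) to $X^{\vee}$; by the initial observation and the NH condition $\underline{\Hom}(X,\R/\Z)=0$, the complex $X^{\vee}$ is concentrated in degree $1$, and $\Hom_{\DDD^{\mathrm{b}}(\FLCA)}$ between such complexes is $\Ext^{-1}=0$.

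For (v), the NH condition gives $H^0(X^{\vee})=0$, so $X^{\vee}$ is concentrated in degree $1$ and indeed $X^{\vee}[1]=\underline{\Ext}^1(X,\R/\Z)\in\LH(\FLCA)$. To check that this new object is itself in $\NH$, I apply Pontryagin duality once more: $R\underline{\Hom}(X^{\vee}[1],\R/\Z)=X[-1]$, whose $H^0$ equals $H^{-1}(X)=0$ since $X$ lies in the heart. Finally, (vi) follows from (v) and the involutivity of Pontryagin duality: the duality sends $\NH\subset\LH(\FLCA)$ into $\NH[-1]\subset\DDD^{\mathrm{b}}(\FLCA)$, and applying the same reasoning to objects of $\NH[-1]$ recovers all of $\NH$, producing the equivalence $\NH^{\mathrm{op}}\cong \NH[-1]$. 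The main subtlety lies in the degree bookkeeping in (iv)--(vi) and in the systematic use of $\underline{\Ext}^i(-,\R/\Z)=0$ for $i\notin\{0,1\}$ on $\LH(\FLCA)$ in (ii); once (i) is established, everything else is a clean exploitation of the Pontryagin equivalence.
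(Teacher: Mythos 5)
Your proof is correct and follows essentially the same route as the paper's: both arguments rest on the two-term resolution from Schneiders' Corollary 1.2.21, the resulting concentration of $R\underline{\Hom}(X,\R/\Z)$ in degrees $0,1$ for $X\in\LH(\FLCA)$, and repeated use of the involutivity of Pontryagin duality. The only cosmetic difference is that in (i) the paper phrases the key step as ``$\underline{\Hom}(X,\R/\Z)$ is a kernel of a map of $\FLCA$-objects, hence in $\FLCA$'' while you phrase it via the dual two-term complex, but this is the same observation.
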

\begin{proof}
To prove i), we just need to show that if $X\in\LH(\FLCA)$ satisfies $\underline{\Ext}(X,\R/\Z)=0$, then it is a locally compact abelian group. By \cite[Corollary 1.2.21]{Schn}, we have \[X=\mathrm{cofib}(X^{-1}\rightarrow X^0),\] where $X^{-1},X^0\in\FLCA$ and the morphism $X^{-1}\rightarrow X^0$ is a monomorphism. Consequently, we have an exact sequence in $\LH(\LCA)$ \[
0\rightarrow \underline{\Hom}(X,\R/\Z)\rightarrow \underline{\Hom}(X^0,\R/\Z)\rightarrow \underline{\Hom}(X^{-1},\R/\Z)
\] which tells us that $\underline{\Hom}(X,\R/\Z)$ is locally compact. Since $\underline{\Ext}(X,\R/\Z)=0$, we have \[
X=R\underline{\Hom}(R\underline{\Hom}(X,\R/\Z),\R/\Z)=R\underline{\Hom}(\underline{\Hom}(X,\R/\Z),\R/\Z).
\] Since $\underline{\Hom}(X,\R/\Z)$ is locally compact, then we have $\underline{\Ext}(\underline{\Hom}(X,\R/\Z),\R/\Z)=0$ and we conclude that \[
X=\underline{\Hom}(\underline{\Hom}(X,\R/\Z),\R/\Z)
\] is locally compact as well.

If we have $0\rightarrow X'\rightarrow X \rightarrow X''\rightarrow 0$ an exact sequence in $\LH(\FLCA)$, we get a fibre sequence in $\DDD^{\mathrm{b}}(\FLCA)$
\[
X''^{\vee}\rightarrow X^{\vee}\rightarrow X'^{\vee}.
\] If we take the long exact cohomology sequence, ii) follows.

For iii), we take $X\in \NH\cap \FLCA$. Then we have $X^{\vee}=R\underline{\Hom}(X,\R/\Z)=0$. Consequently, we have $X=X^{\vee\vee}=0^{\vee}=0$. To prove iv), we observe that if $f:X\to Y$ is a morphism in $\LH(\FLCA)$ with $X\in\NH$ and $Y\in\FLCA$, then the morphism $f^{\vee}:Y^{\vee}\rightarrow X^{\vee}$ must be 0, since $Y^{\vee}$ belongs to $\DDD^{\mathrm{b}}(\FLCA)^{\le 0}$ and $X^{\vee}$ belongs to $\DDD^{\mathrm{b}}(\FLCA)^{\ge 1}$. Consequently, we have $f=f^{\vee\vee}=0$.

Let us prove v). Using \cite[Corollary 1.2.21]{Schn}, we represent $X\in\NH$ as the cofiber in $\DDD^{\mathrm{b}}(\FLCA)$ of the monomorphism $f:X^{-1}\to X^0$, where $X^{-1},X^0$ are locally compact. Then we have a fibre sequence in $\DDD^{\mathrm{b}}(\FLCA)$ \[
(X^0)^{\vee}\overset{f^{\vee}}{\rightarrow}(X^{-1})^{\vee}\rightarrow X^{\vee}[1].
\] Since we have $\underline{\Hom}(X,\R/\Z)=0$, the morphism $f^{\vee}$ is a monomorphism and $X^{\vee}[1]\in\LH(\LCA)$. Moreover, we have \[\begin{split}
\underline{\Hom}(X^{\vee}[1],\R/\Z)&=\underline{\Hom}(\underline{\Ext}(X,\R/\Z),\R/\Z)=\\&=\H^{-1}(R\underline{\Hom}(R\underline{\Hom}(X,\R/\Z)),\R/\Z)=\H^{-1}(X)=0. \end{split}
\] As a consequence, $X^{\vee}[1]$ is an object of $\NH$. Part vi) follows.
\end{proof}
\begin{rmk}
Item iv) shows that for $X\in\LH(\FLCA)$ it is enough to have no non-trivial continuous homomorphism $X\to\R/\Z$ to guarantee that there are no non-trivial continuous homomorphisms $X\to A$, where $A$ is locally compact.
\end{rmk}
\begin{rmk}
By \cite[Corollary 1.2.21]{Schn}, for every object $X$ of $\LH(\FLCA)$ we have a fibre sequence in $\DDD^{\mathrm{b}}(\FLCA)$\[
X'\rightarrow X''\rightarrow X,
\] where $X',X''$ are objects of $\LH(\FLCA)$ which are acyclic for $R\underline{\Hom}(-,\R/\Z)$. This is analogous to the two-term free resolution of an abelian group. \end{rmk}
\begin{prop}[Decomposition in $\LH(\FLCA)$]
Let $A\in\Ab(\CC)$ be in the essential image of $\LH(\FLCA)$. Then we have a decomposition \[
0\rightarrow A^{\mathrm{nh}}\rightarrow A\rightarrow A^{\mathrm{lca}}\rightarrow 0,
\] where $A^{\mathrm{nh}}$ is in the essential image of $\NH$ and $A^{\mathrm{lca}}$ is in the essential image of $\FLCA$. This decomposition is functorial in $A$ and unique up to isomorphism.
\end{prop}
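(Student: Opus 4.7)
The plan is to construct the decomposition at the level of underlying locally compact abelian groups by closing up the image of a Schneiders-style presentation, and then to deduce uniqueness and functoriality from the vanishing of $\Hom$ between $\NH$ and $\FLCA$ proved in \cref{lem:nhlca}(iv). By \cite[Corollary 1.2.21]{Schn}, fix a presentation $A \cong \underline{X^0}/\underline{X^{-1}}$ in $\Ab(\CC)$ with $X^{-1} \hookrightarrow X^0$ a (not necessarily strict) monomorphism in $\FLCA$. Let $K \coloneqq \overline{X^{-1}} \subseteq X^0$ be the topological closure, so that $X^{-1} \hookrightarrow X^0$ factors as a dense injection $X^{-1} \hookrightarrow K$ followed by a closed embedding $K \hookrightarrow X^0$. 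Taking condensations yields a short exact sequence in $\Ab(\CC)$
\[
0 \to \underline{K}/\underline{X^{-1}} \to \underline{X^0}/\underline{X^{-1}} \to \underline{X^0}/\underline{K} \to 0,
\]
and I define $A^{\mathrm{nh}} \coloneqq \underline{K}/\underline{X^{-1}}$ and $A^{\mathrm{lca}} \coloneqq \underline{X^0}/\underline{K}$.

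Since $K \subseteq X^0$ is a closed subgroup of a finite-ranks LCA group, the quotient $X^0/K$ lies in $\FLCA$ and $\underline{X^0}/\underline{K} = \underline{X^0/K}$, so $A^{\mathrm{lca}} \in \FLCA$. To verify $A^{\mathrm{nh}} \in \NH$, I apply the left-exact functor $\underline{\Hom}(-,\R/\Z)$ to $0 \to \underline{X^{-1}} \to \underline{K} \to A^{\mathrm{nh}} \to 0$: the resulting map $\underline{\Hom}(\underline{K},\R/\Z) \to \underline{\Hom}(\underline{X^{-1}},\R/\Z)$ is the Pontryagin dual of the dense injection $X^{-1} \hookrightarrow K$, and it is injective because any continuous character of $K$ vanishing on a dense subgroup vanishes identically. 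Using the internal-hom formula for LCA groups from \cite[Lecture IV]{LCM} to compute this dual in $\Ab(\CC)$, I conclude $\underline{\Hom}(A^{\mathrm{nh}},\R/\Z) = 0$. The main technical point here, which I expect to be the principal obstacle, is precisely this interplay: one must confirm that the LCA-level operations of closure, topological quotient, and Pontryagin duality faithfully transport through $\underline{(-)}$ to the condensed level, which is ensured by \cite[Lecture IV]{LCM} together with \cite[Lemma 4.2]{Artusa}.

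For uniqueness, given any other extension $0 \to N \to A \to L \to 0$ with $N \in \NH$ and $L \in \FLCA$, the composition $A^{\mathrm{nh}} \hookrightarrow A \twoheadrightarrow L$ vanishes, since $\Hom_{\Ab(\CC)}(A^{\mathrm{nh}}, L) = \Hom_{\LH(\FLCA)}(A^{\mathrm{nh}}, L) = 0$ by full faithfulness of $\LH(\FLCA) \hookrightarrow \Ab(\CC)$ and \cref{lem:nhlca}(iv). Hence $A^{\mathrm{nh}} \to A$ factors through $N$, and symmetrically $N \to A^{\mathrm{nh}}$ factors through $A^{\mathrm{nh}}$; injectivity of the two inclusions into $A$ forces the resulting composites to be the respective identities, yielding $A^{\mathrm{nh}} \cong N$ and, by passing to cokernels, $A^{\mathrm{lca}} \cong L$. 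Functoriality in a morphism $f \colon A \to B$ follows by the same argument applied to the composite $A^{\mathrm{nh}} \hookrightarrow A \xrightarrow{f} B \twoheadrightarrow B^{\mathrm{lca}}$: this vanishes, inducing $f^{\mathrm{nh}} \colon A^{\mathrm{nh}} \to B^{\mathrm{nh}}$, and $f^{\mathrm{lca}}$ is then induced on quotients, with compatibility with composition being automatic.
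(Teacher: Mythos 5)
Your construction is the same as the paper's: take the Schneiders presentation of $A$ by a monomorphism of finite-ranks LCA groups, interpolate through the topological closure of the image, and split the induced short exact sequence into a dense-inclusion cokernel (giving $A^{\mathrm{nh}}\in\NH$ via Pontryagin duality) and a closed-embedding cokernel (giving $A^{\mathrm{lca}}\in\FLCA$), with uniqueness and functoriality following from $\Hom(\NH,\FLCA)=0$. The only cosmetic differences are that you deduce $A^{\mathrm{lca}}\in\FLCA$ directly from LCA structure theory where the paper cites \cite[Prop.~1.2.29(a)]{Schn}, and you handle uniqueness by producing two mutually inverse factorizations where the paper applies the snake lemma to a morphism of short exact sequences (also, the clause ``$N\to A^{\mathrm{nh}}$ factors through $A^{\mathrm{nh}}$'' should read ``$N\to A$ factors through $A^{\mathrm{nh}}$'').
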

\begin{proof}
Let us prove the existence first. By hypothesis, we have $A=\underline{X}$, where $X\in\LH(\FLCA)$ is the cofibre in $\DDD^{\mathrm{b}}(\FLCA)$ of a monomorphism $f:X'\rightarrow X''$ of locally compact abelian groups. We denote by $\overline{f(X')}$ the closure of $f(X')$ in $X''$, and we set \begin{equation}\label{eqn:decompositiondefinition}
X^{\mathrm{nh}}\coloneqq \mathrm{cofib}(X'\to \overline{f(X')}), \qquad X^{\mathrm{lca}}\coloneqq \mathrm{cofib}(\overline{f(X')}\rightarrow X'').
\end{equation} Then we have an exact sequence in $\LH(\FLCA)$ \[
0\rightarrow X^{\mathrm{nh}}\rightarrow X\rightarrow X^{\mathrm{lca}}\rightarrow 0.
\] We just need to show that we have $X^{\mathrm{nh}}\in\NH$ and $X^{\mathrm{lca}}\in \FLCA$. Since $X'$ is dense in $\overline{f(X')}$, every continuous morphism $\overline{f(X')}\to\R/\Z$ which vanishes on $X'$ is the zero morphism. This implies that the map $\underline{\Hom}(\overline{f(X')},\R/\Z)\to\underline{\Hom}(X',\R/\Z)$ is a monomorphism in $\LH(\FLCA)$. Consequently, we have $\underline{\Hom}(X^{\mathrm{nh}},\R/\Z)=0$, i.e.\ $X^{\mathrm{nh}}$ is non-Hausdorff. Moreover, since the morphism $\overline{f(X)}\to X''$ is a closed immersion, it is strict. By \cite[Proposition 1.2.29, a)]{Schn}, its cokernel computed in $\LH(\FLCA)$ belongs to $\FLCA$. This proves that $X^{\mathrm{lca}}$ belongs to $\FLCA$.

We now define $A^{\mathrm{nh}}\coloneqq \underline{X^{\mathrm{nh}}}=\underline{\overline{f(X')}}/\underline{X'}$ and $A^{\mathrm{lca}}\coloneqq \underline{X^{\mathrm{lca}}}=\underline{X''}/\underline{\overline{f(X')}}$. Since the functor $\LH(\FLCA)\rightarrow \Ab(\CC)$ is exact, we get the desired decomposition.

For uniqueness, let us consider another decomposition, say $0\to B\to A\to C\to 0$, with $B=\underline{Y}$ and $C=\underline{Z}$, $Y\in\NH$ and $Z\in\FLCA$. By fully faithfulness of the functor $\LH(\FLCA)\to\Ab(\CC)$, and by item iv) of \cref{lem:nhlca}, we have \[
\Hom_{\Ab(\CC)}(B,A^{\mathrm{lca}})=\Hom_{\LH(\FLCA)}(Y,X^{\mathrm{lca}})=0.
\] Hence the composition $B\to A\to A^{\mathrm{lca}}$ vanishes, inducing a morphism of exact sequences \[
\begin{tikzcd}
0\ar[r]&B\ar[r]\ar[d] &A\ar[d,equal]\ar[r]&C\ar[d]\ar[r]&0\\
0\ar[r]&A^{\mathrm{nh}}\ar[r]& A\ar[r]&A^{\mathrm{lca}}\ar[r]&0
\end{tikzcd}
\] By the Snake Lemma, the morphism $B\to A^{\mathrm{nh}}$ is injective and the morphism $C\to A^{\mathrm{lca}}$ is surjective. Moreover, the cokernel of $B\to A^{\mathrm{nh}}$ coincides with the kernel of $C\to A^{\mathrm{lca}}$. By item ii) of \cref{lem:nhlca}, this cokernel is in the essential image of $\NH\cap \FLCA$, which is $0$ by item iii) of \cref{lem:nhlca}. Consequently, both $B\to A^{\mathrm{nh}}$ and $C\to A^{\mathrm{lca}}$ are isomorphisms. 

In the same way we prove the functoriality of the decomposition. Indeed, if we have $A,B\in \Ab(\CC)$ in the essential image of $\LH(\FLCA)$, let us consider the decompositions \[
0\rightarrow B^{\mathrm{nh}}\rightarrow B\rightarrow B^{\mathrm{lca}}\rightarrow 0,\quad 0\rightarrow A^{\mathrm{nh}}\rightarrow A\rightarrow A^{\mathrm{lca}}\rightarrow 0.
\] Since we have $\Hom_{\Ab(\CC)}(B^{\mathrm{nh}},A^{\mathrm{lca}})=0$, we obtain a morphism between the exact sequences.
\end{proof}
\begin{ex}
If $X=[\Z\overset{\cdot p^n}{\longrightarrow} \Z_p]$ and $A=\underline{X}$, then the closure of $p^n\Z$ is $p^n\Z_p$. Consequently, we have \[
A^{\mathrm{nh}}=\underline{p^n\Z_p}/\underline{p^n\Z}, \quad A^{\mathrm{lca}}=\Z/p^n.
\]
\end{ex}
\begin{rmk}
The locally compact abelian group $A^{\mathrm{lca}}$ is the maximal locally compact quotient of $A$. Indeed, for all $B\in\LCA$ receiving a map from $A$, we have $\Hom(A^{\mathrm{nh}},B)=0$. Consequently, the map $A\to B$ factors through $A^{\mathrm{lca}}$.
\end{rmk}
\begin{rmk}
The decomposition $0\rightarrow A^{\mathrm{nh}}\rightarrow A\rightarrow A^{\mathrm{lca}}\rightarrow 0$ plays for the Pontryagin duality in $\DDD^{\mathrm{b}}(\FLCA)$ the same role that the decomposition $0\rightarrow M_{\mathrm{tors}}\to M\to M_{free}\to 0$ plays for the $\Z$-linear duality in $\DDD^{perf}(\Z)$. 
\end{rmk}
\begin{rmk}
If we consider the right t-structure on $\DDD^{\mathrm{b}}(\FLCA)$, and we denote its heart by $\RH(\FLCA)$, we get a full subcategory of $\DDD^{\mathrm{b}}(\Ab(\CC))$. In general, this is not contained in $\Ab(\CC)$, since the t-structure on $\DDD^{\mathrm{b}}(\Ab(\CC))$ does not coincide with the right t-structure on $\DDD^{\mathrm{b}}(\FLCA)$. Consequently,  for $B\in\DDD^{\mathrm{b}}(\Ab(\CC))$ wihich is in the essential image of $\RH(\FLCA)$ we could have $\H^1(B)\neq 0$. Nonetheless, we obtain a similar unique decomposition as in $\LH(\FLCA)$. In particular, for $B\in\DDD^{\mathrm{b}}(\FLCA)$ which is in the essential image of $\RH(\FLCA)$, there exists a unique and functorial fibre sequence \[
B^{\mathrm{lca}}\rightarrow B\rightarrow B^{\mathrm{nh}},
\] where we have $B^{\mathrm{lca}}\in \FLCA$ and $B^{\mathrm{nh}}\in \NH[-1]$. Indeed, any object $Y\in\RH(\LCA)$ is the fibre of a continuous homomorphism $\pi:Y'\to Y''$ of locally compact abelian groups with a dense image. To obtain the decomposition, we set $B^{\mathrm{lca}}\coloneqq \underline{\mathrm{Ker}(\pi)}\in\FLCA$ and $B^{\mathrm{nh}}\coloneqq \underline{Y''}/\underline{\pi(Y')}[-1]\in \NH[-1]$. This fibre sequence is unique, up to isomorphism.
\end{rmk}
The Pontryagin duality realises an equivalence of categories $\LH(\FLCA)^{\mathrm{op}} \overset{\sim}{\rightarrow} \RH(\FLCA)$. We can see this fact under a new light. Indeed, these categories admit ``decompositions" \[
\NH\rightarrow \LH(\FLCA)\rightarrow \FLCA, \qquad \FLCA\rightarrow \RH(\FLCA)\rightarrow \NH[-1]
\] and the Pontryagin duality induces equivalences of categories $\NH^{\mathrm{op}}\overset{\sim}{\rightarrow} \NH[-1]$ and $\FLCA^{\mathrm{op}}\overset{\sim}{\rightarrow}\FLCA$. 

We are ready to prove \cref{dualitynhlca}.
\begin{proof}[Proof of \cref{dualitynhlca}]
The perfect pairing induces an equivalence \[
C\overset{\sim}{\rightarrow} R\underline{\Hom}(D,\R/\Z)
\] in $\DDD^{\mathrm{b}}(\FLCA)$. We have a spectral sequence for the cohomology of $R\underline{\Hom}(D,\R/\Z)$ \[
E_2^{i,j}=\underline{\Ext}^i(\H^{-j}(D),\R/\Z)\implies \H^{i+j}(R\underline{\Hom}(D,\R/\Z)).
\] Since we have $\H^{-j}(D)\in\LH(\FLCA)$ for all $j$, the term $E_2^{i,j}$ vanishes for all $i\neq 0,1$. Consequently, for all $n$, we have an exact sequence in $\LH(\FLCA)$ \begin{equation}\label{eqn:1s1}
0\rightarrow \underline{\Ext}(\H^{-n+1}(D),\R/\Z)\rightarrow \H^n(R\underline{\Hom}(D,\R/\Z))\rightarrow \underline{\Hom}(\H^{-n}(D),\R/\Z)\rightarrow 0.
\end{equation} We observe that we have \[\begin{split}\underline{\Ext}(\H^{-n+1}(D),\R/\Z)&=\underline{\Ext}(\H^{-n+1}(D)^{\mathrm{nh}},\R/\Z)=(\H^{-n+1}(D)^{\mathrm{nh}})^{\vee}[1]\in\NH, \\ \underline{\Hom}(\H^{-n}(D),\R/\Z)&=\underline{\Hom}(\H^{-n}(D)^{\mathrm{lca}},\R/\Z)=(\H^{-n}(D)^{\mathrm{lca}})^{\vee}\in\FLCA.\end{split}\] We have for all $n$  \[\H^n(C)\cong \H^n(R\underline{\Hom}(D,\R/\Z)). \] By uniqueness of the nh-lca decomposition, the exact sequence \eqref{eqn:1s1} induces isomorphisms \[
\H^n(C)^{\mathrm{nh}}\cong(\H^{-n+1}(D)^{\mathrm{nh}})^{\vee}[1]\cong (\H^{-n+1}(D)^{\mathrm{nh}}[-1])^{\vee}, \qquad \H^{n}(C)^{\mathrm{lca}}\cong (\H^{-n}(D)^{\mathrm{lca}})^{\vee}
\] for all $n$. \end{proof}


\subsection{Duality with coefficients in tori}\label{subsection:dualitytori}
Let $T$ be a torus over $F$ and let $Y^*$ be its Cartier dual. The bilinear map in $\Ab(B_{\hat{W}_F})$ introduced in  \cref{example:charactertorus} \[
T(\overline{L})_{\mathrm{cond}}\times X^*(T)\rightarrow \overline{L}^{\times}.
\] induces a cup-product pairing \begin{equation} 
\label{eqn:cupproductpairingtori}
R\GGamma(B_{\hat{W}_F},T(\overline{L})_{\mathrm{cond}})\otimes^L R\GGamma(B_{\hat{W}_F},X^*(T))\rightarrow \HH^1(B_{\hat{W}_F},\overline{L}^{\times})[-1]=\Z[-1].
\end{equation}
Thanks to \cite[Theorem 4.27]{Artusa}, we prove the following
\begin{prop}[Condensed duality for tori]\label{tatenakayama}
The cup-product pairing \eqref{eqn:cupproductpairingtori} is a perfect pairing in $\DDD^{\mathrm{b}}(\FLCA)$. 
\end{prop}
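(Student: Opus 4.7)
The plan is to reduce the statement directly to \cite[Theorem 4.27]{Artusa}, which establishes condensed Tate duality for coefficients including $\hat{W}_F$-modules of the form $\Z^r$ with continuous action factoring through a finite Galois quotient --- precisely the category in which $X^*(T)$ lives.

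First I would use \cref{prop:dualtorus} to identify $T(\overline{L})_{\mathrm{cond}}$ with $\underline{\Hom}_{B_{\hat{W}_F}}(X^*(T), \overline{L}^{\times})$. Under this identification, the bilinear map $T(\overline{L})_{\mathrm{cond}} \otimes X^*(T) \to \overline{L}^{\times}$ underlying \eqref{eqn:cupproductpairingtori} becomes the tautological evaluation pairing. Since $X^*(T)$ is, as an underlying condensed abelian group, a finite free $\Z$-module (the $\hat{W}_F$-action only permutes a basis through a finite quotient), the functor $\underline{\Hom}(X^*(T), -)$ is exact and so agrees with $R\underline{\Hom}(X^*(T), -)$. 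Consequently \eqref{eqn:cupproductpairingtori} is the cup-product pairing induced by applying $R\GGamma(B_{\hat{W}_F}, -)$ to the evaluation map \[R\underline{\Hom}(X^*(T), \overline{L}^{\times}) \otimes^L X^*(T) \to \overline{L}^{\times}.\]

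Next I would apply \cite[Theorem 4.27]{Artusa} with $M = X^*(T)$: by its hypothesis on allowable coefficients it provides a perfect pairing in $\DDD^{\mathrm{b}}(\FLCA)$ of the form \[R\GGamma(B_{\hat{W}_F}, R\underline{\Hom}(X^*(T), \overline{L}^{\times})) \otimes^L R\GGamma(B_{\hat{W}_F}, X^*(T)) \to \HH^1(B_{\hat{W}_F}, \overline{L}^{\times})[-1] = \Z[-1]\] constructed from precisely the same evaluation morphism. Combined with the identifications of the previous paragraph, this yields the desired perfectness.

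The main obstacle is the compatibility check: that the cup product built from the Cartier pairing here agrees, at the level of its adjoint into $R\underline{\Hom}(-, \Z[-1])$, with the duality pairing of \cite[Theorem 4.27]{Artusa}. Since both are induced by the same evaluation morphism, the compatibility is formal but must be recorded carefully using the associativity of the cup product and the adjunction between $\otimes^L$ and $R\underline{\Hom}$ in $\DDD^{\mathrm{b}}(B_{\hat{W}_F})$. A secondary point, easily handled using the structure of the relevant cohomology groups (notably \cref{prop:structuretorus} and \cite[Proposition 3.12]{Artusa}), is confirming that the perfectness really lives in $\DDD^{\mathrm{b}}(\FLCA)$ rather than merely degree by degree: both sides already have cohomology in $\FLCA$ concentrated in degrees $0$ and $1$, so the $\DDD^{\mathrm{b}}(\FLCA)$-level statement follows from the degree-wise duality together with the fact that $\DDD^{\mathrm{b}}(\FLCA) \hookrightarrow \DDD^{\mathrm{b}}(\CC)$ is fully faithful.
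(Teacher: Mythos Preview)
There is a genuine gap in your reduction to \cite[Theorem 4.27]{Artusa}. That theorem does not pair $R\GGamma(B_{\hat{W}_F},\underline{\Hom}(M,\overline{L}^{\times}))$ with $R\GGamma(B_{\hat{W}_F},M)$ into $\Z[-1]$. Rather, the dualising object there is $\R/\Z(1)\coloneqq\mathrm{cofib}(\overline{L}^{\times}[-1]\overset{val}{\to}\R[-1])$, the dual coefficient is $M^{D}\coloneqq R\underline{\Hom}(M,\R/\Z(1))$, and the target is $\R/\Z[-2]$. Since $T(\overline{L})_{\mathrm{cond}}\cong\underline{\Hom}(X^*(T),\overline{L}^{\times})$ differs from $X^*(T)^{D}$ by the term $\underline{\Hom}(X^*(T),\R)$, your identification of the two pairings fails precisely at this point. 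The paper resolves this by writing the fibre sequence $X^*(T)^{D}\to T(\overline{L})_{\mathrm{cond}}\to\underline{\Hom}(X^*(T),\R)$ and handling the $\R$-piece separately: this is the content of \cref{lem:cohomologytensorr}, which shows $R\GGamma(B_{\hat{W}_F},X^*(T))\otimes^{L}\R\simeq R\GGamma(B_{\hat{W}_F},X^*(T)\otimes\R)$ so that \cite[Theorem 4.27]{Artusa} can be applied again with $M=X^*(T)\otimes\R$.

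A second issue is the ``other direction'' of perfectness. You implicitly assume that once one adjoint map is an equivalence the other is automatic, but with target $\Z$ this requires $R\GGamma(B_{\hat{W}_F},X^*(T))$ to be $\Z$-reflexive. This is \cref{lem:doubledualcoeffz} and is not formal: by \cite[Theorem 3.30]{Artusa} the complex $R\GGamma(B_{\hat{W}_F},X^*(T))$ has cohomology in degrees $0,1,2$ (not just $0,1$ as you state), and $\HH^{2}$ is of the form $(\Q_p/\Z_p)^{n}\oplus H$, so it does not lie in $\DDD^{\mathrm{perf}}(\Z)$. One must check reflexivity of this piece by hand.
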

 In order to proceed with the proof, we need two preliminary results
\begin{lem}\label{lem:doubledualcoeffz}
The canonical morphism \[
\varphi_{X^*(T)}:R\GGamma(B_{\hat{W}_F},X^*(T))\rightarrow R\underline{\Hom}(R\underline{\Hom}(R\GGamma(B_{\hat{W}_F},X^*(T)),\Z),\Z)
\] is an equivalence in $\DDD^{\mathrm{b}}(\CC)$.
\end{lem}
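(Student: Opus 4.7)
The plan is to show that $C \coloneqq R\GGamma(B_{\hat{W}_F}, X^*(T))$ lies in the full stable subcategory $\mathcal{R} \subset \DDD^{\mathrm{b}}(\CC)$ of $\Z$-reflexive complexes, i.e.\ those $X$ for which the canonical map $X \to R\underline{\Hom}(R\underline{\Hom}(X,\Z),\Z)$ is an equivalence, via a devissage along the Postnikov tower of $C$.

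First I would invoke the explicit structure of the cohomology of $C$. Since $X^*(T) = Y^*(\overline L)_{\mathrm{cond}}$ for $Y^*$ the Cartier dual of $T$ (an $F$-group scheme \'etale-locally isomorphic to $\Z^r$), the structure result \cite[Theorem 3.30]{Artusa} applies and yields $\HH^q(C) = 0$ for $q \notin \{0,1,2\}$, with each non-zero $\HH^q(C)$ an extension of a finite abelian group by a finitely generated free abelian group (in degrees $0$ and $1$) or by a finite power of $\Q_p/\Z_p$ (in degree $2$).

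Second, I would verify that each of the basic building blocks belongs to $\mathcal{R}$. The case $X = \Z$ is immediate. For a finite abelian group $H$, we have $R\underline{\Hom}(H,\Z) \simeq H^{\vee}[-1]$ with $H^{\vee} = \Hom(H,\Q/\Z)$ finite, so a second dualisation returns $H$. For $\Q_p/\Z_p = \colim_n \Z/p^n$, the interchange of $R\underline{\Hom}(-,\Z)$ with this filtered colimit gives $R\underline{\Hom}(\Q_p/\Z_p,\Z) \simeq R\lim_n(\Z/p^n[-1]) \simeq \underline{\Z_p}[-1]$. Combining this with the complementary identification $R\underline{\Hom}(\underline{\Z_p},\Z) \simeq \Q_p/\Z_p[-1]$ in the derived category of condensed abelian groups produces mutual $\Z$-duality between $\Q_p/\Z_p$ and $\underline{\Z_p}$, placing both in $\mathcal{R}$.

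Third, I would conclude by devissage. The subcategory $\mathcal{R}$ is closed under shifts and under fibre sequences by the five-lemma in the stable setting, and therefore closed under finite extensions. Each $\HH^q(C)$, being an extension of the reflexive blocks above, lies in $\mathcal{R}$. An induction along the Postnikov filtration $\tau_{\leq q-1}C \to \tau_{\leq q}C \to \HH^q(C)[-q]$ then promotes reflexivity from the shifted cohomology groups to $C$ itself; the induction terminates since $C$ is bounded.

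The main obstacle is the single non-elementary input, namely the identification $R\underline{\Hom}(\underline{\Z_p},\Z) \simeq \Q_p/\Z_p[-1]$ in $\DDD^{\mathrm{b}}(\CC)$. This can be derived from the short exact sequence $0 \to \underline{\Z_p} \to \underline{\Q_p} \to \Q_p/\Z_p \to 0$ together with a computation of $R\underline{\Hom}(\underline{\Q_p},\Z)$, or one may cite the corresponding reflexivity statement in the formalism of solid condensed abelian groups from \cite{LCM}; everything else in the argument is formal devissage.
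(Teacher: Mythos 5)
Your argument is essentially the same as the paper's: both proofs split $R\GGamma(B_{\hat{W}_F},X^*(T))$ into pieces using the structure result \cite[Theorem 3.30]{Artusa}, verify $\Z$-reflexivity of the building blocks, and conclude by stability of the reflexive subcategory under cones and shifts. The only cosmetic difference is that the paper truncates just once, packaging $\tau^{\le 1}$ as an object of $\DDD^{\mathrm{perf}}(\Z)$ (which is reflexive for free), whereas you run the devissage degree by degree; you also make explicit the computation $R\underline{\Hom}(\underline{\Z_p},\Z)\simeq \Q_p/\Z_p[-1]$ (via $\Z\to\R\to\R/\Z$ and injectivity of $\R$ and $\R/\Z$ in $\LCA$), which the paper's proof leaves implicit when it asserts reflexivity of $(\Q_p/\Z_p)^n\oplus H$.
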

\begin{proof}
We have a fiber sequence \[
\tau^{\le 1}R\GGamma(B_{\hat{W}_F},X^*(T))\rightarrow R\GGamma(B_{\hat{W}_F},X^*(T))\rightarrow \HH^2(B_{\hat{W}_F},X^*(T))[-2]
\] in $\DDD^{\mathrm{b}}(\CC)$. By \cite[Theorem 3.30]{Artusa}, we have \[\tau^{\le 1}R\GGamma(B_{\hat{W}_F},X^*(T))\in\DDD^{perf}(\Z)\] and \[\HH^2(B_{\hat{W}_F},X^*(T))=(\Q_p/\Z_p)^n\oplus H\] for some $n\in\N$ and some finite abelian group $H$. Consequently, we have equivalences \[
\tau^{\le 1}R\GGamma(B_{\hat{W}_F},X^*(T))\overset{\sim}{\rightarrow} R\underline{\Hom}(R\underline{\Hom}(\tau^{\le 1}R\GGamma(B_{\hat{W}_F},X^*(T)),\Z),\Z)
\] and \[
\HH^2(B_{\hat{W}_F},X^*(T))\overset{\sim}{\rightarrow}R\underline{\Hom}(R\underline{\Hom}(\HH^2(B_{\hat{W}_F},X^*(T)),\Z),\Z).
\] The result follows.
\end{proof}

\begin{lem}\label{lem:cohomologytensorr}
Let $M$ be a free finitely generated abelian group with a continuous action of a finite quotient $G$ of $G_F$. Then $M$ is flat in $B_{\hat{W}_F}$ and the canonical morphism \[
R\GGamma(B_{\hat{W}_F},M)\otimes^L \R\rightarrow R\GGamma(B_{\hat{W}_F},M\otimes^L \R)
\] is an equivalence in $\DDD(\CC)$.
\end{lem}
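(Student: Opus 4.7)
The statement has two parts, so my plan is structured accordingly.

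First, for flatness of $M$ in $B_{\hat{W}_F}$, I would note that the forgetful functor $\Ab(B_{\hat{W}_F})\to \Ab(\CC)$ is exact and conservative, and that the tensor product in $\Ab(B_{\hat{W}_F})$ is computed as the tensor product in $\Ab(\CC)$ of the underlying condensed abelian groups equipped with the diagonal $\hat{W}_F$-action. Hence it suffices to check flatness of the underlying condensed abelian group, which is the discrete $\Z^r$. This is immediate since $\Z$ is the tensor unit and arbitrary direct sums of flat objects remain flat.

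For the second assertion, my plan is to reduce to the case of trivial $\hat{W}_F$-action on $M$. Since the action factors through a finite quotient $G=\Gal(F'/F)$ of $G_F$, the short exact sequence $1\to \hat{W}_{F'}\to \hat{W}_F\to G\to 1$ yields a Hochschild--Serre type equivalence
\[
R\GGamma(B_{\hat{W}_F},-)\simeq R\GGamma(B_G, R\GGamma(B_{\hat{W}_{F'}},-)).
\]
The outer functor $R\GGamma(B_G,-)$ commutes with $-\otimes^L \R$ because $G$ is finite: by Maschke's theorem $\R[G]$ is semisimple, so on bounded complexes of $\R$-vector spaces the higher $G$-cohomology vanishes. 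For the inner functor, $\hat{W}_{F'}$ acts trivially on $M\cong \Z^r$, so by $\Z$-linearity
\[
R\GGamma(B_{\hat{W}_{F'}},M)=R\GGamma(B_{\hat{W}_{F'}},\Z)\otimes_\Z M, \quad R\GGamma(B_{\hat{W}_{F'}},M\otimes \R)=R\GGamma(B_{\hat{W}_{F'}},\R)\otimes_\Z M.
\]
This reduces the statement to proving that the canonical map
\[
R\GGamma(B_{\hat{W}_{F'}},\Z)\otimes^L \R \longrightarrow R\GGamma(B_{\hat{W}_{F'}},\R)
\]
is an equivalence in $\DDD(\CC)$, i.e.\ that cohomology of $B_{\hat{W}_{F'}}$ with constant coefficients commutes with extension of scalars along $\Z\to \R$.

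For the last step, I would fit both sides into the fibre sequence induced by $\Z\to \R\to \R/\Z$ and use the explicit descriptions of $R\GGamma(B_{\hat{W}_{F'}},\Z)$ and $R\GGamma(B_{\hat{W}_{F'}},\R/\Z)$ from \cref{structurerztwist} and \cite[Lemma 3.28]{Artusa}, comparing both sides in each cohomological degree.

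The main obstacle is this final step: condensed tensor products with $\R$ do not behave as in the classical setting (profinite groups tensored with $\R$ in $\Ab(\CC)$ do not vanish), so one cannot invoke a formal ``perfect complex'' or ``projection formula'' argument to discard the profinite summand appearing in $\HH^2(B_{\hat{W}_{F'}},\Z)$ coming from local class field theory. Instead one has to identify the condensed structure of these cohomology groups precisely, using the results developed in \cite{Artusa}, and verify that the natural map induces an isomorphism on each cohomology object.
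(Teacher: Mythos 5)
Your overall strategy is workable and close in spirit to the paper's, but there are two issues. First, the appeal to Maschke's theorem is not the right justification for commuting $R\GGamma(B_G,-)$ with $-\otimes^L\R$: Maschke only says that $R\GGamma(B_G, X\otimes^L\R)$ is concentrated in invariants and says nothing about $R\GGamma(B_G,X)\otimes^L\R$, which is the \emph{source} of the comparison map. What one actually needs is that $R\GGamma(B_G,-)$ for finite $G$ is computed by the bar complex, whose terms are \emph{finite} products $\prod_{G^{n}}X^{m}$, and that the exact functor $-\otimes\R$ commutes with finite products; this is what makes the natural transformation an equivalence, not semisimplicity of $\R[G]$.

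Second, the obstacle you flag at the end is a red herring. With coefficient $\Z$ and trivial $\hat{W}_{F'}$-action, $\HH^2(B_{\hat{W}_{F'}},\Z)$ arises from $\H^2(I_{F'},\Z)\cong\Hom_{\mathrm{cont}}(I_{F'},\Q/\Z)$, which is a \emph{discrete torsion} condensed abelian group, not profinite (the Pontryagin dual of a profinite group is discrete torsion). Such groups are annihilated by $-\otimes\R$ in the condensed setting too, exactly as noted in \cref{rmk:ontensorr}; no perfect-complex or projection-formula trick is needed. Once this is observed, your remaining step reduces to the same degree-by-degree diagram check that closes the paper's argument. In fact, the paper bypasses your extra Hochschild--Serre layer for $G$ entirely: it decomposes once via $\hat{I}\to\hat{W}_F\to W_k$ (keeping the $G$-action throughout), isolates the piece $C=R\GGamma(B_{W_k},\tau^{\geq 1}R\GGamma(B_{\hat{I}},M))$, shows $C\otimes^L\R=0$ because its cohomology is discrete torsion, and handles $R\GGamma(B_{\hat{W}_F},M\otimes\R)$ in one stroke via \cite[Proposition 2.30]{Artusa}, landing directly in a comparison of two objects of $\DDD^{perf}(\R)$.
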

\begin{rmk}
In the statement of \cref{lem:cohomologytensorr}, the derived tensor products are computed in $\DDD^{\mathrm{b}}(\CC)$.
 \end{rmk}
 \begin{rmk}\label{rmk:ontensorr}
If $C\in\DDD^{\mathrm{b}}(\FLCA)$ is such that $\H^i(C)$ is a discrete torsion abelian group for all $i$, then we have $C\otimes^L \R=0$ in $\DDD^{\mathrm{b}}(\CC)$. To show this, we reduce to the case where $C=\Z/n[0]$. We consider the flat resolution $\cdot n:\Z\to \Z$ of $\Z/n$. Then we have \[
 \Z/n\otimes^L \R=[\Z\otimes \R\overset{\cdot n}{\rightarrow} \Z\otimes\R],
 \] which is equivalent to the $0$ complex in $\DDD^{\mathrm{b}}(\CC)$.
 \end{rmk}
\begin{proof}
We first observe that we have $M_{|EG}\cong \Z^n$ for some $n\in\N$, which is flat in $B_{\hat{W}_F}/EG$. Consequently, $M$ is flat and we have $M\otimes^L \R=M\otimes \R$, which is a finitely generated $\R$-vector space with a continuous action of $G$. By \cite[Proposition 2.30]{Artusa} we have \[
R\GGamma(B_{\hat{W}_F},M\otimes^L \R)=R\GGamma(B_{W_k},(M\otimes \R)^H)
\] where $H$ is the image of the inertia subgroup via the quotient $G_F\to G$. 

\vspace{0.5em}
On the other hand, we have a fiber sequence in $\DDD^{\mathrm{b}}(\Ab(\CC))$ \[
R\GGamma(B_{W_k},M^H)\rightarrow R\GGamma(B_{\hat{W}_F},M)\rightarrow R\GGamma(B_{W_k},\tau^{\ge 1}R\GGamma(B_{\hat{I}},M)).
\] By \cite[Example 2.42]{Artusa}, we have \[R\GGamma(B_{W_k},M^H)=[M^H\overset{1-\varphi}{\longrightarrow} M^H]\in \DDD^{perf}(\Z)\subset \DDD^{\mathrm{b}}(\FLCA).\]  Moreover, by \cite[Theorem 4.27]{Artusa}, we have $R\GGamma(B_{\hat{W}_F},M)\in\DDD^{\mathrm{b}}(\FLCA)$. Consequently, we have \[
C\coloneqq R\GGamma(B_{W_k},\tau^{\ge 1}R\GGamma(B_{\hat{I}},M))\in \DDD^{\mathrm{b}}(\FLCA).
\] Moreover, since $\HH^q(B_{\hat{I}},M)=\underline{\H^q(B_I(\Set),M)}$ is torsion for all $q\ge 1$, by \cref{rmk:ontensorr}, we have \[
C\otimes^L \R=0
\] in $\DDD^{\mathrm{b}}(\CC)$. Consequently,
the map \[
R\GGamma(B_{W_k},M^H)\otimes^L \R\rightarrow R\GGamma(B_{\hat{W}_F},M)\otimes^L \R
\] is an equivalence in $\DDD^{\mathrm{b}}(\CC)$. Since the complex $R\GGamma(B_{W_k},M^H)$ belongs to $\DDD^{perf}(\Z)$, we have $R\GGamma(B_{W_k},M^H)\otimes^L \R=R\GGamma(B_{W_k},M^H)\otimes \R$. 

\vspace{0.5em} 
Therefore we need to check that the induced map \[
R\GGamma(B_{W_k},M^H)\otimes \R \rightarrow R\GGamma(B_{W_k},(M\otimes \R)^H)
\] is an equivalence. Both condensed abelian groups belong to $\DDD^{perf}(\R)$, hence it is enough to check that the morphism is an equivalence on the underlying abelian groups.
This reduces to the commutativity of the diagram of abelian groups \[
\begin{tikzcd}
M^H\otimes \R \ar[r,"(1-\varphi)\otimes \R"]\ar[d,"\sim"] & M^H\otimes \R\ar[d, "\sim"]\\
(M\otimes \R)^{H}\ar[r, "1-\varphi"] & (M\otimes \R)^{H},
\end{tikzcd}
\] where the horizontal arrows are\[
(1-\varphi)\otimes \R: M^H\otimes \R \rightarrow M^H\otimes \R, \qquad m\otimes r\mapsto (m-\varphi(m))\otimes r,
\]\[
1-\varphi: (M\otimes \R)^H\rightarrow (M\otimes \R)^H, \qquad m\otimes r\mapsto m\otimes r- \varphi(m\otimes r)= m\otimes r- \varphi(m)\otimes r.
\] This is clear.
\end{proof}
\begin{proof}[Proof of \cref{tatenakayama}]
Following the notations of \cite{Artusa}, we set \[\R/\Z(1)\coloneqq \mathrm{cofib}(\overline{L}^{\times}[-1]\overset{val}{\rightarrow} \R[-1])\in\DDD^{\mathrm{b}}(B_{\hat{W}_F}).\]By \cref{prop:dualtorus}, we have a fiber sequence \[ X^*(T)^D\rightarrow T(\overline{L})_{\mathrm{cond}}\rightarrow \underline{\Hom}(X^*(T),\R) \] in $\DDD^{\mathrm{b}}(B_{\hat{W}_F})$, where $X^*(T)^D\coloneqq R\underline{\Hom}(X(T),\R/\Z(1))$.  We get a morphism of fiber sequences
\[
\begin{tikzcd}
R\GGamma(B_{\hat{W}_F},X(T)^D)\ar[d,"\psi_{D}"]\ar[r] & *\ar[d]\ar[r]& R\GGamma(B_{\hat{W}_F}, \underline{\Hom}(X^*(T),\R))\ar[d,"\psi_{\R}"]\\
R\underline{\Hom}(R\GGamma(B_{\hat{W}_F},X(T)),\R/\Z[-2])\ar[r]&*\ar[r]& R\underline{\Hom}(R\GGamma(B_{\hat{W}_F},X^*(T)),\R[-1]),
\end{tikzcd}
\] where the middle map is the morphism \[
\psi_T:R\GGamma(B_{\hat{W}_F},T(\overline{L})_{\mathrm{cond}})\rightarrow R\underline{\Hom}(R\GGamma(B_{\hat{W}_F},X^*(T)),\Z[-1])
\] coming from \eqref{eqn:cupproductpairingtori}. To show that $\psi_T$ is an equivalence, it is enough to show that the same holds for $\psi_{D}$ and $\psi_{\R}$. The map $\psi_{D}$ is an equivalence by \cite[Theorem 4.27]{Artusa}. Concerning $\psi_{\R}$, we have \[\begin{split}
R\underline{\Hom}(R\GGamma(B_{\hat{W}_F},X^*(T)),\R[-1])&=R\underline{\Hom}(R\GGamma(B_{\hat{W}_F},X^*(T))\otimes^L \R,\R/\Z[-1])=\\&=R\underline{\Hom}(R\GGamma(B_{\hat{W}_F},X^*(T)\otimes^L \R),\R/\Z[-1]), \end{split}
\] where the last equality is \cref{lem:cohomologytensorr}. By \cite[Lemma 3.19]{Artusa} we have $R\underline{\Hom}(\R,\overline{L}^{\times})=0$, thus we obtain \[
(X(T)\otimes^L \R)^D[1]=R\underline{\Hom}(X^*(T),R\underline{\Hom}(\R,\R/\Z(1)))[1]=\underline{\Hom}(X^*(T),\R).
\] Consequently, the fact that $\psi_{\R}$ is an isomorphism follows from \cite[Theorem 4.27]{Artusa} with $M=X^*(T)\otimes^L \R$.

We just need to show that the other morphism induced from \cref{eqn:cupproductpairingtori}, i.e.\ \[
\psi_{X^*(T)}:R\GGamma(B_{\hat{W}_F},X^*(T))\rightarrow R\underline{\Hom}(R\GGamma(B_{\hat{W}_F},T(\overline{L})_{\mathrm{cond}}),\Z[-1])
\] is an equivalence. Up to equivalence, the map $\psi_{X^*(T)}$ is given by \[\begin{split}
R\GGamma(B_{\hat{W}_F},X^*(T))&\overset{\varphi_{X*(T)}}{\longrightarrow} R\underline{\Hom}(R\underline{\Hom}(R\GGamma(B_{\hat{W}_F},X^*(T)),\Z[-1]),\Z[-1])\overset{-\circ \psi_T}{\longrightarrow}\\
&\overset{-\circ \psi_T}{\longrightarrow}R\underline{\Hom}(R\GGamma(B_{\hat{W}_F},T(\overline{L})_{\mathrm{cond}}),\Z[-1]).
\end{split}
\] The second morphism is an equivalence since $\psi_T$ is. The first morphism is an equivalence by \cref{lem:doubledualcoeffz}. Consequently, the morphism $\psi_{X^*(T)}$ is an equivalence.

\vspace{0.5em}

\end{proof}

\subsection{Duality with coefficients in abelian varieties}\label{subsection:dualitycoefficientsabelianvarieties}
Let $A$ be an abelian variety over $F$, and let $A^*$ be the dual abelian variety. As explained in the appendix (\cref{example:poincareabelianvar}), the Poincaré biextension $\mathcal{P}_0$ of $(A,A^*)$ by $\mathbbm{G}_m$ induces a pairing $\psi_{\mathcal{P}_0}:A\otimes^L A^*\rightarrow \mathbbm{G}_m[1]$ in $\DDD^{\mathrm{b}}(\TT_{\mathrm{fppf}})$ and a cup-product pairing in $\DDD^{\mathrm{b}}(\Ab)$ \begin{equation}\label{noncondensedcppairing:av} R\Gamma(B_{G_F}(\Set),A(\overline{F}))\otimes^L R\Gamma(B_{G_F}(\Set),A^*(\overline{F}))\rightarrow \H^2(G_F(\Set),\overline{F}^{\times})[-1]=\Q/\Z[-1].
\end{equation} By \cref{condensedbiext}  $\mathcal{P}_0(\overline{L})_{\mathrm{cond}}$ is a biextension of $(A(\overline{L})_{\mathrm{cond}},A^*(\overline{L})_{\mathrm{cond}})$ by $\overline{L}^{\times}$, inducing a pairing in $\DDD^{\mathrm{b}}(B_{\hat{W_F}})$ \[\psi_{\mathcal{P}_0(\overline{L})_{\mathrm{cond}}}:A(\overline{L})_{\mathrm{cond}}\otimes^L A^*(\overline{L})_{\mathrm{cond}}\rightarrow \overline{L}^{\times}[1]\] and a condensed cup-product pairing in $\DD^{\mathrm{b}}(\Ab(\CC))$ \begin{equation}\label{condensedcppairing:av}
R\GGamma(B_{\hat{W}_F},A(\overline{L})_{\mathrm{cond}})\otimes^L R\GGamma(B_{\hat{W}_F},A^*(\overline{L})_{\mathrm{cond}})\rightarrow \HH^1(B_{\hat{W_F}},\overline{L}^{\times})[0]=\Z.
\end{equation} Moreover, the underlying pairing in $\DDD^{\mathrm{b}}(\Ab)$ is compatible with \eqref{noncondensedcppairing:av}.

We prove the Weil version of the Tate duality with coefficients in abelian varieties.
\begin{thm}\label{thm:condenseddualityav}(Tate duality for abelian varieties)
The cup-product pairing \eqref{condensedcppairing:av} is a perfect pairing in $\DDD^{\mathrm{b}}(\FLCA)$ and factors through $\R/\Z[-1]$. In particular, for $q=0,1$, we get a perfect pairing of locally compact abelian group of finite ranks \[
\HH^q(B_{\hat{W}_F},A(\overline{L})_{\mathrm{cond}})\otimes \HH^{1-q}(B_{\hat{W}_F},A^*(\overline{L})_{\mathrm{cond}})\rightarrow \R/\Z.
\]\end{thm}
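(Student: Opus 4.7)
The plan is to factor the pairing \eqref{condensedcppairing:av} through $\R/\Z[-1]$, and then to prove the resulting pairing is perfect in $\DDD^{\mathrm{b}}(\FLCA)$ by checking on cohomology, invoking classical Tate duality through the compatibility \cref{compatibility3biextensions}.

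For the factorisation, \cref{thm:structurecohomologyabvar} shows that each cohomology group of $R\GGamma(B_{\hat{W}_F},A^*(\overline{L})_{\mathrm{cond}})$ is topologically torsion, of the form $\underline{\Z_p}^r\oplus(\text{finite})$ or $(\Q_p/\Z_p)^s\oplus(\text{finite})$. Each such object tensored with $\R$ in $\DDD^{\mathrm{b}}(\LCA)$ vanishes, as noted in the remark following the definition of the $\R/\Z$-twist: its Pontryagin dual is torsion and $\R$ is self-dual, so by dualisability in $\DDD^{\mathrm{b}}(\LCA)$ the tensor is zero. Consequently the composite of the pairing with $\Z\to\R$ vanishes in $\DDD^{\mathrm{b}}(\LCA)$, and the pairing lifts through the triangle $\R/\Z[-1]\to\Z\to\R$.

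Let $\phi:R\GGamma(B_{\hat{W}_F},A(\overline{L})_{\mathrm{cond}})\to R\underline{\Hom}\bigl(R\GGamma(B_{\hat{W}_F},A^*(\overline{L})_{\mathrm{cond}}),\R/\Z[-1]\bigr)$ denote the adjoint morphism in $\DDD^{\mathrm{b}}(\FLCA)$. Since every $\HH^q(A^*(\overline{L})_{\mathrm{cond}})$ is locally compact, the Ext spectral sequence $E_2^{i,j}=\underline{\Ext}^i(\HH^{-j}(A^*(\overline{L})_{\mathrm{cond}}),\R/\Z)$ collapses to its row $i=0$, so the target of $\phi$ has cohomology $\HH^{1-q}(A^*(\overline{L})_{\mathrm{cond}})^{\vee}$ in degree $q\in\{0,1\}$. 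It therefore suffices to prove that the induced maps
\[
\HH^q(A(\overline{L})_{\mathrm{cond}})\longrightarrow \HH^{1-q}(A^*(\overline{L})_{\mathrm{cond}})^{\vee},\qquad q=0,1,
\]
are isomorphisms in $\Ab(\CC)$. By \cref{compatibility3biextensions} together with \cref{prop:globalsectionscohomag} and the comparison $\H^q(W_F,A(\overline{L}))\cong\H^q(G_F,A(\overline{F}))$ from \cite[Lemma 5.4.3]{Karpuk2}, their underlying maps of abstract abelian groups coincide with the classical Tate pairings, hence are bijections by Theorem \ref{intro:thmtate}. For $q=0$ both source and target are profinite, so the continuous bijection is a homeomorphism (compact Hausdorff to Hausdorff); for $q=1$ both sides are discrete torsion, so continuity is automatic. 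Hence $\phi$ is an equivalence in $\DDD^{\mathrm{b}}(\FLCA)$, and the stated perfect pairings on each cohomology group follow.

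The main technical obstacle is securing the compatibility \cref{compatibility3biextensions} identifying our condensed cup-product pairing with the classical Galois pairing on underlying abelian groups; this is precisely what allows Theorem \ref{intro:thmtate} to be imported into the condensed Weil-étale framework without loss. Once this compatibility is in hand, the remaining arguments reduce to a bookkeeping exercise using the Pontryagin-duality formalism in $\DDD^{\mathrm{b}}(\FLCA)$ and the structural description \cref{thm:structurecohomologyabvar}.
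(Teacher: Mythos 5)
Your proposal is correct and follows essentially the same strategy as the paper's proof: both establish the factorisation through $\R/\Z[-1]$ via the topological-torsion structure of the cohomology given by \cref{thm:structurecohomologyabvar}, and both import perfection from classical Tate duality via the compatibility \cref{compatibility3biextensions} and the comparison $\H^q(W_F,A(\overline{L}))\cong\H^q(G_F,A(\overline{F}))$. The only cosmetic difference is that the paper reduces to checking that the underlying map $\tau(A)(*)$ is an equivalence in $\DDD^{\mathrm{b}}(\Ab)$ directly, while you check degree-by-degree and supply the compact-Hausdorff/discrete automatic-continuity argument to upgrade bijections of underlying groups to isomorphisms in $\Ab(\CC)$ — a slightly more explicit route to the same conclusion.
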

\begin{proof}
By \cref{thm:structurecohomologyabvar}, we have $R\underline{\Hom}(R\GGamma(B_{\hat{W}_F},A(\overline{L})_{\mathrm{cond}}),\R)=0$, which implies that the map of \eqref{condensedcppairing:av} factors through $\R/\Z[-1]$. Let us consider the map \[
\tau(A):R\GGamma(B_{\hat{W}_F},A^*(\overline{L})_{\mathrm{cond}})\rightarrow R\underline{\Hom}(R\GGamma(B_{\hat{W}_F},A(\overline{L})_{\mathrm{cond}}),\R/\Z[-1]).
\] 
Since the cohomology groups of the complexes $R\GGamma(B_{\hat{W}_F},A^*(\overline{L})_{\mathrm{cond}})$ and $R\GGamma(B_{\hat{W}_F},A(\overline{L})_{\mathrm{cond}})$ are either profinite or torsion, we observe that \begin{itemize} \item $\tau(A)$ is an equivalence if and only if $\tau(A)(*)$ is an equivalence in $\DDD^{\mathrm{b}}(\Ab)$;
\item We have a canonical isomorphism \[R\Hom_{\Ab(\CC)}(R\GGamma(B_{\hat{W}_F},A(\overline{L})_{\mathrm{cond}}),\R/\Z)\cong R\Hom_{\Ab}(R\Gamma(B_{\hat{W}_F},A(\overline{L})_{\mathrm{cond}}),\Q/\Z) .\]
\end{itemize} Thus it is enough to show that the pairing \[
R\Gamma(B_{\hat{W}_F},A(\overline{L})_{\mathrm{cond}})\otimes^L R\Gamma(B_{\hat{W}_F},A^*(\overline{L})_{\mathrm{cond}})\rightarrow \Q/\Z[-1]
\] is perfect.

By compatibility with \eqref{noncondensedcppairing:av} we have a commutative diagram \[
\begin{tikzcd}
R\Gamma(B_{G_F}(\Set),A(\overline{F}))\otimes^L R\Gamma(B_{G_F},A^*(\overline{F}))\ar[r]\ar[d]& \Q/\Z[-1]\ar[d,equal]\\
R\Gamma(B_{\hat{W}_F},A(\overline{L})_{\mathrm{cond}})\otimes^L R\Gamma(B_{\hat{W}_F},A^*(\overline{L})_{\mathrm{cond}})\ar[r]& \Q/\Z[-1].
\end{tikzcd}
\] By \cite[Lemma 5.4.3]{Karpuk} and \cref{prop:globalsectionscohomag} the left vertical arrow is an isomorphism. By Tate's duality theorem with coefficients in abelian varieties (see \cite[Corollary 3.4]{ADT}) the upper pairing is perfect. The result follows.
\end{proof} 
\subsection{Duality with coefficients in \texorpdfstring{$1$}{1}-motives}\label{subsection:duality1motives}
Let $\MM=[Y\overset{u}{\rightarrow} E]$ be a $1$-motive over $F$, where $E$ is an extension of the abelian variety $A$ by the torus $T$. In \cite[10.2.11]{Deligne} a dual $1$-motive $\MM^*=[Y^*\overset{u^*}{\rightarrow}E^*]$ is associated to $\MM$, where $Y^*\coloneqq \mathcal{X}(T)$ is the Cartier dual of $T$, while $E^*$ is a semiabelian variety, extension of the dual abelian variety $A^*$ of $A$ by the torus $T^*$ with Cartier dual $Y$. A Poincaré biextension $\mathcal{P}$ of $(\MM,\MM^*)$ by $\mathbbm{G}_m$ is defined. This gives a pairing in $\DDD^{\mathrm{b}}(\TT_{\mathrm{fppf}})$ \[
\psi_{\mathcal{P}}:\MM\otimes^L \MM^*\rightarrow \mathbbm{G}_m[1].
\] and a cup-product pairing for Galois cohomology \[
CP_{\mathcal{P}}:R\Gamma(B_{G_F}(\Set),\MM(\overline{F}))\otimes^L R\Gamma(B_{G_F}(\Set),\MM^*(\overline{F}))\rightarrow \H^2(B_{G_F}(\Set),\overline{F}^{\times})[-1]=\Q/\Z[-1].
\]
The construction of the dual $1$-motive, of the Poincaré pairing, as well as the compatibility with the weight filtration on $\MM$ and $\MM^*$ are recalled  in \cref{example:poincarebiext}. 
Moreover, in \cref{condensedbiextensions1motives} we show that $\mathcal{P}(\overline{L})_{\mathrm{cond}}$ is a biextension of $(\MM(\overline{L})_{\mathrm{cond}},\MM^*(\overline{L})_{\mathrm{cond}})$ by $\overline{L}^{\times}$, called \emph{condensed Poincaré biextension}. We obtain a condensed Poincaré pairing \[
\psi_{\mathcal{P}(\overline{L})_{\mathrm{cond}}}:\MM(\overline{L})_{\mathrm{cond}}\otimes^L \MM^*(\overline{L})_{\mathrm{cond}}\rightarrow \overline{L}^{\times}[1],
\] and a condensed cup-product pairing 
\begin{equation}\label{eqn:condensedcppairing1motives}
   CP_{\mathcal{P}(\overline{L})_{\mathrm{cond}}}: R\GGamma(B_{\hat{W}_F},\MM(\overline{L})_{\mathrm{cond}})\otimes^L R\GGamma(B_{\hat{W}_F},\MM^*(\overline{L})_{\mathrm{cond}})\rightarrow \HH^1(B_{\hat{W}_F},\overline{L}^{\times})=\Z.
\end{equation}
 which are compatible with the condensed weight filtration as proved in \cref{lem:lastlemma}. 
We are ready to prove the following \begin{thm}[Condensed duality with coefficients in $1$-motives]\label{thm:duality1motives1}
The cup-product pairing \eqref{eqn:condensedcppairing1motives} is a perfect pairing in $\DDD^{\mathrm{b}}(\FLCA)$.
\end{thm}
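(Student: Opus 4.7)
The plan is to prove perfectness by dévissage along the condensed weight filtration (\cref{condensedfiltration}) applied to both $\MM(\overline{L})_{\mathrm{cond}}$ and $\MM^*(\overline{L})_{\mathrm{cond}}$. Recall that the graded pieces of $\MM(\overline{L})_{\mathrm{cond}}$ are $W_{-2} = T(\overline{L})_{\mathrm{cond}}[0]$, $gr^{-1} = A(\overline{L})_{\mathrm{cond}}[0]$ and $gr^0 = Y(\overline{L})_{\mathrm{cond}}[1]$, and analogously for $\MM^*$ with $T, A, Y$ replaced by $T^*, A^*, Y^*$. By \cref{lem:lastlemma} the condensed Poincaré pairing is compatible with these filtrations. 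Applying $R\GGamma(B_{\hat{W}_F},-)$ and using the fact that perfectness of a pairing in $\DDD^{\mathrm{b}}(\FLCA)$ is stable under extensions (a standard diagram chase using the five-lemma in the triangulated setting), the problem reduces to showing perfectness of the three pairings induced on the graded pieces.

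These induced pairings should be identified with classical Cartier and Poincaré pairings through the construction of the Poincaré biextension of 1-motives in \cite[10.2.11]{Deligne}: the extreme pairings $T(\overline{L})_{\mathrm{cond}} \otimes^L Y^*[1] \to \overline{L}^{\times}[1]$ and $Y[1] \otimes^L T^*(\overline{L})_{\mathrm{cond}} \to \overline{L}^{\times}[1]$ (after a shift) are the condensed Cartier pairings, while the middle pairing $A(\overline{L})_{\mathrm{cond}} \otimes^L A^*(\overline{L})_{\mathrm{cond}} \to \overline{L}^{\times}[1]$ is the condensed Poincaré pairing of abelian varieties. Passing to condensed Weil-étale cohomology, these become respectively two copies of the cup-product pairing \eqref{eqn:cupproductpairingtori} (one for $T, Y^*$ and one for $T^*, Y$) and the condensed cup-product pairing \eqref{condensedcppairing:av}. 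The first is perfect by \cref{tatenakayama}, the second by \cref{thm:condenseddualityav}, concluding the dévissage.

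The main obstacle will be verifying the identification of the graded pieces of $\mathcal{P}(\overline{L})_{\mathrm{cond}}$ with the classical Cartier and Poincaré biextensions in the condensed setting. This amounts to a compatibility statement: the construction of $\mathcal{P}$ in \cite[10.2.11]{Deligne} must respect the weight filtration in the sense that its associated graded biextensions are precisely the Cartier biextensions (at the outer weights) and the Poincaré biextension of abelian varieties (at the middle weight), and this decomposition must transfer to the condensed Weil-étale realisation via \cref{condensedbiextensions1motives} and \cref{compatibility3biextensions}. Granting this compatibility—which ought to be the content of \cref{lem:lastlemma} or a close variant of it—the rest of the argument is a formal application of the established perfect dualities for tori and abelian varieties, combined with the stability of perfect pairings under extensions in the stable $\infty$-category $\DDD^{\mathrm{b}}(\FLCA)$.
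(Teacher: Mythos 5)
Your proposal is correct and takes essentially the same approach as the paper: the paper's proof is exactly a dévissage along the condensed weight filtration, organized as a nested two-step reduction (first splitting off $T\subset\MM$ against $Y^*[1]$ from $\MM^*$, then splitting off $A\subset\MM/T$ against $A^*$ from $E^*$) and invoking \cref{lem:lastlemma} for the compatibility you identify as the main obstacle, which then reduces everything to \cref{tatenakayama} and \cref{thm:condenseddualityav} as you say.
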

\begin{proof}
By \cref{structuremotives}, we have $R\GGamma(B_{\hat{W}_F},\MM(\overline{L})_{\mathrm{cond}})\in \DDD^{\mathrm{b}}(\FLCA)$ and $R\GGamma(B_{\hat{W}_F},\MM^*(\overline{L})_{\mathrm{cond}})\in \DDD^{\mathrm{b}}(\FLCA)$. We need to show that the morphisms induced by the cup-product pairing $CP_{\mathcal{P}(\overline{L})_{\mathrm{cond}}}$ \[
R\GGamma(B_{\hat{W}_F},\MM(\overline{L})_{\mathrm{cond}})\overset{\alpha}{\rightarrow} R\underline{\Hom}(R\GGamma(B_{\hat{W}_F},\MM^*(\overline{L})_{\mathrm{cond}}),\Z) \] \[ R\GGamma(B_{\hat{W}_F},\MM^*(\overline{L})_{\mathrm{cond}})\overset{\beta}{\rightarrow} R\underline{\Hom}(R\GGamma(B_{\hat{W}_F},\MM(\overline{L})_{\mathrm{cond}}),\Z)
\] are equivalences. 

\vspace{0.5em}
\emph{Reduction to the case $T=0$.} We have fiber sequences \[
T(\overline{L})_{\mathrm{cond}}\rightarrow \MM(\overline{L})_{\mathrm{cond}}\rightarrow \MM/T(\overline{L})_{\mathrm{cond}}, \qquad E^*(\overline{L})_{\mathrm{cond}}\rightarrow \MM^*(\overline{L})_{\mathrm{cond}}\rightarrow Y^*(\overline{L})_{\mathrm{cond}}[1].
\] We denote by $\mathcal{P'}$ the Poincaré biextension of $(\MM/T,E^*)$ by $\mathbbm{G}_m$. By \cref{lem:lastlemma} a) and b) the map $\alpha$ induces a morphism of fiber sequences 
\[
\begin{tikzcd}
R\GGamma(B_{\hat{W}_F},T(\overline{L})_{\mathrm{cond}})\ar[r]\arrow[d,"\alpha'"]
&*\ar[r]\arrow[d,"\alpha"]
&R\GGamma(B_{\hat{W}_F},\MM/T(\overline{L})_{\mathrm{cond}})\arrow[d,"\alpha''"] 
\\
R\underline{\Hom}(R\GGamma(B_{\hat{W}_F},Y^*(\overline{L})_{\mathrm{cond}})[1],\Z)\ar[r]
&*\ar[r]
&R\underline{\Hom}(R\GGamma(B_{\hat{W}_F},E^*(\overline{L})_{\mathrm{cond}}),\Z)
\end{tikzcd}
\] where $\alpha'$, resp.\ $\alpha''$, is the morphism induced by the cup-product pairing \eqref{eqn:cupproductpairingtori}, resp.\ $CP_{\mathcal{P}'(\overline{L})_{\mathrm{cond}}}$.
 The morphism $\alpha$ is an equivalence if and only if $\alpha'$ and $\alpha''$ are. The same holds for $\beta$. Consequently, to show that $CP_{\mathcal{P}(\overline{L})_{\mathrm{cond}}}$ is a perfect pairing, it is enough to show that the cup-product pairings \eqref{eqn:cupproductpairingtori} and 
\[
CP_{\mathcal{P}'(\overline{L})_{\mathrm{cond}}}: R\GGamma(B_{\hat{W}_F},\MM/T(\overline{L})_{\mathrm{cond}})\otimes^L R\GGamma(B_{\hat{W}_F},E^*(\overline{L})_{\mathrm{cond}})\rightarrow \Z
\] are perfect. The first one is perfect by \cref{tatenakayama}. The only thing left to show is that $CP_{\mathcal{P}'(\overline{L})_{\mathrm{cond}}}$ is perfect. 

\vspace{0.5em}
\emph{The case $T=0$.} We reduced to the case $T=0$. We have fiber sequences \[
A(\overline{L})_{\mathrm{cond}}\rightarrow \MM(\overline{L})_{\mathrm{cond}}\rightarrow Y(\overline{L})_{\mathrm{cond}}[1], \qquad T^*(\overline{L})_{\mathrm{cond}}\rightarrow \MM^*(\overline{L})_{\mathrm{cond}}\rightarrow A^*(\overline{L})_{\mathrm{cond}}.
\] By \cref{lem:lastlemma} i) and ii) $\alpha$ induces a morphism of fiber sequences 
\[
\begin{tikzcd}
R\GGamma(B_{\hat{W}_F},A(\overline{L})_{\mathrm{cond}})\ar[r]\arrow[d,"\alpha'"]
&*\ar[r]\arrow[d,"\alpha"]
&R\GGamma(B_{\hat{W}_F},Y(\overline{L})_{\mathrm{cond}})[1]\arrow[d,"\alpha''"] 
\\
R\underline{\Hom}(R\GGamma(B_{\hat{W}_F},A^*(\overline{L})_{\mathrm{cond}}),\Z)\ar[r]
&*\ar[r]
&R\underline{\Hom}(R\GGamma(B_{\hat{W}_F},T^*(\overline{L})_{\mathrm{cond}}),\Z)
\end{tikzcd}
\] where $\alpha'$ and $\alpha''$ are induced by the cup-product pairings \eqref{condensedcppairing:av} and \eqref{eqn:cupproductpairingtori} (and the same for $\beta$). As before, it is enough to show that the cup-product pairings \eqref{condensedcppairing:av} and \eqref{eqn:cupproductpairingtori}, i.e.\ \[
R\GGamma(B_{\hat{W}_F},A(\overline{L})_{\mathrm{cond}})\otimes^L R\GGamma(B_{\hat{W}_F},A^*(\overline{L})_{\mathrm{cond}})\rightarrow \Z
\] and \[
R\GGamma(B_{\hat{W}_F},Y(\overline{L})_{\mathrm{cond}})\otimes^L R\GGamma(B_{\hat{W}_F},T^*(\overline{L})_{\mathrm{cond}})\rightarrow \Z[-1],
\] are perfect. The first fact follows by \cref{thm:condenseddualityav}, while the second is again \cref{tatenakayama}.
\end{proof}
To express the duality as a Pontryagin duality on the cohomology groups, we use the $\R/\Z$-twist defined in \cref{section:rztwist}. Using this definition and \cref{dualitynhlca} we reformulate the duality as follows
\begin{thm}\label{duality1motives2}
Let $\MM$ be a $1$-motive over $F$ with dual $1$-motive $\MM^*$. We have a perfect cup-product pairing in $\DDD^{\mathrm{b}}(\FLCA)$ \begin{equation}\label{eqn:duality1motives2}
R\GGamma(B_{\hat{W}_F},\MM(\overline{L})_{\mathrm{cond}})\otimes^LR\GGamma(B_{\hat{W}_F},\MM^*(\overline{L})_{\mathrm{cond}};\R/\Z)\rightarrow \R/\Z,
\end{equation} which induces perfect cup-product pairings in $\FLCA$ \[\begin{split}
\HH^{-1}(B_{\hat{W}_F},\MM(\overline{L})_{\mathrm{cond}})\otimes \HH^{1}(B_{\hat{W}_F},\MM^*(\overline{L})_{\mathrm{cond}};\R/\Z)^{\mathrm{lca}}&\rightarrow \R/\Z, \\
\HH^{0}(B_{\hat{W}_F},\MM(\overline{L})_{\mathrm{cond}})^{\mathrm{lca}}\otimes \HH^{0}(B_{\hat{W}_F},\MM^*(\overline{L})_{\mathrm{cond}};\R/\Z)&\rightarrow \R/\Z,\\
\HH^{1}(B_{\hat{W}_F},\MM(\overline{L})_{\mathrm{cond}})\otimes \HH^{-1}(B_{\hat{W}_F},\MM^*(\overline{L})_{\mathrm{cond}};\R/\Z)&\rightarrow \R/\Z\\
\end{split}
\] and a perfect pairing in $\DDD^{\mathrm{b}}(\FLCA)$ \[
\HH^0(B_{\hat{W}_F},\MM(\overline{L})_{\mathrm{cond}})^{\mathrm{nh}}\otimes^L \HH^1(B_{\hat{W}_F},\MM^*(\overline{L})_{\mathrm{cond}};\R/\Z)^{\mathrm{nh}}[-1]\rightarrow \R/\Z.
\]
\end{thm}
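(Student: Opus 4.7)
The plan is to derive \cref{duality1motives2} in two steps: first, deduce the perfect $\R/\Z$-valued pairing \eqref{eqn:duality1motives2} from the $\Z$-valued pairing of \cref{thm:duality1motives1}; then read off the claimed cohomology-level pairings via \cref{dualitynhlca}.

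Write $C \coloneqq R\GGamma(B_{\hat{W}_F},\MM(\overline{L})_{\mathrm{cond}})$ and $D \coloneqq R\GGamma(B_{\hat{W}_F},\MM^*(\overline{L})_{\mathrm{cond}})$, both of which lie in $\DDD^{\mathrm{b}}(\FLCA)$ by \cref{structuremotives}. By \cref{thm:duality1motives1}, the cup-product pairing $C\otimes^L D\to\Z$ is perfect, giving an equivalence $C\simeq R\underline{\Hom}(D,\Z)$ in $\DDD^{\mathrm{b}}(\FLCA)$. Setting $D' \coloneqq D\otimes^L_{\LCA}\R/\Z$, the pairing \eqref{eqn:rztwistedcppairing} is obtained as the composite
\[
C\otimes^L D' \;=\; C\otimes^L D\otimes^L_{\LCA}\R/\Z \;\longrightarrow\; \Z\otimes^L_{\LCA}\R/\Z \;=\; \R/\Z.
\]
To verify perfection, note that the adjoint $C\to R\underline{\Hom}(D',\R/\Z)$ factors through the chain of canonical identifications
\[
R\underline{\Hom}(D',\R/\Z) \;\simeq\; R\underline{\Hom}(D,R\underline{\Hom}_{\LCA}(\R/\Z,\R/\Z)) \;\simeq\; R\underline{\Hom}(D,\Z) \;\simeq\; C,
\]
where the first step is tensor-hom adjunction in $\LCA$, the second uses Pontryagin self-duality $(\R/\Z)^{\vee}=\Z$, and the last is the original perfection.

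For the second step, apply \cref{dualitynhlca} to the perfect pairing $C\otimes^L D'\to\R/\Z$. This yields for each $n\in\Z$ a perfect pairing in $\FLCA$
\[
\HH^n(C)^{\mathrm{lca}}\otimes \HH^{-n}(D')^{\mathrm{lca}}\longrightarrow\R/\Z,
\]
and a perfect pairing in $\DDD^{\mathrm{b}}(\FLCA)$
\[
\HH^n(C)^{\mathrm{nh}}\otimes^L \HH^{-n+1}(D')^{\mathrm{nh}}[-1]\longrightarrow\R/\Z.
\]
By \cref{structuremotives}, we have $\HH^q(C)=0$ for $q\neq 0,\pm 1$, while $\HH^{-1}(C)$ and $\HH^1(C)$ are locally compact of finite ranks (so equal their lca parts and have vanishing nh parts). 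The nh-pairing for $n=1$ then forces $\HH^0(D')^{\mathrm{nh}}=0$, and the one for $n=2$ forces $\HH^{-1}(D')^{\mathrm{nh}}=0$. Hence $\HH^{-1}(D')$ and $\HH^0(D')$ already coincide with their lca parts. The lca-pairings for $n=-1,0,1$ give, respectively, the three perfect pairings of locally compact abelian groups stated in \cref{duality1motives2}, and the nh-pairing for $n=0$ gives the fourth pairing.

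The most delicate ingredient is the first step: one must check that the $\R/\Z$-twisted cup-product pairing agrees with the canonical evaluation pairing coming from Pontryagin duality, and that tensor-hom adjunction and the identification $R\underline{\Hom}_{\LCA}(\R/\Z,\R/\Z)\simeq\Z$ hold in the quasi-abelian derived setting. Once granted, the rest of the argument amounts to matching the decorations $(-)^{\mathrm{lca}}$ and $(-)^{\mathrm{nh}}$ produced by \cref{dualitynhlca} against the structural information from \cref{structuremotives}.
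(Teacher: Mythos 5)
Your proposal is correct and takes essentially the same route as the paper: deduce the $\R/\Z$-valued perfection from \cref{thm:duality1motives1} via the chain $R\underline{\Hom}(D\otimes^L_{\LCA}\R/\Z,\R/\Z)\simeq R\underline{\Hom}(D,\Z)\simeq C$, then feed the result into \cref{dualitynhlca} and \cref{structuremotives}. You are slightly more explicit than the paper in the final bookkeeping — noting that perfection of the $n=1,2$ nh-pairings forces $\HH^0(D')^{\mathrm{nh}}=\HH^{-1}(D')^{\mathrm{nh}}=0$ (whereas the paper leaves this to the reader) — and you correctly flag that presenting the pairing as a naive composite of $\otimes^L_{\Ab(\CC)}$ and $\otimes^L_{\LCA}$ is delicate; the cleaner formulation, which is the paper's, is to define the pairing directly as the one determined by the equivalence $\gamma\colon C\overset{\sim}{\to}R\underline{\Hom}(D',\R/\Z)$ obtained from that chain.
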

\begin{proof}
The complex $R\GGamma(B_{\hat{W}_F},\MM^*(\overline{L})_{\mathrm{cond}};\R/\Z)$ is in $\DDD^{\mathrm{b}}(\FLCA)$, hence we have \[\begin{split}
&R\underline{\Hom}_{\Ab(\CC)}(R\GGamma(B_{\hat{W}_F},\MM^*(\overline{L})_{\mathrm{cond}};\R/\Z),\R/\Z)\cong \\ \cong &R\underline{\Hom}_{\FLCA}(R\GGamma(B_{\hat{W}_F},\MM^*(\overline{L})_{\mathrm{cond}};\R/\Z),\R/\Z)\cong\\ \cong & R\underline{\Hom}_{\FLCA}(R\GGamma(B_{\hat{W}_F},\MM^*(\overline{L})_{\mathrm{cond}}),\Z)\cong R\underline{\Hom}_{\Ab(\CC)}(R\GGamma(B_{\hat{W}_F},\MM^*(\overline{L})_{\mathrm{cond}}),\Z).\end{split}
\] Therefore, by \cref{thm:duality1motives1}, the pairing \eqref{eqn:condensedcppairing1motives} induces an equivalence \[
\gamma:R\GGamma(B_{\hat{W}_F},\MM(\overline{L})_{\mathrm{cond}})\rightarrow R\underline{\Hom}(R\GGamma(B_{\hat{W}_F},\MM^*(\overline{L})_{\mathrm{cond}};\R/\Z),\R/\Z)
\] in $\DDD^{\mathrm{b}}(\Ab(\CC))$. Let us consider the cup-product pairing \[
cp_{\gamma}:R\GGamma(B_{\hat{W}_F},\MM(\overline{L})_{\mathrm{cond}})\otimes^L R\GGamma(B_{\hat{W}_F},\MM^*(\overline{L})_{\mathrm{cond}};\R/\Z)\rightarrow \R/\Z
\] in $\DDD^{\mathrm{b}}(\Ab(\CC))$ determined by this equivalence. This cup-product pairing is perfect. To see this, let us denote \[
A\coloneqq R\GGamma(B_{\hat{W}_F},\MM(\overline{L})_{\mathrm{cond}}), \qquad B\coloneqq R\GGamma(B_{\hat{W}_F},\MM^*(\overline{L})_{\mathrm{cond}};\R/\Z).
\] Then $cp_{\gamma}$ induces morphisms \[
\gamma_A:A\rightarrow R\underline{\Hom}(B,\R/\Z), \qquad \gamma_B:B\rightarrow R\underline{\Hom}(A,\R/\Z).
\] The morphism $\gamma_A$ tautologically coincides with the morphism $\gamma$, which is an equivalence. Moreover, the morphism $\gamma_B$ coincides, up to equivalence, to the composition \[
B\longrightarrow R\underline{\Hom}(R\underline{\Hom}(B,\R/\Z),\R/\Z)\overset{-\circ \gamma}{\longrightarrow} R\underline{\Hom}(A,\R/\Z).
\] The second morphism is an equivalence since $\gamma$ is, while the first morphism is an equivalence since $B\in\DDD^{\mathrm{b}}(\FLCA)$ is dualisable for the Pontryagin duality. Consequently, $\gamma_B$ is an equivalence, and the cup-product pairing $cp_{\gamma}$ is a perfect pairing in $\DDD^{\mathrm{b}}(\Ab(\CC))$ and in $\DDD^{\mathrm{b}}(\FLCA)$.  

\vspace{0.5em}

The rest of the proof follows from the structure result on $R\GGamma(B_{\hat{W}_F},\MM(\overline{L})_{\mathrm{cond}})$, i.e. \cref{structuremotives}, and from \cref{dualitynhlca}.

\end{proof}

\subsection{Comparison with Galois cohomology}
We show how to deduce the first statement of \cite[Theorem 0.1]{HarariSzamuely} from \cref{duality1motives2} in the $p$-adic field case. To do this, we compare the Weil $\R/\Z$-twisted cohomology $R\GGamma(B_{\hat{W}_F},\MM^*(\overline{L})_{\mathrm{cond}};\R/\Z)$ with the Galois cohomology $R\GGamma(B_{\hat{G}_F},\MM^*(\overline{F})_{\mathrm{cond,\acute{e}t}})$. 

\vspace{0.5em}
We consider the condensed étale realisation and the condensed Weil-étale realisation of \cref{section:realisation} \[
-(\overline{F})_{cond,\acute{e}t}:\Sch^{\mathrm{lft}}_F\rightarrow B_{\hat{G}_F}, \qquad -(\overline{L})_{\mathrm{cond}}:\Sch^{\mathrm{lft}}_F\rightarrow B_{\hat{W}_F}.
\]The morphism of topoi $f:B_{\hat{W}_F}\rightarrow B_{\hat{G}_F}$ induces a natural transformation \[
\eta:f^* \circ -(\overline{F})_{cond,\acute{e}t}\implies -(\overline{L})_{\mathrm{cond}}
\] of functors $\Sch^{\mathrm{lft}}_{F}\to B_{\hat{W}_F}$. This natural transormation is not an isomorphism in general. For example, we have $f^*\mathbbm{G}_m(\overline{F})_{cond,\acute{e}t}=\overline{F}^{\times}$ together with its $W_F$ action induced by $G_F$ and $\mathbbm{G}_m(\overline{L})_{\mathrm{cond}}=\overline{L}^{\times}$. However, $\eta_X$ is an isomorphism in some cases.
\begin{ex}
Let $X\in\Sch^{\mathrm{lft}}_F$ be a scheme which is locally constant for the étale topology, i.e.\ there exists $F'$ a finite extension of $F$ such that $X_{F'}$ is isomorphic to the constant scheme $M_F$ for some abelian group $M$. Then the map \[
\eta_{X}:f^*X(\overline{F})_{\acute{e}t,cond}\rightarrow X(\overline{L})_{\mathrm{cond}}
\] is an isomorphism in $B_{\hat{W}_F}$, and both condensed $\hat{W}_F$-modules coincide with the discrete abelian group $X(\overline{F})=X(\overline{L})=M$ with its action of $G_F$ which factors through $\Gal(F'/F)$ (see \cref{example:locallyconstantrealisation}).
\end{ex}

Let $G\in\Sch^{\mathrm{lft}}_F$ be a commutative group scheme over $F$ which is locally of finite type. Composing the canonical map \[R\GGamma(B_{\hat{G}_F},G(\overline{F})_{cond,\acute{e}t})\rightarrow R\GGamma(B_{\hat{W}_F},f^*G(\overline{F})_{cond,\acute{e}t})\] with the map induced by $\eta_{G}$, we obtain a canonical morphism \[
R\GGamma(B_{\hat{G}_F},G(\overline{F})_{cond,\acute{e}t})\rightarrow R\GGamma(B_{\hat{W}_F},G(\overline{L})_{\mathrm{cond}}).
\] The same holds if we replace $G$ by a $1$-motive $\MM$.

Before studying the properties of this morphism, we analyse the structure of the condensed cohomology groups $\HH^q(B_{\hat{G}_F},G(\overline{F})_{cond,\acute{e}t})$ in the case where $G$ is either an algebraic group or a commutative group scheme which is étale locally constant.
\begin{prop}\label{condensedgaloiscohomology}
\begin{enumerate}[1)]
\item Let $G$ be a commutative group scheme locally of finite type over $F$. The canonical map \[
\H^q(B_{G_F}(\Set),G(\overline{F}))\rightarrow \H^q(B_{\hat{G}_F},G(\overline{F})_{cond,\acute{e}t})
\] is an isomorphism of abelian groups for all $q$.
\item If $E$ is a semiabelian variety, the condensed abelian group $\HH^q(B_{\hat{G}_F},E(\overline{F})_{cond,\acute{e}t})$ is a discrete torsion abelian group for all $q\ge 1$. Consequently, it coincides with the discrete abelian group $\H^q(B_{G_F}(\Set),E(\overline{F}))$ for all $q\ge 1$.
\item If $Y$ is étale locally constant, the condensed abelian group $\HH^q(B_{\hat{G}_F},Y(\overline{F})_{cond,\acute{e}t})$ is discrete for all $q\ge 0$ and torsion for all $q\ge 1$. Consequently, it coincides with the discrete abelian group $\H^q(B_{G_F}(\Set),Y(\overline{F}))$ for all $q$.
\item If $\MM$ is a $1$-motive over $F$ and $q\ge 1$, the condensed abelian group $\HH^q(B_{\hat{G}_F},\MM(\overline{F})_{cond,\acute{e}t})$ is discrete torsion, and coincides with $\H^q(B_{G_F}(\Set),\MM(\overline{F}))$.
\end{enumerate}
\end{prop}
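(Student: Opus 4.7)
The plan is to establish the four assertions in the order $(1)\to (3)\to (2)\to (4)$, reducing everything to classical Galois cohomology computations via the morphism of topoi $\alpha_K: B_{G_F/\Gal(\overline{F}/K)} \to B_{G_F/\Gal(\overline{F}/K)}(\Set)$ already used in the proof of \cref{prop:globalsectionscohomag}.

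First, for (1), I would copy the proof of \cref{prop:globalsectionscohomag} almost verbatim, replacing $W_F$ by $G_F$ and $L$ by $F$. For each finite Galois extension $K/F$ inside $\overline{F}$, the pushforward $\alpha_{K*}$ is exact by \cite[Proposition 2.41]{Artusa} and therefore induces an isomorphism $\H^q(B_{\Gal(K/F)}(\Set), G(K)) \overset{\sim}{\longrightarrow} \HH^q(B_{\Gal(K/F)}, \underline{G(K)^{\mathrm{top}}})$ for every $q$. Passing to the filtered colimit over $K$ yields (1), since both Galois cohomology and $\HH^q(B_{\hat{G}_F}, -)$ commute with filtered colimits along the system of finite quotients defining $\hat{G}_F$.

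For (3), I would use that by the Galois analog of \cref{example:locallyconstantrealisation} the object $Y(\overline{F})_{cond,\acute{e}t}$ is represented by the discrete abelian group $M = Y(\overline{F})$ equipped with a continuous $G_F$-action factoring through some finite quotient $\Gal(F'/F)$. Writing $\HH^q(B_{\hat{G}_F}, Y(\overline{F})_{cond,\acute{e}t})$ as the filtered colimit over finite Galois extensions $F''\supset F'$ of $\HH^q(B_{\Gal(F''/F)}, M)$ and applying the finite-level isomorphism from (1), the colimit is represented by the discrete abelian group $\H^q(G_F, M)$, which is automatically torsion for $q \ge 1$ since each term in the colimit is annihilated by the order of the finite Galois group.

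For (2), I would apply the short exact sequence $0 \to T \to E \to A \to 0$ together with the Galois analog of \cref{exactweilrealisation}, whose proof is identical to that of \cref{thm:exactrealisation} (non-archimedean case) with $W_F \rightsquigarrow G_F$ and $L \rightsquigarrow F$; the required prodiscreteness of $E(K)^{\mathrm{top}}$ and $A(K)^{\mathrm{top}}$ for finite extensions $K/F$ is established exactly as in \cref{prodiscreteabelianvarieties} and \cref{prodiscretesemiabelianvarieties}, since those arguments only use that $\mathcal{O}_K$ is a complete discrete valuation ring. The resulting long exact cohomology sequence reduces the claim to the separate cases of a torus (where one adapts the proof of \cref{prop:structuretorus} to the Galois setting) and of an abelian variety (where Tate local duality gives finiteness of $H^1(G_F, A(\overline{F}))$ and vanishing for $q \ge 2$). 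In both cases (1) identifies the underlying abelian group with the classical Galois cohomology, which is torsion for $q \ge 1$; that the condensed structure is moreover discrete follows by the same finite-level argument as in (3). Finally, (4) is immediate from the fibre sequence $E(\overline{F})_{cond,\acute{e}t}[0] \to \MM(\overline{F})_{cond,\acute{e}t} \to Y(\overline{F})_{cond,\acute{e}t}[1]$ and the associated long exact cohomology sequence, combined with (2) and (3).

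The main obstacle is to verify carefully that the \emph{condensed} structure on $\HH^q(B_{\hat{G}_F}, -)$ is indeed discrete in parts (2)--(4), rather than merely that the underlying abelian group is. This hinges on systematically presenting $\hat{G}_F$ as the cofiltered limit of its finite discrete quotients $G_F/U$ and invoking \cite[Proposition 2.41]{Artusa} at each level, so that at every stage the condensed cohomology is the discrete condensed enhancement of classical finite-group cohomology, and the filtered colimit of discrete condensed abelian groups (indexed by the poset of open normal subgroups of $G_F$) preserves discreteness.
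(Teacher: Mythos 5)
Your outline for parts (1), (3), and (4) matches the paper's: (1) is a verbatim adaptation of \cref{prop:globalsectionscohomag}, (3) reduces to the discreteness of the coefficient $Y(\overline{F})_{\mathrm{cond,\acute{e}t}}$, and (4) follows from the fibre sequence and the earlier parts. The problem is part (2), where your proposed discreteness argument breaks down.

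You claim that the condensed structure on $\HH^q(B_{\hat{G}_F},E(\overline{F})_{\mathrm{cond,\acute{e}t}})$ is discrete ``by the same finite-level argument as in (3),'' i.e.\ that at each finite level $\HH^q(B_{\Gal(K/F)},\underline{E(K)^{\mathrm{top}}})$ is the discrete condensed enhancement of $\H^q(\Gal(K/F),E(K))$. This is false: the coefficient $\underline{E(K)^{\mathrm{top}}}$ carries a genuinely nontrivial (prodiscrete) topology, unlike the discrete $M=Y(\overline{F})$ in (3). What \cite[Proposition 2.41]{Artusa} gives you is only that the \emph{$S$-sections} satisfy $\HH^q(B_{\Gal(K/F)},\underline{E(K)^{\mathrm{top}}})(S)=\H^q(\Gal(K/F),\Cont(S,E(K)^{\mathrm{top}}))$ for all extremally disconnected $S$; for nontrivial profinite $S$ these are \emph{not} the sections of a discrete constant sheaf, since $\Cont(S,E(K)^{\mathrm{top}})$ is much bigger than the locally constant functions. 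Knowing the underlying abelian group $\HH^q(\ldots)(*)$ is torsion does not force the condensed abelian group to be a colimit of its $m$-torsion subsheaves, let alone to be discrete. Your reduction to tori and abelian varieties via the weight filtration is also unnecessary, and adapting the proof of \cref{prop:structuretorus} to the Galois setting is not straightforward (in the Weil setting $\HH^1$ has a free $\Z$-summand, so the structure is genuinely different).

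The paper's proof of (2) avoids these difficulties. It treats $E$ directly, using the Kummer sequence $0\to E_m\to E\overset{\cdot m}{\to}E\to 0$ and the exactness of the condensed \'etale realisation (\cref{thm:exactrealisation} plus \cref{prodiscretesemiabelianvarieties}) to conclude $E(\overline{F})_{\mathrm{cond,\acute{e}t}}\otimes^L\Z/m=E_m(\overline{F})_{\mathrm{cond,\acute{e}t}}[1]$, a finite Galois module. Finiteness of condensed cohomology with finite coefficients then makes ${}_m\HH^q(B_{\hat{G}_F},E(\overline{F})_{\mathrm{cond,\acute{e}t}})$ \emph{finite and discrete} for every $m$. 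Separately, \cite[Proposition 2.41]{Artusa} shows that the $S$-sections of $\HH^q$, for all extremally disconnected $S$ and all $q\ge 1$, are torsion (being killed by the order of the finite Galois group at each finite level). Together these identify $\HH^q$ with the filtered colimit of its finite discrete $m$-torsion subgroups, which is discrete torsion. Your proof would need to replace the erroneous ``same finite-level argument as in (3)'' with this Kummer-sequence argument.
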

\begin{proof}
The proof of 1) is exactly the same of \cref{prop:globalsectionscohomag}.

\vspace{0.5em}
The proof of 2) is an easier analogue of the proof of \cref{thm:structurecohomologyabvar}. In particular, the result is the consequence of the following facts \begin{enumerate}[(i)]
    \item For all $q$, the condensed abelian group $\leftin{m}{\HH^q(B_{\hat{G}_F},E(\overline{F})_{cond,\acute{e}t})}$ is discrete and finite.
    \item For all $q\ge 1$, the abelian group $\HH^q(B_{\hat{G}_F},E(\overline{F})_{cond,\acute{e}t})(S)$ is torsion for all $S$ extremally disconnected.
\end{enumerate}
To prove (i), we observe that for all $m\in\N$ we have an exact sequence in $\mathbf{Gp}_F$ \[
0\rightarrow E_m\rightarrow E\rightarrow E\rightarrow 0,
\] where $E_m$ is a finite group scheme. By \cref{prodiscretesemiabelianvarieties,thm:exactrealisation} we have \[
E(\overline{F})_{cond,\acute{e}t}\otimes^L \Z/m=E_m(\overline{F})_{cond,\acute{e}t}[1],
\] which is a finite Galois module. The Kummer sequence \[
0\rightarrow \HH^{q-1}(B_{\hat{G}_F},E(\overline{F})_{cond,\acute{e}t})\rightarrow \HH^q(B_{\hat{G}_F},E_m(\overline{F})_{cond,\acute{e}t})\rightarrow \leftin{m}{\HH^{q}(B_{\hat{G}_F},E(\overline{F})_{cond,\acute{e}t})}\rightarrow 0
\] implies that $\leftin{m}{\HH^q(B_{\hat{G}_F},E(\overline{F})_{cond,\acute{e}t})}$ is finite for all $q$. 

To prove (ii), it is enough to show that for all $F'$ finite extension of $F$, the abelian group $\HH^q(B_{\Gal(F'/F)},\underline{E(F')^{\mathrm{top}}})$ is torsion. By \cite[Proposition 2.41]{Artusa} we have \[
\HH^q(B_{\Gal(F'/F)},\underline{E(F')^{\mathrm{top}}})(S)=\H^q(B_{\Gal(F'/F)}(\Set),\Cont(S,E(F')^{\mathrm{top}})).
\] The result follows since higher cohomology groups of the finite group $\Gal(F'/F)$ are killed by the order of $\Gal(F'/F)$.

\vspace{0.5em}
The proof of 3) follows from \cite[Remark 3.22]{Artusa} with $I$ replaced by $G_F$. Item 4) follows from 1), 2) and 3).
\end{proof}

We analyse the relation between the condensed Galois cohomology and the condensed Weil cohomology of $1$-motives.
\begin{lem}\label{lem:galoisweilcomparison1motives}
Let $\MM=[Y\to E]$ be a $1$-motive over $F$. Then for all $m\in\N$ the canonical morphism \[
R\GGamma(B_{\hat{G}_F},\MM(\overline{F})_{cond,\acute{e}t})\otimes^L\Z/m\rightarrow R\GGamma(B_{\hat{W}_F},\MM(\overline{L})_{\mathrm{cond}})\otimes^L \Z/m
\] is an equivalence of complexes with finite cohomology groups in $\DDD^{\mathrm{b}}(\CC)$.Consequently, we obtain an equivalence \[
R\GGamma(B_{\hat{G}_F},\MM(\overline{F})_{cond,\acute{e}t})\otimes^L \Q/\Z\rightarrow R\GGamma(B_{\hat{W}_F},\MM(\overline{L})_{\mathrm{cond}})\otimes^L \Q/\Z 
\] in $\DDD^{\mathrm{b}}(\LCA)$.
\end{lem}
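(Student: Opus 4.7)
The plan is to reduce to the case of finite commutative group schemes, for which Weil and Galois cohomology agree on finite discrete coefficients. Since $\Z/m$ is represented by the perfect $\Z$-complex $[\Z\xrightarrow{\cdot m}\Z]$, derived tensor product with $\Z/m$ commutes with $R\GGamma$, and the map in question is identified with
\[R\GGamma(B_{\hat{G}_F},\MM(\overline{F})_{cond,\acute{e}t}\otimes^L\Z/m)\longrightarrow R\GGamma(B_{\hat{W}_F},\MM(\overline{L})_{\mathrm{cond}}\otimes^L\Z/m).\]
Using the fibre sequence $E[0]\to\MM\to Y[1]$ from \cref{condensedfiltration}, it suffices to handle $E$ and $Y$ separately. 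For $E$, the Kummer sequence $0\to E_m\to E\xrightarrow{\cdot m}E\to 0$ is exact under $-(\overline{L})_{\mathrm{cond}}$ by \cref{exactweilrealisation} and exact under $-(\overline{F})_{cond,\acute{e}t}$ by \cref{thm:exactrealisation} (the prodiscreteness hypothesis holding over each finite extension of $F$, since the Néron-model argument of \cref{prodiscretesemiabelianvarieties} applies verbatim to any complete discretely valued extension of $\Q_p$). Hence $E(\overline{F})_{cond,\acute{e}t}\otimes^L\Z/m\simeq E_m(\overline{F})_{cond,\acute{e}t}[1]$ and $E(\overline{L})_{\mathrm{cond}}\otimes^L\Z/m\simeq E_m(\overline{L})_{\mathrm{cond}}[1]$. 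For $Y$ étale-locally $\Z^r$, multiplication by $m$ is injective with finite étale cokernel $Y/m$, so $Y(\overline{F})_{cond,\acute{e}t}\otimes^L\Z/m\simeq (Y/m)(\overline{F})_{cond,\acute{e}t}$ and similarly on the Weil side.

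This reduces the first claim to showing, for any finite commutative $F$-group scheme $N$, that the canonical map
\[R\GGamma(B_{\hat{G}_F},N(\overline{F})_{cond,\acute{e}t})\longrightarrow R\GGamma(B_{\hat{W}_F},N(\overline{L})_{\mathrm{cond}})\]
is an equivalence with finite cohomology. For such $N$, every $\overline{L}$-point factors through a separable algebraic extension of $F$ contained in $\overline{F}$, so $N(\overline{F})=N(\overline{L})$ as finite sets with $G_F$-action factoring through a finite quotient, and the natural transformation $\eta$ of the preceding discussion is an isomorphism on $N$. By \cref{prop:globalsectionscohomag} and \cref{condensedgaloiscohomology}(1), both cohomologies are represented by the finite discrete abelian groups $\H^\bullet(W_F,N(\overline{F}))$ and $\H^\bullet(G_F,N(\overline{F}))$, which coincide: the Hochschild--Serre spectral sequences for $1\to I_F\to W_F\to W_k\to 1$ and $1\to I_F\to G_F\to G_k\to 1$ take the same finite $I_F$-cohomology as input, and for any finite module $M$ on which $\hat{\Z}$ acts through a finite quotient one has $\H^p(W_k,M)=\H^p(\Z,M)$ equal to $\H^p(\hat{\Z},M)=\H^p(G_k,M)$ (both vanish for $p\ge 2$ and compute invariants/coinvariants of a single element acting through a finite quotient). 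Finiteness is classical Tate cohomology.

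The main obstacle is arranging the identifications above compatibly with the \emph{canonical} comparison map, so that the equivalence of cohomologies on each side is induced by the map of the statement rather than by isolated isomorphisms. Once the first claim is established, the $\Q/\Z$-version follows by writing $\Q/\Z=\underset{m}{\varinjlim}\,\Z/m$ and invoking commutation of derived tensor product with filtered colimits in $\DDD^{\mathrm{b}}(\Ab(\CC))$: the equivalences for each $\Z/m$ pass to the colimit, and the resulting cohomologies, being filtered colimits of finite abelian groups, are torsion discrete, placing the equivalence in $\DDD^{\mathrm{b}}(\LCA)$.
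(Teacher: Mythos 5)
Your overall reduction strategy matches the paper's exactly: split $\MM$ via the fibre sequence $E[0]\to\MM\to Y[1]$, use the Kummer sequence for $E$ (exact under both realisations by \cref{exactweilrealisation} and \cref{thm:exactrealisation}, the prodiscreteness hypothesis being verified over finite extensions of $F$ just as over $L$), use that $\eta$ is an isomorphism on the étale-locally constant objects, and reduce to the comparison of Galois and Weil condensed cohomology with finite coefficients. The $\Q/\Z$-statement by passage to the filtered colimit is also how the paper concludes.

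The one place where you diverge is the finite-coefficient comparison: the paper simply invokes \cite[Proposition 3.26]{Artusa}, which asserts precisely that for a finite $G_F$-module $M$ the canonical map $R\GGamma(B_{\hat{G}_F},M)\to R\GGamma(B_{\hat{W}_F},f^*M)$ is an equivalence with finite cohomology. You attempt to re-derive this via the Hochschild--Serre spectral sequences for $1\to I_F\to W_F\to W_k\to 1$ and $1\to I_F\to G_F\to G_k\to 1$, using $\H^p(\Z,M)=\H^p(\hat{\Z},M)$ for finite $M$ with finite-quotient action. That computation is correct as an abstract identification of cohomology groups, but — as you yourself flag at the end — it does not by itself show that the \emph{canonical comparison map} realizes this identification; one needs the functoriality of the spectral sequence under the morphism of classifying topoi and a compatibility of edge maps. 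This is exactly the content outsourced to \cite[Proposition 3.26]{Artusa}, so your proof is complete modulo that citation. If you wanted a genuinely self-contained argument you would need to carry out the Hochschild--Serre comparison functorially in the condensed topos setting rather than for underlying abelian groups alone; otherwise, citing the proposition (as the paper does) is the efficient route.

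One small stylistic remark: for the $Y$-piece, the paper avoids introducing $Y/m$ and instead observes directly that, because $\eta$ is an isomorphism on $Y$, the map in question is identified with $R\GGamma(B_{\hat G_F},M\otimes^L\Z/m)\to R\GGamma(B_{\hat W_F},f^*(M\otimes^L\Z/m))$ with $M=Y(\overline{F})_{cond,\acute{e}t}$ a free finitely generated Galois module, so that $M\otimes^L\Z/m$ is itself a finite Galois module and the same cited proposition applies. This avoids having to verify separately that $-(\overline{F})_{cond,\acute{e}t}$ commutes with taking $Y/m$, though your version is also fine since the sequence $0\to Y\xrightarrow{m}Y\to Y/m\to 0$ is a short exact sequence of étale-locally constant group schemes.
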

\begin{proof}
Since we have
\[
\MM(\overline{F})_{cond,\acute{e}t}=[Y(\overline{F})_{cond,\acute{e}t}\rightarrow E(\overline{F})_{cond,\acute{e}t}],\qquad \MM(\overline{L})_{\mathrm{cond}}=[Y(\overline{L})_{\mathrm{cond}}\rightarrow E(\overline{L})_{\mathrm{cond}}],
\] it is enough to show the result for $\MM=[Y\to 0]$ and for $\MM=[0\to E]$. 

\vspace{0.5 em}
Let $\MM=[Y\to 0]$. The morphism 
$f^*Y(\overline{F})_{cond,\acute{e}t}\to Y(\overline{L})_{\mathrm{cond}}$ is an isomorphism since $Y$ is étale locally constant. Consequently, it is enough to show that the canonical morphism \[
R\GGamma(B_{\hat{G}_F},M\otimes^L \Z/m)\rightarrow R\GGamma(B_{\hat{W}_F},f^*(M\otimes^L \Z/m)) 
\] is an equivalence of complexes with finite cohomology groups for $M=Y(\overline{F})_{cond,\acute{e}t}$. Since $M$ is free and finitely generated with a continuous action of a finite quotient of $G_F$, then $M\otimes^L \Z/m$ is a finite $G_F$-module, and the result follows from \cite[Proposition 3.26]{Artusa}.

\vspace{0.5em}
Let $\MM=[0\to E]$. 
As we already observed, we have \[
E(\overline{F})_{cond,\acute{e}t}\otimes^L\Z/m=E_m(\overline{F})_{cond,\acute{e}t}[1]\] and \[ E(\overline{L})_{\mathrm{cond}}\otimes^L\Z/m=E_m(\overline{L})_{\mathrm{cond}}[1].
\] Consequently, it is enough to show that the canonical morphism \[
R\GGamma(B_{\hat{G}_F},E_m(\overline{F})_{cond,\acute{e}t})\rightarrow R\GGamma(B_{\hat{W}_F},E_m(\overline{L})_{\mathrm{cond}})
\] is an equivalence of complexes with finite cohomology groups. Again, this follows from the fact that $E_m$ is étale locally constant and from \cite[Proposition 3.26]{Artusa}.
\end{proof}

\begin{cor}\label{comparisongaloisweil1motives}
Let $\MM=[Y\to E]$ be a $1$-motive over $F$. For all $m\in\N$, we have a canonical equivalence \[
R\GGamma(B_{\hat{W}_F},\MM(\overline{L})_{\mathrm{cond}};\R/\Z)\otimes^L \Z/m \overset{\sim}{\rightarrow} R\GGamma(B_{\hat{G}_F},\MM(\overline{F})_{cond,\acute{e}t})\otimes^L \Z/m[1]
\] of complexes with finite cohomology groups in $\DDD^{\mathrm{b}}(\CC)$. Consequently, we obtain an equivalence \[
R\GGamma(B_{\hat{W}_F},\MM(\overline{L})_{\mathrm{cond}};\R/\Z)\otimes^L \Q/\Z \rightarrow R\GGamma(B_{\hat{G}_F},\MM(\overline{F})_{cond,\acute{e}t})\otimes^L \Q/\Z[1]
\] in $\DDD^{\mathrm{b}}(\LCA)$.
\end{cor}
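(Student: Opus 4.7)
The plan is to combine \cref{lem:galoisweilcomparison1motives} with the defining fiber sequence of the $\R/\Z$-twist. Set $C\coloneqq R\GGamma(B_{\hat{W}_F},\MM(\overline{L})_{\mathrm{cond}})$. The short exact sequence $0\to \Z\to \R\to \R/\Z\to 0$ in $\LCA$ yields, upon tensoring over $\LCA$ with $C$, a fiber sequence
\[
C\longrightarrow C\otimes^L_{\LCA} \R \longrightarrow R\GGamma(B_{\hat{W}_F},\MM(\overline{L})_{\mathrm{cond}};\R/\Z)
\]
in $\DDD^{\mathrm{b}}(\LCA)$, hence in $\DDD^{\mathrm{b}}(\CC)$ via the fully faithful embedding of \cite[Lemma 4.2]{Artusa}. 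I would tensor this fiber sequence further with $\Z/m$ in $\DDD^{\mathrm{b}}(\CC)$, so that it suffices to show the vanishing of the middle term $(C\otimes^L_{\LCA}\R)\otimes^L \Z/m$.

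For this vanishing, I use the two-term projective resolution $\Z\overset{\cdot m}{\to}\Z$ of $\Z/m$, which identifies $-\otimes^L\Z/m$ with the cofibre of multiplication by $m$; this computation agrees in $\LCA$ and in $\Ab(\CC)$ since $\Z$ is the tensor unit in both. On $C\otimes^L_{\LCA}\R$, multiplication by $m$ is induced by the isomorphism $\cdot m\colon \R\overset{\sim}{\to}\R$ in $\LCA$, hence it is an equivalence and its cofibre vanishes. Rotating the resulting fiber sequence
\[
C\otimes^L\Z/m\longrightarrow 0\longrightarrow R\GGamma(B_{\hat{W}_F},\MM(\overline{L})_{\mathrm{cond}};\R/\Z)\otimes^L\Z/m
\]
produces a natural equivalence
\[
R\GGamma(B_{\hat{W}_F},\MM(\overline{L})_{\mathrm{cond}};\R/\Z)\otimes^L\Z/m\;\overset{\sim}{\longrightarrow}\;C\otimes^L\Z/m\,[1].
\]
Composing with the equivalence from \cref{lem:galoisweilcomparison1motives} yields the first claim, and finiteness of the cohomology groups is inherited from that lemma.

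For the $\Q/\Z$ statement, I would pass to the filtered colimit over $m$ of the equivalences just constructed, along the structure maps $\Z/m\to \Z/mn$. Since the derived tensor product commutes with filtered colimits and $\Q/\Z$ is the filtered colimit of the $\Z/m$, the equivalences assemble to the desired equivalence in $\DDD^{\mathrm{b}}(\LCA)$ (the target being a complex of discrete torsion groups, hence landing in $\LCA$). The main point requiring care is the compatibility between $-\otimes^L_{\LCA}-$ and $-\otimes^L-$ in $\Ab(\CC)$ when one factor is the finitely presented object $\Z/m$: both should be computable from the same length-one projective resolution, but this is the one non-formal verification in the argument.
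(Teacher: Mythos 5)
Your argument is correct and amounts to the same computation as the paper's: where you tensor the fiber sequence $C\to C\otimes^L_{\LCA}\R\to R\GGamma(\dots;\R/\Z)$ with $\Z/m$ and kill the middle term, the paper directly uses associativity together with the identity $\R/\Z\otimes^L_{\FLCA}\Z/m\cong\Z/m[1]$ — these are two phrasings of the same observation. The compatibility point you flag as ``the one non-formal verification'' is exactly the remark with which the paper opens its proof (both $\otimes^L_{\FLCA}\Z/m$ and $\otimes^L_{\Ab(\CC)}\Z/m$ compute $\mathrm{cofib}(\cdot m)$), so your instinct was right and your justification via the tensor unit suffices.
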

\begin{proof}
We remark that for every $M\in\DDD^{\mathrm{b}}(\FLCA)$ we have \[
M\otimes^L_{\Ab(\CC)}\Z/m=M\otimes^L_{\FLCA}\Z/m=\mathrm{cofib}(M\overset{\cdot m}{\rightarrow}M).
\] Consequently, we have \[
\begin{split}
R\GGamma(B_{\hat{W}_F},\MM(\overline{L})_{\mathrm{cond}};\R/\Z)\otimes^L \Z/m&= R\GGamma(B_{\hat{W}_F},\MM(\overline{L})_{\mathrm{cond}})\otimes^L_{\FLCA} (\R/\Z\otimes^L_{\FLCA} \Z/m)=\\&=R\GGamma(B_{\hat{W}_F},\MM(\overline{L})_{\mathrm{cond}})\otimes^L \Z/m[1].
\end{split}
\] The result now follows from \cref{lem:galoisweilcomparison1motives}.
\end{proof}
\begin{rmk}
We recall that if $C\in \DDD^{\mathrm{b}}(\CC)$, we have a Kummer sequence \[
0\rightarrow \H^q(C)/m\rightarrow \H^q(C\otimes^L \Z/m) \rightarrow \leftin{m}{\H^{q+1}(C)}\rightarrow 0.\] If we take the filtered colimit along integers $m$, we obtain an exact sequence \begin{equation}\label{kummerqz}
    0\rightarrow \H^q(C)\otimes \Q/\Z \rightarrow \H^q(C\otimes^L \Q/\Z)\rightarrow \H^{q+1}(C)_{\mathrm{tors}}\rightarrow 0
\end{equation} If $\leftin{m}{\H^q(C)}$ is finite for all $m$, taking the cofiltered limit along $m$ gives an exact sequence (according to the diagram \cite[(4.5)]{Artusa})
\begin{equation}\label{kummerzhat}
0\rightarrow \H^q(C)^{\wedge}\rightarrow \H^q(C\otimes^L \hat{\Z})\rightarrow T\H^{q+1}(C)\rightarrow 0.
\end{equation}
\end{rmk}
\begin{rmk}
Let us consider the equivalence \[
R\GGamma(B_{\hat{W}_F},\MM(\overline{L})_{\mathrm{cond}};\R/\Z)\otimes^L \Q/\Z \rightarrow R\GGamma(B_{\hat{G}_F},\MM(\overline{F})_{cond,\acute{e}t})\otimes^L \Q/\Z[1].
\] By \cref{condensedgaloiscohomology}, the condensed abelian group $\HH^q(B_{\hat{G}_F},\MM(\overline{F})_{cond,\acute{e}t})$ are torsion for $q=1,2$. Consequently, using the exact sequence \eqref{kummerqz} we obtain an identification \[
\H^0(R\GGamma(B_{\hat{G}_F},\MM(\overline{F})_{cond,\acute{e}t})\otimes^L \Q/\Z[1])=\HH^2(B_{\hat{G}_F},\MM(\overline{F})_{cond,\acute{e}t}).
\] and an exact sequence
\begin{equation}\label{eqn:comparison021}
  \begin{split}  0&\rightarrow \HH^0(B_{\hat{W}_F},\MM(\overline{L})_{\mathrm{cond}};\R/\Z)\otimes \Q/\Z\rightarrow\\&\rightarrow \HH^2(B_{\hat{G}_F},\MM(\overline{F})_{cond,\acute{e}t})\rightarrow \HH^1(B_{\hat{W}_F},\MM(\overline{L})_{\mathrm{cond}};\R/\Z)_{\mathrm{tors}}\rightarrow 0.
  \end{split}
\end{equation}
\end{rmk}

We are ready to prove the following corollary to \cref{duality1motives2}
\begin{cor}\label{cor:harariszamuely1}
Let $\MM=[Y\overset{u}{\rightarrow} E]$ be a $1$-motive over $F$, and let $\MM^*=[Y^*\overset{u^*}{\rightarrow} E^*]$ be its Cartier dual. The perfect pairing \eqref{eqn:duality1motives2} induces a perfect cup-product pairing in $\FLCA$ \begin{equation}\label{eqn:topologisedhs0}
\HH^{-1}(F,\MM)_{\wedge}\otimes \HH^2(F,\MM^*)\rightarrow \R/\Z,
\end{equation} where we set \[\HH^{-1}(F,\MM)_{\wedge}\coloneqq \mathrm{Ker}((\underline{Y(F)^{\mathrm{top}}})^{\wedge}\rightarrow (\underline{E(F)^{\mathrm{top}}})^{\wedge})\] and \[
\HH^2(F,\MM^*)\coloneqq \HH^2(B_{\hat{G}_F},\MM^*(\overline{F})_{cond,\acute{e}t}).
\]
\end{cor}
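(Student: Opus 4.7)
The strategy is to deduce the pairing from the condensed duality of Theorem \ref{duality1motives2} by applying derived profinite completion ($-\otimes^L\hat{\Z}$) to the $\MM$-side, and matching the resulting cohomology groups with the condensed Galois cohomology of $\MM^*$ via Corollary \ref{comparisongaloisweil1motives}.

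First, I identify $\HH^2(F,\MM^*)$ in terms of the $\R/\Z$-twisted Weil cohomology of $\MM^*$. By Proposition \ref{condensedgaloiscohomology}, the condensed Galois cohomology $\HH^q(B_{\hat{G}_F},\MM^*(\overline{F})_{cond,\acute{e}t})$ is discrete torsion for $q\ge 1$, and vanishes for $q\ge 3$ (using $\mathrm{cd}(G_F)=2$ combined with the weight filtration of $\MM^*$). Applying the Kummer sequence \eqref{kummerqz} to $R\GGamma(B_{\hat{G}_F},\MM^*(\overline{F})_{cond,\acute{e}t})\otimes^L\Q/\Z$, the $\HH^1\otimes\Q/\Z$-contribution vanishes (torsion groups tensored with $\Q/\Z$ are $0$), so $\HH^2(F,\MM^*)=\H^1\bigl(R\GGamma(B_{\hat{G}_F},\MM^*(\overline{F})_{cond,\acute{e}t})\otimes^L\Q/\Z\bigr)$. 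Combined with the shift in Corollary \ref{comparisongaloisweil1motives}, this yields
\[
\HH^2(F,\MM^*)\simeq\H^0\bigl(R\GGamma(B_{\hat{W}_F},\MM^*(\overline{L})_{\mathrm{cond}};\R/\Z)\otimes^L\Q/\Z\bigr),
\]
consistent with the exact sequence \eqref{eqn:comparison021} applied to $\MM^*$.

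Next, I transform the perfect pairing of Theorem \ref{duality1motives2} by tensoring the $\MM$-side with $\hat{\Z}$. Using the identification $R\underline{\Hom}(\Q/\Z,\R/\Z)=\hat{\Z}$ (as $\hat{\Z}$ is profinite and $\R/\Z$ injective in $\LCA$) and the fiber sequence $\Z\to\R\to\R/\Z$ (together with $\R\otimes^L\hat{\Z}=0$), one obtains $\R/\Z\otimes^L\hat{\Z}\simeq\hat{\Z}[1]$. The pairing of Theorem \ref{duality1motives2} then yields a pairing
\[
\bigl(R\GGamma(B_{\hat{W}_F},\MM(\overline{L})_{\mathrm{cond}})\otimes^L\hat{\Z}\bigr)\otimes^L R\GGamma(B_{\hat{W}_F},\MM^*(\overline{L})_{\mathrm{cond}};\R/\Z)\otimes^L\Q/\Z\to\R/\Z,
\]
after composing with the natural map $\hat{\Z}[1]\otimes^L\Q/\Z\to\R/\Z$. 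Extracting $\H^{-1}$ from the first factor and $\H^0$ from the second (using Step~1) produces a pairing $\HH^{-1}_{\wedge}\otimes\HH^2(F,\MM^*)\to\R/\Z$ for a suitable profinite group $\HH^{-1}_{\wedge}$.

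Third, I identify $\HH^{-1}_{\wedge}=\H^{-1}\bigl(R\GGamma(B_{\hat{W}_F},\MM(\overline{L})_{\mathrm{cond}})\otimes^L\hat{\Z}\bigr)$ with $\HH^{-1}(F,\MM)_{\wedge}$. Applying $R\GGamma(B_{\hat{W}_F},-)\otimes^L\hat{\Z}$ to the fiber sequence $E(\overline{L})_{\mathrm{cond}}\to\MM(\overline{L})_{\mathrm{cond}}\to Y(\overline{L})_{\mathrm{cond}}[1]$, one gets a fiber sequence whose $\H^{-1}$ is the kernel of the induced map between the $\H^0$-terms of the derived completions of $R\GGamma(B_{\hat{W}_F},E(\overline{L})_{\mathrm{cond}})$ and $R\GGamma(B_{\hat{W}_F},Y(\overline{L})_{\mathrm{cond}})$. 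The Kummer sequence \eqref{kummerzhat} combined with the structure results (Propositions \ref{prop:structuretorus}, \ref{thm:structurecohomologyabvar}, \ref{prop:structurecohomsab} and \cite[Theorem 3.30]{Artusa}) identifies these $\H^0$-terms with $(\underline{Y(F)^{\mathrm{top}}})^{\wedge}$ and $(\underline{E(F)^{\mathrm{top}}})^{\wedge}$ respectively, after verifying that the relevant Tate module contributions vanish or cancel. The kernel is then $\HH^{-1}(F,\MM)_{\wedge}$ by definition.

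The perfectness of the resulting pairing follows from the perfectness of the original pairing in Theorem \ref{duality1motives2} together with the Pontryagin duality between $\hat{\Z}$ and $\Q/\Z$. The main obstacle lies in Step 3: the identification of $\HH^{-1}(F,\MM)_{\wedge}$, which is defined as a kernel of profinite completions (\emph{not} as the completion of the kernel $\HH^{-1}(B_{\hat{W}_F},\MM(\overline{L})_{\mathrm{cond}})$), with the $\H^{-1}$ of the derived completion. This requires controlling the difference between ``profinite completion of a kernel'' and ``kernel of profinite completions'', which is precisely encoded by the non-Hausdorff piece $\HH^0(B_{\hat{W}_F},\MM(\overline{L})_{\mathrm{cond}})^{\mathrm{nh}}$ of the structure theorem \ref{structuremotives}, and to show that this difference matches the extension structure of $\HH^2(F,\MM^*)$ coming from \eqref{eqn:comparison021} in a way compatible with the pairings of Theorem \ref{duality1motives2}.
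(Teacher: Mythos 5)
Your approach is essentially the same as the paper's: you derived-complete the $\MM$-side (tensoring with $\hat{\Z}$), tensor the $\MM^*$-side with $\Q/\Z$, and use the comparison with condensed Galois cohomology (Corollary \ref{comparisongaloisweil1motives} / sequence \eqref{eqn:comparison021}) to identify the resulting cohomology with $\HH^{-1}(F,\MM)_{\wedge}$ and $\HH^2(F,\MM^*)$. The paper's proof does exactly this, but first truncates to the complexes $C\coloneqq\mathrm{cofib}(\underline{Y(F)^{\mathrm{top}}}\to\underline{E(F)^{\mathrm{top}}})$ and $D\coloneqq\mathrm{fib}(\HH^0(B_{\hat{W}_F},\MM^*/T^*(\overline{L})_{\mathrm{cond}};\R/\Z)\to\HH^1(B_{\hat{W}_F},T^*(\overline{L})_{\mathrm{cond}};\R/\Z))$, which are Pontryagin duals by Theorem~\ref{duality1motives2} and whose cohomology captures exactly the degrees involved. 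Working with the full complexes as you do is fine, but it forces the extra verification that the Tate-module contributions from $\HH^1$ vanish (because $\HH^1$ is discrete of finite ranks), which you correctly flag as needing care.

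However, there is a genuine gap in your argument. You correctly recognize that the tension between ``kernel of completions'' and ``completion of kernel'' is controlled by the non-Hausdorff piece $\HH^0(\cdots)^{\mathrm{nh}}$, but you present this as an open ``obstacle'' rather than resolving it, and your claimed perfectness step --- ``follows from the perfectness of the original pairing together with the Pontryagin duality between $\hat{\Z}$ and $\Q/\Z$'' --- does not by itself produce a perfect pairing of cohomology \emph{groups}: a perfect pairing of complexes in $\DDD^{\mathrm{b}}(\FLCA)$ induces perfect pairings on cohomology only modulo the $\mathrm{nh}$/$\mathrm{lca}$ decomposition of Lemma~\ref{dualitynhlca}. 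The crucial observation that closes the gap (which the paper makes explicitly) is that \emph{after} applying $-\otimes^L\hat{\Z}$ and $-\otimes^L\Q/\Z$, the cohomology groups become locally compact: by the Kummer sequences \eqref{kummerzhat} and \eqref{kummerqz}, $\H^q(C\otimes^L\hat{\Z})$ is an extension of profinite groups (hence profinite) and $\H^q(D\otimes^L\Q/\Z)$ is an extension of torsion groups (hence torsion), so the $\mathrm{nh}$ piece disappears and Lemma~\ref{dualitynhlca} immediately yields a perfect pairing in $\FLCA$ on each cohomology degree, with no residual ``matching'' of extension structures to check. In other words, derived completion is precisely the operation that kills the non-Hausdorff phenomenon, and your proof needs to say so. As a minor point, the ``natural map $\hat{\Z}[1]\otimes^L\Q/\Z\to\R/\Z$'' in your Step~2 is not a morphism in the derived category ($\hat{\Z}[1]\otimes^L\Q/\Z\cong\Q/\Z[1]$, and $\Hom(\Q/\Z[1],\R/\Z)=0$); what you want is the shifted map $\Q/\Z[1]\to\R/\Z[1]$, which in degree $-1$ gives the desired target $\R/\Z$.
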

\begin{proof} 
We set  \[ \begin{split}C&\coloneqq \mathrm{cofib}(\underline{Y(F)^{\mathrm{top}}}\rightarrow \underline{E(F)^{\mathrm{top}}}),\\ D&\coloneqq \mathrm{fib}(\HH^0(B_{\hat{W}_F},\MM^*/T^*(\overline{L})_{\mathrm{cond}};\R/\Z)\rightarrow \HH^1(B_{\hat{W}_F},T^*(\overline{L})_{\mathrm{cond}};\R/\Z))\end{split}\] in $\DDD^{\mathrm{b}}(\Ab(\CC))$. By \cref{duality1motives2}, we have $C^{\vee}=D$. Consequently, we have \[
C\otimes^L \hat{\Z}=(D\otimes^L \Q/\Z)^{\vee} [1].
\] Let us consider the exact sequences \eqref{kummerzhat}  \[
0\rightarrow \H^q(C)^{\wedge}\rightarrow \H^q(C\otimes^L \hat{\Z}) \rightarrow T\H^{q+1}(C)\rightarrow 0
\] and \eqref{kummerqz} \[
0\rightarrow \H^q(D)\otimes \Q/\Z \rightarrow \H^q(D\otimes^L \Q/\Z) \rightarrow \H^{q+1}(D)_{\mathrm{tors}}\rightarrow 0.
\] We observe that $\H^q(C)^{\wedge}$ and $T\H^{q+1}(C)$ are profinite, while $\H^q(D)\otimes \Q/\Z$ and $\H^{q+1}(D)_{\mathrm{tors}}$ are torsion abelian groups. In particular, they are locally compact. Since $\LCA$ is stable by extensions in $\Ab(\CC)$ (\cite[Proposition 4.5]{Artusa}), both condensed abelian groups $\H^q(D\otimes^L\Q/\Z)$ and $\H^q(C\otimes^L \hat{\Z})$ are locally compact. By \cref{dualitynhlca}, the equivalence \[
C\otimes^L \hat{\Z}=(D\otimes^L \Q/\Z)^{\vee} [1].\]
 gives a perfect cup-product pairing in $\LCA$ 

\[
\H^{-1}(C\otimes^L \hat{\Z})\otimes \H^0(D\otimes^L \Q/\Z)\rightarrow \R/\Z.
\]
We show that we have \[
\H^{-1}(C\otimes^L \hat{\Z})=\mathrm{Ker}((\underline{Y(F)^{\mathrm{top}}})^{\wedge}\rightarrow (\underline{E(F)^{\mathrm{top}}})^{\wedge}), \qquad \H^0(D\otimes^L \Q/\Z)=\HH^2(B_{\hat{G}_F},\MM(\overline{F})_{cond,\acute{e}t}).
\] 

Firstly, we have a fibre sequence in $\DD^{\mathrm{b}}(\CC)$ \[
\underline{Y(F)^{\mathrm{top}}}\otimes^L \hat{\Z}\rightarrow \underline{E(F)^{\mathrm{top}}}\otimes^L \hat{\Z}\rightarrow C\otimes^L \hat{\Z}.
\] By \eqref{kummerzhat} we observe that both $\underline{Y(F)^{\mathrm{top}}}\otimes^L\hat{\Z}$ and $\underline{E(F)^{\mathrm{top}}}\otimes^L \hat{\Z}$ are concentrated in cohomological degrees $0$ and $1$. The equality \[\H^{-1}(C\otimes^L\hat{\Z})=\mathrm{Ker}((\underline{Y(F)^{\mathrm{top}}})^{\wedge}\rightarrow (\underline{E(F)^{\mathrm{top}}})^{\wedge})\] follows from the long exact cohomology sequence \[
T(\underline{Y(F)^{\mathrm{top}}})\to T(\underline{E(F)^{\mathrm{top}}})\to \H^{-1}(C\otimes^L \hat{\Z})\to (\underline{Y(F)^{\mathrm{top}}})^{\wedge}\to (\underline{E(F)^{\mathrm{top}}})^{\wedge} \to  \H^{0}(C\otimes^L \hat{\Z})\to 0.
\] Indeed, the structure theorem of $\underline{E(F)^{\mathrm{top}}}$ (see \cref{prop:structurecohomsab}) implies the vanishing of $T(\underline{E(F)^{\mathrm{top}}})$.

Let us consider the exact sequence in $\FLCA$ \[
0\rightarrow \H^0(D)\otimes \Q/\Z \rightarrow \H^0(D\otimes^L \Q/\Z) \rightarrow \H^{1}(D)_{\mathrm{tors}}\rightarrow 0.
\] By definition of $D$, the long exact cohomology sequence associated to 
\[
 R\GGamma(B_{\hat{W}_F},T^*(\overline{L})_{\mathrm{cond}};\R/\Z)\rightarrow R\GGamma(B_{\hat{W}_F},\MM^*(\overline{L})_{\mathrm{cond}};\R/\Z)\rightarrow R\GGamma(B_{\hat{W}_F},\MM^*/T^*(\overline{L})_{\mathrm{cond}};\R/\Z)
\] gives an epimorphism \[ \HH^0(B_{\hat{W}_F},\MM^*(\overline{L})_{\mathrm{cond}};\R/\Z)\twoheadrightarrow \H^0(D)\] and an isomorphism \[\H^1(D)\overset{\sim}{\rightarrow} \HH^1(B_{\hat{W}_F},\MM^*(\overline{L})_{\mathrm{cond}};\R/\Z).\] By \cref{duality1motives2}, we have \[
\HH^0(B_{\hat{W}_F},\MM^*(\overline{L})_{\mathrm{cond}};\R/\Z)\otimes \Q/\Z=(T\HH^0(B_{\hat{W}_F},\MM(\overline{L})_{\mathrm{cond}})^{\mathrm{lca}})^{\vee}=0,
\] where the vanishing of the Tate module follows from the structure of $\HH^0(B_{\hat{W}_F},\MM(\overline{L})_{\mathrm{cond}})^{\mathrm{lca}})$, see \cref{structuremotives}. This implies that \[
\H^0(D)\otimes \Q/\Z=0,
\] and we have \[
\H^0(D\otimes^L \Q/\Z)=\H^{1}(D)_{\mathrm{tors}}=\HH^1(B_{\hat{W}_F},\MM^*(\overline{L})_{\mathrm{cond}};\R/\Z)_{\mathrm{tors}}.
\]
The result now follows from the exact sequence \eqref{eqn:comparison021}, which gives the identification \[
\HH^1(B_{\hat{W}_F},\MM^*(\overline{L})_{\mathrm{cond}};\R/\Z)_{\mathrm{tors}}=\HH^2(B_{\hat{G}_F},\MM^*(\overline{F})_{cond,\acute{e}t}).
\]
\end{proof}
We conclude with a further analysis of the perfect pairing \[
\HH^1(F,\MM)_{\wedge}\otimes \HH^2(F,\MM^*)\rightarrow \R/\Z,
\] whose induced pairing on the underlying abelian groups is item 1 of \cite[Theorem 2.3]{HarariSzamuely}. First, we make the following observation. 
\begin{rmk}\label{rmk:final} If $A\in\LH(\FLCA)$, then the condensed abelian groups $\leftin{m}{A}$ and $A/m$ are finite for all $m$. This can be seen, for example, by taking the $2$-term resolution \[
A_0\rightarrow A_1\rightarrow A,
\] with $A_0$ and $A_1$ locally compact of finite ranks. If moreover $A$ is non-Hausdorff, we have $A/m\in \NH\cap \FLCA=0$ (see \cref{lem:nhlca}, ii) and iii)). It follows that for all $A\in\LH(\FLCA)$ the derived completion of the canonical decomposition \[
0\rightarrow A^{\mathrm{nh}}\rightarrow A\rightarrow A^{\mathrm{lca}}\rightarrow 0
\] gives an exact sequence of profinite abelian groups \[
0\rightarrow TA^{\mathrm{nh}}\rightarrow TA\rightarrow TA^{\mathrm{lca}}\rightarrow 0
\] and an isomorphism \[
A^{\wedge}\cong (A^{\mathrm{lca}})^{\wedge}.
\] Applying this observation to $\HH^0(B_{\hat{W}_F},\MM(\overline{L})_{\mathrm{cond}})$, we obtain \[
T\HH^0(B_{\hat{W}_F},\MM(\overline{L})_{\mathrm{cond}})=T\HH^0(B_{\hat{W}_F},\MM(\overline{L})_{\mathrm{cond}})^{\mathrm{nh}}\]and\[ \HH^0(B_{\hat{W}_F},\MM(\overline{L})_{\mathrm{cond}})^{\wedge}=(\HH^0(B_{\hat{W}_F},\MM(\overline{L})_{\mathrm{cond}})^{\mathrm{lca}})^{\wedge}.
\] \end{rmk}
\vspace{0.5em}
We recall that the cup-product pairing \eqref{eqn:topologisedhs0} can be expressed as \[
\H^{-1}(C\otimes^L \hat{\Z})\otimes \H^0(D\otimes^L \Q/\Z)\rightarrow \Q/\Z,
\] where $C$ and $D$ are defined in the proof of \cref{cor:harariszamuely1}.

Applying \cref{rmk:final} to the exact sequence 
\[
    0\rightarrow \H^{-1}(C)^{\wedge}\rightarrow \H^{-1}(C\otimes^L \hat{\Z})\rightarrow T\H^0(C)\rightarrow 0,
\] we obtain an exact sequence 
\begin{equation}\tag{*}\label{eqn:final1}
0\rightarrow \HH^{-1}(B_{\hat{W}_F},\MM(\overline{L})_{\mathrm{cond}})^{\wedge}\rightarrow \HH^{-1}(F,\MM)_{\wedge}\rightarrow T\HH^0(B_{\hat{W}_F},\MM(\overline{L})_{\mathrm{cond}})^{\mathrm{nh}}\rightarrow 0.
\end{equation}
Moreover, taking the torsion of the canonical decomposition \[\begin{split}
0&\rightarrow \HH^1(B_{\hat{W}_F},\MM^*(\overline{L})_{\mathrm{cond}};\R/\Z)^{\mathrm{nh}}\rightarrow\\&\rightarrow \HH^1(B_{\hat{W}_F},\MM^*(\overline{L})_{\mathrm{cond}};\R/\Z)\rightarrow \HH^1(B_{\hat{W}_F},\MM^*(\overline{L})_{\mathrm{cond}};\R/\Z)^{\mathrm{lca}}\rightarrow 0\end{split}
\] gives the exact sequence \begin{equation}\tag{**}\label{eqn:final2}\begin{split}
    0&\rightarrow \HH^1(B_{\hat{W}_F},\MM^*(\overline{L})_{\mathrm{cond}};\R/\Z)^{\mathrm{nh}}_{\mathrm{tors}}\rightarrow\\&\rightarrow \HH^2(F,\MM^*)\rightarrow \HH^1(B_{\hat{W}_F},\MM^*(\overline{L})_{\mathrm{cond}};\R/\Z)^{\mathrm{lca}}_{\mathrm{tors}}\rightarrow 0. \end{split}
\end{equation}

Combining \eqref{eqn:final1} and \eqref{eqn:final2}, we observe that the perfect cup-product pairing \eqref{eqn:topologisedhs0} can be decomposed into the two perfect cup-product pairings \[
\HH^{-1}(B_{\hat{W}_F},\MM(\overline{L})_{\mathrm{cond}})^{\wedge}\otimes \HH^1(B_{\hat{W}_F},\MM^*(\overline{L})_{\mathrm{cond}};\R/\Z)^{\mathrm{lca}}_{\mathrm{tors}}\rightarrow \R/\Z
\] and \[
T\HH^{0}(B_{\hat{W}_F},\MM(\overline{L})_{\mathrm{cond}})^{\mathrm{nh}}\otimes \HH^1(B_{\hat{W}_F},\MM^*(\overline{L})_{\mathrm{cond}};\R/\Z)^{\mathrm{nh}}_{\mathrm{tors}}\rightarrow \R/\Z,
\] which can be immediately deduced from \cref{duality1motives2}.
\appendix
\section{Biextensions and pairings}\label{appendix}
The goal of this appendix is to prove the existence of some condensed cup-product pairings which are needed in \cref{section:duality1motives}. In particular we need a condensed Poincaré pairing for the Weil-étale realisation of abelian varieties and for the Weil-étale realisation of 1-motives. To do so, we use the notion of \emph{biextensions} developed in \cite[VII]{SGA7I}. To show that these pairings are compatible with the non-condensed ones, we recall some results on cup-product pairings and their compatibility with respect to morphisms of topoi.

\subsection{Cup-product pairings}
Let $\TT$ be a topos and let $X,Y\in\DDD^{\mathrm{b}}(\TT)$. We denote by $\Gamma(\TT,-):\TT\rightarrow \Set$ the global section functor. There is a canonical pairing in $\DDD^{\mathrm{b}}(\Ab)$ \begin{equation}\label{eqn:cupproductdefinition}
\varphi_{X,Y}:R\Gamma(\TT,X)\otimes^L R\Gamma(\TT,Y)\rightarrow R\Gamma(\TT,X\otimes^L Y)
\end{equation} which can be described via the cup-product pairings of abelian groups \[
\varphi_{X,Y}^{r,s}:\H^r(\TT,X)\otimes \H^s(\TT,Y)\rightarrow \H^{r+s}(\TT,X\otimes^L Y)
\] as follows. We have a natural isomorphism of abelian groups \[
\H^r(\TT,X)=\Hom_{\DDD^{\mathrm{b}}(\TT)}(\Z,X[r])
\] and similarly for $Y$ and $X\otimes^L Y$. We represent elements $\alpha\in\H^r(\TT,X)$ and $\beta\in\H^s(\TT,Y)$ as homotopy classes of maps $\alpha:\Z\to I_X[r]$ and $\beta:\Z\to I_Y[s]$, where $I_X$ and $I_Y$ are injective resolutions of $X$ and $Y$ respectively. Then $\varphi^{r,s}_{X,Y}(\alpha\otimes \beta)$ is represented by \[
\Z=\Z\otimes \Z\rightarrow I_X[r]\otimes I_Y[s]\overset{\sim}{\leftarrow} P(I_X)\otimes I_Y[r+s],
\] where $P(I_X)$ is a flat resolution of $I_X$.
\begin{defn}
Let $X,Y,Z\in \DDD^{\mathrm{b}}(\TT)$ and let $\psi: X\otimes^L Y \rightarrow Z$ be a morphism in $\DDD^{\mathrm{b}}(\TT)$. We define the cup-product pairing induced by $\psi$ as the composition \begin{equation}
    R\Gamma(\TT,X)\otimes^L R\Gamma(\TT,Y)\overset{\varphi_{X,Y}}{\longrightarrow} R\Gamma(\TT,X\otimes^L Y)\overset{R\Gamma(\TT,\psi)}{\longrightarrow} R\Gamma(\TT,Z)
\end{equation} and we denote it $CP_{\psi}$.
\end{defn}
 Let $f:\TT_1\rightarrow \TT_2$ be a morphism of topoi. The left adjoint $f^*$ is exact and induces a functor \[
f^*:\DDD^{\mathrm{b}}(\TT_2)\rightarrow \DDD^{\mathrm{b}}(\TT_1).
\] For $X,Y\in \DDD^{\mathrm{b}}(\TT_2)$, we have a canonical morphism \begin{equation}\label{eqn:functorialrhom}
R\Hom_{\DDD^{\mathrm{b}}(\TT_2)}(X,Y)\rightarrow R\Hom_{\DDD^{\mathrm{b}}(\TT_1)}(f^*X,f^*Y)
\end{equation} functorial in both terms and described as follows. Represent an element $\alpha:R\Hom_{\DDD^{\mathrm{b}}(\TT_2)}(X,Y)$ as the homotopy class of a map $\alpha:X\to I_Y$. Since $f^*$ is exact, $f^*I_Y$ is a resolution of $f^*Y$. The image of $\alpha$ is represented by \[
f^*X\overset{f^*\alpha}{\longrightarrow}f^*I_Y \overset{\sim}{\longrightarrow}I_{f^*Y},
\] where $I_{f^*Y}$ is an injective resolution of $f^*I_Y$.
For $X=\Z$, we get the canonical morphism \[
R\Gamma(\TT_2,Y)\rightarrow R\Gamma(\TT_1, f^* Y)
\] which is functorial in $Y$.

Let $\psi:X\otimes^L Y \rightarrow Z$ and $\psi':f^*X\otimes^L f^*Y\rightarrow f^*Z$ be morphisms in $\DDD^{\mathrm{b}}(\TT_2)$ and $\DDD^{\mathrm{b}}(\TT_1)$ respectively. Then we have a diagram \begin{equation}\label{compatibilitydiagram} \begin{tikzcd}
R\Gamma(\TT_2,X)\otimes^L R\Gamma(\TT_2,Y)\ar[d]\ar[r,"CP_{\psi}"]& R\Gamma(\TT_2,Z)\ar[d]\\
R\Gamma(\TT_1,f^*X)\otimes^L R\Gamma(\TT_1,f^*Y)\ar[r,"CP_{\psi'}"] & R\Gamma(\TT_1,f^*Z).
\end{tikzcd} \end{equation}
\begin{defn}
We say that $\psi$ and $\psi'$ induce compatible cup-product pairings if \eqref{compatibilitydiagram} is commutative.
\end{defn}
\begin{rmk}\label{rmk:pullbacktensor}
For $X,Y\in \DDD^{\mathrm{b}}(\TT_2)$, we have a canonical isomorphism \[
f^*X\otimes^L f^*Y \overset{\sim}{\rightarrow} f^*(X\otimes^L Y).
\] This follows from the isomorphism between non-derived objects and from flatness of $f^*P$ for a flat $P$ (see \cite[IV.13.4 and V.1.7.1]{SGA4}).
\end{rmk}
Consequently, given a pairing $\psi:X\otimes^L Y\rightarrow Z$ in $\DDD^{\mathrm{b}}(\TT_2)$ we have an induced pairing \[f^*\psi:f^*X\otimes^L f^* Y \overset{\sim}{\rightarrow} f^*(X\otimes^L Y)\rightarrow f^*Z\] in $\DDD^{\mathrm{b}}(\TT_1)$.
\begin{prop}\label{prop:compatiblepairings}
The morphisms $\psi$ and $f^*\psi$ induce compatible cup-product pairings.
\end{prop}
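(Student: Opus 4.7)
The plan is to decompose the compatibility square into two squares by inserting the intermediate term $R\Gamma(\TT_2, X\otimes^L Y)$ on the top row and $R\Gamma(\TT_1, f^*X\otimes^L f^*Y)$ on the bottom row, obtaining the diagram
\[
\begin{tikzcd}[column sep=2em]
R\Gamma(\TT_2,X)\otimes^L R\Gamma(\TT_2,Y) \ar[d] \ar[r,"\varphi_{X,Y}"] & R\Gamma(\TT_2,X\otimes^L Y) \ar[d] \ar[r,"R\Gamma(\psi)"] & R\Gamma(\TT_2,Z) \ar[d] \\
R\Gamma(\TT_1,f^*X)\otimes^L R\Gamma(\TT_1,f^*Y) \ar[r,"\varphi_{f^*X,f^*Y}"] & R\Gamma(\TT_1,f^*X\otimes^L f^*Y) \ar[r,"R\Gamma(f^*\psi)"] & R\Gamma(\TT_1,f^*Z)
\end{tikzcd}
\]
where the middle vertical arrow is induced by the canonical isomorphism $f^*(X\otimes^L Y)\overset{\sim}{\to} f^*X\otimes^L f^*Y$ of \cref{rmk:pullbacktensor}, followed by the functoriality map from \eqref{eqn:functorialrhom}. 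It suffices to prove each square commutes separately.

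The right-hand square commutes by naturality of the transformation $R\Gamma(\TT_2,-)\Rightarrow R\Gamma(\TT_1,f^*-)$ described after \eqref{eqn:functorialrhom}, applied to the morphism $\psi$ of $\DDD^{\mathrm{b}}(\TT_2)$: indeed, \eqref{eqn:functorialrhom} is functorial in both variables, so taking $X=\Z$ and varying the target yields a natural transformation of functors $\DDD^{\mathrm{b}}(\TT_2)\to\DDD^{\mathrm{b}}(\Ab)$, and the square is simply its naturality square on $\psi:X\otimes^L Y\to Z$ (identified with the target via \cref{rmk:pullbacktensor}).

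The left-hand square is the substantive point. My plan is to verify it at the level of explicit representatives using the description of $\varphi_{-,-}$ recalled after \eqref{eqn:cupproductdefinition}. Choose injective resolutions $X\to I_X$ and $Y\to I_Y$ in $\TT_2$, and a flat resolution $P(I_X)\to I_X$. Since $f^*$ is exact, $f^*I_X$ and $f^*I_Y$ are (not necessarily injective) resolutions of $f^*X$ and $f^*Y$; and since $f^*$ commutes with tensor products on the underlying categories and preserves flatness (as it commutes with colimits and with $-\otimes-$), the complex $f^*P(I_X)$ is a flat resolution of $f^*I_X$. Picking any injective resolutions $f^*I_X\to J_X$, $f^*I_Y\to J_Y$, $f^*P(I_X)\to Q_X$ with $Q_X\to J_X$ a flat resolution extending $f^*P(I_X)\to f^*I_X$, one obtains a strictly commutative cube of complexes at the level of sections, and chasing the defining maps $\Z\to I_X[r]\otimes I_Y[s]\overset{\sim}{\leftarrow}P(I_X)\otimes I_Y[r+s]$ through $f^*$ yields the same class as the analogous map computed with $(J_X,J_Y,Q_X)$. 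This is because $f^*$ is strictly compatible with the tensor product of complexes, and the comparison between the two flat/injective resolutions is unique up to homotopy.

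The main obstacle will be handling the non-strict functoriality of injective and flat resolutions: the maps $f^*I_X\to J_X$ and $f^*P(I_X)\to Q_X$ are only defined up to homotopy, so one must verify that the two induced arrows on $R\Gamma$ agree in the derived category and not merely up to an ambiguous homotopy. A clean way around this is to recast everything $\infty$-categorically: $f^*$ is a symmetric monoidal exact functor of stable $\infty$-categories, and $R\Gamma$ is a lax symmetric monoidal functor, so the cup-product pairing is a natural transformation of lax symmetric monoidal functors and the square commutes formally. Either approach works; I would prefer the $\infty$-categorical one since it avoids bookkeeping with Cartan--Eilenberg resolutions.
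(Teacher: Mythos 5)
Your proposal follows essentially the same strategy as the paper's proof: the same decomposition of the compatibility square into two squares by inserting $R\Gamma(\TT_2,X\otimes^L Y)$ in the middle (the paper puts $R\Gamma(\TT_1,f^*(X\otimes^L Y))$ at bottom-center and folds the isomorphism of \cref{rmk:pullbacktensor} into the bottom-left horizontal arrow, which is cosmetically the same as what you do), the right square by functoriality of the base-change transformation, and the left square by going back to the explicit definition of $\varphi^{r,s}$. Where you go beyond the paper is in flagging the homotopy-coherence bookkeeping in the left square (the paper dispatches it with one sentence, ``clear from the definitions of $\varphi^{r,s}_{X,Y}$ and $\varphi^{r,s}_{f^*X,f^*Y}$''), and in offering the cleaner alternative of treating $R\Gamma$ as a lax symmetric monoidal functor and $f^*$ as a symmetric monoidal exact functor so that the square commutes formally; both are legitimate and your concern about the non-functoriality of resolutions is warranted, but they do not change the underlying structure of the argument.
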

\begin{proof}
We decompose the diagram \eqref{compatibilitydiagram} as follows \[
\begin{tikzcd}
R\Gamma(\TT_2,X)\otimes^L R\Gamma(\TT_2,Y)\ar[d]\ar[r,"\varphi_{X,Y}"]&R\Gamma(\TT_2,X\otimes^L Y)\ar[r]\ar[d]&R\Gamma(\TT_2,Z)\ar[d]\\
R\Gamma(\TT_1,f^*X)\otimes^L R\Gamma(\TT_1,f^*Y)\ar[r] & R\Gamma(\TT_1,f^*(X\otimes^L Y))\ar[r]&R\Gamma(\TT_1,f^*Z),
\end{tikzcd}
\] where the first arrow on the second line is the composite \[
R\Gamma(\TT_1,f^*X)\otimes^L R\Gamma(\TT_1,f^*Y)\overset{\varphi_{f^*X,f^*Y}}{\longrightarrow} R\Gamma(\TT_1,f^*X\otimes^L f^*Y)\overset{\sim}{\rightarrow}R\Gamma(\TT_1,f^*(X\otimes^L Y)).
\] The commutativity of the right square follows from functoriality of \eqref{eqn:functorialrhom} in the second argument. We are left to show the commutativity of the left square, which is clear from the definitions of $\varphi_{X,Y}^{r,s}$ and $\varphi_{f^*X,f^*Y}^{r,s}$.
\end{proof}

\subsection{Biextensions in a topos}\label{section:biextensionsinatopos}
The definitions and results of this section are mostly contained in \cite[VII]{SGA7I}.
In what follows, $\TT$ denotes a topos. If $X,Y\in \TT$, we denote by $X_Y\in \TT_{/Y}$ the inverse image of $X$ via the localisation morphism $j_Y:\TT_{/Y}\rightarrow \TT$, i.e.\ the object $X\times Y\rightarrow Y$. We recall the definitions of a torsor and of a biextension in $\TT$, as well as the relation between biextensions and pairings.
\begin{defn}
Let $G$ be a group object of $\TT$, and let $T\rightarrow X$ be a morphism in $\TT$. We say that $T$ is a $G_X$-torsor over $X$ if we have an action of $G_X$ on $T$ over $X$ which is locally trivial, i.e.\ there exists a covering $\{U_i\to X\}_{i\in I}$ in $\TT$ such that for all $i$ the base change $T\times_X U_i$ is isomorphic to $G\times U_i$, where the action of $G$ is given by the left multiplication.
\end{defn}
\begin{rmk}\label{rmk:torsorepi}
If $\pi:T\to X$ is a $G_X$-torsor over $X$, then $\pi$ is an epimorphism in $\TT$. Indeed, by definition there is a covering $\{U_i\to X\}_{i\in I}$ such that the restriction $\pi_{|U_i}:T_{|U_i}\cong G\times U_i\rightarrow U_i$ is an epimorphism for all $i$. Since the morphism $\coprod U_i \rightarrow X$ is an epimorphism, the localisation functor $j:\TT_{/\coprod U_i}\rightarrow \TT_{/X}$ is such that $j^*$ is conservative. The result follows.
\end{rmk}
For any $X\in\TT$ and any $G\in \Ab(\TT)$, we denote by $\mathbf{Tors}(X,G_X)$ the category of $G_X$-torsors over $X$, where morphisms of torsors are morphisms over $X$ compatible with the action of $G_X$.
\begin{defn}\label{defn:biextension}
Let $A,B$ and $G$ be abelian group objects of $\TT$. A \emph{biextension} of $(A,B)$ by $G$ is an object $W$ of $\TT$ together with a surjective morphism $\pi:W\rightarrow A\times B$ endowed with the following structure:
\begin{enumerate}[a)]
    \item An action $W\times_{A\times B}G_{A\times B}\rightarrow W$ of $G_{A\times B}$ on $W$ making $W$ into a $G_{A\times B}$-torsor.
    \item An $A$-morphism $m_A:W\times_A W\rightarrow W$ and a section $e_A:A\rightarrow W$ of the structure morphism $W\to A$, making $W$ into an abelian group object of $\TT_{/A}$.
    \item A $B$-morphism $m_B:W\times_B W\rightarrow W$ and a section $e_B:B\rightarrow W$ of the structure morphism $W\to B$, making $W$ into an abelian group object of $\TT_{/B}$.
\end{enumerate}
These structures are to satisfy the following conditions:
\begin{enumerate}[i)]
    \item if $G_A\rightarrow W$ is the map $(g,a)\mapsto e_A(a)\cdot g$, then the sequence \[
    0\rightarrow G_A\rightarrow W\overset{\pi}{\rightarrow} B_A\rightarrow 0
    \] is an exact sequence in $\Ab(\TT_{/A})$.
    \item if $G_B\rightarrow W$ is the map $(g,b)\mapsto e_B(b)\cdot g$, then the sequence \[
    0\rightarrow G_B\rightarrow W\overset{\pi}{\rightarrow} A_B\rightarrow 0
    \] is an exact sequence in $\Ab(\TT_{/B})$.
    \item (compatibility between $m_A$ and $m_B$) the following diagram commutes 
    \[\begin{tikzcd}[row sep=4em, column sep=6em]
    (W\times_A W)\times_{B\times B} (W\times_A W)\ar[r,"m_A\times_{B\times B} m_A"]\ar[d,"\sim"]&  W\times_{B} W \ar[r,"m_B"]\ar[d] & W\ar[d,equal]\\
    (W\times_B A)\times_{A\times A} (W\times_B W)\ar[r,"m_B\times_{A\times A} m_B"]& W\times_A W\ar[r,"m_A"]& W.
    \end{tikzcd}\] where the isomorphism on the left is $(w_1,w_2;w_3,w_4)\mapsto (w_1,w_3;w_2,w_4)$.
\end{enumerate}
\end{defn}
 For any $A,B,G\in\Ab(\TT)$ we denote by $\mathbf{Biext}(A,B;G)$ the category of biextensions of $(A,B)$ by $G$, where morphisms of biextensions are morphisms in $\TT$ which yield both a morphism of extensions over $A$ and over $B$, as defined in \cite[VII,2.3]{SGA7I}

In \cite[VII,2.5]{SGA7I}, a product of biextensions is defined. This makes the monoid of endomorphism of some $W\in\mathbf{Biext}(A,B;G)$ canonically isomorphic to the monoid of endomorphism of the trivial biextension $A\times B\times G$.  We denote this monoid by $\mathrm{Biext}^0(A,B;G)$ and we observe that it is an abelian group. Moreover, there is a canonical bijection of abelian groups (\cite[VII,2.5.4]{SGA7I})\begin{equation}\label{biextension0pairing}
 \mathrm{Biext}^0(A,B;G)\overset{\sim}{\rightarrow} \Hom_{\Ab(\TT)}(A\otimes B,G), \quad [u]\mapsto \psi_u
\end{equation} described as follows. Let $\pi:W\to A\times B$ be a biextension of $(A,B)$ by $G$. Every automorphism $u$ of $W$ is in particular an automorphism of the underlying $G_{A\times B}$-torsor, and it is induced by a morphism \[
u_0:A\times B\to G
\] via the formula $u(x)=u_0(\pi(x))\cdot x$. In order for $u$ to be an automorphism of biextensions, $u_0$ needs to be bilinear. Consequently, it defines the desired morphism $A\otimes B\to G$.

\vspace{0.5em}
Let $\mathrm{Biext}^1(A,B;G)$ denote the set of isomorphism classes of biextensions of $(A,B)$ by $G$. The product defined in \cite[VII,2.5]{SGA7I} makes it an abelian group. One has a canonical isomorphism of abelian groups (\cite[VII, Corollary 3.6.5]{SGA7I})
\begin{equation}\label{biextensionpairing}
    \mathrm{Biext}^1(A,B;G)\overset{\sim}{\longrightarrow} \Hom_{\DDD(\TT)}(A\otimes^L B,G[1]),\quad [W]\mapsto \psi_W.
\end{equation}

\vspace{0.5em}
\noindent  \begin{rmk}\label{rmk:functorialitycompatibilitybiextensions2}(Functoriality) The isomorphisms \eqref{biextension0pairing} and \eqref{biextensionpairing} are functorial \begin{enumerate}[a)]
    \item (\cite[VII, 3.7]{SGA7I}) in $A,B$ and $G$. Let $f:A'\to A$ and $g:B'\to B$ be morphisms in $\Ab(\TT)$. One has a canonical functor
    \[
-\times_{A\times B}(A'\times B'):\mathbf{Biext}(A,B;G)\rightarrow \mathbf{Biext}(A',B';G)
\] inducing morphisms of abelian groups $
\mathrm{Biext}^i(A,B;G)\rightarrow \mathrm{Biext}^i(A',B';G)
$ for $i=0,1$. Functoriality in $A$ and $B$ is the commutativity of the following diagram \[\begin{tikzcd}
\mathrm{Biext}^i_{\TT}(A,B;G)\ar[r,"\sim"]\ar[d,"-\times_{A\times B}(A'\times B')"] & \Hom_{\DDD(\TT)}(A\otimes^L B,G[i])\ar[d,"-\circ (f\otimes g)"]\\
\mathrm{Biext}^i_{\TT}(A',B';G)\ar[r,"\sim"] & \Hom_{\DDD(\TT)}(A'\otimes^L B',G[i]).
\end{tikzcd}\]
    \item (\cite[VII,2.8, VIII,2.4]{SGA7I}) with respect to the change of topoi. Let $f:\TT_1\rightarrow \TT_2$ be a morphism of topoi.  Since biextensions are defined using finite limits and colimits, we have a canonical functor \[
    f^*:\mathbf{Biext}(A,B;G)\rightarrow \mathbf{Biext}(f^*A,f^*B;f^*G)
    \] which induces homomorphisms of abelian groups $
\mathrm{Biext}^i(A,B;G)\rightarrow \mathrm{Biext}^i(f^*A,f^*B;f^*G)$ for $i=0,1$.  By \cref{rmk:pullbacktensor}, we also have a morphism of abelian groups \[
f^*-:\Hom_{\DDD(\TT_2)}(A\otimes^L B, G[i])\rightarrow \Hom_{\DDD(\TT_1)}(f^*A\otimes^L f^*B,f^*G[i]), \qquad \psi\mapsto f^*\psi.
\]  for $i=0,1$. The functoriality consists in the commutativity of the diagram \[
\begin{tikzcd}
\mathrm{Biext}^i_{\TT_2}(A,B;G)\ar[r,"\sim"]\ar[d,"f^*-"] & \Hom_{\DDD(\TT_2)}(A\otimes^L B,G[i])\ar[d,"f^*-"]\\
\mathrm{Biext}^i_{\TT_1}(f^*A,f^*B;f^*G)\ar[r,"\sim"] & \Hom_{\DDD(\TT_1)}(f^*A\otimes^L f^*B,f^*G[i])
\end{tikzcd}
\] for $i=0,1$. By \cref{prop:compatiblepairings} if $[u]\in \mathrm{Biext}^0(A,B;G)$ (resp.\ if $[W]\in\mathrm{Biext}^1(A,B,G)$), $\psi_{u}$ and $\psi_{f^*u}$ (resp.\ $\psi_W$ and $\psi_{f^*W}$) induce compatible cup-product pairings.
\end{enumerate} \end{rmk}

\subsection{Biextensions of commutative group schemes}\label{appendixsection:commutativegroupschemes}
Let $F$ be a $p$-adic field. We recall that $\TT_{\mathrm{fppf}}$ and $\TT_{\mathrm{\acute{E}t}}$ denote the fppf topos over $F$ and the big étale topos over $F$ respectively. We have a morphism of topoi \[
g:\TT_{\mathrm{fppf}}\rightarrow \TT_{\mathrm{\acute{E}t}}.
\] For all $X\in \TT_{\mathrm{\acute{E}t}}$ and $A,B,G\in \Ab(\TT_{\mathrm{\acute{E}t}})$ we have functors  \[
\mathbf{Tors}(X,G_X)\rightarrow \mathbf{Tors}(g^*X,g^*G_{g^*X}), \quad \mathbf{Biext}(A,B;G)\rightarrow \mathbf{Biext}(g^*A,g^*B;g^*G).
\]

\begin{prop}\label{fppfetaletorsors}
Let $X$ be an $F$-scheme locally of finite type and let $G$ be an affine commutative algebraic group over $F$. Let $T$ be a $G_{X}$-torsor over $X$ for the fppf topology. Then we have \begin{enumerate}[(i)]
\item $T$ is representable. 
\item there exists an \'etale cover $\{f_i:U_i\rightarrow X\}$ of $X$ which trivialises $T$ as a $G_X$-torsor.  \end{enumerate}
In particular, the functor \[
\mathbf{Tors}_{\mathrm{\acute{E}t}}(X,G_X)\rightarrow \mathbf{Tors}_{\mathrm{fppf}}(X,G_X)
\] is an equivalence of categories, and the morphism \[
\mathrm{Tors}_{\mathrm{\acute{E}t}}(X,G_X)\rightarrow \mathrm{Tors}_{\mathrm{fppf}}(X,G_X)
\] is an isomorphism of abelian groups.
\end{prop}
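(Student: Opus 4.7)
The plan is to prove (i) and (ii) separately using fppf descent together with the fact that in characteristic zero every algebraic group is smooth; the final statement about the equivalence of categories and isomorphism of torsor groups will then be a formal consequence.

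For (i), I would exploit fppf descent for affineness. By definition of a torsor, there exists an fppf covering $\{U_i \to X\}$ such that the base change $T \times_X U_i$ is isomorphic to $G_X \times_X U_i = G \times_F U_i$, which is affine over $U_i$ since $G$ is affine over $F$. Affineness of a morphism of schemes is an fppf-local property on the base (see for instance \cite[Tag 0245]{stacks-project} or EGA IV.2), hence $T \to X$ is affine. In particular $T$ is representable by an $X$-scheme.

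For (ii), the key input is that $F$ has characteristic zero, so by \cite[Corollary 8.38]{MilneAG} the algebraic group $G$ is smooth over $F$. Therefore $G_X \to X$ is smooth, and because smoothness is again fppf-local on the base, the affine morphism $T \to X$ is smooth. A smooth morphism of schemes admits sections étale-locally on the base: concretely, for each point $x \in X$ one may pick a point $t \in T$ over $x$, and by the local structure theorem for smooth morphisms there exists an étale neighbourhood of $x$ in $X$ over which $T \to X$ has a section. Collecting these gives an étale cover $\{U_i \to X\}$ on which $T$ acquires sections, and such sections trivialise the torsor, i.e.\ $T \times_X U_i \cong G \times_F U_i$ as $G_{U_i}$-torsors.

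For the final assertion, note that an étale torsor is automatically an fppf torsor (étale coverings are fppf coverings), giving a natural functor $\mathbf{Tors}_{\mathrm{\acute{E}t}}(X,G_X) \to \mathbf{Tors}_{\mathrm{fppf}}(X,G_X)$. A quasi-inverse is provided by (i) and (ii): given an fppf torsor $T$, it is representable and admits an étale trivialising cover, hence is already an étale torsor. Morphisms on both sides are just $G_X$-equivariant $X$-morphisms of schemes, so fully faithfulness is automatic. Passing to isomorphism classes yields the isomorphism of abelian groups $\mathrm{Tors}_{\mathrm{\acute{E}t}}(X,G_X) \overset{\sim}{\to} \mathrm{Tors}_{\mathrm{fppf}}(X,G_X)$. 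The main subtlety is really the use of characteristic zero in step (ii); in positive characteristic affineness of $G$ is not enough (e.g.\ $\mu_p$ torsors are not étale-locally trivial), but this hypothesis is automatic here since $F$ is a $p$-adic field.
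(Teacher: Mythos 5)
Your proof is correct and takes essentially the same route as the paper: the paper simply cites \cite[III, Section 4]{MilneEC} for (i) and (ii), and your argument unpacks exactly what those results contain — effectivity of fppf descent for affine morphisms for (i), and smoothness of $G$ (automatic in characteristic zero) yielding étale-local sections for (ii) — together with the observation that representability of all torsors involved gives full faithfulness of the comparison functor. One small wording point: in (i) you should phrase the conclusion as \emph{effectivity} of fppf descent for affine schemes producing a representing scheme for the sheaf $T$, rather than "$T \to X$ is affine," since the latter presupposes $T$ is already known to be a scheme.
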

\begin{proof}
The proof of (i) and (ii) is contained in \cite[III, Section 4]{MilneEC}. The functor \[
\mathbf{Tors}_{\mathrm{\acute{E}t}}(X,G_X)\rightarrow \mathbf{Tors}_{\mathrm{fppf}}(X,G_X)
\] is fully faithful by (i), and it is essentially surjective by (ii).
\end{proof}
\begin{cor}\label{fppfetalebiextensions}
Let $A,B$ be commutative group schemes which are locally of finite type over $F$, and let $G$ be an affine commutative algebraic group over $F$. Let $W$ be a biextension of $(A,B)$ by $G$ for the fppf topology. Then we have 
\begin{enumerate}[(i)]
    \item $W$ is representable
    \item $W$ is a biextension of $(A,B)$ by $G$ in $\TT_{\mathrm{\acute{E}t}}$.
\end{enumerate}
In particular, the functor \[
\mathbf{Biext}_{\mathrm{\acute{E}t}}(A,B;G)\rightarrow \mathbf{Biext}_{\mathrm{fppf}}(A,B;G)
\] is an equivalence of categories, and the morphism
\[
\mathrm{Biext}^i_{\mathrm{\acute{E}t}}(A,B;G)\rightarrow \mathrm{Biext}^i_{\mathrm{fppf}}(A,B;G)
\] is an isomorphism of abelian groups for $i=0,1$.
\end{cor}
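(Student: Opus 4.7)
The plan is to leverage Proposition~\ref{fppfetaletorsors}, which already transports the essential descent datum from fppf to étale, and to unwind Definition~\ref{defn:biextension} piece by piece. Given an fppf biextension $W$ of $(A,B)$ by $G$, condition (a) of Definition~\ref{defn:biextension} says $W$ is an fppf $G_{A\times B}$-torsor over $A\times B$. Since $A\times B$ is an $F$-scheme locally of finite type and $G$ is affine, Proposition~\ref{fppfetaletorsors} gives (i) at once and additionally yields that $W\to A\times B$ is trivialised by an étale cover, so $W$ is an étale $G_{A\times B}$-torsor.

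For (ii), I would check the remaining axioms in the étale topos. The structure morphisms $m_A,m_B,e_A,e_B$ are morphisms of representable sheaves by (i), hence are given by morphisms of schemes and therefore define morphisms in $\TT_{\mathrm{\acute{E}t}}$ making $W$ an abelian group object of both $\TT_{\mathrm{\acute{E}t}}/A$ and $\TT_{\mathrm{\acute{E}t}}/B$. The compatibility diagram (iii) of Definition~\ref{defn:biextension} is a diagram of schemes, so it commutes in both topologies. The only potentially delicate points are the exactness conditions (i) and (ii) of Definition~\ref{defn:biextension}. The sequence $0\to G_A\to W\to B_A\to 0$ being exact in $\Ab(\TT_{\mathrm{fppf}}/A)$ is equivalent to saying that $W\to B_A$ is an fppf-locally trivial $G_{B_A}$-torsor over $B_A$; applying Proposition~\ref{fppfetaletorsors} over the base $B_A$ (which is still locally of finite type over $F$) shows this torsor is also étale-locally trivial, giving exactness in $\Ab(\TT_{\mathrm{\acute{E}t}}/A)$. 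The symmetric argument handles the second exact sequence.

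For the ``in particular" statement, essential surjectivity of $\mathbf{Biext}_{\mathrm{\acute{E}t}}(A,B;G)\to \mathbf{Biext}_{\mathrm{fppf}}(A,B;G)$ is exactly (i) and (ii), noting that any étale biextension is automatically an fppf biextension since étale covers are fppf covers and the group-theoretic and compatibility data are unchanged. Full faithfulness follows because a morphism of biextensions is in particular a morphism of the underlying $G_{A\times B}$-torsors compatible with the partial group laws, and Proposition~\ref{fppfetaletorsors} already identifies morphisms of torsors in the two topologies. The isomorphism on $\mathrm{Biext}^i$ for $i=0,1$ then follows: $\mathrm{Biext}^0$ is identified with automorphisms of the trivial biextension and $\mathrm{Biext}^1$ with isomorphism classes, and in both cases the group laws from \cite[VII,2.5]{SGA7I} are defined via finite limits and colimits preserved by the equivalence.

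The main subtlety, rather than an obstacle, is the bookkeeping required to see that every axiom of Definition~\ref{defn:biextension} either is a diagram of schemes (hence topology-independent once representability is in hand) or reduces to a torsor statement to which Proposition~\ref{fppfetaletorsors} applies; once this is laid out, no separate descent argument beyond the one for torsors is needed.
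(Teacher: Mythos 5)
Your proposal is correct and follows essentially the same strategy as the paper: apply \cref{fppfetaletorsors} to obtain representability and the \'etale torsor structure, then observe that the remaining axioms of \cref{defn:biextension} are built from morphisms of schemes and hence hold in either topology. The one place where your route diverges is in handling the exactness conditions (i) and (ii) of \cref{defn:biextension}: you reduce them to a torsor statement over the new base $B_A$ (via the classical dictionary between short exact sequences of abelian group objects and torsors equipped with a compatible group law) and re-apply \cref{fppfetaletorsors} there, whereas the paper gets the epimorphism from \cref{rmk:torsorepi} and verifies the kernel identification $\mathrm{Ker}(W\to A_B)=G_B$ directly, using that $g^*:\TT_{\mathrm{\acute{E}t}}\to\TT_{\mathrm{fppf}}$ is fully faithful on representable objects. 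Both routes are valid; yours is somewhat more uniform (everything traced to a single torsor proposition), while the paper's isolates precisely which pieces of exactness are topology-sensitive. One caveat worth making explicit if you keep your version: the equivalence between the exact sequence over $A$ and the torsor condition over $B_A$ holds only \emph{relative to the fixed data} $(m_A,e_A)$ and the induced map $G_A\to W$ --- the torsor condition alone (without the identity section $e_A$ pinning down the trivialisation of the fiber over the zero section) would not identify the kernel with $G_A$.
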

\begin{proof}
By \cref{fppfetaletorsors}, $W$ is representable and it is a $G_{A\times B}$-torsor over $A\times B$ for the \'etale topology. To show that $W$ is a biextension of $(A,B)$ by $G$ in $\TT_{\mathrm{\acute{E}t}}$, the only non-trivial conditions to check are i) and ii) of \cref{defn:biextension}. The surjectivity of $\pi:W\to A\times B$ for the \'etale topology is guaranteed by the fact that this morphisms realises $W$ as a $G_{A\times B}$-torsor over $A\times B$ (see \cref{rmk:torsorepi}). We need to show that \[
\mathrm{Ker}(W\rightarrow A_B)=G_B
\] in $\Ab(\TT_{\mathrm{\acute{E}t}}{}_{/B})$. We obtain the isomorphism after application of $g^*$, which is fully faithful on representable objects of $\TT_{\mathrm{\acute{E}t}}{}/B$. Items (i) and (ii) follow.

Item (i) assures that the functor \[
\mathbf{Biext}_{\mathrm{\acute{E}t}}(A,B;G)\rightarrow \mathbf{Biext}_{\mathrm{fppf}}(A,B;G)
\] is fully faithful, and item (ii) assures its essential surjectivity.
\end{proof}
As a consequence, when $A,B,G$ are as in the hypotheses of \cref{fppfetalebiextensions}, the study of étale biextensions and fppf biextensions is equivalent. We call such objects just biextensions of $(A,B)$ by $G$, without specifying the topology.

The cohomology of $\TT_{\mathrm{\acute{E}t}}$ coincides with Galois cohomology. Consequently, any biextension $W$ of $(A,B)$ by $G$, resp.\ any automorphism $u$ of a biextension, induces a canonical cup-product pairing \[
CP_{W}:R\Gamma(B_{G_F}(\Set),A(\overline{F}))\otimes^L R\Gamma(B_{G_F}(\Set),B(\overline{F}))\rightarrow R\Gamma(B_{G_F}(\Set),G(\overline{F})[1]),
\] resp.\ \[
CP_{u}:R\Gamma(B_{G_F}(\Set),A(\overline{F}))\otimes^L R\Gamma(B_{G_F}(\Set),B(\overline{F}))\rightarrow R\Gamma(B_{G_F}(\Set),G(\overline{F})).
\] which is compatible with the correspondent fppf-pairing by \cref{rmk:functorialitycompatibilitybiextensions2}, b). 

\subsubsection{The Weil-étale realisation of biextensions}\label{section:weiletalebiext}
In this section we let $A,B$ be commutative group schemes locally of finite type over $F$, and we let $G$ be a commutative affine algebraic group over $F$.

The Weil-étale realisation functor \[
-(\overline{L}):\Sch^{\mathrm{lft}}_F\rightarrow B_{W_F}(\Set), \qquad M\mapsto M(\overline{L})
\] respects products and sends étale coverings to epimorphic families of $B_{W_F}(\Set)$. In order to see this last point, it is enough to observe that if $U\rightarrow X$ is an étale surjective morphism in $\Sch^{\mathrm{lft}}_F$, then the morphism $U(\overline{L})\to X(\overline{L})$ is surjective as well. Since étale surjective morphisms are stable under base change, the morphism $U_L\to X_L$ is étale and surjective. Consequently, the induced morphism on $\overline{L}$-points is surjective. Therefore, the Weil-\'etale realisation induces a morphism of topoi \[
f_{W,\acute{E}t}:B_{W_F}(\Set)\rightarrow \TT_{\mathrm{\acute{E}t}},
\] such that $f^*_{W,\acute{E}t}(M)=M(\overline{L})$ for every $M\in\Sch^{\mathrm{lft}}_F$. Consequently, we have a functor \[
\mathbf{Biext}(A,B;G)\rightarrow \mathbf{Biext}(A(\overline{L}),B(\overline{L});G(\overline{L})), \quad W\mapsto f^*_{W,\acute{E}t}(W)=W(\overline{L})
\] which induces homomorphisms of abelian groups \[
\mathrm{Biext}^i(A,B;G)\rightarrow \mathrm{Biext}^i(A(\overline{L}),B(\overline{L});G(\overline{L}))
\] for $i=0,1$. By \cref{rmk:functorialitycompatibilitybiextensions2}, b), a biextension $W$ and its Weil-étale realisation $W(\overline{L})$ induce compatible cup-product pairings, i.e.\ the diagram \begin{equation}
\begin{tikzcd}[row sep=2em, column sep=3em]
R\Gamma(B_{G_F}(\Set),A(\overline{F}))\otimes^L R\Gamma(B_{G_F}(\Set),B(\overline{F}))\ar[d]\ar[r,"CP_{W}"]&R\Gamma(B_{G_F}(\Set),G(\overline{F}))[1]\ar[d]\\
R\Gamma(B_{W_F}(\Set),A(\overline{L}))\otimes^L R\Gamma(B_{W_F}(\Set),B(\overline{L}))\ar[r,"CP_{W(\overline{L})}"] & R\Gamma(B_{W_F}(\Set),G(\overline{L}))[1]
\end{tikzcd}
\end{equation} is commutative. Similarly, an automorphism $u$ of a biextension $W$ and its Weil-étale realisation induce compatible cup-product pairings.

\subsubsection{The condensed Weil-étale realisation of biextensions}\label{section:condensedweiletalebiext}
Let $A,B$ and $G$ as in the previous section. To define the condensed Weil-étale realisation of a biextension, the first idea would be to follow the previous section, where the role of the Weil-\'etale realisation is taken by the condensed Weil-\'etale realisation $-(\overline{L})_{\mathrm{cond}}:\Sch^{\mathrm{lft}}_F\rightarrow B_{\hat{W}_F}$ defined in \cref{section:realisation}. However, it is not clear (for the moment) whether the condensed Weil-étale realisation of an étale surjective map $U\rightarrow X$ gives an epimorphism in $B_{\hat{W}_F}$. Nonetheless, we are still able to prove the following in the case where $G=\mathbbm{G}_m$
\begin{thm}\label{condensedbiext}
Let $A,B$ be commutative group schemes locally of finite type over $F$, and let $W$ be a biextension of $(A,B)$ by $\mathbbm{G}_m$. Then $W(\overline{L})_{\mathrm{cond}}$ is a biextension of $(A(\overline{L})_{\mathrm{cond}},B(\overline{L})_{\mathrm{cond}})$ by $\overline{L}^{\times}$.

If $W'$ is another biextension of $(A,B)$ by $\mathbbm{G}_m$ and $u:W\to W'$ is a morphism of biextensions, then $u(\overline{L})_{\mathrm{cond}}:W(\overline{L})_{\mathrm{cond}}\to W'(\overline{L})_{\mathrm{cond}}$ is a morphism of biextensions.
\end{thm}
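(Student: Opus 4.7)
The strategy is to transfer each piece of data a), b), c) and the conditions i), ii), iii) of \cref{defn:biextension} from $W$ to $W(\overline{L})_{\mathrm{cond}}$ by combining two facts: the condensed Weil-\'etale realisation functor commutes with fiber products, and by Hilbert 90 every $\mathbbm{G}_m$-torsor on an $F$-scheme is Zariski-locally trivial. Commutation with fiber products transports automatically the morphisms $m_A, m_B$, the unit sections $e_A, e_B$, and the compatibility diagram iii), since all of these are expressible entirely in terms of fiber products. What remains is the torsor property a) together with the two exact sequences i) and ii).

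The key preliminary lemma is that if $\{U_i \hookrightarrow X\}_{i\in I}$ is a Zariski open cover of some $X \in \Sch^{\mathrm{lft}}_F$, then the induced family $\{U_i(\overline{L})_{\mathrm{cond}} \to X(\overline{L})_{\mathrm{cond}}\}_{i\in I}$ is jointly epimorphic in $B_{\hat{W}_F}$. By \cref{opencovering}, for each finite extension $K/L$ the family $\{U_i(K)^{\mathrm{top}}\}_{i \in I}$ is a genuine open cover of $X(K)^{\mathrm{top}}$, so the map $\coprod_i \underline{U_i(K)^{\mathrm{top}}} \twoheadrightarrow \underline{X(K)^{\mathrm{top}}}$ is an epimorphism in $\CC$. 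Passing to the filtered colimit over such $K$, and using that filtered colimits preserve epimorphisms, gives the desired jointly epimorphic family in $B_{\hat{W}_F}$.

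With this in hand, I would choose by Hilbert 90 a Zariski open cover $\{U_i \to A \times B\}$ trivialising $W$, so that there are $\mathbbm{G}_m$-equivariant isomorphisms $W \times_{A \times B} U_i \cong \mathbbm{G}_m \times U_i$. Applying $-(\overline{L})_{\mathrm{cond}}$ and invoking fiber product preservation yields
\[
W(\overline{L})_{\mathrm{cond}} \times_{(A\times B)(\overline{L})_{\mathrm{cond}}} U_i(\overline{L})_{\mathrm{cond}} \cong \mathbbm{G}_m(\overline{L})_{\mathrm{cond}} \times U_i(\overline{L})_{\mathrm{cond}},
\]
which, combined with the preliminary lemma, exhibits $W(\overline{L})_{\mathrm{cond}}$ as a $\mathbbm{G}_m(\overline{L})_{\mathrm{cond}}$-torsor over $(A \times B)(\overline{L})_{\mathrm{cond}}$; in particular the structure morphism is an epimorphism by \cref{rmk:torsorepi}. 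The two exact sequences i) and ii) then transfer by direct computation: in each case the kernel is obtained as a fiber product of $W$ with a unit section (of $B_A$ or $A_B$ respectively), and such fibers are preserved by $-(\overline{L})_{\mathrm{cond}}$, while surjectivity of the structure map has just been established from the torsor property.

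The functoriality statement on morphisms of biextensions is immediate: a morphism $u:W \to W'$ is a single scheme morphism respecting all the structural data, so $u(\overline{L})_{\mathrm{cond}}$ inherits the analogous compatibilities by functoriality of $-(\overline{L})_{\mathrm{cond}}$. The genuine difficulty in the whole argument is the preliminary lemma above, because $-(\overline{L})_{\mathrm{cond}}$ does not in general send \'etale covers to covering families in $B_{\hat{W}_F}$; it is precisely the Zariski-local triviality specific to $G = \mathbbm{G}_m$ that makes the transfer possible, and the result cannot be expected for an arbitrary affine algebraic $G$.
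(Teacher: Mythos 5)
Your proposal is correct and follows essentially the same strategy as the paper's proof: reduce to a Zariski-local trivialisation of $W$ (via Hilbert 90 / \cite[XI, Prop.\ 5.1]{SGA1}), transfer that Zariski cover to an epimorphic family in $B_{\hat{W}_F}$ via the local-section lemma (\cref{localsectioncover}) and passage to the colimit over finite extensions of $L$, and handle the remaining biextension data by preservation of fiber products. The only things you glossed over — that the epimorphism claim in $\CC$ really does require the compactness argument of \cref{localsectioncover} rather than being automatic from an open cover, and the conservativity of the localisation morphism $j^*_{EW_F/\Gal(\overline{L}/K)}$ used to lift from $\CC$ to $B_{W_F/U_K}$ before taking colimits — are made explicit in the paper but are minor technical points you clearly had in mind.
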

We need the following 
\begin{lem}\label{localsectioncover}
Let $\{f_i:X_i\rightarrow X\}_{i\in I}$ be a local section cover of topological spaces, i.e.\ a collection of continuous maps such that $\sqcup_{i\in I}X_i\to X$ admits local sections. Then the map $\coprod_{i\in I}\underline{X_i}\rightarrow \underline{X}$ is an epimorphism in $\CC$.
\end{lem}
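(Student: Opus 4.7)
The plan is to verify that $\coprod_{i}\underline{X_i}\to\underline{X}$ is an epimorphism in $\CC$ by testing surjectivity on sections over every extremally disconnected profinite set $S$. This suffices because extremally disconnected $\kappa$-small profinite sets form a defining site for $\CC$ with covers given by finite jointly surjective families, and every profinite set admits a surjection from an extremally disconnected one; hence a morphism of condensed sets is an epimorphism precisely when it is surjective on $S$-sections for all such $S$. Moreover, compactness of $S$ yields on the one hand $\underline{X}(S)=\Cont(S,X)$, and on the other hand $(\coprod_i\underline{X_i})(S)=\Cont(S,\bigsqcup_i X_i)$: any continuous map from the compact set $S$ to the coproduct factors through finitely many components, and the preimages of the individual $X_i$ form a finite clopen partition of $S$, matching the description of the condensed coproduct at $S$.

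Thus the problem reduces to the following topological lifting task: given a continuous $g:S\to X$ with $S$ extremally disconnected profinite, produce a continuous lift $\tilde g:S\to\bigsqcup_i X_i$ along the structural map $\bigsqcup_i X_i\to X$. By the local-section hypothesis, for each $x\in X$ there exist an open neighbourhood $U_x\ni x$ and a continuous section $s_x:U_x\to\bigsqcup_i X_i$. The family $\{g^{-1}(U_x)\}_{x\in X}$ is an open cover of $S$; compactness extracts a finite subcover $V_1,\dots,V_n$ with sections $s_{V_1},\dots,s_{V_n}$. Since $S$ is totally disconnected, this refines to a finite clopen cover, and replacing $W_k$ by $W_k\setminus(W_1\cup\cdots\cup W_{k-1})$ further refines to a finite clopen \emph{partition} $S=W_1\sqcup\cdots\sqcup W_m$ with $W_k\subseteq V_{j(k)}$. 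The maps $s_{V_{j(k)}}\circ g|_{W_k}$ are then continuous lifts on each piece, and gluing over the finitely many disjoint clopen $W_k$ yields the desired continuous $\tilde g$ with $(\bigsqcup_i X_i\to X)\circ\tilde g=g$.

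The proof is entirely elementary; the only step demanding some care is the passage from a finite open cover of $S$ to a finite clopen partition, which makes essential use of total disconnectedness together with compactness of extremally disconnected profinite sets. I do not expect any serious obstacle beyond keeping the site-theoretic bookkeeping straight when identifying the condensed coproduct evaluated at $S$ with $\Cont(S,\bigsqcup_i X_i)$.
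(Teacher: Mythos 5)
Your argument is correct but takes a somewhat different route from the paper's. The paper tests the map on an arbitrary compact Hausdorff $S$ and constructs a surjection $S'\twoheadrightarrow S$ lifting $f$, where $S'$ is the disjoint union of a finite \emph{closed} cover of $S$ refining the pulled-back open cover; this uses only regularity of $S$. You instead restrict attention to extremally disconnected $S$ (exploiting that, on the site of extremally disconnected profinite sets, all covers split so epimorphism reduces to surjectivity on sections), and then build an actual lift $\tilde g:S\to\bigsqcup X_i$ on $S$ itself by refining the open cover to a finite \emph{clopen} partition, which uses total disconnectedness plus compactness. Your route is a little cleaner — no auxiliary cover of $S$ is needed — while the paper's argument has the mild advantage of working verbatim for arbitrary compact Hausdorff test objects (including non-totally-disconnected ones), at the cost of producing only a lift over a cover. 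One small imprecision worth noting: you attribute the clopen-refinement step to extremal disconnectedness, but it holds for any profinite (compact Hausdorff totally disconnected) $S$; extremal disconnectedness is used only in the reduction of epimorphism to section-wise surjectivity. Also, $s_{V_j}$ in your notation should really be the section $s_{x_j}$ over the open $U_{x_j}\subseteq X$, not over $V_j\subseteq S$, but the intent is clear and the composites $s_{x_{j(k)}}\circ g|_{W_k}$ are well-defined as you use them.
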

\begin{proof}
Since the condensification functor $\underline{(-)}:\Top\to \CC$ respects coproducts (see for example \cite[Remark 2.32]{Artusa}), we have \[
\coprod_{i\in I}\underline{X_i}=\underline{\sqcup_{i\in I} X_i}.
\]
Let $S$ be a compact Hausdorff topological space and let $f:S\rightarrow X$ be a continuous map. We need to show that there exists $S'$ compact Hausdorff together with a continuous surjection $S'\twoheadrightarrow S$ and a continuous map $g:S'\rightarrow \sqcup_{i\in I} X_i$ such that the following diagram commutes\[
\begin{tikzcd}
\sqcup_{i\in I} X_i\ar[r,"p"]& X\\
S'\ar[u,"g"]\ar[r,two heads] & S\ar[u,"f"].
\end{tikzcd}
\] By hypothesis, there exists an open covering $\{U_j\}_{j\in J}$ of $X$ such that $p_{|p^{-1}(U_j)}:h^{-1}(U_j)\to U_j$ admits a section $s_j$. For all $j\in J$, we set $V_j\coloneqq f^{-1}(U_j)$. By compactness, $S$ is covered by a finite number of such $f^{-1}(U_j)$'s. By regularity of $S$, we can refine $V_j$ to a finite covering $\{S_1,\dots,S_n\}$ of closed subsets of $S$ such that for all $k=1,\dots n$, we have $S_k\subset f^{-1}(U_{j_k})$ for some $j\in J$.   The result now follows by setting \[
S'\coloneqq S_1\sqcup \dots \sqcup S_n, \qquad g_k:S_k\rightarrow \sqcup_{i\in I} X_i, \qquad g_k\coloneqq s_{j_k}\circ f, \, \forall k=1,\dots,n.
\]
\end{proof}
\begin{proof}[Proof of \cref{condensedbiext}]
Firstly, we set $X\coloneqq A\times_F B$ and we show that $W(\overline{L})_{\mathrm{cond}}$ is a $\overline{L}^{\times}_{X(\overline{L})_{\mathrm{cond}}}$-torsor over $X(\overline{L})_{\mathrm{cond}}$. To do this, we exhibit an epimorphic family  $\{X_i\rightarrow X(\overline{L})_{\mathrm{cond}}\}$ in $B_{\hat{W}_F}$ which realises \begin{equation}\label{eqn:torsorcondition:poinc}
W(\overline{L})_{\mathrm{cond}}\times_{X(\overline{L})_{\mathrm{cond}}} X_i\cong \overline{L}^{\times} \times X_i.
\end{equation} Let $\{U_i\rightarrow X\}$ be an étale cover of $X$ which trivialises $W$ in $\TT_{\mathrm{\acute{E}t}}$. Since the condensed Weil-\'etale realisation functor respects fiber products, the condition \eqref{eqn:torsorcondition:poinc} is satisfied with $X_i\coloneqq U_i(\overline{L})_{\mathrm{cond}}$. It remains to show that $\{U_i(\overline{L})_{\mathrm{cond}}\rightarrow X(\overline{L})_{\mathrm{cond}}\}$ is an epimorphic family. By \cite[XI, Proposition 5.1]{SGA1}, we can suppose $\{U_i\to X\}$ to be a Zariski cover of $X$. Consequently, for all $K$ finite extension of $L$, we get an open cover $\{U_i(K)^{\mathrm{top}}\rightarrow X(K)^{\mathrm{top}}\}$. By \cref{localsectioncover}, the morphism \[
\coprod_{i} \underline{U_i(K)^{\mathrm{top}}}\rightarrow \underline{X(K)^{\mathrm{top}}}
\] is an epimorphism after localisation $j^*_{EW_F/\Gal(\overline{L}/K)}:B_{W_F/\Gal(\overline{L}/K)}\rightarrow \CC$. By conservativity of $j^*_{EW_F/\Gal(\overline{L}/K)}$, we conclude that $\{\underline{U_i(K)^{\mathrm{top}}}\rightarrow \underline{X(K)^{\mathrm{top}}}\}$ is a cover in $B_{W_F/\Gal(\overline{L}/K)}$. Since the colimit of epimorphisms is an epimorphism, the family $\{U_i(\overline{L})_{\mathrm{cond}}\rightarrow X(\overline{L})_{\mathrm{cond}}\}$ is epimorphic. This shows that $W(\overline{L})_{\mathrm{cond}}$ is a $\overline{L}^{\times}_{A(\overline{L})_{\mathrm{cond}}\times B(\overline{L})_{\mathrm{cond}}}$-torsor. In particular, the morphism $\pi:W(\overline{L})_{\mathrm{cond}}\rightarrow A(\overline{L})_{\mathrm{cond}}\times B(\overline{L})_{\mathrm{cond}}$ is an epimorphism in $B_{\hat{W}_F}$. The other conditions that $W(\overline{L})_{\mathrm{cond}}$ needs to satisfy in order to be a biextension of $(A(\overline{L})_{\mathrm{cond}},B(\overline{L})_{\mathrm{cond}})$ are expressed as fiber products. Since the condensed Weil-étale realisation functor respects fiber products, we conclude.

\vspace{0.5em}
In order to prove the last statement, we let $W,W'$ be biextensions of $(A,B)$ by $\mathbbm{G}_m$. In order for a morphisms $W\to W'$ to be a morphism of biextensions, it needs to satisfy conditions that only involve fiber products. Since the Weil-étale realisation functor respects fiber products, the result follows.
\end{proof}
\begin{rmk}
The fact that $G=\mathbbm{G}_m$ is crucial since there exists a Zariski cover of $A\times B$ which trivialises $W$ as a $\mathbbm{G}_{m,A\times B}$-torsor (see \cite[XI, Proposition 5.1]{SGA1}). For more general affine $G$, the same proof holds for those biextension $W$ of $(A,B)$ by $G$ that can be trivialised by a Zariski cover of $A\times_F B$. 
\end{rmk}
We introduce a morphism of topoi $\alpha:B_{W_F}(\Set)\rightarrow B_{\hat{W}_F}$  as follows. For every finite extension $K$ of $L$, we set $U_K\coloneqq \Gal(\overline{L}/K)$. We have a morphism of topoi $\beta_K:B_{W_F/U_K}\rightarrow B_{W_F/U_K}(\Set)$ whose right adjoint \[
\beta_{K*}:B_{W_F/U_K}\rightarrow B_{W_F/U_K}(\Set), \quad M\mapsto M(*)
\] sends a condensed $W_F/U_K$-module to its underlying $W_F/U_K$-set. Since the functor $-(*):\CC\to \Set$ commutes with all limits and colimits, then so does the functor $\beta_{K*}$. Consequently, there is a section of $\beta_K$ \[
\alpha_K:B_{W_F/U_K}(\Set)\rightarrow B_{W_F/U_K}.
\] with $\alpha_K^*=\beta_{K*}$.  Taking the inverse limit over finite extensions of $L$, we obtain morphisms \[
\beta:B_{\hat{W}_F}\rightarrow B_{W_F}(\Set), \qquad \alpha:B_{W_F}(\Set)\rightarrow B_{\hat{W}_F},
\] such that $\alpha$ is a section of $\beta$. Moreover, the composition \[
\alpha^*\circ -(\overline{L})_{\mathrm{cond}}:\Sch_F^{\mathrm{lft}}\rightarrow B_{W_F}(\Set)
\] coincides with the Weil-étale realisation $-(\overline{L}):\Sch_F^{\mathrm{lft}}\to B_{W_F}(\Set)$.

\begin{prop}\label{compatibility3biextensions}
Let $A,B$ be commutative group schemes locally of finite type over $F$, and let $W$ be a biextension of $(A,B)$ by $\mathbbm{G}_m$. Then $W$, its Weil-étale realisation $W(\overline{L})$ and its condensed Weil-étale realisation $W(\overline{L})_{\mathrm{cond}}$ induce compatible cup-product pairings. In other words, the following diagram has commutative squares \[
\begin{tikzcd}
R\Gamma(B_{G_F}(\Set),A(\overline{F}))\otimes^L R\Gamma(B_{G_F}(\Set),B(\overline{F}))\ar[d]\ar[r,"CP_{W}"]&R\Gamma(B_{G_F}(\Set),\mathbbm{G}_m(\overline{F}))[1]\ar[d]\\
R\Gamma(B_{W_F}(\Set),A(\overline{L}))\otimes^L R\Gamma(B_{W_F}(\Set),B(\overline{L}))\ar[r,"CP_{W(\overline{L})}"] & R\Gamma(B_{W_F}(\Set),\mathbbm{G}_m(\overline{L}))[1]\\
R\Gamma(B_{\hat{W}_F},A(\overline{L})_{\mathrm{cond}})\otimes^L R\Gamma(B_{\hat{W}_F},B(\overline{L})_{\mathrm{cond}})\ar[r,"CP_{W(\overline{L})_{\mathrm{cond}}}"]\ar[u,"\sim"] & R\Gamma(B_{\hat{W}_F},\overline{L}^{\times})[1]\ar[u,"\sim"]
\end{tikzcd}
\]
\end{prop}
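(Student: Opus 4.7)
The argument should boil down to two applications of the functoriality principle recorded in Remark \ref{rmk:functorialitycompatibilitybiextensions2}(b): a biextension and its pullback along a morphism of topoi induce compatible cup-product pairings.

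First I would address the top square, which is essentially the compatibility already mentioned at the end of Section \ref{section:weiletalebiext}. The plan is to apply the remark to the morphism of topoi $f_{W,\acute{E}t}: B_{W_F}(\Set) \to \TT_{\mathrm{\acute{E}t}}$. Because $f_{W,\acute{E}t}^{*} M = M(\overline{L})$ for every scheme $M$ locally of finite type over $F$, the pullback $f_{W,\acute{E}t}^{*} W$ is the Weil-\'etale biextension $W(\overline{L})$ of $(A(\overline{L}),B(\overline{L}))$ by $\mathbbm{G}_m(\overline{L})$; after identifying the \'etale cohomology of a commutative group scheme with the Galois cohomology of its $\overline{F}$-points, this yields the upper square.

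For the bottom square, the plan is to apply the same remark to the morphism of topoi $\alpha: B_{W_F}(\Set) \to B_{\hat{W}_F}$ introduced in the preamble. The identity $\alpha^{*} \circ -(\overline{L})_{\mathrm{cond}} = -(\overline{L})$ shows that $\alpha^{*}$ transports the condensed biextension $W(\overline{L})_{\mathrm{cond}}$, provided by \cref{condensedbiext}, onto the biextension $W(\overline{L})$ in $B_{W_F}(\Set)$. Remark \ref{rmk:functorialitycompatibilitybiextensions2}(b) then gives the compatibility of $CP_{W(\overline{L})_{\mathrm{cond}}}$ with $CP_{W(\overline{L})}$ with respect to the canonical maps $R\Gamma(B_{\hat{W}_F}, -) \to R\Gamma(B_{W_F}(\Set), \alpha^{*}-)$. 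The delicate step will be to verify that these canonical maps are precisely the equivalences of \cref{prop:globalsectionscohomag} (or their inverses) labelled $\sim$ in the diagram: the plan is to use that $\alpha$ is a section of $\beta$ with exact pushforward $\beta_{*} = \alpha^{*}$, so that the Leray spectral sequence for $\beta$ collapses and the map induced by $\alpha$ on cohomology coincides with the inverse of the canonical comparison isomorphism of \cref{prop:globalsectionscohomag}.

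Once this identification is in place, both squares of the diagram commute simultaneously, completing the proof of compatibility of the three pairings. The only genuine obstacle is the bookkeeping between the morphisms of topoi $\alpha$ and $\beta$ and the resulting cohomological comparison maps; there are no substantive computations to perform.
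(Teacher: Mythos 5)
Your proposal is correct and follows essentially the same route as the paper: two applications of Remark \ref{rmk:functorialitycompatibilitybiextensions2}(b), first to $f_{W,\acute{E}t}\colon B_{W_F}(\Set)\to\TT_{\mathrm{\acute{E}t}}$ and then to $\alpha\colon B_{W_F}(\Set)\to B_{\hat{W}_F}$, using that $W(\overline{L})$ is the pullback of $W$ along the first and of $W(\overline{L})_{\mathrm{cond}}$ along the second. The ``delicate step'' you single out is not actually an obstacle: the vertical $\sim$ arrows in the statement are by construction the $\alpha$-induced comparison maps, and their invertibility (via exactness of $\alpha^*=\beta_*$, equivalently via \cref{prop:globalsectionscohomag}) is dealt with by the paper in the remark immediately following its proof rather than inside it.
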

\begin{proof}
We have morphisms of topoi \[
f_{W,\acute{e}t}:B_{W_F}(\Set)\rightarrow \TT_{\mathrm{\acute{E}t}}, \qquad \alpha:B_{W_F}(\Set)\rightarrow B_{\hat{W}_F}.
\] The biextension $W(\overline{L})$ coincides with the inverse image of $W$ via $f_{W,\acute{e}t}$ and also with the inverse image of $W(\overline{L})_{\mathrm{cond}}$ via $\alpha$. We conclude by \cref{rmk:functorialitycompatibilitybiextensions2}, b). 
\end{proof}
\begin{rmk}
In the lower commutative square the vertical arrows are isomorphisms. This follows from \cref{prop:globalsectionscohomag}, or equivalently by the exactness of $\beta_*=\alpha^*:B_{\hat{W}_F}\rightarrow B_{W_F}(\Set)$.
\end{rmk}
\begin{ex}(The Poincaré biextension, \cite[Example C.1]{ADT})\label{example:poincareabelianvar}
Let $A$ be an abelian variety over $F$ and $A^*$ be its dual abelian variety. Then there exists an essentially unique biextension $\mathcal{P}_0$ of $(A,A^*)$ by $\mathbbm{G}_m$ such that the correspondent pairing \[
A\otimes^L A^*\rightarrow \mathbbm{G}_m[1]
\] in $\DDD^{\mathrm{b}}(\TT_{\mathrm{fppf}})$ identifies $A^*$ with $\underline{\Ext}_{\mathrm{fppf}}(A,\mathbbm{G}_m)$. We call this extension \emph{Poincaré biextension}. By \cref{compatibility3biextensions} we get a diagram with commutative squares \[ \begin{tikzcd}
R\Gamma(B_{G_F}(\Set),A(\overline{F}))\otimes^L R\Gamma(B_{G_F}(\Set),A^*(\overline{F}))\ar[d,"\sim"]\ar[r,"CP_{\mathcal{P}_0}"]&R\Gamma(B_{G_F}(\Set),\mathbbm{G}_m(\overline{F})[1]\ar[d]\\
R\Gamma(B_{W_F}(\Set),A(\overline{L}))\otimes^L R\Gamma(B_{W_F}(\Set),A^*(\overline{L}))\ar[r,"CP_{\mathcal{P}_0(\overline{L})}"] & R\Gamma(B_{W_F}(\Set),\mathbbm{G}_m(\overline{L}))[1]\\
R\Gamma(B_{\hat{W}_F},A(\overline{L})_{\mathrm{cond}})\otimes^L R\Gamma(B_{\hat{W}_F},A^*(\overline{L})_{\mathrm{cond}})\ar[r,"CP_{\mathcal{P}_0(\overline{L})_{\mathrm{cond}}}"]\ar[u,"\sim"] & R\Gamma(B_{\hat{W}_F},\overline{L}^{\times})[1]\ar[u,"\sim"].
\end{tikzcd} \] 
Here the upper-left vertical morphism is an isomorphism by \cite[Lemma 5.4.3]{Karpuk2}.
\end{ex}

\subsection{Biextensions of 1-motives}\label{appendixsection:biextension1motives}
In \cite{Deligne}, Deligne generalises the concept of biextension in a topos $\TT$ to two-term complexes as follows.
\begin{defn}\label{defn:biextensiondelignegeneralisation}
Let $K_1=[A_1\rightarrow B_1]$ and $K_2=[A_2\rightarrow B_2]$ be two complexes concentrated in degrees $-1$ and $0$ in $C^{\mathrm{b}}(\Ab(\TT))$, and let $H$ be an abelian group object of $\TT$. A biextension $W$ of $(K_1,K_2)$ by $H$ consists in 
\begin{enumerate}[a)]
    \item A biextension $W_1$ of $B_1$ and $B_2$ by $H$;
    \item A trivialisation (i.e.\ a biadditive section) of the biextension $W_1'$ of $(A_1,B_2)$ by $H$ obtained from $W_1$ by pullback along $A_1\times B_2\to B_1\times B_2$.
    \item A trivialisation of the biextension $W''_1$ of $(B_1,A_2)$ by $H$ obtained from $W_1$ by pullback along $B_1\times A_2\to B_1\times B_2$.
\end{enumerate}
We ask that the two trivialisations coincide on $A_1\times A_2$. If we denote by $s_1$ and $s_2$ the trivialisations, this means that the following diagram \[
\begin{tikzcd}[row sep=0.5em, column sep=1.5em]
&A_1\times B_2\ar[r,"s_1"]& W'_1\ar[rd] &\\
A_1\times A_2\ar[ru]\ar[rd] & & & W_1\\
& B_1\times A_2\ar[r,"s_2"]& W''_1\ar[ru]& 
\end{tikzcd}
\] commutes.
\end{defn}
\begin{defn}
Let $W$ be a biextension of $(K_1,K_2)$ by $H$ as in \cref{defn:biextensiondelignegeneralisation}. An automorphism of $W$ is given by an automorphism of $W_1$, say $\varphi:W_1\rightarrow W_1$ , which is compatible with the biadditive sections $s_1$ and $s_2$, i.e.\ such that $\varphi \circ s_i=s_i$ for $i=1,2$.
\end{defn}
The monoid of automorphisms of any $W$ is canonically isomorphic to the monoid of automorphisms of the trivial biextension. We denote it by $\mathrm{Biext}^0(K_1,K_2;H)$ and we observe that it is an abelian group. We have a canonical isomorphism of abelian groups (\cite[10.2.1]{Deligne}) \begin{equation}\label{generalisedbiextension0pairing}
\mathrm{Biext}^0(K_1,K_2;H)\overset{\sim}{\rightarrow}\Hom_{\Ab(\TT)}(\H^0(K_1)\otimes \H^1(K_2),H)
\end{equation} that can be described as follows. Every automorphism of $W$ is given by an automorphism $\varphi$ of $W_1$ compatible with the biadditive sections. As before, $\varphi$ is determined by a biadditive map $u_0:B_1\times B_2\rightarrow H$. This defines a morphism $B_1\otimes B_2\to H$. The compatibility with the biadditive sections $s_1$ and $s_2$ precisely states that the restriction of $u_0$ to $A_1\times B_2$ and $B_1\times A_2$ is the zero morphism. Hence the morphism induced by $u_0$ factors through $\H^0(K_1)\otimes \H^0(K_2)$.

The set of isomorphism classes of biextensions of $(K_1,K_2)$ by $H$ has an abelian group structure, and we denote it by $\mathrm{Biext}^1(K_1,K_2;H)$. As in \eqref{biextensionpairing}, we have an isomorphism \begin{equation}\label{generalisedbiextensionpairing}
\mathrm{Biext}^1(K_1,K_2;H)\overset{\sim}{\longrightarrow} \Hom_{\DDD(\TT)}(K_1\otimes^L K_2, H[1]).
\end{equation}
\begin{rmk}\label{rmk:functorialities:biextensions:complexes}
The same functoriality of \cref{rmk:functorialitycompatibilitybiextensions2} holds for \eqref{generalisedbiextension0pairing} and \eqref{generalisedbiextensionpairing}.
\end{rmk}
\begin{rmk}\label{rmk:biextensionspecialcase}
We study three special cases:
\begin{enumerate}[a)]
\item if $K_1=[0\to B_1]$ and $K_2=[0\to B_2]$, then \eqref{generalisedbiextension0pairing} and  \eqref{generalisedbiextensionpairing} canonically identify with \eqref{biextension0pairing} and \eqref{biextensionpairing}.
\item If $K_1=[A_1\to 0]$ and $K_2=[0\to B_2]$, we have a canonical isomorphism \[
\mathrm{Biext}^0(A_1,B_2;H)\overset{\sim}{\longrightarrow} \mathrm{Biext}^1(K_1,K_2;H)
\] which sends the class of a biadditive map $\sigma:A_1\times B_2\to H$ to the class of the couple \[(W_1\to 0\times B_2,s:A_1\times B_2\to W_1),\] where $W_1\coloneqq 0\times B_2\times H$ is the trivial biextension and the biadditive section $s:A_1\times B_2\to W_1$ is given by $(0\times \mathrm{id}_{B_2},\sigma)$. Under \eqref{generalisedbiextension0pairing} and \eqref{generalisedbiextensionpairing}, this isomorphism identifies with \[
-[1]:\Hom(A_1\otimes^L B_2,H)\overset{\sim}{\rightarrow}{\Hom}(K_1 \otimes^L K_2,H[1]).
\]
\item Let $K_1=[A_1\overset{u_1}{\to} B_1]$ and $K_2=[0\to B_2]$. By the previous items we have an exact sequence of abelian groups \[
\mathrm{Biext}^0(A_1,K_2;H)\overset{\alpha}{\longrightarrow} \mathrm{Biext}^1(K_1,K_2;H)\overset{\beta}{\longrightarrow} \mathrm{Biext}^1(B_1,K_2;H).
\] Here $\alpha$ and $\beta$ can be described as follows \begin{itemize}
\item $\alpha$ sends the class of $\sigma:A_1\times B_2\to H$ to the class of \[(W_1\to B_1\times B_2,s:A_1\times B_2\to W_1),\] where $W_1\coloneqq B_1\times B_2\times H$ is the trivial biextension and $s:A_1\times B_2\to W_1$ is the biadditive section $(u_1\times \mathrm{id}_{B_2},\sigma)$. 
\item $\beta$ sends the class of $(W_1\to B_1\times B_2,\sigma:A_1\times B_2\to W_1)$ to the class of $W_1$.
\end{itemize}
Under \eqref{generalisedbiextension0pairing} and \eqref{generalisedbiextensionpairing}, this exact sequence canonically identifies with the usual exact sequence \[
\Hom(A_1\otimes^L B_2,H)\rightarrow \Hom(K_1\otimes^L B_2,H[1])\rightarrow \Hom(B_1\otimes^L B_2,H[1]).
\]
\end{enumerate}
\end{rmk}
\vspace{0.5em} 

This generalisation to two-term complexes applies to $1$-motives. Let $\MM_1=[Y_1\rightarrow E_1]$ and $\MM_2=[Y_2\rightarrow E_2]$ be $1$-motives over a $p$-adic field $F$, as defined in \cref{section:realisation1motives}. If $H$ is smooth and affine, it is equivalent to study biextensions of $(\MM_1,\MM_2)$ by $H$ in $\TT_{\mathrm{fppf}}$ and $\TT_{\mathrm{\acute{E}t}}$ by \cref{fppfetalebiextensions}. Moreover, any such biextension is representable. We use \cref{condensedbiext} to prove the following 
\begin{cor}\label{condensedbiextensions1motives}
Let $\MM_1=[Y_1\to E_1]$ and $\MM_2=[Y_2\to E_2]$ be two $1$-motives over the $p$-adic field $F$, and let $W$ be a biextension of $(\MM_1,\MM_2)$ by $\mathbbm{G}_m$. Then $W(\overline{L})_{\mathrm{cond}}$ is a biextension of $(\MM_1(\overline{L})_{\mathrm{cond}},\MM_2(\overline{L})_{\mathrm{cond}})$ by $\overline{L}^{\times}$.
\end{cor}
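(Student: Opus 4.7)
The plan is to combine \cref{condensedbiext} (which handles the case of plain commutative group schemes) with the fact that the condensed Weil-\'etale realisation functor $-(\overline{L})_{\mathrm{cond}}\colon \Sch^{\mathrm{lft}}_F\to B_{\hat{W}_F}$ commutes with all finite fibre products. By \cref{defn:biextensiondelignegeneralisation}, a biextension $W$ of $(\MM_1,\MM_2)$ by $\mathbbm{G}_m$ is equivalent to the data of a triple $(W_1,s_1,s_2)$, where $W_1$ is a biextension of $(E_1,E_2)$ by $\mathbbm{G}_m$ in $\TT_{\mathrm{fppf}}$, and $s_i$ are biadditive sections of the two pullbacks $W_1',W_1''$, satisfying a compatibility condition over $Y_1\times Y_2$. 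So the proof reduces to transporting each piece of this data through the functor $-(\overline{L})_{\mathrm{cond}}$ and checking the conditions of \cref{defn:biextensiondelignegeneralisation} in $B_{\hat{W}_F}$.

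First, I would apply \cref{condensedbiext} to $W_1$: this directly yields that $W_1(\overline{L})_{\mathrm{cond}}$ is a biextension of $(E_1(\overline{L})_{\mathrm{cond}},E_2(\overline{L})_{\mathrm{cond}})$ by $\overline{L}^\times$. Next, since $W_1'=W_1\times_{E_1\times E_2}(Y_1\times E_2)$ and $W_1''=W_1\times_{E_1\times E_2}(E_1\times Y_2)$ are defined by fibre products, and $-(\overline{L})_{\mathrm{cond}}$ preserves fibre products, one has canonical identifications
\[
W_1'(\overline{L})_{\mathrm{cond}}= W_1(\overline{L})_{\mathrm{cond}}\times_{E_1(\overline{L})_{\mathrm{cond}}\times E_2(\overline{L})_{\mathrm{cond}}}(Y_1(\overline{L})_{\mathrm{cond}}\times E_2(\overline{L})_{\mathrm{cond}})
\]
and analogously for $W_1''(\overline{L})_{\mathrm{cond}}$, so these are the correct pullback biextensions.

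Then I would take the images $s_1(\overline{L})_{\mathrm{cond}}$ and $s_2(\overline{L})_{\mathrm{cond}}$ of the biadditive sections. These are morphisms $Y_i(\overline{L})_{\mathrm{cond}}\times E_j(\overline{L})_{\mathrm{cond}}\to W_1'(\overline{L})_{\mathrm{cond}}$ (resp.\ $W_1''(\overline{L})_{\mathrm{cond}}$) in $B_{\hat{W}_F}$. The fact that they are sections of the structure morphism, and that they are \emph{biadditive}, are conditions expressible entirely in terms of equalities between morphisms built from finite products, addition maps, and the zero section. Since $-(\overline{L})_{\mathrm{cond}}$ is a functor preserving finite products (hence also preserving the group object structures on $Y_i(\overline{L})_{\mathrm{cond}},E_j(\overline{L})_{\mathrm{cond}},W_1(\overline{L})_{\mathrm{cond}}$ obtained as additions relative to $E_1$ and $E_2$), all such commutative diagrams are transported verbatim to $B_{\hat{W}_F}$.

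Finally, the compatibility condition between $s_1$ and $s_2$ on $Y_1\times Y_2$ is again the equality of two morphisms $Y_1\times Y_2\to W_1$, and it is preserved under the functor $-(\overline{L})_{\mathrm{cond}}$ for the same reason. This gives a biextension $W(\overline{L})_{\mathrm{cond}}$ of $(\MM_1(\overline{L})_{\mathrm{cond}},\MM_2(\overline{L})_{\mathrm{cond}})$ by $\overline{L}^\times$ in the sense of \cref{defn:biextensiondelignegeneralisation}. The only nontrivial step is really the input from \cref{condensedbiext}; everything else is purely formal, since once one has established the torsor property for $W_1(\overline{L})_{\mathrm{cond}}$ (the only non-categorical condition), the rest of the biextension data of a two-term complex is expressed via finite limits that the realisation functor respects. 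Thus I do not anticipate any genuine obstacle beyond citing \cref{condensedbiext} and invoking preservation of fibre products.
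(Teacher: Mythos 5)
Your proof is correct and takes essentially the same approach as the paper: apply \cref{condensedbiext} to the underlying biextension $W_1$ of $(E_1,E_2)$, and then observe that the biadditive sections $s_1,s_2$ and their compatibility condition are all expressed through finite fibre products, hence are transported verbatim by the condensed Weil-\'etale realisation functor.
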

\begin{proof}
Let $W_1,s_1$ and $s_2$ be respectively the biadditive section of $(E_1,E_2)$ by $\mathbbm{G}_m$, the biadditive section $ Y_1\times E_2\rightarrow W_1$ and the biadditive section $ E_1\times Y_2\rightarrow W_1$ which define $W$.

By \cref{condensedbiext}, the morphism \[
W_1(\overline{L})_{\mathrm{cond}}\rightarrow E_1(\overline{L})_{\mathrm{cond}}\times E_2(\overline{L})_{\mathrm{cond}}
\] realises $W_1(\overline{L})_{\mathrm{cond}}$ as a biextension of $(E_1(\overline{L})_{\mathrm{cond}},E_2(\overline{L})_{\mathrm{cond}})$ by $\overline{L}^{\times}$. Moreover, since the Weil-étale realisation functor respects products, the morphisms $s_1(\overline{L})_{\mathrm{cond}}$ and $s_2(\overline{L})_{\mathrm{cond}}$ are biadditive sections, and they coincide on $Y_1(\overline{L})_{\mathrm{cond}}\times Y_2(\overline{L})_{\mathrm{cond}}$. Consequently, the data of $W(\overline{L})_{\mathrm{cond}}\coloneqq (W_1(\overline{L})_{\mathrm{cond}},s_1(\overline{L})_{\mathrm{cond}}),s_2(\overline{L})_{\mathrm{cond}})$ define a biextension of $(\MM_1(\overline{L})_{\mathrm{cond}},\MM_2(\overline{L})_{\mathrm{cond}})$ by $\overline{L}^{\times}$.
\end{proof}
\begin{rmk}
Let $W$ be a biextension of $(\MM_1,\MM_2)$ by $\mathbbm{G}_m$. 
By \cite[VIII,3.5]{SGA7I}, its restriction $W_1$ to  $E_1\times E_2$ is the pullback of a biextension $W_0$ of $(A_1,A_2)$ by $\mathbbm{G}_m$ along the morphism $E_1\times E_2\rightarrow A_1\times A_2$. Thus $W_1(\overline{L})_{\mathrm{cond}}$ is the pullback of $W_0(\overline{L})_{\mathrm{cond}}$, biextension of $(A_1(\overline{L})_{\mathrm{cond}},A_2(\overline{L})_{\mathrm{cond}})$ by $\overline{L}^{\times}$.
\end{rmk}
As a consequence of \cref{condensedbiextensions1motives}, if $W$ is a biextension of $(\MM_1,\MM_2)$ by $\mathbbm{G}_m$ inducing a pairing \[
\psi_{W}:\MM_1\otimes^L \MM_2\rightarrow\mathbbm{G}_m[1]
\] in $\DDD^{\mathrm{b}}(\TT_{\mathrm{fppf}})$ (or $\DDD^{\mathrm{b}}(\TT_{\mathrm{\acute{E}t}})$), we get a biextension $W(\overline{L})_{\mathrm{cond}}$ inducing a pairing \[
\psi_{W(\overline{L})_{\mathrm{cond}}}:\MM_1(\overline{L})_{\mathrm{cond}}\otimes^L \MM_2(\overline{L})_{\mathrm{cond}}\rightarrow \overline{L}^{\times}[1]
\] in $\DDD^{\mathrm{b}}(B_{\hat{W}_F})$. As in Section \ref{section:weiletalebiext}, we also have an induced biextension $W(\overline{L})$ of $(\MM_1(\overline{L}),\MM_2(\overline{L}))$ by $\mathbbm{G}_m(\overline{L})$ and a pairing \[
\psi_{W(\overline{L})}:\MM_1(\overline{L})\otimes^L \MM_2(\overline{L})\rightarrow \mathbbm{G}_m(\overline{L})[1].
\] 
By adapting the proof of \cref{compatibility3biextensions}, one shows that the induced cup-product pairings are compatible, i.e.\ we have the following commutative diagram \[
\begin{tikzcd}
R\Gamma(B_{G_F}(\Set),\MM_1(\overline{F}))\otimes^L R\Gamma(B_{G_F}(\Set),\MM_2(\overline{F}))\ar[d]\ar[r,"CP_{W}"]&R\Gamma(B_{G_F}(\Set),\mathbbm{G}_m(\overline{F}))[1]\ar[d]\\
R\Gamma(B_{W_F}(\Set),\MM_1(\overline{L}))\otimes^L R\Gamma(B_{W_F}(\Set),\MM_2(\overline{L}))\ar[r,"CP_{W(\overline{L})}"] & R\Gamma(B_{W_F}(\Set),\mathbbm{G}_m(\overline{L}))[1]\\
R\Gamma(B_{\hat{W}_F},\MM_1(\overline{L})_{\mathrm{cond}})\otimes^L R\Gamma(B_{\hat{W}_F},\MM_2(\overline{L})_{\mathrm{cond}})\ar[r,"CP_{W(\overline{L})_{\mathrm{cond}}}"]\ar[u,"\sim"] & R\Gamma(B_{\hat{W}_F},\overline{L}^{\times})[1]\ar[u,"\sim"]
\end{tikzcd}
\]

\subsubsection{The Poincaré biextension}\label{example:poincarebiext}
Let $\MM=[Y\overset{u}{\to} E]$ be a $1$-motive over $F$. In \cite[10.2.11]{Deligne} a dual $1$-motive $\MM^*=[Y^*\overset{u^*}{\to} E^*]$ and a biextension $\mathcal{P}$ of $(\MM,\MM^*)$ by $\mathbbm{G}_m$ are constructed. This biextension is such that its restriction to $E\times E^*$ is the pullback of the Poincaré biextension $\mathcal{P}_0$ of $(A,A^*)$ by $\mathbbm{G}_m$ discussed in \cref{example:poincareabelianvar}. We call $\mathcal{P}$ Poincaré biextension as well. We recall these constructions and we give a proof of the well-known fact that $\mathcal{P}$ is compatibile with the weight filtration. 

\vspace{0.5em} 
\textbf{Construction} (Case $T=0$). We suppose firstly $T=0$. Then we have $\MM=[Y\overset{u}{\rightarrow} A]$. We define the dual $1$-motive $\MM^*\coloneqq[0\rightarrow E^*]$, where $E^*$ is the algebraic group representing the fppf-sheaf $X\mapsto \Ext^1_X(\MM_{|X},\mathbbm{G}_{m|X})$. In particular, an element of $E^*(X)$ is a couple $(\mathcal{E},\eta)$, where $\mathcal{E}$ is an extension of $A_X$ by $\mathbbm{G}_{m,X}$ and $\eta$ is a trivialisation of $\mathcal{E}$ along $Y_X\overset{u_X}{\rightarrow}A_X$. If we set $T^*\coloneqq \underline{\Hom}_{\mathrm{fppf}}(Y,\mathbbm{G}_m)$ and $A^*\coloneqq \underline{\Ext}_{\mathrm{fppf}}(A,\mathbbm{G}_m)$, we have an exact sequence in $\Ab(\TT_{\mathrm{fppf}})$\[
0\rightarrow T^*\rightarrow E^*\rightarrow A^*\rightarrow 0.
\] The two maps can be described as follows \[\begin{split}
T^*(X)=\Hom_X(Y_X,\mathbbm{G}_{m,X})\rightarrow E^*(X)=\Ext_X(\MM_X,\mathbbm{G}_{m,X}), \qquad &\varphi\mapsto (A_X\times \mathbbm{G}_{m,X},u_X\times \varphi),\\
E^*(X)=\Ext_X(\MM_X,\mathbbm{G}_{m,X})\rightarrow A^*(X)=\Hom_X(A_X,\mathbbm{G}_{m,X}), \qquad & (\mathcal{E},\eta)\mapsto \mathcal{E}.
\end{split}
\] 

Let $\mathcal{P}_0$ be the Poincaré biextension of $(A,A^*)$ by $\mathbbm{G}_m$. For all $X$ and for all $\mathcal{E}\in A^*(X)$ we denote by $\mathcal{P}_{0,\mathcal{E}}$ the pullback $\mathcal{P}_0\times_{A^*} X$. By definition of $\mathcal{P}_0$ we have $\mathcal{P}_{0,\mathcal{E}}=\mathcal{E}$, which is an extension of $A_X$ by $\mathbbm{G}_{m,X}$.

The pullback of $\mathcal{P}_0\to A\times A^*$ by the map $A\times E^*\rightarrow A\times A^*$ defines a biextension of $(A,E^*)$ by $\mathbbm{G}_m$. We denote it by $\mathcal{P}_1$. In order to define a biextension of $(\MM,E^*)$ by $\mathbbm{G}_m$, we have to exhibit a biadditive section $Y\times E^*\to \mathcal{P}_1$. We firstly observe that for all $X\in \Sch^{\mathrm{lft}}_F$ and for all $(\mathcal{E},\eta)\in E^*(X)$, we have \[
\mathcal{P}_{1,(\mathcal{E},\eta)}\coloneqq\mathcal{P}_1\times_{E^*}X=(\mathcal{P}_0\times_{A^*}E^*)\times_{E^*}X=\mathcal{P}_{0,\mathcal{E}}=\mathcal{E},
\] which is an extension of $A_X$ by $\mathbbm{G}_{m,X}$. Moreover, $\eta$ gives a trivialisation of such extension along $Y_X\to A_X$. Indeed, we have the following diagram \[
\begin{tikzcd}
& & & Y_X\arrow[d,"u_X"]\arrow[dl,"\eta"]&\\
0\ar{r}&\mathbbm{G}_{m,X}\ar{r}&\mathcal{E}\ar{r}&A_X\ar{r}&0.
\end{tikzcd}
\] We define a biadditive section $\sigma:Y\times E^*\rightarrow \mathcal{P}_1$ as follows. For all $X\in\Sch^{\mathrm{lft}}_F$, we set \begin{equation}\label{defn:biadditivesection1}
\sigma(X):Y(X)\times E^*(X)\rightarrow \mathcal{P}_1(X), \quad (y,(\mathcal{E},\eta))\mapsto \eta(X)(y)\in \mathcal{E}(X)=\mathcal{P}_{1,(\mathcal{E},\eta)}(X)\subset \mathcal{P}_1(X).
\end{equation} \begin{defn}\label{defn:poincarebiextension:t0} (Case $T=0$) The Poincaré biextension $\mathcal{P}$ of $(\MM,\MM^*)$ by $\mathbbm{G}_m$ consists in the following data:\begin{itemize}
    \item the biextension $\mathcal{P}_1$ of $(A,E^*)$ by $\mathbbm{G}_m$, pullback of $\mathcal{P}_0$, the Poincaré biextension of $(A,A^*)$ by $\mathbbm{G}_m$.
    \item the biadditive section $\sigma:Y\times E^* \rightarrow \mathcal{P}_1$ defined in \cref{defn:biadditivesection1}.
\end{itemize}  \end{defn}
\begin{rmk}\label{rmk:biadditivesection1}
We determine the restriction of $\sigma:Y\times E^*\rightarrow \mathcal{P}_1$ to $Y\times T^*$. We fix $X\in\Sch^{\mathrm{lft}}_F$.  Following \eqref{defn:biadditivesection1} and composing with the morphism $T^*(X)\to E^*(X)$ we have 
\[
\sigma_{|}(X): Y(X)\times T^*(X)\rightarrow \mathcal{P}(X), \qquad (y,\varphi)\mapsto (u(X)(y),\varphi(X)(y))\in A(X)\times \mathbbm{G}_m(X).
\] Thus $\sigma_{|Y\times T^*}$ is determined by the biadditive map $C_{\sigma}:Y\times T^*\rightarrow \mathbbm{G}_m$ given for $X\in\Sch^{\mathrm{lft}}_F$ by \[
C_{\sigma}(X):Y(X)\times T^*(X)\rightarrow \mathbbm{G}_m(X), \quad (y,\varphi)\mapsto \varphi(X)(y).
\] $C_{\sigma}$ is exactly $\gamma_{T^*}$, the map inducing Cartier duality between $Y$ and $T^*$ (see \cref{example:charactertorus}).
\end{rmk}
\vspace{0.5em}
\textbf{Compatibility with weight filtration} (Case $T=0$). The Poincaré biextension $\mathcal{P}$ induces a morphism $\psi_{\mathcal{P}}:\MM\otimes^L \MM^*\rightarrow \mathbbm{G}_m[1]$. The weight filtrations on $\MM$ and $\MM^*$ reduce to the fiber sequences \[
A\rightarrow \MM \rightarrow Y[1]
\] and \[
T^*\rightarrow \MM^* \rightarrow A^*
\] respectively. We have the following maps:\begin{itemize} \item $\psi_{\mathcal{P}_0}:A\otimes^L A^*\rightarrow \mathbbm{G}_m[1]$, the map induced by the Poincaré biextension $\mathcal{P}_0$ of $(A,A^*)$ by $\mathbbm{G}_m$.
\item $\gamma_{T^*}:Y\otimes^L T^*\rightarrow \mathbbm{G}_m$, induced by the biadditive map of Cartier duality $Y\times T^*\rightarrow \mathbbm{G}_m$, which coincides with $C_{\sigma}$ by \cref{rmk:biadditivesection1}. \end{itemize}
Compatibility of $\psi_{\mathcal{P}}:\MM\otimes^L\MM^*\rightarrow \mathbbm{G}_m[1]$ with weight filtration is expressed by the following
\begin{lem}\label{lem:poincarecompatibilitymotivest0}
Let $\MM=[Y\to A]$ be a $1$-motive (with $T=0$) and let $\mathcal{P}$ be the Poincaré biextension of $(\MM,\MM^*)$ by $\mathbbm{G}_m$, inducing $\psi_{\mathcal{P}}:\MM\otimes^L\MM^*\to\mathbbm{G}_m[1]$. The following facts hold:
\begin{enumerate}[a)]
\item The composition $A\otimes^L \MM^*\rightarrow \MM\otimes^L \MM^*\overset{\psi_{\mathcal{P}}}{\longrightarrow}\mathbbm{G}_m[1]$ factors through $A\otimes^L A^*$. The induced map $A\otimes^L A^*\rightarrow \mathbbm{G}_m[1]$ is $\psi_{\mathcal{P}_0}$.
\item The composition $\MM\otimes^L T^*\rightarrow \MM\otimes \MM^*\overset{\psi_{\mathcal{P}}}{\longrightarrow}\mathbbm{G}_m[1]$ factors through $Y[1]\otimes^L T^*$. The induced map $Y\otimes^L T^*\rightarrow \mathbbm{G}_m$ is $\gamma_{T^*}$. \end{enumerate}
\end{lem}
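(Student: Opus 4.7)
The plan is to translate both statements into claims about biextensions via the correspondence \eqref{generalisedbiextensionpairing} and its functoriality (\cref{rmk:functorialities:biextensions:complexes}). The key observation is that the datum of $\mathcal{P}$ is encoded in a triple: the biextension $\mathcal{P}_1$ of $(A,E^*)$ by $\mathbbm{G}_m$, obtained as the pullback of $\mathcal{P}_0$ along $A\times E^*\to A\times A^*$; the biadditive section $\sigma:Y\times E^*\to\mathcal{P}_1$; and the trivial section on $A\times 0$. Pulling back along morphisms of two-term complexes is a purely formal operation on this triple, and factorisations through subquotients of $\MM$ or $\MM^*$ correspond to the vanishing of pullback biextensions.

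For part a), I first observe that pulling back $\mathcal{P}$ along $A=[0\to A]\to\MM=[Y\to A]$ (the inclusion in the fibre sequence $A\to\MM\to Y[1]$) discards the section $\sigma$, leaving just $\mathcal{P}_1$ viewed as a biextension of $(A,E^*)$ by $\mathbbm{G}_m$. Thus the composite $A\otimes^L\MM^*\to\mathbbm{G}_m[1]$ corresponds to $\mathcal{P}_1$. To obtain the factorisation through $A\otimes^L A^*$, I use the fibre sequence $T^*\to\MM^*\to A^*$ and check that the pullback of $\mathcal{P}_1$ along $A\times T^*\to A\times E^*$ is trivial; this is immediate from the fact that $\mathcal{P}_1$ is itself a pullback from $A\times A^*$ and that $T^*\to E^*\to A^*$ is the zero map. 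To identify the induced map $A\otimes^L A^*\to\mathbbm{G}_m[1]$ with $\psi_{\mathcal{P}_0}$, I invoke the fibre sequence
\[
\Hom(A\otimes^L T^*,\mathbbm{G}_m)\to \Hom(A\otimes^L A^*,\mathbbm{G}_m[1])\to \Hom(A\otimes^L E^*,\mathbbm{G}_m[1])
\]
and note that $\Hom(A\otimes^L T^*,\mathbbm{G}_m)=\Hom(A,\underline{\Hom}_{\mathrm{fppf}}(T^*,\mathbbm{G}_m))=\Hom(A,Y)=0$, since $A$ is a connected abelian variety and $Y$ is étale-locally isomorphic to $\Z^r$. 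Hence the restriction map is injective, and since $\psi_{\mathcal{P}_0}$ and the induced map have the same restriction $\mathcal{P}_1$ by construction, they agree.

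For part b), I apply the same strategy to the pullback of $\mathcal{P}$ along $(\MM,T^*)\to(\MM,\MM^*)$. By the computation just made, the underlying biextension of $(A,T^*)$ by $\mathbbm{G}_m$ is trivial, so by \cref{rmk:biextensionspecialcase}~c) the class in $\mathrm{Biext}^1(\MM,T^*;\mathbbm{G}_m)$ lies in the image of the natural map from $\mathrm{Biext}^0(Y,T^*;\mathbbm{G}_m)=\Hom(Y\otimes T^*,\mathbbm{G}_m)$, and its preimage is precisely $\sigma_{|Y\times T^*}$ composed with the canonical trivialisation of the underlying biextension. By \cref{rmk:biadditivesection1}, this composition is exactly $\gamma_{T^*}$. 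Translating back via \eqref{generalisedbiextensionpairing} and the description of $\alpha$ in \cref{rmk:biextensionspecialcase}~c), the composite $\MM\otimes^L T^*\to\mathbbm{G}_m[1]$ factors through $Y[1]\otimes^L T^*$ with induced map $\gamma_{T^*}:Y\otimes^L T^*\to\mathbbm{G}_m$.

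I do not expect a serious obstacle: the whole argument is bookkeeping with the biextension/pairing dictionary, its functoriality in both the coefficient object and the complex arguments, and the interpretation of factorisations across fibre sequences. The only non-formal inputs are the vanishing $\Hom(A,Y)=0$ and the identification $\sigma_{|Y\times T^*}=\gamma_{T^*}$ of \cref{rmk:biadditivesection1}, both of which are already at hand. The trickiest point to get right in writing is to carefully track which pullback diagrams correspond to which morphisms in the derived category, so I would make these explicit before invoking \eqref{generalisedbiextension0pairing} and \eqref{generalisedbiextensionpairing}.
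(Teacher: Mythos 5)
Your proof is correct, and part b) is essentially the paper's own argument (pullback biextension is trivial because $T^*\to A^*$ vanishes, then \cref{rmk:biextensionspecialcase}~c) and \cref{rmk:biadditivesection1}). For part a), however, you have taken a more roundabout route than necessary. The paper's proof deduces the factorisation and the identification simultaneously in one step: since $\mathcal{P}_1$ is, by construction, the pullback of $\mathcal{P}_0$ along $A\times E^*\to A\times A^*$, functoriality of \eqref{generalisedbiextensionpairing} in the two complex arguments gives directly that $\psi_{\mathcal{P}_1}=\psi_{\mathcal{P}_0}\circ(A\otimes^L E^*\to A\otimes^L A^*)$, which \emph{is} the asserted factorisation with the induced map already identified as $\psi_{\mathcal{P}_0}$. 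You instead separate existence (vanishing of the restriction to $A\otimes^L T^*$) from uniqueness (injectivity of $\Hom(A\otimes^L A^*,\mathbbm{G}_m[1])\to\Hom(A\otimes^L E^*,\mathbbm{G}_m[1])$ via $\Hom(A\otimes^L T^*,\mathbbm{G}_m)=0$) and only then compare to $\psi_{\mathcal{P}_0}$. But your step ``$\psi_{\mathcal{P}_0}$ and the induced map have the same restriction $\mathcal{P}_1$'' is itself an appeal to the very same functoriality that already closes the lemma in one line, so the two vanishing computations are redundant rather than load-bearing.

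One small imprecision in your uniqueness step is also worth flagging: tensor--Hom adjunction gives $\Hom_{\DDD(\TT)}(A\otimes^L T^*,\mathbbm{G}_m)\cong\Hom_{\DDD(\TT)}(A,R\underline{\Hom}(T^*,\mathbbm{G}_m))$, not the Hom into the underived $\underline{\Hom}_{\mathrm{fppf}}(T^*,\mathbbm{G}_m)$. The reduction to $\Hom(A,Y)$ still holds, because $A$ is concentrated in degree $0$ and $R\underline{\Hom}(T^*,\mathbbm{G}_m)$ in non-negative degrees, so only the $E_2^{0,0}$-term of the hyperext spectral sequence can contribute; but this degree argument should be spelled out if one keeps the uniqueness step. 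There is a further reason to prefer the direct route: the paper explicitly retains these proofs to port them verbatim to the condensed setting in \cref{lem:lastlemma}. The pure-functoriality argument transfers with no work, whereas your injectivity argument would require a separate vanishing statement for internal Hom's in $\Ab(B_{\hat{W}_F})$, which would need its own justification.
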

This result is well-known. However, the author could not find a proof in the literature that could be adapted directly to the condensed setting. 
\begin{proof}
By functoriality in the first two terms of the isomorphism \eqref{generalisedbiextensionpairing} (see \cref{rmk:functorialities:biextensions:complexes}), the composition $A\otimes^L \MM^*\rightarrow\MM\otimes^L \MM^* \overset{\psi_{\mathcal{P}}}{\longrightarrow}\mathbbm{G}_m[1]$ coincides with $\psi_{\mathcal{P}_1}:A\otimes^L \MM^*\to \mathbbm{G}_m[1]$. In order to prove a) it is enough to show that $\psi_{\mathcal{P}_1}$ is the image of $\psi_{\mathcal{P}_0}:A\otimes^L A^*\to \mathbbm{G}_m[1]$ via the morphism \[
\Hom(A\otimes^L A^*,\mathbbm{G}_m[1])\rightarrow \Hom(A\otimes^L \MM^*,\mathbbm{G}_m[1]).
\] Since by definition $\mathcal{P}_1$ is the pullback of $\mathcal{P}_0$ along $A\times E^*\to A\times A^*$, the result follows again by functoriality of \eqref{generalisedbiextensionpairing}.

As before, the composition $\MM\otimes^L T^*\rightarrow \MM\otimes^L \MM^*\overset{\psi_{\mathcal{P}}}{\longrightarrow}\mathbbm{G}_m[1]$ identifies with $\psi_{\tilde{\mathcal{P}}}$, where $\tilde{\mathcal{P}}$ is the biextension of $(\MM,T^*)$ by $\mathbbm{G}_m$ given by $(\mathcal{P}_0\times_{A\times A^*} A\times T^*,\sigma_{|Y\times T^*})$. We need to show that $\psi_{\tilde{\mathcal{P}}}$ is the image of $\gamma_{T^*}$ via the morphism \[
\Hom(Y\otimes^L T^*,\mathbbm{G}_m)\rightarrow \Hom(\MM\otimes^L T^*,\mathbbm{G}_m[1]).
\] The map $T^*\to A^*$ is trivial, hence $\mathcal{P}_0\times_{A\times A^*} A\times T^*$ is the trivial biextension. Using the identification of \cref{rmk:biextensionspecialcase}, c), we conclude that $\psi_{\tilde{\mathcal{P}}}$ is the image of $C_{\sigma}:Y\times T^*\to \mathbbm{G}_m$ via \[
\Hom(Y\otimes^L T^*,\mathbbm{G}_m)\rightarrow \Hom(\MM\otimes^L T^*,\mathbbm{G}_m[1]).
\] We conclude since $C_{\sigma}=\gamma_{T^*}$ by \cref{rmk:biadditivesection1}.
\end{proof}

\textbf{Construction} (General case). We have $\MM=[Y\overset{u}{\rightarrow}E]$. We define the dual motive $\MM^*\coloneqq[Y^*\overset{u^*}{\rightarrow}E^*]$, with \[Y^*=\underline{\Hom}_{\mathrm{fppf}}(T,\mathbbm{G}_m), \quad E^*=\underline{\Ext}_{\mathrm{fppf}}(\MM/T,\mathbbm{G}_m).\] The morphism $u^*$ is defined as follows. We consider $X\in\Sch^{\mathrm{lft}}_F$ and $\chi:T_X\to\mathbbm{G}_{m,X}$, an element of $Y^*(X)$. By pushing out $E_X$ along $\chi$ we obtain an extension of $A_X$ by $\mathbbm{G}_{m,X}$, say $\mathcal{E_{\chi}}$ \begin{equation}\label{diagram:chi}
\begin{tikzcd}
0\ar{r}&T_X\arrow[d,"\chi"]\ar{r}&E_X\arrow[d,"\overline{\chi}"]\ar{r}&A_X\arrow[d,equal]\ar{r}&0\\
0\ar{r}& \mathbbm{G}_{m,X}\ar{r}&\mathcal{E}_{\chi}\ar{r}&A_X\ar{r}&0.
\end{tikzcd}
\end{equation} Moreover, the composition $\overline{\chi}\circ u_X:Y_X\rightarrow \mathcal{E}_{\chi}$ defines a trivialisation of the extension $\mathcal{E}_{\chi}$ along the morphism $\pi_X\circ u_X:Y_X\to A_X$. The morphism $u^*(X)$ is thus defined as follows \[
u^*(X):Y^*(X)\rightarrow E^*(X), \quad \chi\mapsto (\mathcal{E}_{\chi},\overline{\chi}\circ u_X).
\] The previous procedure (where we supposed $T=0$) defines a biextension $\mathcal{P}'$ of $(\MM/T,E^*)$ by $\mathbbm{G}_m$, given by a biextension $\mathcal{P}'_1$ of $(A,E^*)$ by $\mathbbm{G}_m$ and a biadditive section $\sigma':Y\times E^*\to \mathcal{P}'_1$. The pullback of $\mathcal{P}'_1$  along the morphism $E\times E^*\rightarrow A\times E^*$ defines a biextension $\mathcal{P}_1$ of $(E,E^*)$ by $\mathbbm{G}_m$. Moreover, the biadditive section $\sigma':Y\times E^*\to \mathcal{P'}_1$ factors through $\mathcal{P}_1$ giving $\sigma:Y\times E^*\to \mathcal{P}_1$. In order to define a biextension of $(\MM,\MM^*)$ by $\mathbbm{G}_m$, we have to exhibit a biadditive section $\tau:E\times Y^*\to \mathcal{P}_1$. We firstly observe that for all $X\in\Sch^{\mathrm{lft}}_F$ and for all $(\mathcal{E},\eta)\in E^*(X)$, we have \[
\mathcal{P}_{1,(\mathcal{E},\eta)}=\mathcal{P}'_1\times_{A\times E^*} (E\times X)=\mathcal{P}'_{1,(\mathcal{E},\eta)}\times_{A_X}E_X=\mathcal{E}\times_{A_X}E_X\eqqcolon \Tilde{\mathcal{E}}.
\] Consequently, we have the following pullback diagram of exact sequences \[
\begin{tikzcd}
0\ar{r}&\mathbbm{G}_{m,X}\arrow[d,equal]\ar{r}&\Tilde{\mathcal{E}}\arrow[d]\ar{r}&E_X\arrow[d,"\pi_X"]\ar{r}&0\\
0\ar{r}& \mathbbm{G}_{m,X}\ar{r}&\mathcal{E}\ar{r}&A_X\ar{r}&0,
\end{tikzcd}
\] and $\mathcal{P}_{1,(\mathcal{E},\eta)}=\Tilde{\mathcal{E}}$ is an extension of $E_X$ by $\mathbbm{G}_{m,X}$.

If $(\mathcal{E},\eta)\in E^*(X)$ is in the essential image of $u^*(X):Y^*(X)\to E^*(X)$, then it is of the form $(\mathcal{E}_{\chi},\overline{\chi}\circ u_X)$ for some $\chi\in Y^*(X)$. Consequently we have a trivialisation of $\mathcal{E}$ given by $\overline{\chi}$ \[
\begin{tikzcd}
0\ar{r}&\mathbbm{G}_{m,X}\arrow[d,equal]\ar{r}&\Tilde{\mathcal{E}_{\chi}}\arrow[d]\ar{r}&E_X\arrow[dl,"\overline{\chi}"]\arrow[d,"\pi_X"]\ar{r}&0\\
0\ar{r}& \mathbbm{G}_{m,X}\ar{r}&\mathcal{E}_{\chi}\ar{r}&A_X\ar{r}&0.\end{tikzcd}
\]
This allows us to define, for all $X\in \Sch^{\mathrm{lft}}_F$ the biadditive section \begin{equation}\label{defn:biadditivesection2}
\tau(X):E(X)\times Y^*(X)\rightarrow \mathcal{P}_1(X), \quad (e,\chi)\mapsto (\overline{\chi}(X)(e),e)\in \mathcal{E}_{\chi}\times_{A(X)}E(X)=\Tilde{\mathcal{E}_{\chi}}(X)\subset \mathcal{P}_1(X).
\end{equation}
\begin{defn} 
The Poincaré biextension $\mathcal{P}$ of $(\MM,\MM^*)$ by $\mathbbm{G}_m$ consists in the following data \begin{itemize}
    \item the biextension $\mathcal{P}_1$ of $(E,E^*)$ by $\mathbbm{G}_m$, pullback of $\mathcal{P}'_1$, the Poincaré biextension of $(A,E^*)$ by $\mathbbm{G}_m$ presented in \cref{defn:poincarebiextension:t0}.
    \item the biadditive section $\sigma:Y\times E^* \rightarrow \mathcal{P}_1$ induced by $\sigma':Y\times E^*\to \mathcal{P}_1'$ defined in \eqref{defn:biadditivesection1}.
    \item the biadditive section $\tau:E\times Y^*\to \mathcal{P}_1$ defined in \eqref{defn:biadditivesection2}.
\end{itemize} 
\end{defn}
\begin{rmk}\label{rmk:biadditivesection2}
We determine the restriction of the biadditive section $\tau:E\times Y^*\to \mathcal{P}_1$ to $T\times Y^*$. We fix $X\in \Sch^{\mathrm{lft}}_F$. By definition, for all $t\in T(X)$ and all $\chi\in Y^*(X)=\Hom(T_X,\mathbbm{G}_{m,X})$ we have \[
\tau(X)(t,\chi)=(\overline{\chi}(X)(t),u^*_X(\chi))\in\mathcal{P}_1(X).
\] Since the restriction of $\overline{\chi}:E_X\to \mathcal{E}_{\chi}$ to $T_X$ is exactly $\chi:T_X\to \mathbbm{G}_{m,X}$, $\tau_{|T\times Y^*}(X)$ is given by \[
\tau_{|}(X):T(X)\times Y^*(X)\to \mathcal{P}_1(X), \qquad (t,\chi)\mapsto (\chi(X)(t),u^*_X(\chi)).
\] We have an induced biadditive map $C_{\tau}:T\times Y^*\rightarrow\mathbbm{G}_m$ given for all $X\in\Sch^{\mathrm{lft}}_F$ by \[
C_{\tau}(X):T(X)\times Y^*(X)\rightarrow \mathbbm{G}_m(X), \qquad (t,\chi)\mapsto \chi(X)(t).
\] $C_{\tau}$ is exactly $\gamma_T$, the map inducing Cartier duality between $T$ and $Y^*$ (see \cref{example:charactertorus}).
\end{rmk}

\textbf{Compatibility with weight filtration} (General case). The Poincaré biextension induces a morphism $\psi_{\mathcal{P}}:\MM\otimes^L \MM^*\rightarrow \mathbbm{G}_m[1]$. The weight filtration on $\MM$, resp.\ $\MM^*$, gives fiber sequences \[
T\rightarrow E\rightarrow A, \qquad E\rightarrow \MM\rightarrow Y[1],
\] and respectively \[
T^*\rightarrow E^*\rightarrow A^*, \qquad E^*\rightarrow \MM^*\rightarrow Y^*[1].
\] We have the following maps: \begin{itemize}
    \item  $\psi_{\mathcal{P}_0}:A\otimes^L A^*\to \mathbbm{G}_m[1]$ induced by the Poincaré biextension $\mathcal{P}_0$ of $(A,A^*)$ by $\mathbbm{G}_m$.
    \item $\gamma_T:T\otimes^L Y^*\to \mathbbm{G}_m$ induced by Cartier duality, which coincides with $C_{\tau}$ by \cref{rmk:biadditivesection2}.
    \item $\gamma_{T^*}:Y\otimes^L T^*\rightarrow \mathbbm{G}_m$ induced by Cartier duality, which coincides with $C_{\sigma}$ by \cref{rmk:biadditivesection1}.
\end{itemize}
Compatibility of $\psi_{\mathcal{P}}:\MM\otimes^L \MM^*\rightarrow \mathbbm{G}_m[1]$ with weight filtration is expressed by the following \begin{lem}\label{lem:poincarecompatibilitymotives}
The following facts hold:
\begin{enumerate}[a)]
    \item The composition $\MM\otimes^L E^*\rightarrow \MM\otimes^L \MM^*\overset{\psi_{\mathcal{P}}}{\longrightarrow}\mathbbm{G}_m[1]$ factors through $\MM/T\otimes^L E^*\rightarrow \mathbbm{G}_m[1]$. The induced map $\MM/T\otimes^L E^*\rightarrow \mathbbm{G}_m[1]$ is $\psi_{\mathcal{P}'}$.
    \item The composition $T\otimes^L \MM^*\rightarrow \mathbbm{G}_m[1]\rightarrow \MM\otimes^L \MM^*\overset{\psi_{\mathcal{P}}}{\longrightarrow}\mathbbm{G}_m[1]$ factors through $T\otimes^L Y^*[1]$. The induced map $T\otimes^L Y^*\rightarrow \mathbbm{G}_m$ is $\gamma_T$.
\end{enumerate}
\end{lem}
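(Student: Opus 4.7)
Both parts follow the same strategy as the proof of \cref{lem:poincarecompatibilitymotivest0}: apply the functoriality of the biextension--pairing correspondence \eqref{generalisedbiextensionpairing} in the first two arguments (see \cref{rmk:functorialities:biextensions:complexes}) to reduce each composition to the pullback of $\mathcal{P}$ along a suitable weight-filtration map, and then identify this pullback explicitly with the data of $\mathcal{P}'$, respectively $\gamma_T$.

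For (a), the map $E^*\to\MM^*$ is the inclusion of $W_{-1}\MM^*$, so by functoriality the composition $\MM\otimes^L E^* \to \MM\otimes^L\MM^* \overset{\psi_\mathcal{P}}{\to}\mathbbm{G}_m[1]$ is classified by the pullback of $\mathcal{P}$ along $(\mathrm{id}_\MM, E^*\hookrightarrow \MM^*)$ as a biextension of $(\MM, E^*)$ by $\mathbbm{G}_m$. Unwinding \cref{defn:biextensiondelignegeneralisation}, its underlying biextension on $(E,E^*)$ is $\mathcal{P}_1$, its first trivialisation on $Y\times E^*$ is $\sigma$, and its second trivialisation on $E\times 0$ is trivial, so it reduces to the datum $(\mathcal{P}_1,\sigma)$. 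By construction $\mathcal{P}_1$ is the pullback of $\mathcal{P}'_1$ along $E\times E^*\to A\times E^*$ and $\sigma$ is induced by $\sigma'$; hence $(\mathcal{P}_1,\sigma)$ is precisely the pullback of $\mathcal{P}'$ along $(\MM\to\MM/T, \mathrm{id}_{E^*})$, and the factorisation with induced map $\psi_{\mathcal{P}'}$ follows at once.

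For (b), the fiber sequence $E^*\to\MM^*\to Y^*[1]$ tensored with $T$ gives $T\otimes^L E^*\to T\otimes^L\MM^*\to T\otimes^L Y^*[1]$, so to obtain the factorisation it suffices to show that the composite $T\otimes^L E^*\to T\otimes^L\MM^*\to\mathbbm{G}_m[1]$ vanishes. Again by functoriality, this composite is classified by the pullback of $\mathcal{P}$ to $(T,E^*)$; since $\mathcal{P}_1$ is ultimately pulled back from $\mathcal{P}_0$ along $E\times E^*\to A\times A^*$ (by the construction and \cite[VIII, 3.5]{SGA7I}), its restriction to $T\times E^*$ is trivial because $T\to A$ is zero, and the pullback of $\sigma$ vanishes on $0\times E^*$. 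To identify the induced map $T\otimes^L Y^*\to\mathbbm{G}_m$, I classify the pullback of $\mathcal{P}$ to $(T,\MM^*)$: the underlying biextension $W_1$ and first trivialisation $s_1$ are trivial as above, while the second trivialisation $T\times Y^*\to W_1$ is $\tau_{|T\times Y^*}$. By \cref{rmk:biadditivesection2} this is $C_\tau=\gamma_T$, and the analogue of \cref{rmk:biextensionspecialcase}, item b), transposed to the second argument, identifies the resulting element of $\Hom(T\otimes^L Y^*,\mathbbm{G}_m)$ with $\gamma_T$.

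The only real subtlety will be the careful bookkeeping of biextension data (underlying torsor and the two biadditive sections) under pullback along the weight-filtration maps; once that is done, everything reduces to the functoriality recorded in \cref{rmk:functorialities:biextensions:complexes,rmk:biextensionspecialcase} and the explicit formulas already computed in \cref{rmk:biadditivesection1,rmk:biadditivesection2}. No new cohomological input is required.
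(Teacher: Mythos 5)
Your proposal is correct and follows essentially the same strategy as the paper's proof: apply the functoriality of the biextension--pairing correspondence to reduce to the pullback of $\mathcal{P}$, identify that pullback explicitly via the weight-filtration construction, and then match it with $\psi_{\mathcal{P}'}$ (resp.\ $\gamma_T$). The only stylistic difference is in part (b): the paper cites item c) of \cref{rmk:biextensionspecialcase} directly (identifying the pullback as $\alpha(C_\tau)$ via the exact sequence), whereas you split off the vanishing on $T\otimes^L E^*$ first and then invoke item b) transposed; the underlying computation is the same, though citing item c) there would make the identification of the induced map with $\gamma_T$ slightly more immediate.
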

\begin{rmk}
By combining this with \cref{lem:poincarecompatibilitymotivest0}, we obtain that the map $\psi_{\mathcal{P}}$ induces the maps $\psi_{\mathcal{P}_0}$, $\gamma_T$ and $\gamma_{T^*}$ on the graded pieces of the weight filtration. Again, this result is well-known but the author could not find a satisfying proof directly adaptable to the condensed setting.
\end{rmk}
\begin{proof}
Let us start with a). As in the proof of \cref{lem:poincarecompatibilitymotivest0}, by functoriality of \cref{generalisedbiextensionpairing} the image of $\psi_{\mathcal{P}}$ via \[
\Hom(\MM\otimes^L \MM^*,\mathbbm{G}_m[1])\rightarrow \Hom(\MM\otimes E^*,\mathbbm{G}_m[1])
\] is $\psi_{\tilde{\mathcal{P}}}$, where $\tilde{\mathcal{P}}$ is the biextension of $(\MM,E^*)$ by $\mathbbm{G}_m$ given by $(\mathcal{P}_1,\sigma)$. To conclude, it is enough to show that $\psi_{\tilde{\mathcal{P}}}$ is also the image of $\mathcal{P}'$ via \[
\Hom(\MM/T\otimes^L E^*,\mathbbm{G}_m[1])\rightarrow \Hom(\MM\otimes^L E^*,\mathbbm{G}_m[1]).
\] By definition $\mathcal{P}_1$ is the pullback of $\mathcal{P}'_1$ along $E\times E^*\to A\times E^*$, hence the result follows again by functoriality of \cref{generalisedbiextensionpairing}.

The proof of b) is very similar to its analogue in \cref{lem:poincarecompatibilitymotivest0}. We observe that the composition $T\otimes^L \MM^*\rightarrow \MM\otimes^L \MM^*\overset{\psi_{\mathcal{P}}}{\rightarrow}\mathbbm{G}_m[1]$ is given by $\psi_{\overline{\mathcal{P}}}$. Here $\overline{\mathcal{P}}$ is the biextension of $(T,\MM^*)$ by $\mathbbm{G}_m$ defined by \[
(\mathcal{P}'_1\times_{A\times E^*} T\times E^*, \tau_{T\times Y^*}).
\] We need to show that $\psi_{\overline{\mathcal{P}}}$ is the image of $\gamma_T$ via the morphism \[
\Hom(T\otimes^L Y^*,\mathbbm{G}_m)\rightarrow \Hom(T\otimes^L \MM^*,\mathbbm{G}_m[1]).
\] The map $T\rightarrow A$ is trivial, hence $\mathcal{P}'_1\times_{A\times E^*} T\times E^*\rightarrow T\times E^*$ is isomorphic to the trivial biextension. Using the identification of \cref{rmk:biextensionspecialcase}, c), we conclude that $\psi_{\overline{\mathcal{P}}}$ is the image of $C_{\tau}:T\times Y^*\to \mathbbm{G}_m$ via \[\Hom(T\otimes^L Y^*,\mathbbm{G}_m)\rightarrow \Hom(T\otimes^L \MM^*,\mathbbm{G}_m[1]).\] We conclude since $C_{\tau}=\gamma_T$ by \cref{rmk:biadditivesection2}.
\end{proof}
\subsubsection{The condensed Poincaré biextension}\label{subsectionappendix:condensedpoincare}
By \cref{condensedbiextensions1motives} $\mathcal{P}(\overline{L})_{\mathrm{cond}}$ is a biextension of $(\MM(\overline{L})_{\mathrm{cond}},\MM^*(\overline{L})_{\mathrm{cond}})$ by $\overline{L}^{\times}$. We call it \emph{condensed} Poincaré biextension. It is given by: 
\begin{itemize}
\item the biextension $\mathcal{P}_1(\overline{L})_{\mathrm{cond}}$ of $(E(\overline{L})_{\mathrm{cond}},E^*(\overline{L})_{\mathrm{cond}})$ by $\overline{L}^{\times}$, condensed Weil-étale realisation of $\mathcal{P}_1$. 
\item the biadditive section $\sigma(\overline{L})_{\mathrm{cond}}:Y(\overline{L})_{\mathrm{cond}}\times E^*(\overline{L})_{\mathrm{cond}}\rightarrow \mathcal{P}_1(\overline{L})_{\mathrm{cond}}$, condensed Weil-étale realisation of $\sigma$.
\item the biadditive section $\tau(\overline{L})_{\mathrm{cond}}:E(\overline{L})_{\mathrm{cond}}\times Y^*(\overline{L})_{\mathrm{cond}}\rightarrow \mathcal{P}_1(\overline{L})_{\mathrm{cond}}$, condensed Weil-étale realisation of $\tau$.
\end{itemize} By functoriality of the condensed Weil-étale realisation and since it preserves fiber products, we have \begin{enumerate}
\item $\mathcal{P}_1(\overline{L})_{\mathrm{cond}}$ is the pullback of $\mathcal{P}'_1(\overline{L})_{\mathrm{cond}}$ along $E(\overline{L})_{\mathrm{cond}}\times E^*(\overline{L})_{\mathrm{cond}}\rightarrow A(\overline{L})_{\mathrm{cond}}\times E^*(\overline{L})_{\mathrm{cond}}$.
\item $\mathcal{P}'_1(\overline{L})_{\mathrm{cond}}$ is the pullback of $\mathcal{P}_0(\overline{L})_{\mathrm{cond}}$ along $A(\overline{L})_{\mathrm{cond}}\times E^*(\overline{L})_{\mathrm{cond}}\rightarrow A(\overline{L})_{\mathrm{cond}}\times A^*(\overline{L})_{\mathrm{cond}}$.
\item Let $\mathcal{P'}$ be the Poincaré biextension of $(\MM/T,E^*)$ by $\mathbbm{G}_m$ of \cref{defn:poincarebiextension:t0}. Then the pullback of $\mathcal{P}(\overline{L})_{\mathrm{cond}}$ to a biextension of $(\MM(\overline{L})_{\mathrm{cond}},E^*(\overline{L})_{\mathrm{cond}})$ by $\overline{L}^{\times}$ is $\mathcal{P}'(\overline{L})_{\mathrm{cond}}$.
\item The restriction of $\sigma(\overline{L})_{\mathrm{cond}}$ to $Y(\overline{L})_{\mathrm{cond}}\times T^*(\overline{L})_{\mathrm{cond}}$ is given by $\sigma_{|Y\times T^*}(\overline{L})_{\mathrm{cond}}$. The induced biadditive map $Y(\overline{L})_{\mathrm{cond}}\times T^*(\overline{L})_{\mathrm{cond}}\to \overline{L}^{\times}$ is $C_{\sigma}(\overline{L})_{\mathrm{cond}}=\gamma_{T^*}(\overline{L})_{\mathrm{cond}}$.
\item The restriction of $\tau(\overline{L})_{\mathrm{cond}}$ to $T(\overline{L})_{\mathrm{cond}}\times Y^*(\overline{L})_{\mathrm{cond}}$ is given by $\tau_{|T\times Y^*}(\overline{L})_{\mathrm{cond}}$. The induced biadditive map $T^*(\overline{L})_{\mathrm{cond}}\times Y(\overline{L})_{\mathrm{cond}}\to \overline{L}^{\times}$ is $C_{\tau}(\overline{L})_{\mathrm{cond}}=\gamma_{T}(\overline{L})_{\mathrm{cond}}$.
\end{enumerate}

\textbf{Compatibility with weight filtration}. The condensed Poincaré biextension induces a morphism $\psi_{\mathcal{P}(\overline{L})_{\mathrm{cond}}}:\MM(\overline{L})_{\mathrm{cond}}\otimes^L \MM^*(\overline{L})_{\mathrm{cond}}\rightarrow \overline{L}^{\times}[1]$. The condensed weight filtrations (\cref{condensedfiltration}) give fiber sequences \[
T(\overline{L})_{\mathrm{cond}}\rightarrow E(\overline{L})_{\mathrm{cond}}\rightarrow A(\overline{L})_{\mathrm{cond}}, \qquad E(\overline{L})_{\mathrm{cond}}\rightarrow \MM(\overline{L})_{\mathrm{cond}}\rightarrow Y(\overline{L})_{\mathrm{cond}}[1],
\] and \[
T^*(\overline{L})_{\mathrm{cond}}\rightarrow E^*(\overline{L})_{\mathrm{cond}}\rightarrow A^*(\overline{L})_{\mathrm{cond}}, \qquad E^*(\overline{L})_{\mathrm{cond}}\rightarrow \MM^*(\overline{L})_{\mathrm{cond}}\rightarrow Y^*(\overline{L})_{\mathrm{cond}}[1].
\] We have the following maps: \begin{itemize}
    \item  $\psi_{\mathcal{P}_0(\overline{L})_{\mathrm{cond}}}:A(\overline{L})_{\mathrm{cond}}\otimes^L A^*(\overline{L})_{\mathrm{cond}}\to \overline{L}^{\times}[1]$ induced by the condensed Poincaré biextension $\mathcal{P}_0(\overline{L})_{\mathrm{cond}}$ of $(A(\overline{L})_{\mathrm{cond}},A^*(\overline{L})_{\mathrm{cond}})$ by $\overline{L}^{\times}$.
    \item $\gamma_T(\overline{L})_{\mathrm{cond}}:T(\overline{L})_{\mathrm{cond}}\otimes^L Y^*(\overline{L})_{\mathrm{cond}}\to \overline{L}^{\times}$, the condensed Weil-étale realisation of $\gamma_{T^*}$, the morphism induced by Cartier duality.
        \item $\gamma_{T^*}(\overline{L})_{\mathrm{cond}}:Y(\overline{L})_{\mathrm{cond}}\otimes^L T^*(\overline{L})_{\mathrm{cond}}\rightarrow \overline{L}^{\times}$ the condensed Weil-étale realisation of $\gamma_T$, the morphism induced by Cartier duality.
        \end{itemize}
        The compatibility of $\psi_{\mathcal{P}(\overline{L})_{\mathrm{cond}}}$ with these maps is expressed by the following 
        \begin{lem}\label{lem:lastlemma}
        Let $\mathcal{P}(\overline{L})_{\mathrm{cond}}$ be the condensed Poincaré biextension of $(\MM(\overline{L})_{\mathrm{cond}},\MM^*(\overline{L})_{\mathrm{cond}})$ by $\overline{L}^{\times}$ and let $\mathcal{P}'(\overline{L})_{\mathrm{cond}}$ be the condensed Poincaré biextension of $(\MM/T(\overline{L})_{\mathrm{cond}},E^*(\overline{L})_{\mathrm{cond}})$ by $\overline{L}^{\times}$. The following facts hold:
 \begin{enumerate}[a)]
    \item The composition \[\MM(\overline{L})_{\mathrm{cond}}\otimes^L E^*(\overline{L})_{\mathrm{cond}}\rightarrow \MM(\overline{L})_{\mathrm{cond}}\otimes^L \MM^*(\overline{L})_{\mathrm{cond}}\overset{\psi_{\mathcal{P}(\overline{L})_{\mathrm{cond}}}}{\longrightarrow}\overline{L}^{\times}[1]\] factors through $\MM/T(\overline{L})_{\mathrm{cond}}\otimes^L E^*(\overline{L})_{\mathrm{cond}}\rightarrow\overline{L}^{\times}1]$. The induced map \[\MM/T(\overline{L})_{\mathrm{cond}}\otimes^L E^*(\overline{L})_{\mathrm{cond}}\rightarrow \overline{L}^{\times}[1]\] is $\psi_{\mathcal{P}'(\overline{L})_{\mathrm{cond}}}$.
    \item The composition \[T(\overline{L})_{\mathrm{cond}}\otimes^L \MM^*(\overline{L})_{\mathrm{cond}}\rightarrow \MM(\overline{L})_{\mathrm{cond}}\otimes^L \MM^*(\overline{L})_{\mathrm{cond}}\overset{\psi_{\mathcal{P}(\overline{L})_{\mathrm{cond}}}}{\longrightarrow}\overline{L}^{\times}[1]\] factors through $T(\overline{L})_{\mathrm{cond}}\otimes^L Y^*(\overline{L})_{\mathrm{cond}}[1]$. The induced map \[T(\overline{L})_{\mathrm{cond}}\otimes^L Y^*(\overline{L})_{\mathrm{cond}}\rightarrow \overline{L}^{\times}\] is $\gamma_T(\overline{L})_{\mathrm{cond}}$.
\end{enumerate}
Moreover we have:
\begin{enumerate}[i)]
\item The composition \[A(\overline{L})_{\mathrm{cond}}\otimes^L E^*(\overline{L})_{\mathrm{cond}}\rightarrow \MM/T(\overline{L})_{\mathrm{cond}}\otimes^L E^*(\overline{L})_{\mathrm{cond}}\overset{\psi_{\mathcal{P'}(\overline{L})_{\mathrm{cond}}}}{\longrightarrow}\overline{L}^{\times}[1]\] factors through $A(\overline{L})_{\mathrm{cond}}\otimes^L A^*(\overline{L})_{\mathrm{cond}}$. The induced map \[A(\overline{L})_{\mathrm{cond}}\otimes^L A^*(\overline{L})_{\mathrm{cond}}\rightarrow \overline{L}^{\times}[1]\] is $\psi_{\mathcal{P}_0(\overline{L})_{\mathrm{cond}}}$.
\item The composition \[\MM/T(\overline{L})_{\mathrm{cond}}\otimes^L T^*(\overline{L})_{\mathrm{cond}}\rightarrow \MM/T(\overline{L})_{\mathrm{cond}}\otimes E^*(\overline{L})_{\mathrm{cond}}\overset{\psi_{\mathcal{P}(\overline{L})_{\mathrm{cond}}}}{\longrightarrow}\overline{L}^{\times}[1]\] factors through $Y(\overline{L})_{\mathrm{cond}}[1]\otimes^L T^*(\overline{L})_{\mathrm{cond}}$. The induced map \[Y(\overline{L})_{\mathrm{cond}}\otimes^L T^*(\overline{L})_{\mathrm{cond}}\rightarrow \overline{L}^{\times}\] is $\gamma_{T^*}(\overline{L})_{\mathrm{cond}}$. \end{enumerate}
        \end{lem}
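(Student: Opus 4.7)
The plan is to mimic the proofs of \cref{lem:poincarecompatibilitymotivest0} and \cref{lem:poincarecompatibilitymotives} inside the topos $B_{\hat{W}_F}$. The discussion of biextensions and of the isomorphisms \eqref{generalisedbiextension0pairing} and \eqref{generalisedbiextensionpairing} makes sense in any topos, and the functoriality statements of \cref{rmk:functorialities:biextensions:complexes} apply in $B_{\hat{W}_F}$ as they did in $\TT_{\mathrm{fppf}}$. The crucial input is the explicit description of $\mathcal{P}(\overline{L})_{\mathrm{cond}}$ as a biextension, namely items 1--5 of \cref{subsectionappendix:condensedpoincare}, which all follow from the fact that the condensed Weil-étale realisation functor commutes with fiber products.

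For part a), the map $\MM(\overline{L})_{\mathrm{cond}}\otimes^L E^*(\overline{L})_{\mathrm{cond}}\rightarrow \MM(\overline{L})_{\mathrm{cond}}\otimes^L \MM^*(\overline{L})_{\mathrm{cond}}$ corresponds, via functoriality of \eqref{generalisedbiextensionpairing}, to pullback of $\mathcal{P}(\overline{L})_{\mathrm{cond}}$ along the morphism $\MM(\overline{L})_{\mathrm{cond}}\times E^*(\overline{L})_{\mathrm{cond}}\rightarrow \MM(\overline{L})_{\mathrm{cond}}\times \MM^*(\overline{L})_{\mathrm{cond}}$. By item 3 of \cref{subsectionappendix:condensedpoincare}, this pullback is $\mathcal{P}'(\overline{L})_{\mathrm{cond}}$ regarded as a biextension of $(\MM(\overline{L})_{\mathrm{cond}}, E^*(\overline{L})_{\mathrm{cond}})$; but it is \emph{a priori} defined as a biextension of $(\MM/T(\overline{L})_{\mathrm{cond}}, E^*(\overline{L})_{\mathrm{cond}})$, so the composition factors through $\MM/T(\overline{L})_{\mathrm{cond}}\otimes^L E^*(\overline{L})_{\mathrm{cond}}$ with induced map $\psi_{\mathcal{P}'(\overline{L})_{\mathrm{cond}}}$. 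Statement i) is identical, using that the pullback of $\mathcal{P}'(\overline{L})_{\mathrm{cond}}$ along $A(\overline{L})_{\mathrm{cond}}\times E^*(\overline{L})_{\mathrm{cond}}\rightarrow A(\overline{L})_{\mathrm{cond}}\times A^*(\overline{L})_{\mathrm{cond}}$ is $\mathcal{P}_0(\overline{L})_{\mathrm{cond}}$ by item 2 of the same section.

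For part b), the same functoriality argument reduces the claim to the following: the pullback biextension $\overline{\mathcal{P}(\overline{L})_{\mathrm{cond}}}$ of $(T(\overline{L})_{\mathrm{cond}}, \MM^*(\overline{L})_{\mathrm{cond}})$ by $\overline{L}^{\times}$, obtained from $\mathcal{P}(\overline{L})_{\mathrm{cond}}$ along the canonical morphisms, has its underlying extension on $T(\overline{L})_{\mathrm{cond}}\times E^*(\overline{L})_{\mathrm{cond}}$ canonically trivial, because the composition $T(\overline{L})_{\mathrm{cond}}\rightarrow E(\overline{L})_{\mathrm{cond}}\rightarrow A(\overline{L})_{\mathrm{cond}}$ is zero (this last fact is a consequence of \cref{exactweilrealisation}, already used in the construction of the condensed weight filtration). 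Applying the identification in \cref{rmk:biextensionspecialcase} c), $\psi_{\overline{\mathcal{P}(\overline{L})_{\mathrm{cond}}}}$ is the image under the connecting map of the biadditive pairing $T(\overline{L})_{\mathrm{cond}}\times Y^*(\overline{L})_{\mathrm{cond}}\rightarrow \overline{L}^{\times}$ coming from the biadditive section $\tau(\overline{L})_{\mathrm{cond}}_{|T(\overline{L})_{\mathrm{cond}}\times Y^*(\overline{L})_{\mathrm{cond}}}$; by item 5 of \cref{subsectionappendix:condensedpoincare} this is $C_\tau(\overline{L})_{\mathrm{cond}}=\gamma_T(\overline{L})_{\mathrm{cond}}$. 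Statement ii) follows by the same reasoning, using item 4 instead.

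The main potential obstacle is to make sure that the identifications of \cref{rmk:biextensionspecialcase} are preserved under the passage from $\TT_{\mathrm{fppf}}$ to $B_{\hat{W}_F}$ via condensed Weil-étale realisation. However, these identifications are purely categorical, formulated in terms of pullbacks and biadditive sections; the compatibility therefore follows formally from the fact that $-(\overline{L})_{\mathrm{cond}}$ preserves fiber products, plus the functoriality of the biextension-pairing correspondence in the topos variable as recorded in \cref{rmk:functorialities:biextensions:complexes}.
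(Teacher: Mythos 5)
Your proposal is correct and takes essentially the same approach as the paper's proof, which simply says to copy the proofs of Lemmas \ref{lem:poincarecompatibilitymotivest0} and \ref{lem:poincarecompatibilitymotives} with the biextensions and biadditive sections replaced by their condensed Weil-\'etale realisations. You have carried out that replacement explicitly, including the correct observations that the realisation functor preserves fiber products (so items 1--5 of \cref{subsectionappendix:condensedpoincare} carry over the relevant pullback and restriction identifications), that the biextension--pairing formalism of \eqref{generalisedbiextension0pairing}--\eqref{generalisedbiextensionpairing} and \cref{rmk:biextensionspecialcase} is topos-independent, and that the vanishing of $T(\overline{L})_{\mathrm{cond}}\to A(\overline{L})_{\mathrm{cond}}$ needed in part b) follows from \cref{exactweilrealisation}.
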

        \begin{proof}
        We just copy the proofs of \cref{lem:poincarecompatibilitymotives,lem:poincarecompatibilitymotivest0}, replacing the biextensions and the biadditive sections by their condensed Weil-étale realisation. 
        \end{proof}
        Consequently, the pairing $\psi_{\mathcal{P}(\overline{L})_{\mathrm{cond}}}$ induces the pairings $\psi_{\mathcal{P}_0(\overline{L})_{\mathrm{cond}}}$, $\gamma_{T*}(\overline{L})_{\mathrm{cond}}$ and $\gamma_T(\overline{L})_{\mathrm{cond}}$ on the graded pieces of the condensed weight filtration.



\section{Relation between two condensed Weil-\'etale duality formalisms (by T. Suzuki)}\label{appendixB}

In this appendix, we explain the relation between Theorem \ref{intro:thm:suz}
(a result of \cite[Section 10]{Suz})
and Theorem \ref{thm:duality1motives1}.
They both give the Weil-\'etale cohomology of $F$
with coefficients in a $1$-motive as an object of $\mathbf{D}^{\mathrm{b}}(\mathscr{C})$
and show its duality.
But there is an apparent difference:
the Weil-\'etale cohomology complex in Theorem \ref{intro:thm:suz} is defined
using the ind-rational pro-\'etale site
while the Weil-\'etale cohomology complex in Theorem \ref{thm:duality1motives1} is defined
using the classifying topos of the condensed Weil group.
We show below that these complexes are in fact canonically isomorphic
and the two duality pairings are compatible.
Since our explanations in this appendix do not use the results of the body of this paper,
this gives another proof of Theorem \ref{thm:duality1motives1} based on Theorem \ref{intro:thm:suz}.

In Section \ref{0026},
we first recall the formulation of Theorem \ref{intro:thm:suz} in Theorem \ref{0013}.
In Section \ref{0027},
we then rewrite it in the style of the classifying topos of the condensed Weil group
in Theorem \ref{0016},
thereby recovering Theorem \ref{thm:duality1motives1}.
Since this appendix is almost entirely about \cite{Suz},
we recall and follow the notation of \cite{Suz}.


\subsection{Recollection on the condensed duality in Theorem \ref{intro:thm:suz}}
\label{0026}

Let $\ast_{\mathrm{proet}}$ be the pro-\'etale site of a point,
namely the category of profinite sets with the Grothendieck (pre)topology
where a covering is a continuous surjection of profinite sets.
Let $k = \mathbbm{F}_{q}$ be a finite field.
For a finite set $S$ and an algebraic extension $k'$ of $k$,
set $S_{k'} = \bigsqcup_{x \in S} \operatorname{Spec} k'$.
For a profinite set $S = \varprojlim_{\lambda} S_{\lambda}$
(where each $S_{\lambda}$ is finite),
set $S_{k'} = \varprojlim_{\lambda} (S_{\lambda})_{k'}$.

Recall from \cite[Section 2.1]{Suz} that
a $k$-algebra $k'$ is said to be \emph{rational}
if it can be written as a finite product $k' = k'_{1} \times \dots \times k'_{n}$,
where each $k'_{i}$ is the perfection (direct limit along Frobenius)
of a finitely generated field extension of $k$.
A $k$-algebra $k'$ is said to be \emph{ind-rational}
if it is a filtered direct limit of rational $k$-algebras.
Let $k^{\mathrm{indrat}}$ be the category of ind-rational $k$-algebras
with $k$-algebra homomorphisms.
(The opposite of) this category can be equipped with the pro-\'etale topology.
The resulting site is called the \emph{ind-rational pro-\'etale site} of $k$
and denoted by $\operatorname{Spec} k^{\mathrm{indrat}}_{\mathrm{proet}}$.

Note that $S_{k'}$ above is the $\operatorname{Spec}$ of an ind-rational $k$-algebra.
Let
	$
			f
		\colon
			\operatorname{Spec} k^{\mathrm{indrat}}_{\mathrm{proet}}
		\to
			\ast_{\mathrm{proet}}
	$
be the premorphism of sites%
\footnote{
	A premorphism of sites $f \colon T' \to T$ between sites defined by given pretopologies
	is a functor $f^{-1}$ from the underlying category of $T$
	to the underlying category of $T'$ 
	that sends covering families to covering families such that
		$
				f^{-1}(Y \times_{X} Z)
			\overset{\sim}{\to}
				f^{-1} Y \times_{f^{-1} X} f^{-1} Z
		$
	whenever $Y \to X$ appears in a covering family.
	See \cite[Section 2.4]{Suz20a}
	or \cite[Definition 2.3]{Suz21}.
	A morphism of sites is further assumed to have
	exact pullback functor
	$f^{\ast} \colon \operatorname{Set}(T) \to \operatorname{Set}(T')$.
}
defined by the functor
$S_{\overline{k}} \mapsfrom S$ on the underlying categories.
Let $F$ be the $q$-th power map on any $k$-algebra,
which induces an action on any object of
the topos $\operatorname{Set}(k^{\mathrm{indrat}}_{\mathrm{proet}})$ and
the derived category $D(k^{\mathrm{indrat}}_{\mathrm{proet}})$.%
\footnote{
	For a site $S$,
	we denote the category of sheaves of sets on $S$ by $\operatorname{Set}(S)$,
	the category of sheaves of abelian groups on $S$ by $\operatorname{Ab}(S)$
	and the (unbounded) derived (triangulated) category of $\operatorname{Ab}(S)$ by $D(S)$.
	For a site such as $\operatorname{Spec} k^{\mathrm{indrat}}_{\mathrm{proet}}$,
	we omit ``$\operatorname{Spec}$'' from the notation
	and instead use the notation
	$\operatorname{Ab}(k^{\mathrm{indrat}}_{\mathrm{proet}})$ for example.
}
Recall from \cite[Section 10]{Suz} the triangulated functor
	\[
			R \Gamma(k_{W}, \;\cdot\;)
		\colon
			D(k^{\mathrm{indrat}}_{\mathrm{proet}})
		\to
			D(\ast_{\mathrm{proet}});
		\quad
			A
		\mapsto
			f_{\ast}[A \stackrel{F - 1}{\to} A][-1],
	\]
where $[\;\cdot\; \to \;\cdot\;]$ denotes the mapping cone construction.
This has a natural cup product morphism (or a lax symmetric monoidal structure)
	\[
			R \Gamma(k_{W}, A) \mathbin{\otimes}^{L} R \Gamma(k_{W}, B)
		\to
			R \Gamma(k_{W}, A \mathbin{\otimes}^{L} B)
	\]
in $D(\ast_{\mathrm{proet}})$
for any $A, B \in D(k^{\mathrm{indrat}}_{\mathrm{proet}})$.

Let $W_{k} \cong \mathbbm{Z}$ be the Weil group of $k$.
Let $B_{W_{k}}(\ast_{\mathrm{proet}})$ be
the classifying topos of $W_{k}$ in $\ast_{\mathrm{proet}}$.
Define a functor $\Gamma(\overline{k}, \;\cdot\;)$ by
	\[
			\Gamma(\overline{k}, \;\cdot\;)
		\colon
			\operatorname{Set}(k^{\mathrm{indrat}}_{\mathrm{proet}})
		\to
			\operatorname{Set}(B_{W_{k}}(\ast_{\mathrm{proet}}));
		\quad
			\Gamma(\overline{k}, C)(S)
		=
			C(S_{\overline{k}}),
	\]
where $C \in \operatorname{Set}(k^{\mathrm{indrat}}_{\mathrm{proet}})$,
$S \in \ast_{\mathrm{proet}}$
and $C(S_{\overline{k}})$ is equipped with the natural $W_{k}$-action.
It is an exact functor.
Let 
	$
			\Gamma(W_{k}, \;\cdot\;)
		\colon
			\operatorname{Set}(B_{W_{k}}(\ast_{\mathrm{proet}}))
		\to
			\operatorname{Set}(\ast_{\mathrm{proet}})
	$
be the $W_{k}$-invariant part functor.
Both $\Gamma(\overline{k}, \;\cdot\;)$ and $\Gamma(W_{k}, \;\cdot\;)$
have natural cup product morphisms.

\begin{prop} \label{0005}
	We have a natural isomorphism
		\[
				R \Gamma(k_{W}, \;\cdot\;)
			\cong
				R \Gamma(W_{k}, \Gamma(\overline{k}, \;\cdot\;))
		\]
	of triangulated functors
	$D(k^{\mathrm{indrat}}_{\mathrm{proet}}) \to D(\ast_{\mathrm{proet}})$
	compatible with the cup product morphisms.
\end{prop}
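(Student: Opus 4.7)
The plan is to identify both sides with the same explicit two-term complex representing the mapping fibre of $F-1$, and then to verify naturality and cup-product compatibility. The main ingredients are the tautological identification of $\Gamma(\overline{k}, -)$ with $f_{\ast}$ after forgetting the $W_{k}$-action, and the fact that $W_{k} \cong \mathbbm{Z}$ admits a length-one free resolution.

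First I would observe that by the very definitions, $(f_{\ast}A)(S) = A(f^{-1}(S)) = A(S_{\overline{k}}) = \Gamma(\overline{k}, A)(S)$, so the composition of $\Gamma(\overline{k}, -)$ with the forgetful functor $\operatorname{Set}(B_{W_{k}}(\ast_{\mathrm{proet}})) \to \operatorname{Set}(\ast_{\mathrm{proet}})$ equals $f_{\ast}$. Under this identification, the $W_{k}$-action on $\Gamma(\overline{k}, A)$ is generated by the Frobenius endomorphism $F$ of $A$ pulled down through $f_{\ast}$: the action of $W_{k}$ on $S_{\overline{k}}$ is generated by the geometric Frobenius on $\overline{k}$, which coincides with the $q$-th power map $F$ on any ind-rational $k$-algebra.

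Second, I would exploit the fact that $W_{k} \cong \mathbbm{Z}$: the trivial $\mathbbm{Z}[W_{k}]$-module $\mathbbm{Z}$ admits the length-one projective resolution $\mathbbm{Z}[W_{k}] \xrightarrow{F-1} \mathbbm{Z}[W_{k}] \to \mathbbm{Z}$, and hence for any $M \in D(B_{W_{k}}(\ast_{\mathrm{proet}}))$ there is a canonical isomorphism
\[
R\Gamma(W_{k}, M) \;\simeq\; [M \xrightarrow{F-1} M][-1].
\]
Combining the two steps, $R\Gamma(W_{k}, \Gamma(\overline{k}, A))$ is identified with $f_{\ast}[A \xrightarrow{F-1} A][-1]$, which is the definition of $R\Gamma(k_{W}, A)$. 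Naturality in $A$ is automatic from the construction, and the exactness of $\Gamma(\overline{k}, -)$ lets us apply it termwise to any complex in $D(k^{\mathrm{indrat}}_{\mathrm{proet}})$.

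The main obstacle I anticipate is the cup-product compatibility. Both sides carry a cup product induced by an explicit Alexander--Whitney-type diagonal on the two-term complex $[\,\cdot\, \xrightarrow{F-1}\, \cdot\,][-1]$, combined with the lax symmetric monoidal structure of $\Gamma(\overline{k}, -)$ (equivalently of $f_{\ast}$), which is strict because evaluation at $S_{\overline{k}}$ commutes with tensor products of abelian sheaves. The relevant diagonal is controlled by the Leibniz-type identity
\[
(F-1)(a \otimes b) \;=\; F(a) \otimes (F-1)(b) + (F-1)(a) \otimes b,
\]
which shows that the Koszul-style cup product on $R\Gamma(W_{k}, -)$, obtained from the comultiplication on the resolution of $\mathbbm{Z}$ over $\mathbbm{Z}[W_{k}]$, coincides with the chain-level cup product on the two-term fibre used to define the pairing on $R\Gamma(k_{W}, -)$. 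The remaining verification is a direct diagram chase on the explicit two-term models, using only multiplicativity of Frobenius and the monoidality of $\Gamma(\overline{k}, -)$.
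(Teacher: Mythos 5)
Your proposal is correct and follows essentially the same approach as the paper: both identify the two sides with the explicit two-term complex $[B \xrightarrow{F-1} B][-1]$, where $B^{n}(S) = A^{n}(S_{\overline{k}})$, using the exactness of $\Gamma(\overline{k},\,\cdot\,)$ and the length-one resolution coming from $W_{k}\cong\mathbbm{Z}$. You flesh out the cup-product compatibility more than the paper's one-sentence proof, which leaves that verification implicit once the common explicit model is exhibited, but the underlying argument (a Koszul/chain-level comparison on the shared two-term complex) is the same.
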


\begin{proof}
	For a complex $A$ in $\operatorname{Ab}(k^{\mathrm{indrat}}_{\mathrm{proet}})$,
	both objects $R \Gamma(k_{W}, A)$ and
	$R \Gamma(W_{k}, \Gamma(\overline{k}, A))$ are
	represented by the complex $[B \stackrel{F - 1}{\to} B][-1]$,
	where the $n$-th term of $B$ for each $n$ is the sheaf
	sending $S \in \ast_{\mathrm{proet}}$ to $A^{n}(S_{\overline{k}})$.
\end{proof}

Let $\operatorname{Spec} k^{\mathrm{indrat}}_{\mathrm{et}}$ be
the ind-rational \'etale site of $k$,
which is the category $k^{\mathrm{indrat}}$ endowed with the \'etale topology
(\cite[Section 2.1]{Suz}).
Let $\ast^{\mathrm{proet}}_{\mathrm{et}}$ be the category of profinite sets
endowed with the classical topology,
where a covering is an open covering of profinite sets in the usual sense.%
\footnote{
	The convention here is that the upper script (such as ``$\mathrm{indrat}$'') denotes
	the type of objects of the underlying category
	and the lower script (such as ``$\mathrm{proet}$'') denotes the topology.
	When they are ``the same'', the upper script is omitted.
	Hence $\ast^{\mathrm{proet}}_{\mathrm{proet}} = \ast_{\mathrm{proet}}$.
}
Let $B_{W_{k}}(\ast^{\mathrm{proet}}_{\mathrm{et}})$ be the classifying topos of
$W_{k}$ in $\ast^{\mathrm{proet}}_{\mathrm{et}}$.
Let
	\[
			P
		\colon
			\operatorname{Spec} k^{\mathrm{indrat}}_{\mathrm{proet}}
		\to
			\operatorname{Spec} k^{\mathrm{indrat}}_{\mathrm{et}}
		\quad \text{or} \quad
			P
		\colon
			\ast_{\mathrm{proet}}
		\to
			\ast^{\mathrm{proet}}_{\mathrm{et}}
	\]
be the morphism of sites defined by the identity functor,
so that $P^{\ast}$ is the pro-\'etale sheafification functor.
Then $\Gamma(\overline{k}, \;\cdot\;)$ extends to the \'etale topology:

\begin{prop} \label{0006}
	The diagram
		\[
			\begin{CD}
					D(k^{\mathrm{indrat}}_{\mathrm{et}})
				@> \Gamma(\overline{k}, \;\cdot\;) >>
					D(B_{W_{k}}(\ast^{\mathrm{proet}}_{\mathrm{et}}))
				\\
				@V P^{\ast} VV
				@VV P^{\ast} V
				\\
					D(k^{\mathrm{indrat}}_{\mathrm{proet}})
				@> \Gamma(\overline{k}, \;\cdot\;) >>
					D(B_{W_{k}}(\ast_{\mathrm{proet}}))
			\end{CD}
		\]
	commutes.
\end{prop}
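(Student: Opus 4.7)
I would reduce the claim to a statement about abelian sheaves and then promote it to the derived level via exactness of both $\Gamma(\overline{k}, \cdot)$ functors.

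The two horizontal functors both arise from the same functor $S \mapsto S_{\overline{k}}$ on underlying categories, and the two vertical functors $P^{\ast}$ both arise from the identity functor on underlying categories. Since $S \mapsto S_{\overline{k}}$ sends étale (resp.\ pro-étale) coverings of profinite sets to étale (resp.\ pro-étale) coverings of objects of $k^{\mathrm{indrat}}$, it defines morphisms of sites that fit into a strictly commuting outer square. Consequently, on presheaves of abelian groups the two composites agree literally, and the underived commutation reduces to showing that for a pro-étale sheaf $C$ on $k^{\mathrm{indrat}}$, the presheaf $\Gamma(\overline{k}, C) \colon S \mapsto C(S_{\overline{k}})$ is already a pro-étale sheaf on $\ast$. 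This is immediate: a pro-étale covering $S' \twoheadrightarrow S$ of profinite sets gives a pro-étale covering $S'_{\overline{k}} \to S_{\overline{k}}$ in $\operatorname{Spec} k^{\mathrm{indrat}}_{\mathrm{proet}}$, and the sheaf condition for $C$ at $S_{\overline{k}}$ translates verbatim to the sheaf condition for $\Gamma(\overline{k}, C)$ at $S$. The same argument, with ``étale'' in place of ``pro-étale'' throughout, handles the top row. The compatibility with the $W_{k}$-action is preserved because $S \mapsto S_{\overline{k}}$ is $W_{k}$-equivariant in the natural sense.

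To promote the commutation to derived categories it suffices, since $P^{\ast}$ is exact, to verify that both $\Gamma(\overline{k}, \cdot)$ functors are exact. In the pro-étale case, for $S$ extremally disconnected the ind-rational $k$-algebra $S_{\overline{k}}$ is w-contractible in $\operatorname{Spec} k^{\mathrm{indrat}}_{\mathrm{proet}}$, whence the higher pro-étale cohomology of any abelian sheaf on $S_{\overline{k}}$ vanishes; since such $S$ form a basis of $\ast_{\mathrm{proet}}$, the functor $\Gamma(\overline{k}, \cdot)$ is exact. In the étale case, restricting to finite $S$ (which form a basis of $\ast^{\mathrm{proet}}_{\mathrm{et}}$), the object $S_{\overline{k}}$ is a finite disjoint union of copies of $\operatorname{Spec} \overline{k}$ and is therefore étale-acyclic, so exactness again holds. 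Combining exactness with the underived commutation yields the desired natural isomorphism of triangulated functors.

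The principal subtlety is the pro-étale exactness of $\Gamma(\overline{k}, \cdot)$, which rests on the fact that the functor $S \mapsto S_{\overline{k}}$ transports sufficient ``contractibility'' to kill higher pro-étale cohomology on the site $\operatorname{Spec} k^{\mathrm{indrat}}_{\mathrm{proet}}$. This is implicit in the construction of $R\Gamma(k_{W}, \cdot)$ in \cite{Suz} but deserves a careful verification; once granted, the remainder of the argument is formal.
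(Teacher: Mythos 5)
Your approach is genuinely different from the paper's. The paper forgets the $W_k$-action, identifies $\ast^{\mathrm{proet}}_{\mathrm{et}}$ and $\ast_{\mathrm{proet}}$ with the small sites $\operatorname{Spec}\overline{k}^{\mathrm{proet}}_{\mathrm{et}}$ and $\operatorname{Spec}\overline{k}_{\mathrm{proet}}$ respectively, and then observes that under this identification the two horizontal functors $\Gamma(\overline{k},\cdot)$ become restrictions to the small sites over $\overline{k}$; the commutation with $P^{\ast}$ then becomes the standard fact that restricting to a small site commutes with changing the topology. Your proposal instead argues directly by exactness plus a presheaf-level agreement.

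The main problem is with your treatment of the underived commutation. You write that the two composites ``agree literally'' on presheaves and that the commutation ``reduces to showing that for a pro-\'etale sheaf $C$, the presheaf $\Gamma(\overline{k}, C)$ is already a pro-\'etale sheaf on $\ast$.'' This is not the correct reduction. Sheafhood of the target presheaf only shows that the bottom-row functor $\Gamma(\overline{k},\cdot)$ is well-defined on pro-\'etale sheaves; it does not address the real question, which is whether the \emph{pro-\'etale sheafification} of $S \mapsto C(S_{\overline{k}})$ (for an \'etale sheaf $C$) equals $S \mapsto (P^{\ast} C)(S_{\overline{k}})$. A presheaf and the presheaf of sections of a sheafified sheaf can agree before sheafifying but diverge after, and the sheafification functors $P^{\ast}$ on the two sides a priori interact differently with $\Gamma(\overline{k},\cdot)$. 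What you actually need here is that for $S$ in a basis of the pro-\'etale topos --- say $S$ extremally disconnected --- both $(P^{\ast}\Gamma(\overline{k}, C))(S)$ and $(P^{\ast} C)(S_{\overline{k}})$ equal $C(S_{\overline{k}})$, the first because $S$ is w-contractible in $\ast_{\mathrm{proet}}$, the second because $S_{\overline{k}}$ is w-contractible in $\operatorname{Spec} k^{\mathrm{indrat}}_{\mathrm{proet}}$. You mention the w-contractibility of $S_{\overline{k}}$, but only in service of exactness; you never deploy it to establish the commutation itself, which is the genuinely substantive step.

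A secondary issue: your argument for exactness of the top-row (\'etale-level) functor claims that finite $S$ ``form a basis of $\ast^{\mathrm{proet}}_{\mathrm{et}}$.'' This is not right: an open cover of an infinite profinite set can be refined to a finite \emph{disjoint} cover, but the pieces are themselves profinite (clopen subsets of $S$) and need not be finite, so you cannot reduce to finite $S$ by refinement. You would instead need to verify \'etale acyclicity of $S_{\overline{k}}$ directly for profinite $S$, which is exactly the kind of thing the paper's small-site identification is designed to sidestep.
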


\begin{proof}
	It is enough to prove this after forgetting the $W_{k}$-actions
	and identifying $\ast^{\mathrm{proet}}_{\mathrm{et}}$ and $\ast_{\mathrm{proet}}$
	with $\operatorname{Spec} \overline{k}^{\mathrm{proet}}_{\mathrm{et}}$
	(the \'etale site on the category of pro-\'etale $\overline{k}$-schemes) and
	$\operatorname{Spec} \overline{k}_{\mathrm{proet}}$, respectively.
	But then the diagram
		\[
			\begin{CD}
					D(k^{\mathrm{indrat}}_{\mathrm{et}})
				@>>>
					D(\overline{k}^{\mathrm{proet}}_{\mathrm{et}})
				\\
				@V P^{\ast} VV
				@VV P^{\ast} V
				\\
					D(k^{\mathrm{indrat}}_{\mathrm{proet}})
				@>>>
					D(\overline{k}_{\mathrm{proet}})
			\end{CD}
		\]
	commutes,
	where the horizontal functors are restrictions to the small sites over $\overline{k}$.
\end{proof}

Let $K$ be a non-archimedean local field
with ring of integers $\mathcal{O}_{K}$ and residue field $k = \mathbbm{F}_{q}$.
The characteristic of $K$ may be either zero or positive.
For $k' \in k^{\mathrm{indrat}}$, set
	\[
			\mathbf{K}(k')
		=
				(W(k') \mathbin{\Hat{\otimes}}_{W(k)} \mathcal{O}_{K})
			\mathbin{\otimes}_{\mathcal{O}_{K}}
				K,
	\]
where $W$ denotes the ring of $p$-typical Witt vectors of infinite length.
Let $\operatorname{Spec} K_{\mathrm{Et}}$ be (the opposite of) the category of $K$-algebras
endowed with the \'etale topology.
By \cite[Proposition 2.5.1]{Suz20a},
this functor $\mathbf{K}$ defines a premorphism of sites
	\[
			\pi
		\colon
			\operatorname{Spec} K_{\mathrm{Et}}
		\to
			\operatorname{Spec} k^{\mathrm{indrat}}_{\mathrm{et}}.
	\]
As in the paragraphs after \cite[Proposition 2.5.2]{Suz20a}
or \cite[Proposition 2.4]{Suz21},
we have a cup product morphism
	\[
			R \pi_{\ast} A \mathbin{\otimes}^{L} R \pi_{\ast} B
		\to
			R \pi_{\ast}(A \mathbin{\otimes}^{L} B)
	\]
in $D(k^{\mathrm{indrat}}_{\mathrm{et}})$ for any $A, B \in D(K_{\mathrm{Et}})$.
Set
	\[
			\mathbf{\Gamma}(K, \;\cdot\;)
		=
			P^{\ast} \pi_{\ast}
		\colon
			\operatorname{Set}(K_{\mathrm{Et}})
		\to
			\operatorname{Set}(k^{\mathrm{indrat}}_{\mathrm{proet}}).
	\]
As in the first paragraph of \cite[Section 4.1]{Suz}, we have a canonical morphism
	\[
			v_{K}
		\colon
			\mathbf{K}^{\times}
		\to
			\mathbbm{Z}
	\]
in $\operatorname{Ab}(k^{\mathrm{indrat}}_{\mathrm{proet}})$
(with $\mathbbm{Z}$ viewed as a constant group scheme over $k$),
which is the normalized valuation map
$\mathbf{K}^{\times}(k') \to \mathbbm{Z}$ if $k'$ is a perfect field over $k$.
We have $\mathbf{H}^{i}(K, \mathbbm{G}_{m}) = 0$ for $i \ge 1$
by \cite[Proposition (3.4.3)]{Suz},
where $\mathbf{H}^{i}$ denotes the $i$-th cohomology object of $R \mathbf{\Gamma}$.
This defines a morphism
	\[
			R \mathbf{\Gamma}(K, \mathbbm{G}_{m})
		\cong
			\mathbf{K}^{\times}
		\to
			\mathbbm{Z}
	\]
as in \cite[(4.1.1)]{Suz}.

Define
	\[
			R \Gamma(K_{W}, \;\cdot\;)
		=
			R \Gamma(k_{W}, R \mathbf{\Gamma}(K, \;\cdot\;))
		\colon
			D(K_{\mathrm{Et}})
		\to
			D(\ast_{\mathrm{proet}}).
	\]
We have morphisms
	\begin{equation} \label{0014}
			R \Gamma(K_{W}, \mathbbm{G}_{m})
		\to
			R \Gamma(k_{W}, \mathbbm{Z})
		\to
			\mathbbm{Z}[-1].
	\end{equation}
Let $\operatorname{\mathbf{Hom}}_{\ast_{\mathrm{proet}}}$ be the sheaf-Hom functor
for $\ast_{\mathrm{proet}}$.
Now the precise formulation of Theorem \ref{intro:thm:suz} is the following:

\begin{thm}[{\cite[Section 10]{Suz}}] \label{0013}
	Let $M$ and $N$ be $1$-motives over $K$ dual to each other.
	Let
		\[
			M \mathbin{\otimes}^{L} N \to \mathbbm{G}_{m}[1]
		\]
	be the Poincar\'e biextension morphism in $D(K_{\mathrm{Et}})$.
	Then the induced morphisms
		\[
				R \Gamma(K_{W}, M)
			\mathbin{\otimes}^{L}
				R \Gamma(K_{W}, N)
			\to
				R \Gamma(K_{W}, \mathbbm{G}_{m}[1])
			\to
				\mathbbm{Z}
		\]
	induce an isomorphism
		\[
				R \Gamma(K_{W}, M)
			\overset{\sim}{\to}
				R \operatorname{\mathbf{Hom}}_{\ast_{\mathrm{proet}}}(
					R \Gamma(K_{W}, N),
					\mathbbm{Z}
				).
		\]
\end{thm}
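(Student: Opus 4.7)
The plan is to reduce the theorem to three base cases via the weight filtration on $1$-motives and the Cartier-dual decomposition of the Poincaré biextension. Writing $M = [Y \to E]$ with $E$ a semiabelian extension of an abelian variety $A$ by a torus $T$, and dually $N = [Y^{\ast} \to E^{\ast}]$, where $Y^{\ast}$ is the character group of $T$, $T^{\ast}$ the torus with character group $Y$, and $A^{\ast}$ the dual abelian variety, the standard compatibility of the Poincaré biextension with the weight filtration breaks the pairing $M \mathbin{\otimes}^{L} N \to \mathbbm{G}_{m}[1]$ on the graded pieces into (i) the abelian variety Poincaré pairing $A \mathbin{\otimes}^{L} A^{\ast} \to \mathbbm{G}_{m}[1]$, (ii) the Cartier pairing $Y \mathbin{\otimes}^{L} T^{\ast} \to \mathbbm{G}_{m}$, and (iii) the symmetric Cartier pairing $T \mathbin{\otimes}^{L} Y^{\ast} \to \mathbbm{G}_{m}$. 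Combined with the five-lemma for fibre sequences in $D(\ast_{\mathrm{proet}})$, it suffices to prove the duality isomorphism in each of these three cases.

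For the torus case (and, symmetrically, the lattice case), the plan is to use the identification $R \mathbf{\Gamma}(K, T) \cong R \underline{\Hom}(Y^{\ast}, \mathbf{K}^{\times})$ together with the valuation exact sequence
\[
1 \to \mathcal{O}_{\mathbf{K}}^{\times} \to \mathbf{K}^{\times} \stackrel{v_{K}}{\to} \mathbbm{Z} \to 0
\]
in $\operatorname{Ab}(k^{\mathrm{indrat}}_{\mathrm{proet}})$. The integral unit contribution $R \Gamma(k_{W}, R \underline{\Hom}(Y^{\ast}, \mathcal{O}_{\mathbf{K}}^{\times}))$ is ``of finite type'' in the pro-\'etale sense (as laid out in \cite[Section 4.1]{Suz}) and can therefore be matched directly against the dual constituent, whereas after quotienting by the unit part the remaining pairing reduces to the $\mathbbm{Z}$-linear self-duality $R \Gamma(k_{W}, R \underline{\Hom}(Y^{\ast}, \mathbbm{Z})) \mathbin{\otimes}^{L} R \Gamma(k_{W}, Y^{\ast}) \to R \Gamma(k_{W}, \mathbbm{Z})$, which is an Artin--Verdier-type statement for finitely generated $W_{k}$-modules proved in \cite[Section 4]{Suz}.

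For the abelian variety case the key input is Tate's classical local duality, giving a perfect pairing $H^{q}(K, A) \otimes H^{1 - q}(K, A^{\ast}) \to \mathbbm{Q}/\mathbbm{Z}$. The Weil-\'etale complex $R \Gamma(K_{W}, A)$ is concentrated in degrees $0$ and $1$, with $H^{0}$ a profinite sheaf on $\ast_{\mathrm{proet}}$ and $H^{1}$ a discrete torsion sheaf of cofinite type. Such complexes sit in a full subcategory of $D(\ast_{\mathrm{proet}})$ on which $R \operatorname{\mathbf{Hom}}_{\ast_{\mathrm{proet}}}(-, \mathbbm{Z})$ is an equivalence, interchanging profinite and discrete torsion objects via $\operatorname{\mathbf{Ext}}^{1}(-, \mathbbm{Z}) = \operatorname{\mathbf{Hom}}(-, \mathbbm{Q}/\mathbbm{Z})$ on the torsion side and $\operatorname{\mathbf{Hom}}(-, \mathbbm{Z}) = 0$ elsewhere. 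Transporting Tate's pairing through this equivalence produces the required isomorphism, subject to checking that the cup-product built from the Poincaré biextension via $R \mathbf{\Gamma}(K, \;\cdot\;)$ and the Frobenius cone reproduces, after global sections, the classical Poincaré pairing in Galois cohomology.

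The principal obstacle is precisely this compatibility of cup products. The pairing in the theorem is assembled through the Witt-vector rings $\mathbf{K}$ and the $[\;\cdot\; \stackrel{F-1}{\to} \;\cdot\;]$ construction defining $R \Gamma(k_{W}, \;\cdot\;)$, whereas Tate's pairing is built from Galois cocycles and the Brauer-group trace. Bridging the two rests on the vanishing $\mathbf{H}^{i}(K, \mathbbm{G}_{m}) = 0$ for $i \geq 1$ (\cite[Proposition (3.4.3)]{Suz}), the explicit presentation $R \Gamma(K_{W}, \mathbbm{G}_{m}) \cong [\mathbf{K}^{\times} \stackrel{F - 1}{\to} \mathbf{K}^{\times}][-1]$, and the identification of the composite \eqref{0014} with valuation-then-Frobenius-trace; these ingredients, assembled in \cite[Section 10]{Suz}, collapse the abstract cup-product to the classical trace pairing and close the argument.
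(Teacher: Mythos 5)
Note first that the paper does not actually \emph{prove} Theorem \ref{0013}: it is stated as a restatement of the main duality theorem of \cite{Suz}, and the short paragraph following it supplies only the justification for replacing the relative fppf site used in \cite{Suz} with the big \'etale site used in the appendix (valid because the coefficients are smooth group schemes, so fppf and \'etale cohomology agree). That is the entire content of the ``proof'' in this paper. Your proposal, by contrast, sketches an independent proof of the duality from first principles, which is a fundamentally different undertaking from what the paper does.

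Taken on its own terms, the sketch has two real gaps. First, in the torus case the step ``the integral unit contribution \(R\Gamma(k_W, R\underline{\Hom}(Y^\ast, \mathcal{O}_{\mathbf{K}}^\times))\) \dots\ can therefore be matched directly against the dual constituent'' is not an argument; it names the object that must be shown to be dualisable but does not establish the duality (this is precisely where the Serre-duality / ind-rational machinery of \cite{Suz} does nontrivial work). Second, you explicitly identify the cup-product compatibility for abelian varieties as ``the principal obstacle,'' and your final sentence defers exactly this step back to \cite[Section 10]{Suz}: ``these ingredients, assembled in \cite[Section 10]{Suz}, collapse the abstract cup-product to the classical trace pairing.'' In other words, the hardest step of your proof is still a citation to the very reference the paper is citing, so the proposal does not close the circle. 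You also take the compatibility of the Poincar\'e biextension morphism with the weight filtration (needed to apply the five lemma to the graded pieces) as given; this is not automatic and in the present paper is only established later, in the appendix on biextensions (\cref{lem:poincarecompatibilitymotives} and \cref{lem:lastlemma}), in a form tailored to the condensed setting rather than the ind-rational pro-\'etale setting used in Theorem \ref{0013}.
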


Note that \cite{Suz} originally uses the relative fppf site
$\operatorname{Spec} K_{\mathrm{fppf}} / k^{\mathrm{indrat}}_{\mathrm{et}}$
while the above uses the big \'etale site $\operatorname{Spec} K_{\mathrm{Et}}$.
As written in \cite[Section 2.5 and Appendix A]{Suz20a},
we may use the fppf site $\operatorname{Spec} K_{\mathrm{fppf}}$ in \cite{Suz}
in the above style.
Since the fppf cohomology with coefficients in a smooth group scheme
agrees with the \'etale cohomology,
we may further use $\operatorname{Spec} K_{\mathrm{Et}}$
when the coefficients are smooth group scheme
or complexes thereof.
In this manner, we get Theorem \ref{0013} above.

Similarly, the Poincar\'e biextension morphism does not care about the topology,
fppf or \'etale.
See also \cite[Chapter III, Proposition C.2]{ADT}.


\subsection{Deducing Theorem \ref{thm:duality1motives1} from Theorem \ref{intro:thm:suz}}
\label{0027}

Let $I_{K}$ be the inertia subgroup of $\operatorname{Gal}(K^{\mathrm{sep}} / K)$.
Let $W_{K}$ be the Weil group of $K$.
View them naturally as pro-objects in $\operatorname{Set}(\ast_{\mathrm{proet}})$.
Let $\Hat{I}_{K}$ and $\Hat{W}_{K}$ denote these pro-objects.
Let $B_{\Hat{W}_{K}}(\ast_{\mathrm{proet}})$ be the classifying topos of
$\Hat{W}_{K}$ in $\ast_{\mathrm{proet}}$.
Let $B_{\Hat{W}_{K}}(\ast^{\mathrm{proet}}_{\mathrm{et}})$ be similarly.
Let
	\[
			\Gamma(\Hat{I}_{K}, \;\cdot\;)
		\colon
			B_{\Hat{W}_{K}}(\ast_{\mathrm{proet}})
		\to
			B_{W_{k}}(\ast_{\mathrm{proet}}),
	\]
	\[
			\Gamma(\Hat{I}_{K}, \;\cdot\;)
		\colon
			B_{\Hat{W}_{K}}(\ast^{\mathrm{proet}}_{\mathrm{et}})
		\to
			B_{W_{k}}(\ast^{\mathrm{proet}}_{\mathrm{et}})
	\]
be the $\Hat{I}_{K}$-invariant part functors.
Set
	\[
			\Gamma(\Hat{W}_{K}, \;\cdot\;)
		=
			\Gamma(W_{k}, \Gamma(\Hat{I}_{K}, \;\cdot\;))
		\colon
			B_{\Hat{W}_{K}}(\ast_{\mathrm{proet}})
		\to
			B_{W_{k}}(\ast_{\mathrm{proet}}),
	\]
	\[
			\Gamma(\Hat{W}_{K}, \;\cdot\;)
		=
			\Gamma(W_{k}, \Gamma(\Hat{I}_{K}, \;\cdot\;))
		\colon
			B_{\Hat{W}_{K}}(\ast^{\mathrm{proet}}_{\mathrm{et}})
		\to
			B_{W_{k}}(\ast^{\mathrm{proet}}_{\mathrm{et}}),
	\]
which are $\Hat{W}_{K}$-invariant part functors.

Let $K^{\mathrm{ur}}$ be the maximal unramified extension of $K$
and $\Hat{K}^{\mathrm{ur}}$ its completion.
For $C \in \operatorname{Set}(K_{\mathrm{Et}})$ and $S \in \ast_{\mathrm{proet}}$,
define
	\[
			\Gamma(\Hat{K}^{\mathrm{sep}}_{W}, C)(S)
		=
			\varinjlim_{L / \Hat{K}^{\mathrm{ur}}}
				C \bigl(
					L \mathbin{\otimes}_{\Hat{K}^{\mathrm{ur}}} \mathbf{K}(S_{\overline{k}})
				\bigr),
	\]
where $L$ runs over finite separable extensions of $\Hat{K}^{\mathrm{ur}}$.%
\footnote{
	Here $\mathbf{K}(S_{\overline{k}})$ means
	$\mathbf{K}(\mathcal{O}(S_{\overline{k}}))$,
	where we write $S_{\overline{k}} = \operatorname{Spec} \mathcal{O}(S_{\overline{k}})$.
	We are sloppy about whether the functors are defined
	for rings or schemes.
}
It is naturally a discrete $W_{K}$-set.
Varying $S$, we have a presheaf
$\Gamma(\Hat{K}^{\mathrm{sep}}_{W}, C)$ on $\ast_{\mathrm{proet}}$
with an action of $\Hat{W}_{K}$.
It is a sheaf in the classical topology on $\ast_{\mathrm{proet}}$.
Hence we have an object
$\Gamma(\Hat{K}^{\mathrm{sep}}_{W}, C)$
of $B_{\Hat{W}_{K}}(\ast^{\mathrm{proet}}_{\mathrm{et}})$.
Thus we have a functor
	\[
			\Gamma(\Hat{K}^{\mathrm{sep}}_{W}, \;\cdot\;)
		\colon
			\operatorname{Set}(K_{\mathrm{Et}})
		\to
			B_{\Hat{W}_{K}}(\ast^{\mathrm{proet}}_{\mathrm{et}}).
	\]

\begin{prop} \label{0001}
	The functor $\Gamma(\Hat{K}^{\mathrm{sep}}_{W}, \;\cdot\;)$ is exact.
\end{prop}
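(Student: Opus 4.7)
The plan is to exhibit $\Gamma(\Hat{K}^{\mathrm{sep}}_{W}, \cdot)$ as a filtered colimit of inverse-image functors of premorphisms of sites, each automatically exact, and then to invoke that a filtered colimit of exact functors between topoi is exact.

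First, for each finite Galois extension $L/\Hat{K}^{\mathrm{ur}}$ inside $\Hat{K}^{\mathrm{sep}}$, let $G_{L}$ denote the corresponding finite quotient of $\Hat{W}_{K}$, namely the extension of $W_{k}$ by $\operatorname{Gal}(L/\Hat{K}^{\mathrm{ur}})$ induced by $1 \to I_{K} \to \Hat{W}_{K} \to W_{k} \to 1$. I would introduce
\[
    \Gamma_{L} \colon \operatorname{Set}(K_{\mathrm{Et}}) \to B_{G_{L}}(\ast^{\mathrm{proet}}_{\mathrm{et}}),
    \quad
    C \mapsto \bigl[\, S \mapsto C(L \otimes_{\Hat{K}^{\mathrm{ur}}} \mathbf{K}(S_{\overline{k}})) \,\bigr],
\]
with the natural $G_{L}$-action arising from its action on $L$ through $\operatorname{Gal}(L/\Hat{K}^{\mathrm{ur}})$ and on $\mathbf{K}(S_{\overline{k}})$ through $W_{k}$. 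Building on \cite[Proposition 2.5.1]{Suz20a}, which gives that $\mathbf{K}$ defines a premorphism of sites $\pi$, the assignment $S \mapsto L \otimes_{\Hat{K}^{\mathrm{ur}}} \mathbf{K}(S_{\overline{k}})$ is itself a premorphism of sites $\ast^{\mathrm{proet}}_{\mathrm{et}} \to \operatorname{Spec} K_{\mathrm{Et}}$, since tensoring with the finite \'etale $\Hat{K}^{\mathrm{ur}}$-algebra $L$ preserves pullbacks and sends open covers to \'etale covers. Hence $\Gamma_{L}$ is the underlying inverse-image part of a morphism of topoi into the equivariant target, and is therefore exact.

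Next, since $\Hat{W}_{K} = \varprojlim_{L} G_{L}$ as a pro-condensed group, a sheaf in $B_{\Hat{W}_{K}}(\ast^{\mathrm{proet}}_{\mathrm{et}})$ corresponds to a compatible family of sheaves in the $B_{G_{L}}(\ast^{\mathrm{proet}}_{\mathrm{et}})$, and tracing through the definition identifies $\Gamma(\Hat{K}^{\mathrm{sep}}_{W}, \cdot)$ with the filtered colimit $\varinjlim_{L} \Gamma_{L}$, with transition maps induced by the inclusions $L \subseteq L'$. Since filtered colimits commute with finite limits in any topos and trivially commute with arbitrary colimits, the colimit of the exact functors $\Gamma_{L}$ is itself exact, yielding the claimed exactness of $\Gamma(\Hat{K}^{\mathrm{sep}}_{W}, \cdot)$.

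The main obstacle I anticipate is the first step: rigorously verifying the premorphism-of-sites property together with the prescribed $\Hat{W}_{K}$-equivariance, which requires tracing through the explicit construction of $\mathbf{K}$ in \cite{Suz20a} and checking that the Galois action on $L$ and the $W_{k}$-action on $\mathbf{K}(S_{\overline{k}})$ assemble into a continuous action of $G_{L}$ compatible with the extension $1 \to I_{K} \to \Hat{W}_{K} \to W_{k} \to 1$; once this is secured, the remainder is a purely formal consequence of the 2-functoriality of classifying topoi and the exactness of inverse-image functors.
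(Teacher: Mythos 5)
Your argument breaks at the first step: you identify $\Gamma_L$ with the inverse-image part of a morphism of topoi, but the variance is the opposite. A premorphism of sites $f\colon T'\to T$ is given by a functor $f^{-1}$ on underlying categories going from $T$ \emph{to} $T'$, and its direct image is $(f_*C)(X)=C(f^{-1}X)$. Your assignment $S\mapsto L\otimes_{\Hat{K}^{\mathrm{ur}}}\mathbf{K}(S_{\overline{k}})$ is exactly such an $f^{-1}$, for a premorphism $\pi_L\colon\operatorname{Spec}K_{\mathrm{Et}}\to\ast^{\mathrm{proet}}_{\mathrm{et}}$ (not the direction you wrote), and your $\Gamma_L$ is $\pi_{L,*}$, a \emph{direct image}, not a pullback. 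Direct images are automatically left exact but not right exact, and in fact $\Gamma_L$ for a single fixed $L$ does \emph{not} preserve epimorphisms of sheaves: an epimorphism $D\to C$ in $\operatorname{Set}(K_{\mathrm{Et}})$ only guarantees that a section of $C$ over $L\otimes_{\Hat{K}^{\mathrm{ur}}}\mathbf{K}(S_{\overline{k}})$ lifts to $D$ after passing to some faithfully flat \'etale cover, and there is no reason that cover is split over $L$ itself. Since each $\Gamma_L$ fails to be exact, your reduction to ``filtered colimit of exact functors is exact'' has nothing to work with.

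What is actually true, and what the proof must establish, is that the \emph{colimit} $\varinjlim_L\Gamma_L$ preserves epimorphisms even though no single $\Gamma_L$ does: the cover that splits the lift can be traded for a passage to a larger finite separable extension $L'''/L$. Proving this is the entire content of the proposition and requires real input: one must first descend the faithfully flat \'etale cover of $L\otimes_{\Hat{K}^{\mathrm{ur}}}\mathbf{K}(S_{\overline{k}})$ to one defined at a finite level $S_\lambda$ of the profinite set $S$ (this is \cite[Lemma 2.5.3]{Suz22}), then use that $\mathbf{K}((S_\lambda)_{\overline{k}})\cong\prod_{x\in S_\lambda}\Hat{K}^{\mathrm{ur}}$ to write the cover as a product $\prod_x R''_x$ of \'etale $L$-algebras, and finally choose a single finite extension $L'''/L$ dominating all the $R''_x$ simultaneously (e.g.\ a residue field of the tensor product $\bigotimes_x R''_x$ over $L$) to produce a diagonal map $R''\to L'''\otimes_{\Hat{K}^{\mathrm{ur}}}\mathbf{K}((S_\lambda)_{\overline{k}})$ and hence, after base change, a map into $L'''\otimes_{\Hat{K}^{\mathrm{ur}}}\mathbf{K}(S_{\overline{k}})$. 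The diagonal trick and the reduction to finite level are precisely the steps your purely formal argument cannot see, and they cannot be replaced by exactness at each fixed $L$.

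Your identification of $\Gamma(\Hat{K}^{\mathrm{sep}}_W,\cdot)$ with the filtered colimit $\varinjlim_L\Gamma_L$ and your observation that left exactness follows from the sites formalism are both correct, and this is indeed the scaffolding the argument rests on; but the substance of the proposition is showing that the colimit preserves epimorphisms, and that requires the field-theoretic argument above rather than a 2-categorical one.
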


\begin{proof}
	It is left exact.
	We need to show that if $D \to C$ is an epimorphism in $\operatorname{Set}(K_{\mathrm{Et}})$,
	then $\Gamma(\Hat{K}^{\mathrm{sep}}_{W}, D) \to \Gamma(\Hat{K}^{\mathrm{sep}}_{W}, C)$
	is an epimorphism in $B_{\Hat{W}_{K}}(\ast^{\mathrm{proet}}_{\mathrm{et}})$.
	Let $S \in \ast_{\mathrm{proet}}$ and $L / \Hat{K}^{\mathrm{ur}}$ a finite separable extension.
	Let
		$
				a
			\in
				C \bigl(
					L \mathbin{\otimes}_{\Hat{K}^{\mathrm{ur}}} \mathbf{K}(S_{\overline{k}})
				\bigr)
		$.
	Since $C \in \operatorname{Set}(K_{\mathrm{Et}})$,
	there exist a faithfully flat \'etale
	$L \mathbin{\otimes}_{\Hat{K}^{\mathrm{ur}}} \mathbf{K}(S_{\overline{k}})$-algebra $R$
	and an element $b \in D(R)$ such that $a$ and $b$ map to the same element of $C(R)$
	via the maps
		\[
				D(R)
			\to
				C(R)
			\gets
				C \bigl(
					L \mathbin{\otimes}_{\Hat{K}^{\mathrm{ur}}} \mathbf{K}(S_{\overline{k}})
				\bigr).
		\]
	Write $S = \varprojlim_{\lambda} S_{\lambda}$ with $S_{\lambda}$ finite.
	Set
		\[
				\mathbf{K}^{0}(S_{\overline{k}})
			=
				\varinjlim_{\lambda}
					\mathbf{K}((S_{\lambda})_{\overline{k}}).
		\]
	By \cite[Lemma 2.5.3]{Suz22},
	there exist a faithfully flat \'etale
	$L \mathbin{\otimes}_{\Hat{K}^{\mathrm{ur}}} \mathbf{K}^{0}(S_{\overline{k}})$-algebra $R'$
	and an $L \mathbin{\otimes}_{\Hat{K}^{\mathrm{ur}}} \mathbf{K}(S_{\overline{k}})$-algebra homomorphism
		\begin{equation} \label{0000}
				R
			\to
				R' \mathbin{\otimes}_{\mathbf{K}^{0}(S_{\overline{k}})} \mathbf{K}(S_{\overline{k}}).
		\end{equation}
	Choose an element $\lambda$ large enough
	so that there exist a faithfully flat \'etale
	$L \mathbin{\otimes}_{\Hat{K}^{\mathrm{ur}}} \mathbf{K}((S_{\lambda})_{\overline{k}})$-algebra $R''$
	and a co-cartesian diagram
		\[
			\begin{CD}
					L \mathbin{\otimes}_{\Hat{K}^{\mathrm{ur}}} \mathbf{K}((S_{\lambda})_{\overline{k}})
				@>>>
					R''
				\\ @VVV @VVV \\
					L \mathbin{\otimes}_{\Hat{K}^{\mathrm{ur}}} \mathbf{K}^{0}(S_{\overline{k}})
				@>>>
					R'.
			\end{CD}
		\]
	Since $S_{\lambda}$ is finite, we have
		\[
				\mathbf{K}((S_{\lambda})_{\overline{k}})
			\cong
				\prod_{x \in S_{\lambda}}
					\Hat{K}^{\mathrm{ur}},
		\]
		\[
				L \mathbin{\otimes}_{\Hat{K}^{\mathrm{ur}}} \mathbf{K}((S_{\lambda})_{\overline{k}})
			\cong
				\prod_{x \in S_{\lambda}}
					L.
		\]
	Hence we can write
		\[
				R''
			=
				\prod_{x \in S_{\lambda}}
					R''_{x},
		\]
	where each $R''_{x}$ is a faithfully flat \'etale $L$-algebra.
	Let $R'''$ be the tensor product of the $L$-algebras $R''_{x}$
	over all $x \in S_{\lambda}$,
	which is a faithfully flat \'etale $L$-algebra.
	Let $L'''$ be the residue field of $R'''$ at some maximal ideal,
	which is a finite separable extension of $L$.
	For each $x \in S_{\lambda}$,
	let $i_{x} \colon R''_{x} \to L'''$ be the natural map.
	The product of the maps $i_{x}$ for $x \in \lambda$ defines a map
		\[
				R''
			\to
				\prod_{x \in S}
				 	L'''
			\cong
				L''' \mathbin{\otimes}_{\Hat{K}^{\mathrm{ur}}} \mathbf{K}((S_{\lambda})_{\overline{k}})
		\]
	such that the composite
		\[
				L \mathbin{\otimes}_{\Hat{K}^{\mathrm{ur}}} \mathbf{K}((S_{\lambda})_{\overline{k}})
			\to
				R''
			\to
				L''' \mathbin{\otimes}_{\Hat{K}^{\mathrm{ur}}} \mathbf{K}((S_{\lambda})_{\overline{k}})
		\]
	is the inclusion
	$L \hookrightarrow L'''$ tensored with
	$\mathbf{K}((S_{\lambda})_{\overline{k}})$ over $\Hat{K}^{\mathrm{ur}}$.
	By base change $(\;\cdot\;) \mathbin{\otimes}_{\mathbf{K}((S_{\lambda})_{\overline{k}})} \mathbf{K}(S_{\overline{k}})$,
	we have ring homomorphisms
		\[
				L \mathbin{\otimes}_{\Hat{K}^{\mathrm{ur}}} \mathbf{K}(S_{\overline{k}})
			\to
				R' \mathbin{\otimes}_{\mathbf{K}^{0}(S_{\overline{k}})} \mathbf{K}(S_{\overline{k}})
			\to
				L''' \mathbin{\otimes}_{\Hat{K}^{\mathrm{ur}}} \mathbf{K}(S_{\overline{k}}).
		\]
	With \eqref{0000}, we have
	an $L \mathbin{\otimes}_{\Hat{K}^{\mathrm{ur}}} \mathbf{K}(S_{\overline{k}})$-algebra homomorphism
		\[
				R
			\to
				L''' \mathbin{\otimes}_{\Hat{K}^{\mathrm{ur}}} \mathbf{K}(S_{\overline{k}}).
		\]
	Then $a$ and $b$ map to the same element of
	$C(L''' \mathbin{\otimes}_{\Hat{K}^{\mathrm{ur}}} \mathbf{K}(S_{\overline{k}}))$
	via the maps
		\[
				D(R)
			\to
				D(L''' \mathbin{\otimes}_{\Hat{K}^{\mathrm{ur}}} \mathbf{K}(S_{\overline{k}}))
			\to
				C(L''' \mathbin{\otimes}_{\Hat{K}^{\mathrm{ur}}} \mathbf{K}(S_{\overline{k}}))
			\gets
				C(R)
			\gets
				C(L \mathbin{\otimes}_{\Hat{K}^{\mathrm{ur}}} \mathbf{K}(S_{\overline{k}})).
		\]
	Therefore the image of $b$ in $\Gamma(\Hat{K}^{\mathrm{sep}}_{W}, D)(S)$
	maps to the image of $a$ in $\Gamma(\Hat{K}^{\mathrm{sep}}_{W}, C)(S)$.
	Hence $\Gamma(\Hat{K}^{\mathrm{sep}}_{W}, D) \to \Gamma(\Hat{K}^{\mathrm{sep}}_{W}, C)$
	is an epimorphism in $B_{\Hat{W}_{K}}(\ast^{\mathrm{proet}}_{\mathrm{et}})$.
\end{proof}

We say that a complex $A$ of sheaves of abelian groups on a site $S$ is
\emph{K-limp} (\cite[Section 2.4]{Suz20a})
if for any object $X$ of the underlying category of $S$, the natural morphism
	\[
			\Gamma(X, A)
		\to
			R \Gamma(X, A)
	\]
in $D(\operatorname{Ab})$ is an isomorphism,
where the functor $\Gamma(X, \;\cdot\;)$ on the left is applied term-wise to $A$.
A K-injective complex is K-limp.

\begin{prop} \label{0002}
	Let $A$ be a K-limp complex in $\operatorname{Ab}(K_{\mathrm{Et}})$.
	Then the natural morphism
		\begin{equation} \label{0018}
				\Gamma(\Hat{I}_{K}, \Gamma(\Hat{K}^{\mathrm{sep}}_{W}, A))
			\to
				R \Gamma(\Hat{I}_{K}, \Gamma(\Hat{K}^{\mathrm{sep}}_{W}, A))
		\end{equation}
	in $D(B_{W_{k}}(\ast^{\mathrm{proet}}_{\mathrm{et}}))$ is an isomorphism,
	where the functor $\Gamma(\Hat{I}_{K}, \;\cdot\;)$ on the left
	is applied term-wise.
\end{prop}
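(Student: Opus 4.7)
The plan is to evaluate both sides of \eqref{0018} on an arbitrary test object $S \in \ast^{\mathrm{proet}}_{\mathrm{et}}$ and identify each term with the complex $A(\mathbf{K}(S_{\overline{k}}))$ in a manner compatible with the comparison morphism. By construction,
\[
\Gamma(\Hat{K}^{\mathrm{sep}}_{W}, A)(S) = \varinjlim_{L/\Hat{K}^{\mathrm{ur}}} A(X_L),
\]
where $X_L = L \mathbin{\otimes}_{\Hat{K}^{\mathrm{ur}}} \mathbf{K}(S_{\overline{k}})$ and $L$ runs over finite separable extensions of $\Hat{K}^{\mathrm{ur}}$. The map $X_L \to X_{\Hat{K}^{\mathrm{ur}}} = \mathbf{K}(S_{\overline{k}})$ is a finite \'etale Galois cover with group $\operatorname{Gal}(L/\Hat{K}^{\mathrm{ur}})$, on which $\Hat{I}_K$ acts through this finite quotient.

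For the non-derived side, the sheaf property applied in each degree to the Galois cover $X_L \to X_{\Hat{K}^{\mathrm{ur}}}$ yields $A^n(X_L)^{\operatorname{Gal}(L/\Hat{K}^{\mathrm{ur}})} = A^n(\mathbf{K}(S_{\overline{k}}))$, so the term-wise invariants already stabilise to $A(\mathbf{K}(S_{\overline{k}}))$ as a complex, and thus $\Gamma(\Hat{I}_{K}, \Gamma(\Hat{K}^{\mathrm{sep}}_{W}, A))(S) = A(\mathbf{K}(S_{\overline{k}}))$. For the derived side, the key observation is that the $\Hat{I}_K$-action on $A(X_L)$ factors through the finite discrete quotient $\operatorname{Gal}(L/\Hat{K}^{\mathrm{ur}})$, so in the classifying topos over the replete site $\ast_{\mathrm{proet}}$ one has $R\Gamma(\Hat{I}_K, A(X_L)) = R\Gamma(\operatorname{Gal}(L/\Hat{K}^{\mathrm{ur}}), A(X_L))$. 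The K-limpness of $A$ then rewrites both $A(X_L)$ and $A(\mathbf{K}(S_{\overline{k}}))$ as the corresponding derived global sections on the small \'etale sites, and Galois descent along the cover $X_L \to X_{\Hat{K}^{\mathrm{ur}}}$ gives the Hochschild--Serre identification
\[
R\Gamma(\operatorname{Gal}(L/\Hat{K}^{\mathrm{ur}}), R\Gamma(X_L, A)) = R\Gamma(\mathbf{K}(S_{\overline{k}}), A).
\]
Hence $R\Gamma(\Hat{I}_K, A(X_L)) = A(\mathbf{K}(S_{\overline{k}}))$ independently of $L$, and commuting $R\Gamma(\Hat{I}_K, -)$ with the filtered colimit $\varinjlim_L$ yields $R\Gamma(\Hat{I}_K, \Gamma(\Hat{K}^{\mathrm{sep}}_W, A))(S) = A(\mathbf{K}(S_{\overline{k}}))$. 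A compatibility check on the identification maps shows that \eqref{0018} corresponds under these identifications to the identity on $A(\mathbf{K}(S_{\overline{k}}))$.

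The main obstacle is twofold and topos-theoretic: one must justify the commutation of $R\Gamma(\Hat{I}_K, -)$ with the filtered colimit $\varinjlim_L$, and the identification of derived $\Hat{I}_K$-invariants with ordinary finite group cohomology for modules whose action factors through a finite quotient. Both rely on the replete-ness of $\ast_{\mathrm{proet}}$ and on $\Hat{I}_K$ being by construction a cofiltered limit of finite discrete groups in $\operatorname{Set}(\ast_{\mathrm{proet}})$; technically, one reduces to evaluation on extremally disconnected $S$ and invokes that higher cohomology on the classifying topos of a finite discrete group against a discrete module coincides with ordinary group cohomology. Once these points are settled, the K-limpness hypothesis combined with Galois descent for the small \'etale topology on $\mathbf{K}(S_{\overline{k}})$ produces the desired isomorphism automatically.
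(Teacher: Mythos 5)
Your approach is genuinely different from the paper's. You try to compute both sides explicitly via Cartan--Leray/Hochschild--Serre descent along the towers $X_L \to \operatorname{Spec}\mathbf{K}(S_{\overline{k}})$, whereas the paper first reduces to $A$ K-injective (by a mapping-cone argument), forgets the $W_k$-action, restricts to $\Hat{I}_K$, evaluates at $S$, and then shows that the functor $\Gamma(\Hat{K}^{\mathrm{sep}}_{W}, \;\cdot\;)(S) \colon \operatorname{Set}(\mathbf{K}(S_{\overline{k}})_{\mathrm{et}}) \to B_{\Hat{I}_K}$ admits an \emph{exact left adjoint} and therefore preserves K-injectivity, making the derived comparison tautological. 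The two methods share the initial reduction to a section-wise statement, but diverge completely afterward, and the paper's route avoids every explicit cohomology computation.

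There is a genuine error in your derived computation. You assert that, because the $\Hat{I}_K$-action on $A(X_L)$ factors through $G_L := \operatorname{Gal}(L/\Hat{K}^{\mathrm{ur}})$, one has $R\Gamma(\Hat{I}_K, A(X_L)) = R\Gamma(G_L, A(X_L))$ for each fixed $L$. This is false: inflation $R\Gamma(G_L, M) \to R\Gamma(I_K, M)$ is not a quasi-isomorphism for an inflated module $M$, because the kernel $\operatorname{Gal}((\Hat{K}^{\mathrm{ur}})^{\mathrm{sep}}/L)$ has plenty of higher cohomology (already $H^1$ with $\mathbbm{Z}/p$ coefficients is nonzero, as the wild inertia is an infinite pro-$p$ group). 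Consequently the next sentence, ``$R\Gamma(\Hat{I}_K, A(X_L)) = A(\mathbf{K}(S_{\overline{k}}))$ independently of $L$,'' is also false, and you cannot then appeal to a colimit of constant objects. The correct route in your style is to \emph{not} fix $L$: use the standard continuous-cohomology formula $H^n(I_K, M) \cong \varinjlim_U H^n(I_K/U, M^U)$ for the single discrete module $M = \varinjlim_L A(X_L)$, observe via the sheaf condition that $M^{\operatorname{Gal}(\cdot/L)} = A(X_L)$, and only then apply Cartan--Leray plus K-limpness to each term $H^n(G_L, A(X_L)) = H^n(A(\mathbf{K}(S_{\overline{k}})))$. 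This colimit formula simultaneously handles your two ``obstacles'' (it \emph{is} the commutation with filtered colimits and the passage to finite quotients), so the gaps you flagged are not two independent technical points but one statement, and the one you actually wrote down in the body of the proof is wrong. Once corrected, your argument yields the result and, notably, avoids the paper's preliminary reduction to $A$ K-injective, since K-limpness alone suffices for the Cartan--Leray step; but as written the proof does not go through.
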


\begin{proof}
	We may assume that $A$ is K-injective.
	Indeed, let $A \to I$ be a quasi-isomorphism to a K-injective complex.
	Then its mapping cone $B$ is exact as a complex of presheaves on $\operatorname{Spec} K_{\mathrm{Et}}$
	by the proof of \cite[Proposition 2.4.2]{Suz20a}.
	For such a complex $B$, both sides of \eqref{0018} are zero.
	Hence the statement for $I$ implies the statement for $A$.
	
	Now assume that $A = I$ is K-injective.
	The functor
	$B_{W_{k}}(\ast^{\mathrm{proet}}_{\mathrm{et}}) \to \operatorname{Set}(\ast^{\mathrm{proet}}_{\mathrm{et}})$
	forgetting the $W_{k}$-action
	and the functor
	$B_{\Hat{W}_{K}}(\ast^{\mathrm{proet}}_{\mathrm{et}}) \to B_{\Hat{I}_{K}}(\ast^{\mathrm{proet}}_{\mathrm{et}})$
	restricting the $\Hat{W}_{K}$-action to the $\Hat{I}_{K}$-action
	are exact and conservative.
	Hence it is enough to show that the morphism
		\[
				\Gamma(\Hat{I}_{K}, \Gamma(\Hat{K}^{\mathrm{sep}}_{W}, I))
			\to
				R \Gamma(\Hat{I}_{K}, \Gamma(\Hat{K}^{\mathrm{sep}}_{W}, I))
		\]
	in $D(\ast^{\mathrm{proet}}_{\mathrm{et}})$ is an isomorphism,
	where the functors $\Gamma(\Hat{K}^{\mathrm{sep}}_{W}, \;\cdot\;)$
	and $\Gamma(\Hat{I}_{K}, \;\cdot\;)$ are now viewed as
		\[
				\operatorname{Set}(K_{\mathrm{Et}})
			\stackrel{\Gamma(\Hat{K}^{\mathrm{sep}}_{W}, \;\cdot\;)}{\longrightarrow}
				B_{\Hat{I}_{K}}(\ast^{\mathrm{proet}}_{\mathrm{et}})
			\stackrel{\Gamma(\Hat{I}_{K}, \;\cdot\;)}{\longrightarrow}
				\operatorname{Set}(\ast^{\mathrm{proet}}_{\mathrm{et}}).
		\]
	It is enough to show that for an arbitrary $S \in \ast_{\mathrm{proet}}$,
	the morphism
		\begin{equation} \label{0017}
				\Gamma(\Hat{I}_{K}, \Gamma(\Hat{K}^{\mathrm{sep}}_{W}, I)(S))
			\to
				R \Gamma(\Hat{I}_{K}, \Gamma(\Hat{K}^{\mathrm{sep}}_{W}, I)(S))
		\end{equation}
	in $D(\operatorname{Ab})$ is an isomorphism,
	where the functors $\Gamma(\Hat{K}^{\mathrm{sep}}_{W}, \;\cdot\;)(S)$
	and $\Gamma(\Hat{I}_{K}, \;\cdot\;)$ are now viewed as
		\[
				\operatorname{Set}(\mathbf{K}(S_{\overline{k}})_{\mathrm{et}})
			\stackrel{\Gamma(\Hat{K}^{\mathrm{sep}}_{W}, \;\cdot\;)(S)}{\longrightarrow}
				B_{\Hat{I}_{K}}
			\stackrel{\Gamma(\Hat{I}_{K}, \;\cdot\;)}{\longrightarrow}
				\operatorname{Set}
		\]
	and $I$ is now a K-injective complex
	in $\operatorname{Ab}(\mathbf{K}(S_{\overline{k}})_{\mathrm{et}})$.
	In this setting, the functor $\Gamma(\Hat{K}^{\mathrm{sep}}_{W}, \;\cdot\;)(S)$
	admits a left adjoint,
	given by sending an object $C$ of $B_{\Hat{I}_{K}}$
	to the \'etale $\mathbf{K}(S_{\overline{k}})$-scheme whose base change to
	$(\Hat{K}^{\mathrm{ur}})^{\mathrm{sep}} \mathbin{\otimes}_{\Hat{K}^{\mathrm{ur}}} \mathbf{K}(S_{\overline{k}})$
	with $I_{K}$-equivariant structure
	is the constant $(\Hat{K}^{\mathrm{ur}})^{\mathrm{sep}} \mathbin{\otimes}_{\Hat{K}^{\mathrm{ur}}} \mathbf{K}(S_{\overline{k}})$-scheme $C$
	with $I_{K}$-action.
	This left adjoint is an exact functor
	(so $\Gamma(\Hat{K}^{\mathrm{sep}}_{W}, \;\cdot\;)(S)$ is a morphism of topoi
	in this restricted situation).
	Hence $\Gamma(\Hat{K}^{\mathrm{sep}}_{W}, \;\cdot\;)(S)$ sends
	a K-injective complex to a K-injective complex.
	This implies that \eqref{0017} is an isomorphism, as desired.
\end{proof}

\begin{prop} \label{0003}
	We have a natural isomorphism
		\begin{equation} \label{0020}
				\Gamma(\overline{k}, \pi_{\ast} \;\cdot\;)
			\cong
				\Gamma(\Hat{I}_{K}, \Gamma(\Hat{K}^{\mathrm{sep}}_{W}, \;\cdot\;))
		\end{equation}
	of functors
	$\operatorname{Set}(K_{\mathrm{Et}}) \to B_{W_{k}}(\ast^{\mathrm{proet}}_{\mathrm{et}})$.
\end{prop}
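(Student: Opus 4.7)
The plan is to produce the natural isomorphism \eqref{0020} by exhibiting an explicit natural transformation from the left to the right and then verifying pointwise that it is an isomorphism via étale descent.

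First, I would construct the natural transformation. Evaluating the left-hand side gives
$\Gamma(\overline{k}, \pi_{\ast} C)(S) = (\pi_{\ast} C)(S_{\overline{k}}) = C(\mathbf{K}(S_{\overline{k}}))$
for each $S \in \ast_{\mathrm{proet}}$. The colimit defining $\Gamma(\Hat{K}^{\mathrm{sep}}_{W}, C)(S)$ contains, as its initial term $L = \Hat{K}^{\mathrm{ur}}$, precisely $C(\mathbf{K}(S_{\overline{k}}))$. This yields a canonical map $C(\mathbf{K}(S_{\overline{k}})) \to \Gamma(\Hat{K}^{\mathrm{sep}}_{W}, C)(S)$. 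Since $I_{K} = \operatorname{Gal}(\Hat{K}^{\mathrm{sep}}/\Hat{K}^{\mathrm{ur}})$ acts trivially on $\Hat{K}^{\mathrm{ur}} \mathbin{\otimes}_{\Hat{K}^{\mathrm{ur}}} \mathbf{K}(S_{\overline{k}}) = \mathbf{K}(S_{\overline{k}})$, the image lies in the $\Hat{I}_{K}$-invariant part, giving the desired natural transformation.

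Second, I would check that this map is a bijection for every $S$. The discrete $\Hat{I}_{K}$-set $\Gamma(\Hat{K}^{\mathrm{sep}}_{W}, C)(S) = \varinjlim_{L/\Hat{K}^{\mathrm{ur}}} C(L \mathbin{\otimes}_{\Hat{K}^{\mathrm{ur}}} \mathbf{K}(S_{\overline{k}}))$ is a filtered colimit of $\operatorname{Gal}(L/\Hat{K}^{\mathrm{ur}})$-sets where $L$ ranges over finite Galois extensions of $\Hat{K}^{\mathrm{ur}}$, and since $\Hat{I}_{K}$-invariants on discrete $I_{K}$-sets commute with filtered colimits, taking $\Hat{I}_{K}$-invariants yields
$\varinjlim_{L/\Hat{K}^{\mathrm{ur}} \text{ finite Galois}} C\bigl(L \mathbin{\otimes}_{\Hat{K}^{\mathrm{ur}}} \mathbf{K}(S_{\overline{k}})\bigr)^{\operatorname{Gal}(L/\Hat{K}^{\mathrm{ur}})}.$
For each such $L$, the inclusion $\mathbf{K}(S_{\overline{k}}) \hookrightarrow L \mathbin{\otimes}_{\Hat{K}^{\mathrm{ur}}} \mathbf{K}(S_{\overline{k}})$ is a faithfully flat étale Galois cover with group $\operatorname{Gal}(L/\Hat{K}^{\mathrm{ur}})$; since $C \in \operatorname{Set}(K_{\mathrm{Et}})$ is an étale sheaf, descent gives $C(\mathbf{K}(S_{\overline{k}})) \overset{\sim}{\to} C(L \mathbin{\otimes}_{\Hat{K}^{\mathrm{ur}}} \mathbf{K}(S_{\overline{k}}))^{\operatorname{Gal}(L/\Hat{K}^{\mathrm{ur}})}$. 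The colimit over $L$ is then a colimit of a system of isomorphisms, identifying both sides with $C(\mathbf{K}(S_{\overline{k}}))$.

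Third, I would verify compatibility with the $W_{k}$-actions and with the $S$-functoriality, so that the isomorphism lifts from $\operatorname{Set}(\ast^{\mathrm{proet}}_{\mathrm{et}})$ to $B_{W_{k}}(\ast^{\mathrm{proet}}_{\mathrm{et}})$. On both sides the $W_{k}$-action is induced by the Frobenius action on $S_{\overline{k}}$ via the functoriality of $\mathbf{K}$: on the left via the $W_{k}$-structure of $\Gamma(\overline{k}, -)$, and on the right via the canonical surjection $\Hat{W}_{K} \twoheadrightarrow W_{k}$ (using that $I_{K}$ acts trivially on the $\mathbf{K}(S_{\overline{k}})$-factor of the $L = \Hat{K}^{\mathrm{ur}}$ term and on its invariants). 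The naturality in $S$ and $C$ is immediate from the construction.

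The main obstacle I anticipate is purely bookkeeping: namely making the identification of the two $W_{k}$-actions fully explicit, since the left-hand side inherits its action directly from the $\mathbf{K}$-functoriality applied to $S_{\overline{k}}$ with its $W_{k}$-action, while the right-hand side carries the action as a quotient of the $\Hat{W}_{K}$-action on $\varinjlim_{L} C(L \mathbin{\otimes}_{\Hat{K}^{\mathrm{ur}}} \mathbf{K}(S_{\overline{k}}))$. Unwinding both through the $L = \Hat{K}^{\mathrm{ur}}$ inclusion should show they coincide, but this requires being careful about the chosen lift of Frobenius via the Witt vector construction underlying $\mathbf{K}$.
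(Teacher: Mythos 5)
Your proof is correct and follows the same route as the paper: the paper's one-sentence proof simply asserts that both sides evaluate to $C(\mathbf{K}(S_{\overline{k}}))$ with its natural $W_k$-action, and your étale-descent argument (together with commuting $\Hat{I}_K$-invariants past the filtered colimit over finite Galois $L/\Hat{K}^{\mathrm{ur}}$) is exactly the implicit justification for the right-hand side.
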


\begin{proof}
	For $C \in \operatorname{Set}(K_{\mathrm{Et}})$ and $S \in \ast_{\mathrm{proet}}$,
	both $\Gamma(\overline{k}, \pi_{\ast} C)(S)$
	and $\Gamma(\Hat{I}_{K}, \Gamma(\Hat{K}^{\mathrm{sep}}_{W}, C))(S)$
	are given by $C(\mathbf{K}(S_{\overline{k}}))$
	with the natural $W_{k}$-action.
\end{proof}

\begin{prop} \label{0009}
	For any $A, B \in D(K_{\mathrm{Et}})$,
	there exists a canonical cup product morphism
		\begin{equation} \label{0025}
					\Gamma(\Hat{K}^{\mathrm{sep}}_{W}, A)
				\mathbin{\otimes}^{L}
					\Gamma(\Hat{K}^{\mathrm{sep}}_{W}, B)
			\to
				\Gamma(\Hat{K}^{\mathrm{sep}}_{W}, A \mathbin{\otimes}^{L} B)
		\end{equation}
	in $D(B_{\Hat{W}_{K}}(\ast^{\mathrm{proet}}_{\mathrm{et}}))$
	functorial in $A, B$.
\end{prop}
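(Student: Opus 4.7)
The plan is to construct the cup product first at the level of sheaves and then derive it, exploiting the exactness of $\Gamma := \Gamma(\Hat{K}^{\mathrm{sep}}_{W}, \cdot)$ established in Proposition \ref{0001}.

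For the underived step, I would build, for $C, D \in \operatorname{Ab}(K_{\mathrm{Et}})$, a natural morphism $\Gamma(C) \otimes \Gamma(D) \to \Gamma(C \otimes D)$ in $\operatorname{Ab}(B_{\Hat{W}_{K}}(\ast^{\mathrm{proet}}_{\mathrm{et}}))$. Since the target tensor product is the sheafification of the presheaf tensor product, it suffices to produce, for each $S \in \ast_{\mathrm{proet}}$, a natural bilinear pairing $\Gamma(C)(S) \otimes \Gamma(D)(S) \to \Gamma(C \otimes D)(S)$. Using the definition of $\Gamma$ as a filtered colimit over finite separable extensions $L/\Hat{K}^{\mathrm{ur}}$ and the fact that tensor products of abelian groups commute with filtered colimits, this reduces to the universal bilinear pairing at the presheaf level evaluated on the ring $L \otimes_{\Hat{K}^{\mathrm{ur}}} \mathbf{K}(S_{\overline{k}})$. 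Extending termwise and totalizing yields a natural map of complexes $\Gamma(A) \otimes \Gamma(B) \to \Gamma(A \otimes B)$ for any complexes of abelian sheaves $A, B$.

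To derive this, I would choose K-flat resolutions $P_{A} \to A$ and $P_{B} \to B$ in $C(K_{\mathrm{Et}})$, so that $P_{A} \otimes P_{B}$ represents $A \otimes^{L} B$, together with K-flat resolutions $Q_{A} \to \Gamma(P_{A})$ and $Q_{B} \to \Gamma(P_{B})$ in $C(B_{\Hat{W}_{K}}(\ast^{\mathrm{proet}}_{\mathrm{et}}))$. By the exactness of $\Gamma$ (Proposition \ref{0001}), the maps $\Gamma(P_{A}) \to \Gamma(A)$ and $\Gamma(P_{B}) \to \Gamma(B)$ are quasi-isomorphisms, hence $Q_{A} \otimes Q_{B}$ represents $\Gamma(A) \otimes^{L} \Gamma(B)$. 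The derived cup product is then defined as the composition in $D(B_{\Hat{W}_{K}}(\ast^{\mathrm{proet}}_{\mathrm{et}}))$
\[
Q_{A} \otimes Q_{B} \to \Gamma(P_{A}) \otimes \Gamma(P_{B}) \to \Gamma(P_{A} \otimes P_{B}),
\]
where the first arrow is the tensor product of the chosen K-flat resolutions (well-defined because tensoring with a K-flat complex preserves quasi-isomorphisms) and the second is the underived cup product constructed in the previous step.

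The main obstacle will be verifying that the resulting morphism in $D(B_{\Hat{W}_{K}}(\ast^{\mathrm{proet}}_{\mathrm{et}}))$ is independent of the chosen K-flat resolutions and functorial in $A$ and $B$. This is a standard coherence check for the lax monoidal structure on a derived exact functor: any two systems of K-flat resolutions are related by quasi-isomorphisms unique up to homotopy, which lift to compatible comparison maps between the representatives of $\Gamma(A) \otimes^{L} \Gamma(B)$ and $\Gamma(A \otimes^{L} B)$. Once well-definedness is settled, functoriality in $A$ and $B$ follows directly from functoriality of the underived cup product together with the naturality of K-flat resolutions up to quasi-isomorphism.
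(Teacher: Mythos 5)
Your proposal is correct and follows essentially the same two-step strategy as the paper: build the underived cup product on sheaves by evaluating on the rings $L \otimes_{\Hat{K}^{\mathrm{ur}}} \mathbf{K}(S_{\overline{k}})$ and using that tensor products commute with filtered colimits, extend termwise to complexes, and then pass to the derived category. The one place where you diverge is the final step. The paper settles it with a single observation — if $A$ has torsion-free terms then so does $\Gamma(\Hat{K}^{\mathrm{sep}}_{W}, A)$, because the exact functor $\Gamma$ preserves monomorphisms and hence injectivity of multiplication by $n$ — and then "passes to the derived category." Your version instead chooses K-flat resolutions $P_A, P_B$ of $A, B$ and, crucially, further K-flat resolutions $Q_A, Q_B$ of $\Gamma(P_A), \Gamma(P_B)$ on the target side before tensoring, which sidesteps the question of whether $\Gamma$ of a K-flat complex is again K-flat. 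This is a bit more mechanical but arguably more careful: in the unbounded derived category, termwise torsion-free does not by itself yield K-flatness, so the paper's shorter argument is implicitly relying on a reduction to bounded complexes (which is harmless for the application, since the coefficients are $1$-motives) or on the small Tor-dimension of $\Z$, whereas your extra resolution step makes the construction of the derived morphism unconditional. The well-definedness and functoriality remarks at the end are indeed the standard coherence argument for a lax monoidal structure on a derived functor and need no more detail here.
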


\begin{proof}
	For any $A, B \in \operatorname{Ab}(E_{\mathrm{Et}})$,
	any finite separable extension $L / \Hat{K}^{\mathrm{ur}}$
	and any $S \in \ast_{\mathrm{proet}}$,
	we have a natural map
		\[
					A(L \mathbin{\otimes}_{\Hat{K}^{\mathrm{ur}}} \mathbf{K}(S_{\overline{k}}))
				\mathbin{\otimes}
					B(L \mathbin{\otimes}_{\Hat{K}^{\mathrm{ur}}} \mathbf{K}(S_{\overline{k}}))
			\to
				(A \mathbin{\otimes} B)(L \mathbin{\otimes}_{\Hat{K}^{\mathrm{ur}}} \mathbf{K}(S_{\overline{k}})).
		\]
	Hence we have a morphism
		\begin{equation} \label{0024}
					\Gamma(\Hat{K}^{\mathrm{sep}}_{W}, A)
				\mathbin{\otimes}
					\Gamma(\Hat{K}^{\mathrm{sep}}_{W}, B)
			\to
				\Gamma(\Hat{K}^{\mathrm{sep}}_{W}, A \mathbin{\otimes} B)
		\end{equation}
	in $\operatorname{Ab}(B_{\Hat{W}_{K}}(\ast^{\mathrm{proet}}_{\mathrm{et}}))$.
	When $A$ and $B$ are instead complexes in $\operatorname{Ab}(K_{\mathrm{Et}})$,
	this extends to a morphism of complexes in
	$\operatorname{Ab}(B_{\Hat{W}_{K}}(\ast^{\mathrm{proet}}_{\mathrm{et}}))$.
	If $A$ has torsion-free terms, then so does $\Gamma(\Hat{K}^{\mathrm{sep}}_{W}, A)$.
	Passing to the derived category, we get the result.
\end{proof}

Arguing as in the proof of Proposition \ref{0001},
we can actually show that the morphism \eqref{0024}
and hence the morphism \eqref{0025} are isomorphisms.

For the next proposition, recall that the cup product morphism
	\[
			R \pi_{\ast} A \mathbin{\otimes}^{L} R \pi_{\ast} B
		\to
			R \pi_{\ast} (A \mathbin{\otimes}^{L} B)
	\]
for $R \pi_{\ast}$ (for example) can be equivalently written as the morphism
	\[
			R \pi_{\ast} R \operatorname{\mathbf{Hom}}_{K_{\mathrm{Et}}}(A, C)
		\to
			R \operatorname{\mathbf{Hom}}_{k^{\mathrm{indrat}}_{\mathrm{et}}}(
				R \pi_{\ast} A,
				R \pi_{\ast} C
			)
	\]
of functoriality of $R \pi_{\ast}$ via the change of variables
$A \mathbin{\otimes}^{L} B \leadsto C$ and
$R \operatorname{\mathbf{Hom}}_{K_{\mathrm{Et}}}(A, C) \leadsto B$.

\begin{prop} \label{0004}
	We have a natural isomorphism
		\begin{equation} \label{0019}
				\Gamma(\overline{k}, R \pi_{\ast} \;\cdot\;)
			\cong
				R \Gamma(\Hat{I}_{K}, \Gamma(\Hat{K}^{\mathrm{sep}}_{W}, \;\cdot\;))
		\end{equation}
	of functors
	$D(K_{\mathrm{Et}}) \to D(B_{W_{k}}(\ast^{\mathrm{proet}}_{\mathrm{et}}))$
	compatible with the cup product morphisms.
\end{prop}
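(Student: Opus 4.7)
The plan is to construct the isomorphism level-by-level using a K-injective resolution and then invoke the three preparatory propositions. Given $A \in D(K_{\mathrm{Et}})$, I would fix a K-injective resolution $A \to I$. By the definition of the right derived functor, $R\pi_{\ast} A$ is represented by $\pi_{\ast} I$, and the functor $\Gamma(\overline{k}, \;\cdot\;)$ from \cref{0006} is exact, so $\Gamma(\overline{k}, R\pi_{\ast} A)$ is represented by the complex $\Gamma(\overline{k}, \pi_{\ast} I)$ computed termwise.

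Next, applying the natural set-level isomorphism of \cref{0003} termwise to $I$, I obtain an isomorphism of complexes
\[
\Gamma(\overline{k}, \pi_{\ast} I) \;\cong\; \Gamma(\Hat{I}_{K}, \Gamma(\Hat{K}^{\mathrm{sep}}_{W}, I))
\]
in the category of complexes in $B_{W_k}(\ast^{\mathrm{proet}}_{\mathrm{et}})$. Since $I$ is K-injective, it is K-limp, so \cref{0002} gives an isomorphism in $D(B_{W_k}(\ast^{\mathrm{proet}}_{\mathrm{et}}))$ between this termwise-computed object and $R\Gamma(\Hat{I}_{K}, \Gamma(\Hat{K}^{\mathrm{sep}}_{W}, I))$. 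Finally, by the exactness of $\Gamma(\Hat{K}^{\mathrm{sep}}_{W}, \;\cdot\;)$ established in \cref{0001}, this functor descends to a triangulated functor on derived categories, and $\Gamma(\Hat{K}^{\mathrm{sep}}_{W}, I)$ represents $\Gamma(\Hat{K}^{\mathrm{sep}}_{W}, A) \in D(B_{\Hat{W}_K}(\ast^{\mathrm{proet}}_{\mathrm{et}}))$. Chaining these identifications yields the desired natural isomorphism \eqref{0019}. Naturality in $A$ follows because K-injective resolutions are functorial up to homotopy and \cref{0003} is natural in $C$.

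For the cup product compatibility, I would argue as follows. The cup product on the left-hand side is the composition of the standard Godement-style cup product for $R\pi_{\ast}$ with the (trivial) cup product of the exact functor $\Gamma(\overline{k}, \;\cdot\;)$; on the right-hand side, it is the composition of the cup product morphism for $\Gamma(\Hat{K}^{\mathrm{sep}}_{W}, \;\cdot\;)$ from \cref{0009} with the standard cup product for $R\Gamma(\Hat{I}_{K}, \;\cdot\;)$. At the level of sections over an arbitrary $S \in \ast_{\mathrm{proet}}$ and a finite separable extension $L/\Hat{K}^{\mathrm{ur}}$, both cup products are induced by the same natural pairing
\[
A(L \mathbin{\otimes}_{\Hat{K}^{\mathrm{ur}}} \mathbf{K}(S_{\overline{k}})) \otimes B(L \mathbin{\otimes}_{\Hat{K}^{\mathrm{ur}}} \mathbf{K}(S_{\overline{k}})) \to (A \otimes B)(L \mathbin{\otimes}_{\Hat{K}^{\mathrm{ur}}} \mathbf{K}(S_{\overline{k}})),
\]
so the compatibility reduces to a diagram chase on sections once one fixes K-injective resolutions of $A$, $B$ and $A \mathbin{\otimes}^{L} B$ together with a compatible K-flat resolution of $A$.

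The main obstacle will be this last compatibility check rather than the construction of the isomorphism itself: one must ensure that the Godement resolutions (or whatever model computes the derived tensor product) used to define the cup product on the right-hand side are compatible with the K-injective resolutions used to compute $R\pi_{\ast}$ on the left, and that the identification through \cref{0002,0003} respects these choices. This is routine but tedious, and can be handled by choosing functorial K-injective/K-flat resolutions and unravelling the maps termwise, using that the pairing is given by the same set-theoretic multiplication of sections in every model.
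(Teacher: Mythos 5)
Your derivation of the underlying isomorphism matches the paper's: it also combines the exactness of $\Gamma(\Hat{K}^{\mathrm{sep}}_{W},\;\cdot\;)$ (\cref{0001}), the K-limp comparison (\cref{0002}), and the section-level identification (\cref{0003}) applied to a K-injective resolution.

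For cup-product compatibility, however, your sketch has a genuine gap that you yourself flag but then dismiss as ``routine but tedious.'' The worry you raise --- how to make the K-flat resolution computing $A\mathbin{\otimes}^L B$ interact correctly with the K-injective resolution feeding $R\pi_\ast$ --- is exactly the obstruction, and you offer no concrete mechanism to dispel it. There is no single complex that is simultaneously K-flat and K-injective, so ``choosing functorial K-injective/K-flat resolutions and unravelling termwise'' does not obviously close the diagram; one still needs to compare models of the derived tensor product across three or four functors. The paper sidesteps this entirely by reformulating each cup-product morphism in adjoint $\operatorname{\mathbf{Hom}}$-form (via the change of variables $A\mathbin{\otimes}^L B\leadsto C$ and $R\operatorname{\mathbf{Hom}}_{K_{\mathrm{Et}}}(A,C)\leadsto B$), so that only K-injective complexes appear. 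The key enabling fact --- which your proposal misses --- is that $\operatorname{\mathbf{Hom}}_{K_{\mathrm{Et}}}(A,B)$ is K-limp when $A,B$ are K-injective, so all of $R\pi_\ast$, $R\Gamma(\Hat{I}_K,\;\cdot\;)$, and the inner $R\operatorname{\mathbf{Hom}}$'s collapse to their underived, termwise versions. Only after that reduction does the paper pass to a presheaf tensor product and run the section-wise check you anticipate. Without the $\operatorname{\mathbf{Hom}}$-reformulation or some equally effective device, your argument remains a plausible plan rather than a proof.
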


\begin{proof}
	The isomorphism follows from Propositions \ref{0001}, \ref{0002} and \ref{0003}.
	For the cup products, we need to show that for any $A, B \in D(K_{\mathrm{Et}})$,
	the composite
		\begin{align*}
			&
					\Gamma \bigl(
						\overline{k},
						R \pi_{\ast}
						R \operatorname{\mathbf{Hom}}_{K_{\mathrm{Et}}}(A, B)
					\bigr)
			\\
			&	\to
					\Gamma \bigl(
						\overline{k},
						R \operatorname{\mathbf{Hom}}_{k^{\mathrm{indrat}}_{\mathrm{et}}}(
							R \pi_{\ast} A,
							R \pi_{\ast} B
						)
					\bigr)
			\\
			&	\to
					R \operatorname{\mathbf{Hom}}_{B_{W_{k}}(\ast^{\mathrm{proet}}_{\mathrm{et}})} \bigl(
						\Gamma(\overline{k}, R \pi_{\ast} A),
						\Gamma(\overline{k}, R \pi_{\ast} B)
					\bigr)
		\end{align*}
	and the composite
		\begin{align*}
			&
					R \Gamma \Bigl(
						\Hat{I}_{K},
						\Gamma \bigl(
							\Hat{K}^{\mathrm{sep}}_{W},
							R \operatorname{\mathbf{Hom}}_{K_{\mathrm{Et}}}(A, B)
						\bigr)
					\Bigr)
			\\
			&	\to
					R \Gamma \Bigl(
						\Hat{I}_{K},
						R \operatorname{\mathbf{Hom}}_{B_{\Hat{W}_{K}}(\ast^{\mathrm{proet}}_{\mathrm{et}})} \bigl(
							\Gamma(\Hat{K}^{\mathrm{sep}}_{W}, A),
							\Gamma(\Hat{K}^{\mathrm{sep}}_{W}, B)
						\bigr)
					\Bigr)
			\\
			&	\to
					R \operatorname{\mathbf{Hom}}_{B_{W_{k}}(\ast^{\mathrm{proet}}_{\mathrm{et}})} \Bigl(
						R \Gamma \bigl(
							\Hat{I}_{K},
							\Gamma(\Hat{K}^{\mathrm{sep}}_{W}, A)
						\bigr),
						R \Gamma \bigl(
							\Hat{I}_{K},
							\Gamma(\Hat{K}^{\mathrm{sep}}_{W}, B)
						\bigr)
		\end{align*}
	are compatible via the isomorphism \eqref{0019}.
	We may assume that $A$ and $B$ are (represented by) K-injective complexes.
	Then $\operatorname{\mathbf{Hom}}_{K_{\mathrm{Et}}}(A, B)$ is K-limp by \cite[Proposition 2.4.1]{Suz20a}.
	Hence $R \pi_{\ast} R \operatorname{\mathbf{Hom}}_{K_{\mathrm{Et}}}(A, B)$ is represented by
	$\pi_{\ast} \operatorname{\mathbf{Hom}}_{K_{\mathrm{Et}}}(A, B)$
	by \cite[Proposition 2.4.2]{Suz20a}.
	Also, the objects
		\[
				R \Gamma \bigl(
					\Hat{I}_{K},
					\Gamma(\Hat{K}^{\mathrm{sep}}_{W}, A)
				\bigr),
			\quad
				R \Gamma \bigl(
					\Hat{I}_{K},
					\Gamma(\Hat{K}^{\mathrm{sep}}_{W}, B)
				\bigr),
			\quad
				R \Gamma \Bigl(
					\Hat{I}_{K},
					\Gamma \bigl(
						\Hat{K}^{\mathrm{sep}}_{W},
						R \operatorname{\mathbf{Hom}}_{K_{\mathrm{Et}}}(A, B)
					\bigr)
				\Bigr)
		\]
	are represented by
		\[
				\Gamma \bigl(
					\Hat{I}_{K},
					\Gamma(\Hat{K}^{\mathrm{sep}}_{W}, A)
				\bigr),
			\quad
				\Gamma \bigl(
					\Hat{I}_{K},
					\Gamma(\Hat{K}^{\mathrm{sep}}_{W}, B)
				\bigr),
			\quad
				\Gamma \Bigl(
					\Hat{I}_{K},
					\Gamma \bigl(
						\Hat{K}^{\mathrm{sep}}_{W},
						\operatorname{\mathbf{Hom}}_{K_{\mathrm{Et}}}(A, B)
					\bigr)
				\Bigr),
		\]
	respectively, by Proposition \ref{0002}.
	Therefore it is enough to show that
	for any complexes $A, B$ in $\operatorname{Ab}(K_{\mathrm{Et}})$,
	the composite
		\begin{align*}
			&
					\Gamma \bigl(
						\overline{k},
						\pi_{\ast}
						\operatorname{\mathbf{Hom}}_{K_{\mathrm{Et}}}(A, B)
					\bigr)
			\\
			&	\to
					\Gamma \bigl(
						\overline{k},
						\operatorname{\mathbf{Hom}}_{k^{\mathrm{indrat}}_{\mathrm{et}}}(
							\pi_{\ast} A,
							\pi_{\ast} B
						)
					\bigr)
			\\
			&	\to
					\operatorname{\mathbf{Hom}}_{B_{W_{k}}(\ast^{\mathrm{proet}}_{\mathrm{et}})} \bigl(
						\Gamma(\overline{k}, \pi_{\ast} A),
						\Gamma(\overline{k}, \pi_{\ast} B)
					\bigr)
		\end{align*}
	and the composite
		\begin{align*}
			&
					\Gamma \Bigl(
						\Hat{I}_{K},
						\Gamma \bigl(
							\Hat{K}^{\mathrm{sep}}_{W},
							\operatorname{\mathbf{Hom}}_{K_{\mathrm{Et}}}(A, B)
						\bigr)
					\Bigr)
			\\
			&	\to
					\Gamma \Bigl(
						\Hat{I}_{K},
						\operatorname{\mathbf{Hom}}_{B_{\Hat{W}_{K}}(\ast^{\mathrm{proet}}_{\mathrm{et}})} \bigl(
							\Gamma(\Hat{K}^{\mathrm{sep}}_{W}, A),
							\Gamma(\Hat{K}^{\mathrm{sep}}_{W}, B)
						\bigr)
					\Bigr)
			\\
			&	\to
					\operatorname{\mathbf{Hom}}_{B_{W_{k}}(\ast^{\mathrm{proet}}_{\mathrm{et}})} \Bigl(
						\Gamma \bigl(
							\Hat{I}_{K},
							\Gamma(\Hat{K}^{\mathrm{sep}}_{W}, A)
						\bigr),
						\Gamma \bigl(
							\Hat{I}_{K},
							\Gamma(\Hat{K}^{\mathrm{sep}}_{W}, B)
						\bigr)
		\end{align*}
	are compatible via the isomorphism \eqref{0020}.
	We may assume that $A, B \in \operatorname{Ab}(K_{\mathrm{Et}})$
	(namely, have terms only in degree zero).
	Then the statement to prove is equivalent to the statement that
	the composite
		\begin{align*}
			&
						\Gamma(\overline{k}, \pi_{\ast} A)
					\mathbin{\otimes}
						\Gamma(\overline{k}, \pi_{\ast} B)
			\\
			&	\to
					\Gamma(
						\overline{k},
						\pi_{\ast} A \mathbin{\otimes} \pi_{\ast} B
					)
			\\
			&	\to
					\Gamma(
						\overline{k},
						\pi_{\ast}(A \mathbin{\otimes} B)
					)
		\end{align*}
	and the composite
		\begin{align*}
			&
						\Gamma \bigl(
							\Hat{I}_{K},
							\Gamma(\Hat{K}^{\mathrm{sep}}_{W}, A)
						\bigr)
					\mathbin{\otimes}
						\Gamma \bigl(
							\Hat{I}_{K},
							\Gamma(\Hat{K}^{\mathrm{sep}}_{W}, B)
						\bigr)
			\\
			&	\to
					\Gamma \bigl(
						\Hat{I}_{K},
							\Gamma(\Hat{K}^{\mathrm{sep}}_{W}, A)
						\mathbin{\otimes}
							\Gamma(\Hat{K}^{\mathrm{sep}}_{W}, B)
					\bigr)
			\\
			&	\to
					\Gamma \bigl(
						\Hat{I}_{K},
						\Gamma(\Hat{K}^{\mathrm{sep}}_{W}, A \mathbin{\otimes} B)
					\bigr)
		\end{align*}
	are compatible via the isomorphism \eqref{0020}.
	It is enough to prove the statement
	where all the tensor products $\mathbin{\otimes}$ here are replaced by
	presheaf tensor products
	and $A$ and $B$ are presheaves on $\operatorname{Spec} K_{\mathrm{Et}}$.
	In this case, a direct section-wise comparison
	(over each profinite $S$)
	gives the result.
\end{proof}

\begin{prop} \label{0021}
	We have a commutative diagram
		\[
			\begin{CD}
					D(B_{\Hat{W}_{K}}(\ast^{\mathrm{proet}}_{\mathrm{et}}))
				@> R \Gamma(\Hat{I}_{K}, \;\cdot\;) >>
					D(B_{W_{k}}(\ast^{\mathrm{proet}}_{\mathrm{et}}))
				\\ @V P^{\ast} VV @VV P^{\ast} V \\
					D(B_{\Hat{W}_{K}}(\ast_{\mathrm{proet}}))
				@> R \Gamma(\Hat{I}_{K}, \;\cdot\;) >>
					D(B_{W_{k}}(\ast_{\mathrm{proet}})).
			\end{CD}
		\]
\end{prop}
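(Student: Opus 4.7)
The plan is to exploit the exactness of $P^{\ast}$ together with a presentation of $R\Gamma(\Hat{I}_K, -)$ as a filtered colimit of derived invariants under finite quotients. Since $P \colon \ast_{\mathrm{proet}} \to \ast^{\mathrm{proet}}_{\mathrm{et}}$ is induced by the identity functor on the underlying category, $P^{\ast}$ is pro-\'etale sheafification, which is exact on abelian sheaves and preserves finite limits and all colimits. By functoriality of the classifying-topos construction, $P^{\ast}$ extends to exact functors on the $B_{\Hat{W}_K}$-categories and $B_{W_k}$-categories, and at the level of non-derived $\Hat{I}_K$-invariants the square commutes essentially tautologically, since both $\Gamma(\Hat{I}_K, -)$ and $P^{\ast}$ are defined by sheaf-theoretic operations on the underlying abelian sheaves.

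For the derived compatibility, I would write $R\Gamma(\Hat{I}_K, A) = \varinjlim_U R\Gamma(I_K/U, A^U)$, where $U$ runs over open normal subgroups of $I_K$ and the action on $A$ is assumed smooth in the sense that $A = \varinjlim_U A^U$. Each finite group cohomology $R\Gamma(I_K/U, -)$ can be computed using a bar-type resolution built from finite products of the module with itself, so the comparison map $P^{\ast} R\Gamma(I_K/U, -) \to R\Gamma(I_K/U, P^{\ast}(-))$ is an isomorphism for each finite quotient, by exactness of $P^{\ast}$ and its compatibility with finite products. Taking the filtered colimit over $U$ and using that $P^{\ast}$ preserves filtered colimits as a left adjoint yields the claim.

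The hard part will be handling the pro-finite structure of $\Hat{I}_K$ rigorously: one must ensure that the colimit presentation of $R\Gamma(\Hat{I}_K, -)$ is valid in both topoi, that the smooth condition is preserved under $P^{\ast}$, and that the comparison map on colimits is an honest isomorphism. A clean way to sidestep cofinality issues is to observe that the pushforward $\Gamma(\Hat{I}_K, -)$ factors through the full subcategory of smooth $\Hat{I}_K$-modules, where the comparison with finite group cohomology is tight and where $P^{\ast}$ restricts to an exact endofunctor of the smooth subcategories; the rest is then formal from the fact that the colimit, the finite quotient cohomology, and $P^{\ast}$ all commute pairwise at the level of derived categories.
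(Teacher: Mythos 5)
The paper's proof is considerably more direct: it reduces (as in Proposition 0002) to forgetting equivariance, and then checks the isomorphism $R\Gamma(\Hat{I}_K, P^\ast A) \cong P^\ast R\Gamma(\Hat{I}_K, A)$ section-wise at extremally disconnected profinite $S$, using two facts: $\Gamma(S,-)$ commutes with $R\Gamma(\Hat{I}_K,-)$ for such $S$ (since $\Gamma(S,-)$ is exact), and $\Gamma(S, P^\ast A) = \Gamma(S, A)$ (sheafification along $P$ does not change sections at extremally disconnected sets, since on those objects the sheaf conditions in both topologies reduce to finite products). Your proposal goes through the filtered colimit formula $R\Gamma(\Hat{I}_K, A) \cong \varinjlim_U R\Gamma(I_K/U, A^U)$ and compatibility of $P^\ast$ with finite group cohomology via bar resolutions. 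These are genuinely different routes.

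Your approach is plausible in outline but leaves the load-bearing step unaddressed. The compatibility of $P^\ast$ with finite group cohomology is fine (bar resolution terms are finite products, and $P^\ast$ is exact and preserves products), and the smoothness hypothesis $A = \varinjlim_U A^U$ is automatic in the limit topos $B_{\Hat{I}_K}(T) = \lim_U B_{I_K/U}(T)$, so you do not actually need to impose it. The gap is in the colimit formula itself: you need to know that the derived $\Hat{I}_K$-invariants functor (a sheaf-valued derived pushforward, not global cohomology) commutes with the filtered colimit over $U$, and you need this in \emph{both} the pro-\'etale and the classical-topology versions of the point. This is a statement about derived functors commuting with filtered colimits of sheaves and is not automatic; in particular it is not a ``cofinality issue'' and cannot be sidestepped by restricting attention to a subcategory of smooth modules, since the question is precisely whether $R\Gamma(\Hat{I}_K,-)$ of a colimit is the colimit of the $R\Gamma(I_K/U,-)$. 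You flag this yourself, but the fix you propose does not bear on it. The paper's argument avoids the issue entirely: because extremally disconnected sets are projective in both $\ast_{\mathrm{proet}}$ and $\ast^{\mathrm{proet}}_{\mathrm{et}}$, the functor $\Gamma(S,-)$ is exact on both sides and commutes with $R\Gamma(\Hat{I}_K,-)$ by construction, so one never needs to exchange a colimit with a derived pushforward. If you want to complete your argument, the missing lemma would be the filtered-colimit formula for the derived pushforward of the limit topos, established separately in each topology; alternatively, switching to the section-wise check at extremally disconnected $S$, as the paper does, makes the comparison a one-line unwinding.
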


\begin{proof}
	As in the proof of Proposition \ref{0002},
	it is enough to show that the diagram
		\[
			\begin{CD}
					D(B_{\Hat{I}_{K}}(\ast^{\mathrm{proet}}_{\mathrm{et}}))
				@> R \Gamma(\Hat{I}_{K}, \;\cdot\;) >>
					D(\ast^{\mathrm{proet}}_{\mathrm{et}})
				\\ @V P^{\ast} VV @VV P^{\ast} V \\
					D(B_{\Hat{I}_{K}}(\ast_{\mathrm{proet}}))
				@> R \Gamma(\Hat{I}_{K}, \;\cdot\;) >>
					D(\ast_{\mathrm{proet}})
			\end{CD}
		\]
	commutes.
	Let $A \in D(B_{\Hat{I}_{K}}(\ast^{\mathrm{proet}}_{\mathrm{et}}))$.
	For any extremally disconnected profinite set $S$, we have
		\begin{align*}
			&
					\Gamma(S, R \Gamma(\Hat{I}_{K}, P^{\ast} A))
				\cong
					R \Gamma(\Hat{I}_{K}, \Gamma(S, P^{\ast} A))
				\cong
					R \Gamma(\Hat{I}_{K}, \Gamma(S, A))
			\\
			&
				\cong
					\Gamma(S, R \Gamma(\Hat{I}_{K}, A))
				\cong
					\Gamma(S, P^{\ast} R \Gamma(\Hat{I}_{K}, A)).
		\end{align*}
	Hence $R \Gamma(\Hat{I}_{K}, P^{\ast} A) \cong P^{\ast} R \Gamma(\Hat{I}_{K}, A)$.
\end{proof}

\begin{prop} \label{0008}
	We have a natural isomorphism
		\[
				R \Gamma(K_{W}, \;\cdot\;)
			\cong
				R \Gamma(\Hat{W}_{K}, P^{\ast} \Gamma(\Hat{K}^{\mathrm{sep}}_{W}, \;\cdot\;))
		\]
	of functors
	$D(K_{\mathrm{Et}}) \to D(\ast_{\mathrm{proet}})$
	compatible with the cup product morphisms.
\end{prop}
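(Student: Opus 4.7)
The plan is to assemble the isomorphism by chaining the already-established comparisons. Recall that by definition $R\Gamma(K_W,-) = R\Gamma(k_W, R\mathbf{\Gamma}(K,-))$, and since $P^{\ast}$ is exact, $R\mathbf{\Gamma}(K,-) = P^{\ast} R\pi_{\ast}(-)$. First I would apply Proposition \ref{0005} to rewrite $R\Gamma(k_W,-) = R\Gamma(W_k, \Gamma(\overline{k},-))$, giving
\[
R\Gamma(K_W,-) \;\cong\; R\Gamma\bigl(W_k, \Gamma(\overline{k}, P^{\ast} R\pi_{\ast}(-))\bigr).
\]
Then I would apply Proposition \ref{0006} (trivially extended to the derived category using exactness of $\Gamma(\overline{k},-)$ and of $P^{\ast}$) to commute $P^{\ast}$ past $\Gamma(\overline{k},-)$, obtaining $\Gamma(\overline{k}, P^{\ast} R\pi_{\ast}(-)) \cong P^{\ast} \Gamma(\overline{k}, R\pi_{\ast}(-))$.

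Next I would invoke Proposition \ref{0004} to identify $\Gamma(\overline{k}, R\pi_{\ast}(-))$ with $R\Gamma(\hat{I}_K, \Gamma(\hat{K}^{\mathrm{sep}}_W,-))$, and then Proposition \ref{0021} to commute $P^{\ast}$ with $R\Gamma(\hat{I}_K,-)$, yielding
\[
\Gamma(\overline{k}, P^{\ast} R\pi_{\ast}(-)) \;\cong\; R\Gamma\bigl(\hat{I}_K, P^{\ast}\Gamma(\hat{K}^{\mathrm{sep}}_W,-)\bigr).
\]
Finally, the definition $\Gamma(\hat{W}_K,-) = \Gamma(W_k, \Gamma(\hat{I}_K,-))$ together with the Grothendieck composition-of-derived-functors isomorphism (which applies because $\Gamma(\hat{I}_K,-)$ sends injectives in $B_{\hat{W}_K}(\ast_{\mathrm{proet}})$ to $\Gamma(W_k,-)$-acyclics in $B_{W_k}(\ast_{\mathrm{proet}})$) gives $R\Gamma(W_k, R\Gamma(\hat{I}_K,-)) \cong R\Gamma(\hat{W}_K,-)$. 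Splicing these isomorphisms together produces the desired natural equivalence.

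For compatibility with cup products, each of the four intermediate comparisons has already been established to respect cup products: Proposition \ref{0005} explicitly states this, Proposition \ref{0004} states this, Proposition \ref{0006} is compatible because both $P^{\ast}$ and $\Gamma(\overline{k},-)$ are monoidal on the nose at the level of presheaves, and the Grothendieck composition isomorphism is lax-monoidal by general nonsense. Since the cup-product structure on $R\Gamma(K_W,-)$ is, by construction, the one induced from $R\pi_{\ast}$ followed by the cup products on $R\Gamma(k_W,-) = R\Gamma(W_k, \Gamma(\overline{k},-))$, the composite isomorphism automatically intertwines cup products.

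The main obstacle, and the only place where care is needed, will be step three and four: verifying that the isomorphisms from Propositions \ref{0004} and \ref{0021}, which were proven pointwise on sections over profinite $S$ and required K-injective (or K-limp) replacements to make sense of the right-hand side, fit together coherently once $P^{\ast}$ is interposed. Concretely, one must check that choosing a K-injective resolution $A \to I$ in $\operatorname{Ab}(K_{\mathrm{Et}})$ and then applying $P^{\ast}\Gamma(\hat{K}^{\mathrm{sep}}_W,-)$ produces a complex whose $R\Gamma(\hat{I}_K,-)$ agrees with the one computed before applying $P^{\ast}$; this is exactly the content of Proposition \ref{0021} combined with Proposition \ref{0002}, so the verification amounts to carefully threading the K-limpness argument through $P^{\ast}$.
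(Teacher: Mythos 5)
Your proof is correct and follows essentially the same chain as the paper's one-line proof: the paper applies $R\Gamma(W_k, P^\ast\,\cdot\,)$ to the isomorphism in Proposition \ref{0004} and then cites Propositions \ref{0005}, \ref{0006}, \ref{0021}, which is precisely the sequence of comparisons you spell out (together with the implicit Grothendieck composition $R\Gamma(\hat{W}_K,\cdot) \cong R\Gamma(W_k, R\Gamma(\hat{I}_K, \cdot))$ that you make explicit). The one unnecessary caveat: you note that Proposition \ref{0006} would need to be ``extended to the derived category,'' but it is already stated at the level of derived categories, so no extension is required.
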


\begin{proof}
	Apply $R \Gamma(W_{k}, P^{\ast} \;\cdot\;)$
	to the isomorphism in Proposition \ref{0004}.
	By Propositions \ref{0005}, \ref{0006} and \ref{0021}, we get the result.
\end{proof}

\begin{prop} \label{0007}
	For any $K$-scheme $X$ locally of finite type,
	the object $\Gamma(\Hat{K}^{\mathrm{sep}}_{W}, X)$
	of the category $\operatorname{Set}(B_{\Hat{W}_{K}}(\ast^{\mathrm{proet}}_{\mathrm{et}}))$ is canonically isomorphic
	to the direct limit of $\underline{X(L)^{\mathrm{top}}}$
	over finite separable extensions $L / \Hat{K}^{\mathrm{ur}}$,
	where $X(L)^{\mathrm{top}}$ is the classical topological space structure on $X(L)$
	and $\underline{X(L)^{\mathrm{top}}}$ is the corresponding sheaf on $\ast_{\mathrm{proet}}$.
	In particular, we have
	$\Gamma(\Hat{K}^{\mathrm{sep}}_{W}, X) \in \operatorname{Set}(B_{\Hat{W}_{K}}(\ast_{\mathrm{proet}}))$.
\end{prop}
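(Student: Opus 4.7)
The plan is to reduce the claim to the ring-theoretic identification
\[
L \otimes_{\Hat{K}^{\mathrm{ur}}} \mathbf{K}(S_{\overline{k}}) \;\cong\; \operatorname{Cont}(S, L)
\]
for every profinite $S$ and every finite separable extension $L/\Hat{K}^{\mathrm{ur}}$. Writing $S = \varprojlim_\lambda S_\lambda$ with $S_\lambda$ finite, the ring $\mathcal{O}(S_{\overline{k}}) = \varinjlim_\lambda \prod_{x\in S_\lambda}\overline{k}$ is the ring of locally constant $\overline{k}$-valued functions on $S$. Unwinding the definition $\mathbf{K}(S_{\overline{k}}) = (W(\mathcal{O}(S_{\overline{k}})) \mathbin{\Hat{\otimes}}_{W(k)} \mathcal{O}_K) \otimes_{\mathcal{O}_K} K$, one checks that the completion converts the colimit over $\lambda$ of products into the corresponding ring of continuous functions valued in $\Hat{K}^{\mathrm{ur}}$, yielding $\mathbf{K}(S_{\overline{k}}) \cong \operatorname{Cont}(S, \Hat{K}^{\mathrm{ur}})$. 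Tensoring with the finite extension $L$ over $\Hat{K}^{\mathrm{ur}}$ then commutes with the continuous-function formation, giving the displayed identity.

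Second, for any $K$-scheme $X$ locally of finite type, one establishes a canonical bijection
\[
X\bigl(\operatorname{Cont}(S, L)\bigr) \;\cong\; \operatorname{Cont}(S, X(L)^{\mathrm{top}}).
\]
This is proved along the lines of Conrad's functor (cf.\ property (iv) after \eqref{conradfunctor} in Section \ref{section:realisation}): one first handles $X = \mathbbm{A}^n_K$, where both sides are $\operatorname{Cont}(S, L)^n$ and $\operatorname{Cont}(S, L^n)$, which are canonically identified via the topological ring structure on $L^n$; then a closed subscheme of $\mathbbm{A}^n_K$ cut out by polynomials is treated by taking the subset satisfying the equations on both sides; finally, for a general locally of finite type $X$, one reduces to the affine case using an open cover of $X$ together with the compactness of $S$ (which guarantees that any continuous map $S \to X(L)^{\mathrm{top}}$ factors through a finite subcollection of affine open trivialisations, using a closed refinement of the pullback covering by regularity of $S$, exactly as in the proof of \cref{localsectioncover}).

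Combining these two steps and passing to the colimit over $L$, one obtains
\[
\Gamma(\Hat{K}^{\mathrm{sep}}_W, X)(S) \;=\; \varinjlim_L X\bigl(L \otimes_{\Hat{K}^{\mathrm{ur}}} \mathbf{K}(S_{\overline{k}})\bigr) \;\cong\; \varinjlim_L \operatorname{Cont}(S, X(L)^{\mathrm{top}}) \;=\; \Bigl(\varinjlim_L \underline{X(L)^{\mathrm{top}}}\Bigr)(S).
\]
Naturality in $S$ is immediate from the construction. Compatibility with the $\Hat{W}_K$-action follows from the fact that $W_K$ acts on both sides through its action on the collection of extensions $L/\Hat{K}^{\mathrm{ur}}$ and on the topological spaces $X(L)^{\mathrm{top}}$ in matching ways. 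For the ``in particular'' statement, each $\underline{X(L)^{\mathrm{top}}}$ is already a sheaf on $\ast_{\mathrm{proet}}$ (not merely on $\ast^{\mathrm{proet}}_{\mathrm{et}}$), and filtered colimits of sheaves on $\ast_{\mathrm{proet}}$ remain sheaves, so the direct limit lies in $\operatorname{Set}(B_{\Hat{W}_{K}}(\ast_{\mathrm{proet}}))$.

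The main obstacle is the ring-theoretic step $\mathbf{K}(S_{\overline{k}}) \cong \operatorname{Cont}(S, \Hat{K}^{\mathrm{ur}})$: the completed tensor product in the definition of $\mathbf{K}$ must convert the filtered colimit $\varinjlim_\lambda \prod_{x\in S_\lambda} W(\overline{k}) \mathbin{\otimes}_{W(k)} \mathcal{O}_K$ into the cofiltered inverse limit giving continuous functions into $\mathcal{O}_{\Hat{K}^{\mathrm{ur}}}$, and one must verify carefully that the Witt vector construction applied to a filtered colimit of products of $\overline{k}$ behaves as expected under this completion. Once this is in hand, every other ingredient is formal.
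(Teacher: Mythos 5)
Your overall strategy tracks the paper's: reduce to the ring identification $L \otimes_{\Hat{K}^{\mathrm{ur}}} \mathbf{K}(S_{\overline{k}}) \cong \underline{L}(S)$, settle the affine case by hand, then glue. The ring step, which you flag as the main obstacle, the paper handles somewhat more cleanly by first working with the discrete quotients $\mathcal{O}_{K,n}$ (where the relevant objects are literal finite products over finite $S$), then passing to the colimit in $S$, then the limit in $n$ and inverting $\pi$; this avoids having to commute the Witt vector functor with a completed tensor product directly. That is a presentational difference, not a gap.

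The gap is in the gluing step. Your justification for passing from affine to general $X$ is that a continuous map $S \to X(L)^{\mathrm{top}}$ factors, by compactness of $S$, through a finite clopen refinement of the pullback of an affine cover. This constructs the map from $\operatorname{Cont}(S,X(L)^{\mathrm{top}})$ to $X(\underline{L}(S))$, and gives injectivity (evaluate at each $s\in S$). It does not give surjectivity. Given an arbitrary morphism $a\colon \operatorname{Spec}\underline{L}(S) \to X$, you must show that the Zariski covering of $\operatorname{Spec}\underline{L}(S)$ obtained by pulling back an affine cover of $X$ can itself be refined by a disjoint covering coming from a clopen partition of $S$. This is a nontrivial structural statement about $\operatorname{Spec}\underline{L}(S)$ (which has many prime ideals not corresponding to points of $S$), and it is precisely what the paper cites \cite[Lemma 2.5.2]{Suz22} to supply. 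To close this without the citation one would need an argument such as: the evaluation map $S \to \operatorname{Spec}\underline{L}(S)$ is continuous for the Zariski topology, its image is dense because $\underline{L}(S)$ is reduced, hence by compactness of $S$ the pulled-back cover refines to a clopen partition of $S$ whose induced disjoint decomposition of $\operatorname{Spec}\underline{L}(S)$ is still subordinate to the original Zariski cover. As written, your appeal to \cref{localsectioncover} only handles the purely topological direction, so the bijectivity of the comparison map is not established.
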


\begin{proof}
	For $n \ge 1$,
	set $\mathcal{O}_{K, n} = \mathcal{O}_{K} / \mathfrak{p}_{K}^{n}$,
	where $\mathfrak{p}_{K}$ is the maximal ideal of $\mathcal{O}_{K}$.
	Define $\Hat{\mathcal{O}}^{\mathrm{ur}}_{K, n}$ similarly from $\Hat{K}^{\mathrm{ur}}$.
	For $k' \in k^{\mathrm{indrat}}$, define $\mathbf{O}_{K, n}(k')$ to be
	$W(k') \mathbin{\otimes}_{W(k)} \mathcal{O}_{K, n}$.
	Then $\underline{\Hat{\mathcal{O}}^{\mathrm{ur}}_{K, n}}$ is discrete.
	For any finite set $S$, we have
		\[
				\underline{\Hat{\mathcal{O}}^{\mathrm{ur}}_{K, n}}(S)
			\cong
				\mathbf{O}_{K, n}(S_{\overline{k}}),
		\]
	since both are $\prod_{x \in S} \Hat{\mathcal{O}}^{\mathrm{ur}}_{K, n}$.
	Taking direct limits in $S$,
	this isomorphism holds for any $S \in \ast_{\mathrm{proet}}$.
	Taking the inverse limit in $n$ and $\mathbin{\otimes}_{\mathcal{O}_{K}} K$,
	we have
		\[
				\underline{\Hat{K}^{\mathrm{ur}}}(S)
			\cong
				\mathbf{K}(S_{\overline{k}}).
		\]
	
	Let $L / \Hat{K}^{\mathrm{ur}}$ be finite separable.
	Taking $L \mathbin{\otimes}_{\Hat{K}^{\mathrm{ur}}}$, we have
		\begin{equation} \label{0015}
				\underline{L}(S)
			\cong
				L \mathbin{\otimes}_{\Hat{K}^{\mathrm{ur}}} \mathbf{K}(S_{\overline{k}}).
		\end{equation}
	Therefore
		\[
				X(\underline{L}(S))
			\cong
				X(L \mathbin{\otimes}_{\Hat{K}^{\mathrm{ur}}} \mathbf{K}(S_{\overline{k}})).
		\]
	
	Hence it is enough to show that
		\[
				\underline{X(L)^{\mathrm{top}}}(S)
			\cong
				X(\underline{L}(S)).
		\]
	This is true if $X = \mathbbm{A}^{1}_{K}$.
	It follows that it is also true for any affine $X$.
	By \cite[Lemma 2.5.2]{Suz22},
	any Zariski covering of $\operatorname{Spec} \mathbf{K}(S_{\overline{k}})$ can be refined
	by a disjoint Zariski covering coming from a disjoint open covering of $S$.
	Hence the same is true for $\operatorname{Spec} \underline{L}(S)$
	by \eqref{0015}.
	Now a patching argument shows that the desired isomorphism holds
	for any $X$.
\end{proof}

\begin{prop} \mbox{} \label{0012}
	\begin{enumerate}
	\item \label{0022}
		For any complex $A$ of commutative group schemes locally of finite type over $K$,
		we have
			\[
					R \Gamma(K_{W}, A)
				\cong
					R \Gamma(\Hat{W}_{K}, \Gamma(\Hat{K}^{\mathrm{sep}}_{W}, A))
			\]
		in $D(\ast_{\mathrm{proet}})$.
	\item \label{0023}
		For any complexes $A, B, C$ of commutative group schemes locally of finite type over $K$
		and any morphism $A \mathbin{\otimes}^{L} B \to C$ in $D(K_{\mathrm{Et}})$,
		the cup product morphism
			\[
					R \Gamma(K_{W}, A) \mathbin{\otimes}^{L} R \Gamma(K_{W}, B)
				\to
					R \Gamma(K_{W}, C)
			\]
		and the composite of the cup product morphisms
			\begin{align*}
				&
							R \Gamma(\Hat{W}_{K}, \Gamma(\Hat{K}^{\mathrm{sep}}_{W}, A))
						\mathbin{\otimes}^{L}
							R \Gamma(\Hat{W}_{K}, \Gamma(\Hat{K}^{\mathrm{sep}}_{W}, B))
				\\
				&	\to
						R \Gamma(\Hat{W}_{K},
								\Gamma(\Hat{K}^{\mathrm{sep}}_{W}, A)
							\mathbin{\otimes}^{L}
								\Gamma(\Hat{K}^{\mathrm{sep}}_{W}, B)
						)
				\\
				&	\to
						R \Gamma(\Hat{W}_{K}, \Gamma(\Hat{K}^{\mathrm{sep}}_{W}, C))
			\end{align*}
		are compatible via the isomorphism in Statement \ref{0022}.
	\end{enumerate}
\end{prop}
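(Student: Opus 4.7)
The proof strategy reduces the proposition to combining two earlier results in the appendix: the isomorphism of Proposition \ref{0008} and the representability statement of Proposition \ref{0007}. The key observation is that when the coefficients are commutative group schemes, the pro-\'etale sheafification functor $P^{\ast}$ acts trivially on the relevant object.

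For statement \ref{0022}, I would argue as follows. By Proposition \ref{0008}, there is a natural isomorphism
\[
R \Gamma(K_{W}, A) \cong R \Gamma(\Hat{W}_{K}, P^{\ast} \Gamma(\Hat{K}^{\mathrm{sep}}_{W}, A))
\]
in $D(\ast_{\mathrm{proet}})$. Applying $\Gamma(\Hat{K}^{\mathrm{sep}}_{W}, \cdot)$ term-wise to the complex $A$, Proposition \ref{0007} identifies each term with the direct limit of the sheaves $\underline{A^{n}(L)^{\mathrm{top}}}$ over finite separable extensions $L / \Hat{K}^{\mathrm{ur}}$. These are already objects of $B_{\Hat{W}_{K}}(\ast_{\mathrm{proet}})$, so the natural adjunction map $P^{\ast} \Gamma(\Hat{K}^{\mathrm{sep}}_{W}, A) \to \Gamma(\Hat{K}^{\mathrm{sep}}_{W}, A)$ is an isomorphism (viewing the right-hand side as a complex in $D(B_{\Hat{W}_{K}}(\ast_{\mathrm{proet}}))$ via Proposition \ref{0007}). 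Substituting this into the above isomorphism yields statement \ref{0022}.

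For statement \ref{0023}, I would track the cup product compatibility through the same chain of isomorphisms. The cup product $R \Gamma(K_{W}, A) \otimes^{L} R \Gamma(K_{W}, B) \to R \Gamma(K_{W}, C)$ is compatible with the corresponding cup product for $R \Gamma(\Hat{W}_{K}, P^{\ast} \Gamma(\Hat{K}^{\mathrm{sep}}_{W}, \cdot))$ by the last assertion of Proposition \ref{0008}. Since $P^{\ast}$, as the inverse image of a morphism of sites, is symmetric monoidal, it commutes with the cup product construction of Proposition \ref{0009}; moreover, the identification $P^{\ast} \Gamma(\Hat{K}^{\mathrm{sep}}_{W}, A) \cong \Gamma(\Hat{K}^{\mathrm{sep}}_{W}, A)$ obtained above intertwines these cup products. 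Combining these observations gives the compatibility in \ref{0023}.

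The plan is therefore essentially a diagram chase combining Propositions \ref{0007}, \ref{0008}, and \ref{0009}, and I do not expect any serious obstacle — the only subtlety is verifying at the level of complexes (rather than single objects) that the pro-\'etale sheafification identification is compatible with the derived tensor product used to form the cup product. This amounts to checking that $P^{\ast}$ of a complex of pro-\'etale sheaves is canonically the complex itself in $D(B_{\Hat{W}_{K}}(\ast_{\mathrm{proet}}))$, which follows since $P^{\ast}$ is exact and the identification is term-wise. The resulting isomorphism, together with Theorem \ref{0013}, then recovers Theorem \ref{thm:duality1motives1} by applying it to the Poincar\'e biextension morphism $M \otimes^{L} N \to \mathbbm{G}_{m}[1]$ and using the computation $R \Gamma(K_{W}, \mathbbm{G}_{m}) \to \mathbbm{Z}[-1]$ from \eqref{0014}.
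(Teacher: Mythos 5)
Your argument is essentially identical to the paper's proof, which is simply stated as "This follows from Propositions \ref{0008} and \ref{0007}." You correctly spell out the two ingredients — the isomorphism $R\Gamma(K_W,\;\cdot\;)\cong R\Gamma(\Hat{W}_K, P^*\Gamma(\Hat{K}^{\mathrm{sep}}_W,\;\cdot\;))$ with its cup-product compatibility from Proposition \ref{0008}, and the observation from Proposition \ref{0007} that for group schemes each term of $\Gamma(\Hat{K}^{\mathrm{sep}}_W, A)$ is already a pro-\'etale sheaf, so $P^*$ acts as the identity — and correctly note that exactness of $P^*$ lets you pass from the term-wise identification to complexes. This matches the intended reasoning exactly, and there is no gap.
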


\begin{proof}
	This follows from Propositions \ref{0008} and \ref{0007}.
\end{proof}

\begin{thm} \label{0016} \mbox{}
	\begin{enumerate}
	\item \label{0010}
		There exists a canonical morphism
			\[
					R \Gamma(\Hat{W}_{K}, \Gamma(\Hat{K}^{\mathrm{sep}}_{W}, \mathbbm{G}_{m}))
				\to
					\mathbbm{Z}[-1]
			\]
		in $D(\ast_{\mathrm{proet}})$.
	\item \label{0011}
		Let $M$ and $N$ be $1$-motives over $K$ dual to each other.
		Consider the Poincar\'e biextension morphism
		$M \mathbin{\otimes}^{L} N \to \mathbbm{G}_{m}[1]$ in $D(K_{\mathrm{Et}})$.
		Consider the induced morphism
			\[
						\Gamma(\Hat{K}^{\mathrm{sep}}_{W}, M)
					\mathbin{\otimes}^{L}
						\Gamma(\Hat{K}^{\mathrm{sep}}_{W}, N)
				\to
					\Gamma(\Hat{K}^{\mathrm{sep}}_{W}, \mathbbm{G}_{m}[1])
			\]
		in $D(B_{\Hat{W}_{K}}(\ast_{\mathrm{proet}}))$
		by Proposition \ref{0009}.
		Consider the induced morphisms
			\[
						R \Gamma(\Hat{W}_{K}, \Gamma(\Hat{K}^{\mathrm{sep}}_{W}, M))
					\mathbin{\otimes}^{L}
						R \Gamma(\Hat{W}_{K}, \Gamma(\Hat{K}^{\mathrm{sep}}_{W}, N))
				\to
					R \Gamma(\Hat{W}_{K}, \Gamma(\Hat{K}^{\mathrm{sep}}_{W}, \mathbbm{G}_{m}[1]))
				\to
					\mathbbm{Z}
			\]
		in $D(\ast_{\mathrm{proet}})$ by Statement \ref{0010}.
		The induced morphism
			\[
					R \Gamma(\Hat{W}_{K}, \Gamma(\Hat{K}^{\mathrm{sep}}_{W}, M))
				\to
					R \operatorname{\mathbf{Hom}}_{\ast_{\mathrm{proet}}} \bigl(
						R \Gamma(\Hat{W}_{K}, \Gamma(\Hat{K}^{\mathrm{sep}}_{W}, N)),
						\mathbbm{Z}
					\bigr)
			\]
		is an isomorphism.
	\end{enumerate}
\end{thm}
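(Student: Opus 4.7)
The plan is to transport Theorem \ref{0013} through the canonical isomorphism established in Proposition \ref{0012}, which is the main technical input. There is essentially nothing new to prove beyond this transport: everything has been set up so that Theorem \ref{0016} is a formal consequence of Theorem \ref{0013} once the identification of the two cohomology formalisms is in place.

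For part \ref{0010}, I would simply transport the morphism \eqref{0014} along the canonical isomorphism of Proposition \ref{0012}(\ref{0022}) applied to $A = \mathbbm{G}_m$. That is, I define the morphism
\[
    R\Gamma(\hat{W}_K, \Gamma(\hat{K}^{\mathrm{sep}}_W, \mathbbm{G}_m))
    \;\cong\;
    R\Gamma(K_W, \mathbbm{G}_m)
    \;\longrightarrow\;
    \mathbbm{Z}[-1]
\]
to be the composite of the canonical isomorphism from Proposition \ref{0012}(\ref{0022}) with the morphism \eqref{0014}. No further verification is needed for this step.

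For part \ref{0011}, I would apply the compatibility of cup products asserted in Proposition \ref{0012}(\ref{0023}) to the Poincaré biextension morphism $M \otimes^{L} N \to \mathbbm{G}_m[1]$ in $D(K_{\mathrm{Et}})$. Combined with the definition of the trace morphism of part \ref{0010}, this yields a commutative diagram
\[
\begin{CD}
        R\Gamma(\hat{W}_K, \Gamma(\hat{K}^{\mathrm{sep}}_W, M))
        \mathbin{\otimes}^{L}
        R\Gamma(\hat{W}_K, \Gamma(\hat{K}^{\mathrm{sep}}_W, N))
    @>>>
        \mathbbm{Z}
    \\
    @V \wr VV
    @VV = V
    \\
        R\Gamma(K_W, M) \mathbin{\otimes}^{L} R\Gamma(K_W, N)
    @>>>
        \mathbbm{Z}
\end{CD}
\]
where the vertical arrow on the left is the tensor product of the isomorphisms from Proposition \ref{0012}(\ref{0022}) applied to $M$ and to $N$, and the bottom row is the pairing of Theorem \ref{0013}. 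The induced morphism $R\Gamma(\hat{W}_K, \Gamma(\hat{K}^{\mathrm{sep}}_W, M)) \to R\operatorname{\mathbf{Hom}}_{\ast_{\mathrm{proet}}}(R\Gamma(\hat{W}_K, \Gamma(\hat{K}^{\mathrm{sep}}_W, N)), \mathbbm{Z})$ therefore fits into a commutative square with the analogous morphism appearing in Theorem \ref{0013}, joined by isomorphisms. Since Theorem \ref{0013} asserts the bottom horizontal arrow is an isomorphism, so is the top one.

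The only step that requires any subtlety is ensuring that the trace morphism constructed in part \ref{0010} is genuinely the one used in assembling the pairing in part \ref{0011}; but this is tautological from how it is defined, since both instances come from transporting \eqref{0014} along Proposition \ref{0012}(\ref{0022}) with $A = \mathbbm{G}_m$. The main real work — compatibility of the cup products on each side with the isomorphism of Proposition \ref{0008} — has already been carried out in Proposition \ref{0004} and Proposition \ref{0008}, so at this stage the proof of Theorem \ref{0016} is essentially a diagram chase.
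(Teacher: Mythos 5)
Your proposal is correct and takes essentially the same approach as the paper: the paper's own proof is a single sentence asserting that Theorem \ref{0016} is a translation of \eqref{0014} and Theorem \ref{0013} via Proposition \ref{0012}, and your argument simply spells out the transport isomorphism (Proposition \ref{0012}.\ref{0022}) and the cup-product compatibility diagram (Proposition \ref{0012}.\ref{0023}) that this one-liner implicitly invokes.
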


\begin{proof}
	This is nothing but a translation of \eqref{0014} and Theorem \ref{0013}
	using Proposition \ref{0012}.
\end{proof}

Proposition \ref{0007} says that
$\Gamma(\Hat{K}^{\mathrm{sep}}_{W}, X)$ is the same as
the condensed Weil-\'etale realization of $X$
in the sense of Definition \ref{defn:cwer}.
With the comparison result, Proposition \ref{0012},
the above theorem thus recovers Theorem \ref{thm:duality1motives1}.


\noindent
\textbf{Marco Artusa}

\noindent Institut de Recherche Math\'ematique Avanc\'ee, Universit\'e de Strasbourg, 7 rue Ren\'e Descartes, 67000 Strasbourg, France, \href{mailto:artusa@unistra.fr}{artusa@unistra.fr}.

\vspace{0.5em}\noindent
\textbf{Takashi Suzuki}

\noindent Department of Mathematics, Chuo University,	1-13-27 Kasuga, Bunkyo-ku, Tokyo 112-8551, Japan, \href{mailto:tsuzuki@gug.math.chuo-u.ac.jp}{tsuzuki@gug.math.chuo-u.ac.jp}.

\begin{thebibliography}{99}
\bibitem{Andrey}
G.~Andreychev, Pseudocoherent and Perfect Complexes and Vector Bundles on Analytic Adic Spaces. 2021, arXiv:2105.12591.

\bibitem{SGA4}
  M.~Artin, A.~Grothendieck, and J.~L. Verdier, \emph{Th\'eorie des topos et cohomologie \'etale des sch\'emas. Tome 1-2-3}.
 Lecture Notes in Math. 269, 270, 305,  Springer, Berlin, 1972/1973
 
\bibitem{Artusa}
M.~Artusa, Duality for condensed cohomology of the Weil group of a {{\(p\)}}-adic field.
 \emph{Doc. Math.} \textbf{29} (2024), no.~6, 1381--1434.
 
 \bibitem{BH}
 C.~Barwick, and P.~Haine,
Pyknotic objects, I. Basic notions.
 2019.
 
 \bibitem{proetale}
 B.~Bhatt, and P.~Scholze, The pro-\'{e}tale topology for schemes.
 \emph{Ast\'{e}risque} \textbf{369} (2015), 99--201
 
 \bibitem{blrneron}
 S.~Bosch, W.~L\"utkebohmert and M.~Raynaud, \emph{N\'eron models}. Ergebnisse der Mathematik und ihrer Grenzgebiete (3), 21, Springer, Berlin, 1990.
 
 \bibitem{LCM}
D.~Clausen, and P.~Scholze, Lectures on Condensed Mathematics. 2019,
 \url{http://www.math.uni-bonn.de/people/scholze/Condensed.pdf}
 
\bibitem{Conrad}
 B.~Conrad, Weil  and  Grothendieck  approaches  to  adelic  point.
 \emph{Enseign. Math.} \textbf{58} (2012), no.~1,2, 61--97.

\bibitem{Deligne}
P.~Deligne, Th\'eorie de Hodge. III. \emph{Inst. Hautes \'Etudes Sci. Publ. Math.} \textbf{44} (1974), 5--77.

\bibitem{GeisserMor2}
T.~Geisser, and B.~Morin, Pontryagin duality for varieties over {{\(p\)}}-adic fields.
\emph{J. Inst. Math. Jussieu} \textbf{23} (2024), no.~1, 425--462.

\bibitem{Schemata}
M.~J. Greenberg, Schemata over local rings, \emph{Ann. of Math.} \textbf{73} (1961), no.~2, 624--648. 

\bibitem{Schemata2}
M.~J. Greenberg, Schemata over local rings. II, \emph{Ann. of Math.} \textbf{78} (1963), no.~2, 256--266.

\bibitem{SGA1}
A.~Grothendieck, \emph{Rev\^etements \'etales et groupe fondamental. Fasc. II: Expos\'es 6, 8 \`a 11}, Institut des Hautes \'Etudes Scientifiques, Paris, 1963.

\bibitem{SGA7I}
  A.~Grothendieck, M.~Raynaud and D.~S. Rim, \emph{S\'eminaire de G\'eom\'etrie Alg\'ebrique Du Bois-Marie 1967--1969. Groupes de monodromie en g\'eom\'etrie alg\'ebrique (SGA 7 I). Expos\'es I, II, VI, VII, VIII, IX}.
 Lect. Notes Math. 288, Springer, Cham, 1972.

\bibitem{HarariSzamuely}
D.~Harari and T.~Szamuely, Arithmetic duality theorems for 1-motives. \emph{J. Reine Angew. Math.} \textbf{578} (2005), 93--128.

\bibitem{Hoff}
 N.~Hoffmann, and M.~Spitzweck, Homological algebra with locally compact {abelian} groups.
 \emph{Adv. Math.} \textbf{212} (2007), no.~2, 504--524.

\bibitem{Karpuk}
 D.~Karpuk, Cohomology of the {Weil} group of a {{\(p\)}}-adic field.
 \emph{J. Number Theory} \textbf{133} (2013), no.~4, 1270--1288.
 
\bibitem{Karpuk2}
 D.~Karpuk, Weil-\'etale cohomology over {{\(p\)}}-adic fields. 2012, arXiv:1111.6710.
 
  \bibitem{Licht}
  S.~Lichtenbaum, The {Weil}-{\'e}tale topology for number rings.
 \emph{Ann. Math. (2)} \textbf{170} (2009), no.~2, 657--683.
 
 \bibitem{HA}
  J.~Lurie, Higher algebra. 2017,
 \url{https://www.math.ias.edu/~lurie/ papers/HA.pdf}.


\bibitem{Mattuck}
A.~Mattuck, Abelian varieties over {{\(p\)}}-adic ground fields.
\emph{Ann. of Math.} \textbf{62} (1955), no.~2, 92--119.

\bibitem{MilneAG}
  J.~S. Milne, \emph{Algebraic groups. The theory of group schemes of finite type over a field}.
 Cambridge University Press,  Cambridge, 2017.
 
 \bibitem{ADT}
  J.~S. Milne, \emph{Arithmetic duality theorems}.
 Perspect. Math. 1,  Academic Press, Boston, MA, 1986.
 
 \bibitem{MilneEC}
 J.~S. Milne, \emph{\'Etale cohomology}.
Princeton Mathematical Series 33,  Princeton Univ. Press, Princeton, NJ, 1980.
 
\bibitem{Ray}
M.~Raynaud, 1-motifs et monodromie g\'eom\'etrique.
 \emph{Ast\'erisque} \textbf{223} (1994), 295--319.
 
  \bibitem{Schn}
  J.~P. Schneiders, Quasi-abelian categories and sheaves.
 \emph{M{\'e}m. Soc. Math. Fr., Nouv. S{\'e}r.} \textbf{76} (1998), 1--140.
 
\bibitem{Suz20a}
T.~Suzuki.
\newblock Duality for cohomology of curves with coefficients in abelian
  varieties.
\newblock {\em Nagoya Math. J.}, 240:42--149, 2020.

\bibitem{Suz}
 T.~Suzuki, Grothendieck's pairing on N\'eron component groups: Galois descent from the semistable case. \emph{Kyoto J. Math.} \textbf{60} (2020), no. 2, pp. 593–-716.
 
  \bibitem{Suz0}
  T.~Suzuki, Grothendieck's pairing on N\'eron component groups: Galois descent from the semistable case. 2014, arXiv:1410.3046v1.

\bibitem{Suz21}
T.~Suzuki.
\newblock An improvement of the duality formalism of the rational \'{e}tale
  site.
\newblock In {\em Algebraic number theory and related topics 2018}, RIMS
  K\^{o}ky\^{u}roku Bessatsu, B86, pages 287--330. Res. Inst. Math. Sci.
  (RIMS), Kyoto, 2021.
 
\bibitem{Suz22}
T.~Suzuki.
\newblock Duality for local fields and sheaves on the category of fields.
\newblock {\em Kyoto J. Math.}, 62(4):789--864, 2022.

\bibitem{WC}
J.~T. Tate, $WC$-groups over  {{\(p\)}}-adic fields, \emph{Secr\'etariat math\'ematique, Paris}, 1958.

 
\end{thebibliography}
\end{document}